\numberwithin{equation}{section}
\newtheorem{theorem}{Theorem}[section]
\newtheorem{lemma}[theorem]{Lemma}
\newtheorem{proposition}[theorem]{Proposition}
\newtheorem{corollary}[theorem]{Corollary}
\newtheorem{definition}[theorem]{Definition}
\renewcommand{\tilde}{\widetilde} %\wwtilde with mtpro
\newcommand{\End}{\textnormal{End}}
\newcommand{\C}{\mathbf{C}}
\def\P{{\mathbb P}}
\def\psl{{\bf PSL}_2}
\def\pgl{{\bf PGL}_2}
\newcommand{\R}{\mathbf{R}}
\newcommand{\Q}{\mathbf{Q}}
\newcommand{\Z}{\mathbf{Z}}
\newcommand{\K}{\mathbf{K}}
\newcommand{\h}{\mathbf{H}}
\newcommand{\im}{\mathrm{im}}
\newcommand{\Hom}{\mathrm{Hom}}
\newcommand{\coker}{\mathrm{coker}}
\newcommand{\Dom}{\textnormal{Dom }}
\newcommand{\dM}{\partial\overline{M}}
\newcommand{\dd}{{\bf\textnormal{d}}}
\newcommand{\supp}{\textnormal{supp } }
\newcommand{\nv}{\mathbf{n}}
\renewcommand{\phi}{\varphi}    % Maybe search and replace for this after everything is finished
\renewcommand{\epsilon}{\varepsilon}    % Maybe search and replace for this after everything is finished
\renewcommand{\geq}{\geqslant }% Maybe search and replace for this after everything is finished
\renewcommand{\leq}{\leqslant }% Maybe search and replace for this after everything is finished
\newcommand{\lx}{\overline{{\bf x}}}
\newcommand{\Lip}{\textnormal{Lip}}
\newcommand{\grad}{\textnormal{grad}}
\newcommand{\bea}          {\begin{eqnarray}}
\newcommand{\eea}          {\end{eqnarray}}
\newcommand{\beastar}          {\begin{eqnarray*}}
\newcommand{\eeastar}          {\end{eqnarray*}}
\newcommand{\geo}{\textnormal{geo}}
\newcommand{\diag}{\textnormal{diag}}
\newcommand{\dH}{\partial\mathbf{H}}
\newcommand{\ep}{\varepsilon}
\newcommand{\Ext}{\textnormal{Ext}}
\newcommand{\Dsla}{\slashed{D}}
\newcommand{\Ad}{\textnormal{Ad}}
\newcommand{\pt}{\textnormal{pt}}
\newcommand{\Eul}{\textnormal{Eul}}
\newcommand{\intN}{\mathring{N}}
\newcommand{\alg}{\textnormal{alg}}
\newcommand{\x}{{\bf x}}
\newcommand{\e}{{\bf e}}
\newcommand{\Tsla}{T}
\declaretheoremstyle[notefont=\bfseries,notebraces={}{},%
    headpunct={},postheadspace=1em]{mystyle}
\declaretheorem[style=mystyle,numbered=no,name=Theorem]{thm-hand}
\DeclareFontFamily{OT1}{pzc}{}
\DeclareFontShape{OT1}{pzc}{m}{it}{<-> s * [1.20] pzcmi7t}{}
\DeclareMathAlphabet{\mathpzc}{OT1}{pzc}{m}{it}
\begin{document}
 
\date{\today}
\title[Hecke operators and $K$-homology of Bianchi groups]{Hecke operators in $KK$-theory and \\
 the $K$-homology of Bianchi groups}

\author[B. Mesland]{Bram Mesland}
\address{\normalfont{Institut f\"{u}r Analysis, Leibniz Universit\"{a}t Hannover, Welfengarten 1, 30167,
Hannover, Germany}}
\email{mesland@math.uni-hannover.de}

\author[M.H. \c{S}eng\"un]{Mehmet Haluk \c{S}eng\"un}
\address{\normalfont{University of Sheffield,
School of Mathematics and Statistics, Hicks Building,
Sheffield, UK}}
\email{m.sengun@sheffield.ac.uk}
\subjclass[2010]{11F75, 46L80, 19K35, 58B34, 53C23}

\begin{abstract} 
\noindent Let $\Gamma$ be a torsion-free arithmetic group acting on its associated global symmetric space $X$. Assume that $X$ is of non-compact type and let $\Gamma$ 
act on the geodesic boundary $\partial X$ of $X$. Via general constructions in $KK$-theory, we endow the $K$-groups of the arithmetic manifold $X / \Gamma$,  of the reduced group $C^*$-algebra $C^*_r(\Gamma)$ and of the boundary crossed product algebra $C(\partial X) \rtimes\Gamma$ with Hecke operators. The $K$-theory and $K$-homology groups of these $C^{*}$-algebras are related by a Gysin six-term exact sequence. In the case when $\Gamma$ is a group of real hyperbolic isometries, we show that this Gysin sequence is Hecke equivariant. Finally, when $\Gamma$ is a Bianchi group, we assign explicit unbounded Fredholm modules (i.e. spectral triples) to (co)homology classes, inducing Hecke-equivariant isomorphisms between the integral cohomology of $\Gamma$ and each of these $K$-groups. Our methods apply to case $\Gamma \subset \psl(\Z)$ as well. As these results are achieved in the context of unbounded Fredholm modules, they shed light on noncommutative geometric aspects of the purely infinite boundary crossed product algebra. % In particular, we employ the unbounded Kasparov product to push the Dirac operator an embedded surface in the Borel-Serre compactification of $\h/\Gamma$ to a spectral triple on the geodesic boundary crossed product algebra $C(\partial \h) \rtimes\Gamma$.
\end{abstract}

\maketitle

\tableofcontents 
\addtocontents{toc}{~\hfill\textbf{Page}\par}

%%%%%%%%%%%%%%%%%%%%%%%%%%%%%%%%%%%%%
%%%%%%%%%%%%%%%%%%%%%%%%%%%%%%%%%%%%%
\section*{Introduction}

Let ${\bf G}$ be a semi-simple algebraic group over $\Q$ and $\Gamma \subset {\bf G}(\Q)$ be an arithmetic group. The cohomology of $\Gamma$ can be equipped with a commuting family of endomorphisms called ``Hecke operators'' which arise from correspondences on the associated arithmetic manifold $X / \Gamma$. The cohomology of $\Gamma$ as a Hecke module plays an important role in modern number theory. It is well-known that, as a Hecke module, the complex cohomology of $\Gamma$ can be completely accounted for by various spaces of automorphic forms (see \cite{franke}). The Langlands programme predicts that classes in the complex (resp. mod $p$) cohomology of $\Gamma$ that are simultaneous eigenvectors of the Hecke operators  correspond to $p$-adic (resp. mod $p$) Galois representations. This is known in many cases (see, for example,  \cite{scholze}).

Let $X$ denote the symmetric space associated to the real Lie group ${\bf G}(\R)$. Assume that $X$ is of non-compact type and let $\partial X$ denote the geodesic boundary of $X$. We consider three $C^*$-algebras that are naturally associated to $\Gamma$, namely, the algebra $C_0(X / \Gamma)$ of functions on the arithmetic manifold $X/ \Gamma$, the reduced group $C^*$-algebra $C^*_r(\Gamma)$ and the boundary crossed product algebra $C(\partial X) \rtimes \Gamma$, which we call the \emph{arithmetic $C^*$-algebras} associated to $\Gamma$. 

Via general constructions in $KK$-theory, we endow the $K$-groups of arithmetic $C^*$-algebras with Hecke operators. As in the case of the cohomology of arithmetic groups, we expect that these Hecke modules are related to the arithmetic of automorphic forms and to Galois representations. We show that this is the case when $\Gamma$ is a Bianchi group by proving that the {\it integral} cohomology of $\Gamma$ and the $K$-homology of the arithmetic $C^*$-algebras associated to $\Gamma$ are isomorphic as Hecke modules. We realize these isomorphisms by constructing explicit unbounded Fredholm modules (i.e. spectral triples) for each of the arithmetic $C^*$-algebras. Our results also apply to the case of $\psl(\Z)$.

Bianchi groups are among the simplest generalizations of the $\psl(\Z)$, yet from the perspective of the Langlands programme, many fundamental questions are still wide open (see, for example, \cite{sengun-survey}) and moreover new phenomena (\cite{bergeron-venkatesh, calegari-venkatesh, scholze, sengun}) that are not present in the setting of $\psl(\Z)$ (or, more generally, in the setting of Shimura varieties) arise. Our results allow the possibility of using ideas and tools from $K$-theory (both topological and analytic) and noncommutative geometry in investigating these problems and phenomena. For example, one of our results implies that, up to normalization, a period of a  Bianchi (or classical) modular form can be captured as the index of a Fredholm operator.  

Our work is inspired by the works of Manin and Marcolli \cite{mm-1, manin-marcolli, mm-3} which pursue number theoretic questions around the `noncommutative modular curves' via tools of Connes' noncommutative geometry \cite{connes-book,connesspectral}. The $K$-theory of $C^{*}$-algebras associated to Kleinian and Fuchsian groups have been studied by many authors e.g. \cite{delaroche,  berkove,connestrans, EM}, with $K$-homology receiving attention only recently \cite{EN, Fuchs, rahmcompt, rahm}. The novelty of our work lies in the introduction of the Hecke module structure within $KK$-theory 
%on these $K$-groups 
and the construction of new unbounded $K$-cycles using the unbounded Kasparov product \cite{KaLe, Mes, MR}. This is of particular interest for the purely infinite boundary crossed product $C^{*}$-algebra $C(\partial X) \rtimes \Gamma$. 

\subsection*{Description of results and plan of the paper}
We first carry out a very general treatment of Hecke operators as they are crucial for the link with arithmetic that we are aiming to establish. This is done in Section \ref{section: hecke}. A novelty of our treatment is the introduction of Hecke operators via $KK$-theory. This proves to be a powerful, and natural, way of treating Hecke operators that allows us to establish various results related to Hecke operators in a robust and efficient manner. 

Let $G$ be a group acting on a locally compact Hausdorff space $X$. Given a subgroup $\Gamma$ of $G$ which acts freely and properly on $X$, put $M=X/\Gamma$. For any element $g$ in the commensurator group of $\Gamma$ in $G$, we define a bimodule $T_g^M$ and let $[T_{g}^{M}]\in KK_{0}(C_{0}(M),C_{0}(M))$ be its class. Then we define the Hecke operator 
$$T_{g}:K^{*}(C_{0}(M))\to K^{*}(C_{0}(M)),\quad x\mapsto [T_{g}^{M}]\otimes x,$$
to be the Kasparov product with this class. Now let $B$ an arbitrary $C_{G}(\Gamma)$-$C^{*}$-algebra. Similarly we define a class 
$[T_{g}^{\Gamma}]\in KK_{0}(B\rtimes_{r}\Gamma,B\rtimes_{r}\Gamma)$ and define the Hecke operator 
$$T_{g}: K^{*}(B\rtimes_{r}\Gamma)\to K^{*}(B\rtimes_{r}\Gamma),\quad x\mapsto [T^{\Gamma}_{g}]\otimes x,$$
as the Kasparov product with this class. Of course, in both cases, we obtain Hecke operators on $K$-theory as well. These two constructions allow us to define Hecke operators 
on the $K$-groups of arithmetic $C^*$-algebras.

Next, in Section \ref{section: gysin}, we study the Hecke equivariance of a certain Gysin exact sequence, which shall play an important role in our investigation. Similar exact sequences were studied by Emerson-Meyer \cite{EM} in $K$-theory and later by Emerson-Nica \cite{EN} in $K$-homology. Our treatment is again general here. Let $\Gamma$ be a group acting freely and properly on hyperbolic $n$-space $\h_n$ via isometries such that the hyperbolic manifold $M=\h_n / \Gamma$ has finite volume. Let $C^{*}_{r}(\Gamma)$ denote the reduced group $C^*$-algebra of $\Gamma$. Starting from 
the $\Gamma$-equivariant short exact sequence of $C^*$-algebras
$$0\rightarrow C_{0}(\h_n)\rightarrow C(\h_n \cup \partial \h_n)\rightarrow C(\partial \h_n)\rightarrow 0,$$
we show the following in Theorem \ref{thm: GysinHecke} below.

\begin{thm-hand}[A.] The $K$-homology Gysin exact sequence takes the form  
$$ 
\xymatrix{ 0\ar[r] & K^{1}(C_{0}(M)) \ar[r] & K^{0}(C(\partial \h_n)\rtimes\Gamma) \ar[r] & K^{0}(C^{*}_{r}(\Gamma)) \ar[d] \\ 
0 &\ar[l] K^{1}(C^{*}_{r}(\Gamma))  & K^{1}(C(\partial \h_n)\rtimes \Gamma) \ar[l] & K^{0}(C_{0}(M))  \ar[l] }
$$
and is {\em Hecke equivariant}. 
\end{thm-hand}
  
This is done by representing the boundary extension as a Fredholm module constructed from the field of harmonic measures on the boundary. Since we work within $KK$-theory, the results of Section \ref{section: hecke} and Section \ref{section: gysin} have counterparts for $K$-theory groups as well.

At this point, we specialize to the case of Bianchi groups and study the above Hecke equivariant Gysin exact sequence in great detail. Let $K$ be an imaginary quadratic field with ring of integers $\Z_K$. 
Let $\Gamma$ be a torsion-free finite index subgroup of the Bianchi group $\psl(\Z_K)$ acting on the hyperbolic $3$-space $\h_3$ and its boundary $\partial \h_3$. In this case, the $K$-homology Gysin exact sequence splits into two exact sequences, 
\begin{equation}\tag{$\star$} \label{ses}
0 \rightarrow K^{i+1}(C_{0}(M)) \rightarrow K^i(C(\partial \h_3)\rtimes\Gamma) \rightarrow K^i(C^{*}_{r}(\Gamma)) \rightarrow 0,
\end{equation}
with $i=0,1$, revealing that $K$-homology of $C(\partial \h_3)\rtimes\Gamma$ is `made of' that of  $C_r^*(\Gamma)$ and that of $M$. We then study the $K$-groups of these two parts in Section \ref{section: algebraic} and Section \ref{section: geometric}, relating them to the ordinary 
(co)homology of $\Gamma$ and $M$. 
We do not consider $K$-theory groups anymore except in Section \ref{section: algebraic}. To describe the $K$-homology isomorphism at the unbounded level, we give a construction of unbounded Fredholm modules from group cocycles. The fact that Kasparov's $\gamma$-element is equal to the identity in $KK^{\Gamma}_0(\C,\C)$ (see \cite{ kasparov-SOn1,kasparov}) is a vital ingredient in the construction. 

%More precisely, let $\Gamma\subset \textnormal{Isom } \h_{n}$ be a discrete subgroup, 
Let $c:\Gamma\to \Z$ be $1$-cocycle with kernel $\Gamma_{c}$ and $(C^{*}_{r}(\Gamma), E, D_{c})$ the associated unbounded $(C^{*}_{r}(\Gamma), C^{*}_{r}(\Gamma_c))$ Kasparov module (see Proposition \ref{modularindex}). For $0<s<1$, the $\gamma$-element induces an unbounded Fredholm module $$(C^{*}_{r}(\Gamma_c), L^{2}(\wedge^{*}\h_n), \slashed{D}_{s}:=\slashed{D}_{HR}+\rho^{s}\hat{c}(\mbox{d}\rho)),$$
where $\slashed{D}_{HR}$ is the Hodge-deRham operator, $\rho(x)=d_{\h_n}(0,x)$ the distance function on the hyperbolic ball and $\hat{c}$ the Clifford multiplication. We prove the following result in Theorem \ref{algisos}.
\begin{thm-hand}[B.] The operators $D_c$ and $\slashed{D}_{s}$ assemble into an $(1-s)$ unbounded Fredholm module
\[(C^{*}_{r}(\Gamma), E\otimes_{C^{*}_{r}(\Gamma_c)}L^{2}(\wedge^{*}\h_n), D_{c}\otimes \sigma + 1\otimes_{\nabla_{g}}\slashed{D}_{s}). \]
%for which the index pairing satisfies $([D_{c,s}], [u_{\gamma}])=c(\gamma)$. 
For $n=3$, this induces Hecke equivariant isomorphisms
$$H^1(\Gamma,\Z) \simeq K^1(C^*_r(\Gamma)), \ \ c\mapsto [D_{c,s}] \ \ \ \ H_1(\Gamma,\Z) \simeq K_1(C^*_r(\Gamma)), \ \ [\gamma]\mapsto u_\gamma.$$
Moreover under these isomorphisms, the homological pairing $H^* \times H_* \rightarrow \Z$ corresponds to the index pairing 
$K^* \times K_* \rightarrow \Z$. 
\end{thm-hand}
The $K$-theory isomorphism comes from the results in \cite{bettaiebmattheyvalette, matthey}. In Section \ref{section: geometric} we exploit the equivalence between geometric and analytic $K$-homology (\cite{BHS})  of the non-compact manifold $M$ and in Theorem \ref{geometriciso} establish the following:
\begin{thm-hand} [C.] There is an explicit Hecke equivariant isomorphism
$$ H_2(\overline{M}, \partial \overline{M}, \Z) \simeq K^0(C_0(M)), \ \ \ \ (N,\partial N)\mapsto (C_{0}(M), L^{2}(\mathring{N},\mathscr{S}_{\mathring{N}}), \slashed{D}_{\mathring{N}})$$
where $\overline{M}$ denotes the Borel-Serre compactification of $M$ and $(N,\partial N)\subset (\overline{M},\partial\overline{M})$ is a properly embedded hypersurface. The interior $\mathring{N}$ inherits a complete Riemannian metric from $M$ and hence the Dirac operators  $\slashed{D}_{\mathring{N}}$ are self-adjoint.\\
\end{thm-hand}
Note that $H^1(\Gamma, \Z) \simeq H_2(\overline{M}, \partial \overline{M}, \Z)$. Our methods above also apply to the case of torsion-free finite index subgroups of $\psl(\Z)$ which we discuss in Section \ref{section: others}.  %This result is proved by showing that every class in $K^{0}(C_0(M))$ can be represented by a self-adjoint Dirac operator on the interior of a properly embedded hypersurface in the Borel-Serre compactification $\overline{M}$. 

With these results in place, we proceed to describe the unbounded Fredholm modules that exhaust the $K$-homology of the purely infinite boundary crossed product $C(\dH_3)\rtimes\Gamma$. To this end we use the maps $K^{1}(C(\dH_{3})\rtimes\Gamma)\to K^{1}(C_{r}^{*}(\Gamma))$, for which we construct an explicit section in Section \ref{section: algebraic}, and $\partial: K^{0}(C_{0}(M))\to K^{1}(C(\dH_3)\rtimes \Gamma)$ coming from (\ref{ses}). 

To compute the latter map at the unbounded level, we construct an unbounded representative for the extension class by means of a hypersingular integral operator built from the harmonic measures $\nu_x$ and associated metrics $d_x$ on $\dH_3$ based at $x\in\h$. Let $T_1\h_n=\h_n\times\partial\h_n$ be the unit tangent bundle of $\h_n$ and $L^{2}(T_1\h_n,\nu_x)$ the associated $C^{*}$-module completion. The integral operators
\[\Delta\Psi(x,\xi)=\int_{\partial\h_n}\frac{\Psi(x,\xi)-\Psi(x,\eta)}{d_x(\xi,\eta)^{n-1}}d\nu_x \eta,\ \ p\Psi(x,\xi)=\int_{\partial\h_n} \Psi(x,\eta)d\nu_x\eta, \]
are $G$-invariant and the mutliplication operator $\rho\Psi(x,\xi)=d_{\h_n}(0,x)\Psi(x,\xi)$ commutes with $G$ boundedly in $L^{2}(T_1\h,\nu_x)$. In Section \ref{section: hyper} we prove:
\begin{thm-hand}[D.] The operators $\Delta,\rho$ and $p$ 
assemble into a $G$-equivariant unbounded Kasparov module
\[(C(\dH), L^{2}(T_1\h,\nu_x), -\Delta+(2p-1)\rho),\]
representing the class of the $G$-equivariant extension $$0\rightarrow C_{0}(\h_n)\rightarrow C(\h_n \cup \partial \h_n)\rightarrow C(\partial \h_n)\rightarrow 0.$$
\end{thm-hand}
In Section \ref{section: boundary} we then compute the unbounded Kasparov product of this operator with the self-adjoint Dirac operators on embedded hypersurfaces from Section \ref{section: geometric}. The main result here can be found in Theorem \ref{product}. The results in Sections \ref{section: algebraic} and \ref{section: boundary} can be summarized to describe the structure of the $K$-homology of the purely infinite simple $C^{*}$-algebra $C(\partial \h_3)\rtimes\Gamma$ as in the following theorem.

\begin{thm-hand}[E.] The unbounded Fredholm modules in Theorem B extend to $C(\partial \h_n)\rtimes\Gamma$. A self-adjoint Dirac operator $\slashed{D}_{\mathring{N}}$ associated to an embedded hypersurface and the integral operator $S:=-\Delta+(2p-1)\rho$ of Theorem D assemble into an unbounded Kasparov product
\[(C(\partial\h_n)\rtimes \Gamma, L^{2}(T_{1}\h_n)\otimes_{C_{0}(M)}L^{2}(\mathring{N},\mathscr{S}_{\mathring{N}}), S\otimes \sigma+1\otimes_{\nabla}\slashed{D}_{\mathring{N}}).\]
Consequently, for $n=3$, there is an explicit Hecke equivariant isomorphism 
$$H^{1}(\Gamma,\Z)\oplus H_{2}(\overline{M},\partial\overline{M}) \simeq K^1(C(\partial \h_3)\rtimes\Gamma),\ \ \ \  ([c],[(N,\partial N)])\mapsto [D_{c,s}]+[S\otimes 1+1\otimes_{\nabla}\slashed{D}_{\mathring{N}}] $$
defined at the level of unbounded Fredholm modules.
\end{thm-hand}
The two pieces of $K^{1}(C(\dH_3)\rtimes \Gamma)$ give rise to very different unbounded Fredholm modules, which by virtue of the Gysin sequence pair with $K$-theory in distinct ways.

\subsection*{Questions} 
\begin{enumerate}

\item Given an arithmetic group $\Gamma$, we construct Hecke operators on the $K$-groups of arithmetic $C^*$-algebras associated to $\Gamma$. In the case of Bianchi groups, we show that our Hecke operators correspond to the classical Hecke operators on the cohomology groups of $\Gamma$. While this convinces us that our construction is natural and correct, for general $\Gamma$ such a comparison is still to be made.  A natural question to ask here is, does  the Chern character homomorphism 
$$K^i(B\Gamma)  \to \bigoplus_{n \geq 0} H^{2n+i}(B\Gamma, \Q),$$ 
where $i=0,1$ commute with the Hecke operators?

\item Torsion in the homology of arithmetic groups has gained a lot of interest in recent years. What  can we say about the torsion in the $K$-theory of arithmetic $C^*$-algebras? We observe in Section \ref{section: Kasparov} that for Bianchi groups, $H_1$ and $K_1$ hold the same torsion. However this is a coincidence of low dimensionality and in general the torsion on the two sides will not agree. Note that it is natural expect that $p$-torsion Hecke eigenclasses in the $K$-homology 
of arithmetic $C^*$-algebras have associated mod $p$ Galois representations.

\item Can the $K$-homology of the arithmetic $C^*$-algebras as Hecke modules be accounted for by automorphic forms as is the case for cohomology? If so, what are these `$K$-theoretic' automorphic forms? 
Are they the same as cohomological ones? In the case of $\psl(\Z)$ and Bianchi groups, we show that they are the same. In these cases, can we directly associate a $K$-homology class
associated to a Bianchi (or classical) modular form?

\item What can we say about the summability properties of the (un)bounded Fredholm modules that we construct? Do their spectral zeta functions relate to the arithmetic of Bianchi modular forms?

\end{enumerate}

\subsection*{Acknowledgements} We gratefully acknowledge that we benefited from conversations with Michael Atiyah, Alain Connes, Gunther Cornelissen, Robin Deeley, Nathan Dunfield, Magnus Goffeng, Paul Gunnells, Joel Hass, Matilde Marcolli, Sergey Neshveyev, Ryszard Nest, Bogdan Nica, Elmar Schrohe, Ren\'{e} Schulz, Richard Sharp, Aidan Sims and Alain Valette. We thank the Leibniz Universit\"{a}t Hannover and the MSRC of the Univerity of Sheffield for their financial support in facilitating this collaboration. We are also grateful to the University of Warwick and the Nesin Mathematics Village for providing excellent working environments. The first author was partially supported by EPSRC grant EP/J006580/2, Simons Foundation grant 346300 and the Polish Government MNiSW 2015-2019 matching fund. The second author was a Marie Curie Intra-European Fellow during the preparation of a significant part of this work.

%%%%%%%%%%%%%%%%%%%%%%%%%%%%%%%%%%%%%%%%%%%%%%%%%%%%%%
\section{Prelude: Kasparov's spectral sequence}\label{section: Kasparov}
Let $\Gamma$ be a torsion-free cofinite discrete subgroup of $\psl(\C)$ acting on the hyperbolic $3$-space $\h_3$ and its boundary $\partial \h_3$. The limit set of $\Gamma$ is all of $\partial \h_3$ on which it 
acts with dense orbits. We can identify $\partial \h_3$ with $S^2 \simeq \P^1(\C)$ and the action of $\Gamma$ with the usual M\"obius action. Let $M$ denote the hyperbolic $3$-manifold $\h_3 / \Gamma$. In this section, we employ a $KK$-theory spectral sequence and get a description of $K$-groups of $C(\partial \h_3)\rtimes\Gamma$, $C^*_r(\Gamma)$ and $C_0(M)$ in terms of the cohomology of $\Gamma$. 

The abstract isomorphisms that will come out of the spectral sequence will motivate the main task we accomplish in the present paper: Can we equip the respective $K$-groups with a Hecke module module structure and find \emph{explicit Hecke equivariant} isomorphisms from (co)homology to $K$-groups?

We let $D$ be a $C^{*}$-algebra with a $\Gamma$-action and $D \rtimes_{r} \Gamma$ be the reduced crossed product algebra of $\Gamma$ and $D$. Let $\gamma \in KK^\Gamma(\C,\C)$ denote the Kasparov idempotent.  A spectral sequence of Kasparov (see \cite[Section 6.10.]{kasparov}, see also \cite{schochet}) calculates the $\gamma$-parts of the K-groups of $D \rtimes_{r} \Gamma$ out of those of $D$. For discrete subgroups of $\textnormal{Isom}(\h_n)$, it holds that $\gamma=1$, a fact that will be of importance in several places in the present paper (see for instance \cite[Chapter 9]{Valette} and \cite{kasparov-SOn1, kasparov}). For our groups, the  $\gamma$-part of a K-group of $D \rtimes_{r} \Gamma$ is itself. 
\begin{theorem} \label{thm: kasparov} There is a cohomological spectral sequence $(E_r,d_r)$ with differentials $d^{p,q}_r : E_r^{p,q} \rightarrow E_r^{p+r,q-r+1}$ and the term 
$E_2^{p,q} = H^p(\Gamma, K^q(D))$ converging to the K-homology groups of $D \rtimes_{r} \Gamma$. There is an analogous homological spectral sequence converging to the 
K-theory groups of $D \rtimes_{r} \Gamma$.
\end{theorem}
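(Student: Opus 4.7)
The plan is to realize this as a Kasparov-type spectral sequence associated to an equivariant skeletal filtration, and then to identify its abutment with $K^*(D\rtimes_r\Gamma)$ via the condition $\gamma=1$. First, I would fix a $\Gamma$-equivariant CW structure on $\h_3$. Since $\Gamma$ is torsion-free and acts freely and properly on $\h_3$, such a structure exists in which the $p$-skeleton $X_p$ satisfies
\begin{equation*}
X_p\setminus X_{p-1} \;\cong\; \bigsqcup_{\alpha\in I_p}\bigl(\Gamma\times \mathring{e}^p_\alpha\bigr)
\end{equation*}
as a $\Gamma$-space. Applying $-\otimes D$ and then the reduced crossed product (which preserves short exact sequences, since $\Gamma\subset\psl(\C)$ is exact) to the resulting tower of $\Gamma$-equivariant extensions
\begin{equation*}
0\to C_0(X_p\setminus X_{p-1})\otimes D\to C_0(X_p)\otimes D\to C_0(X_{p-1})\otimes D\to 0
\end{equation*}
yields a family of six-term $K$-homology long exact sequences which assemble into an exact couple; the associated spectral sequence has the differentials of the stated bidegree.

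Next I would identify the $E_1$- and $E_2$-pages. For each orbit of $p$-cells, the isomorphism $C_0(\Gamma)\rtimes_r\Gamma\cong \mathcal{K}(\ell^2(\Gamma))$ gives a Morita equivalence
\begin{equation*}
(C_0(\Gamma\times \mathring{e}^p_\alpha)\otimes D)\rtimes_r\Gamma \;\sim_{\mathrm{M}}\; C_0(\mathring{e}^p_\alpha)\otimes D,
\end{equation*}
and since $\mathring{e}^p_\alpha\cong\R^p$, Bott periodicity yields
\begin{equation*}
E_1^{p,q}\;=\;K^{p+q}\!\bigl((C_0(X_p\setminus X_{p-1})\otimes D)\rtimes_r\Gamma\bigr)\;\cong\;\prod_{\alpha\in I_p}K^{q}(D),
\end{equation*}
which is precisely the group of cellular $p$-cochains on $M=\h_3/\Gamma$ with coefficients in $K^q(D)$. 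A direct unpacking of the connecting maps in the six-term sequences identifies $d_1$ with the cellular coboundary, giving $E_2^{p,q}=H^p(\Gamma,K^q(D))$.

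It remains to identify the abutment. The filtration converges to $K^{p+q}\!\bigl((C_0(\h_3)\otimes D)\rtimes_r\Gamma\bigr)$, which by Kasparov's descent principle computes the $\gamma$-part of $K^{p+q}(D\rtimes_r\Gamma)$. For discrete subgroups of $\mathrm{Isom}(\h_n)$ one has $\gamma=1$ in $KK^\Gamma_0(\C,\C)$ by \cite{kasparov-SOn1, kasparov}, so the $\gamma$-part is all of $K^{p+q}(D\rtimes_r\Gamma)$, as required. The homological spectral sequence converging to $K_*(D\rtimes_r\Gamma)$ is obtained by the completely dual argument, reversing arrows in the six-term sequences. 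The main obstacle I anticipate is strong convergence in the non-cocompact (cusped) case, where the skeletal filtration is infinite; this is handled by choosing a $\Gamma$-CW model with only finitely many orbits of cells in each degree—for instance one derived from a spine of the Borel--Serre bordification—so that the filtration is exhaustive and bounded-below, and standard convergence criteria for spectral sequences of exact couples apply.
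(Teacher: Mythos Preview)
The paper does not give its own proof of this statement; it is quoted as a result of Kasparov (with a reference to \cite[Section~6.10]{kasparov} and \cite{schochet}), so there is no in-paper argument to compare against. Your outline follows the standard Atiyah--Hirzebruch/Schochet template and is correct in spirit, but there is a genuine indexing gap in the passage from your abutment to the stated one.

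The filtration you set up converges, as you say, to $K^{p+q}\bigl((C_0(\h_3)\otimes D)\rtimes_r\Gamma\bigr)$. However, the $KK^\Gamma$-equivalence between $C_0(\h_3)$ and $\C$ furnished by $\gamma=1$ is implemented by the Dirac and dual-Dirac elements, which live in $KK^\Gamma_n$ with $n=\dim\h_3=3$; it is \emph{not} a degree-zero equivalence. Thus $K^{*}\bigl((C_0(\h_3)\otimes D)\rtimes_r\Gamma\bigr)\cong K^{*+1}(D\rtimes_r\Gamma)$, not $K^{*}(D\rtimes_r\Gamma)$. You can see this already in the paper's own applications: compare Propositions~\ref{prop: spectral-small} and~\ref{prop: spectral-manifold}, where $K^0(C^*_r(\Gamma))$ and $K^1(C_0(M))$ (not $K^0$) sit in the same extension of $H^0$ by $H^2$.

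This shift is not a bug that disappears; it has a counterpart on the $E_2$-page. Trace your $d_1$: it is the composite $K^{*}(I_p)\xrightarrow{\partial}K^{*+1}(C_0(X_{p-1})\ldots)\to K^{*+1}(I_{p-1})$, so it \emph{decreases} the filtration index. What you obtain is therefore the cellular \emph{chain} complex of $M=\h_3/\Gamma$ (with $K^q(D)$ coefficients), and the $E_2$-page is Borel--Moore homology $H^{BM}_p(M;K^q(D))$, not group cohomology. For the open $3$-manifold $M$ these are exchanged by Poincar\'e duality with a shift by $3$, so the two discrepancies cancel and the theorem does follow---but this duality step must be made explicit; ``a direct unpacking identifies $d_1$ with the cellular coboundary'' is not correct as written.
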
 

By setting $D$ equal to $\C=C({\bf pt}), C_0(\h_3)$ and $C(\partial \h_3),$ we shall use the above spectral sequence to obtain information on the $K$-groups of $C(\partial \h_3) \rtimes_{r} \Gamma$, $C^{*}_r(\Gamma)$ and $C_0(M)$ respectively. We first set 
$D =C(\partial \h_3) $, and note that the action of $\Gamma$ on $\dH_{3}$ is amenable so the full and reduced crossed products coincide (see \cite[Lemma 3.8]{Lott}). The following well known lemma computes the $K$-homology groups of $D$ as $\Gamma$-modules in this case.

\begin{lemma} \label{lem: p1C}We have
$$K^0(C(\partial \h_3)) \simeq \Z^2, \ \ \ \ \ \ K^1(C(\partial \h_3))= \{ 0 \}, $$
where the action of  \hspace{.02in} $\Gamma$ on $K^0(C(\partial \h_3))$ is trivial.
\end{lemma}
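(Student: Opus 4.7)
The plan is to reduce the computation to the two-sphere and then use connectedness of $\psl(\C)$. The identification $\partial \h_3 \cong S^2 \cong \P^1(\C)$ is already given in the preceding paragraph, so the content is really about $K^*(C(S^2))$ together with the action of Möbius transformations. I would first invoke the split short exact sequence
\[
0 \to C_0(\R^2) \to C(S^2) \to \C \to 0
\]
obtained by collapsing a point, which yields a split six-term exact sequence in $K$-homology. By Bott periodicity one has $K^0(C_0(\R^2)) \cong \Z$, generated by the Dirac class on $\R^2$, and $K^1(C_0(\R^2)) = 0$. Combined with $K^0(\C) = \Z$, $K^1(\C) = 0$ and the splitting, this gives $K^0(C(S^2)) \cong \Z^2$ and $K^1(C(S^2)) = 0$. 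Explicit generators can be taken to be the class $[\mathbf{1}]$ of point evaluation and the Dolbeault class of the standard round metric on $S^2$.

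For the $\Gamma$-action, the key observation is that $\Gamma \subset \psl(\C)$ acts on $\partial \h_3$ through the Möbius action, i.e.\ through the identity component of $\mathrm{Homeo}(S^2)$, since $\psl(\C)$ is connected. Any $\gamma \in \Gamma$ can therefore be joined to the identity by a continuous path $\gamma_t$ in $\psl(\C)$, which gives a homotopy of homeomorphisms of $S^2$. Because $K$-homology is a homotopy invariant, the induced map $\gamma^* : K^*(C(S^2)) \to K^*(C(S^2))$ equals the identity. Equivalently, in terms of the two explicit generators: $\gamma$ fixes $[\mathbf{1}]$ trivially, and it fixes the Dolbeault/fundamental class because Möbius transformations are orientation-preserving and so act as $+1$ on $H_2(S^2,\Z)$, to which the Dolbeault class corresponds under the Chern character.

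I do not expect any real obstacle here; the statement is a standard computation and the only thing to be careful about is to make sure the two generators of $K^0$ are identified in such a way that the triviality of the action is manifest. The only subtle ingredient is the homotopy invariance argument, which depends on the fact that $\psl(\C) \cong \textnormal{Isom}^+(\h_3)$ is connected — this is essentially what makes the entire Bianchi-group analysis of later sections possible, as it allows us to reduce $\Gamma$-equivariant computations on $\partial \h_3$ to ordinary ones.
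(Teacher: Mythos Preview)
Your proposal is correct and follows essentially the same approach as the paper: the paper simply declares the $K$-homology of $S^2$ to be well-known (you supply a standard computation via the split extension and Bott periodicity), and for the triviality of the $\Gamma$-action both you and the paper invoke connectedness of $\psl(\C)$ together with homotopy invariance of $K$-homology.
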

\begin{proof} The $K$-homology of the two-sphere $S^{2}\simeq \dH_{3}$ is well-known. The triviality of the action of $\Gamma$ on $K^{0}(C(\partial \h_3))$ follows from the facts that it is the restriction of the action of $PSL_{2}(\C)$, which is a connected group, and $K$-homology is homotopy invariant.
\end{proof}

\begin{proposition}\label{prop: spectral-big} There is a short exact sequence
\begin{equation}\label{exseq} 
0 \rightarrow H^0(\Gamma, \Z^2) \rightarrow K^0(C(\partial \h_3) \rtimes \Gamma) \rightarrow H^2(\Gamma, \Z^2) \rightarrow 0
\end{equation}
and an isomorphism
\begin{equation}\label{iso}
H^1(\Gamma, \Z^2) \simeq K^1(C(\partial \h_3) \rtimes \Gamma)
\end{equation}
where the action of \hspace{.02in} $\Gamma$ on $\Z^2$ is trivial. 
\end{proposition}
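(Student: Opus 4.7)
The plan is to specialise Theorem \ref{thm: kasparov} to $D=C(\partial\h_3)$, compute the $E_2$ page using Lemma \ref{lem: p1C}, and then argue that the spectral sequence degenerates for dimensional reasons, leaving a filtration of the $K$-homology whose associated graded pieces yield the two assertions.

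First I would unravel the $E_2$ page. By Lemma \ref{lem: p1C}, $K^{q}(C(\partial\h_3))$ equals $\Z^{2}$ with trivial $\Gamma$-action when $q$ is even and $0$ when $q$ is odd. Therefore
\[
E_{2}^{p,q}=
\begin{cases}
H^{p}(\Gamma,\Z^{2}) & q\equiv 0\pmod 2,\\
0 & q\equiv 1\pmod 2.
\end{cases}
\]
Since $\Gamma$ is torsion-free and cofinite in $\psl(\C)$, the Borel–Serre compactification of $\h_{3}/\Gamma$ deformation retracts to a $2$-dimensional spine, so $\Gamma$ has cohomological dimension at most $2$; consequently $E_{2}^{p,q}=0$ for $p\geq 3$. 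The whole $E_{2}$-page is thus supported in the rectangle $0\leq p\leq 2$, $q\equiv 0\pmod 2$.

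Next I would verify that all differentials vanish. The differential $d_{r}^{p,q}\colon E_{r}^{p,q}\to E_{r}^{p+r,q-r+1}$ changes the parity of $q$ exactly when $r$ is even; in that case one of source or target lies in an odd-$q$ column and therefore vanishes. For odd $r\geq 3$ the target column is $p+r\geq 3$, which also vanishes by the cohomological dimension bound. Thus every $d_{r}$ is zero and $E_{\infty}=E_{2}$.

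Finally I would read off the filtration. The convergence of Kasparov's spectral sequence gives a decreasing filtration on $K^{n}(C(\partial\h_3)\rtimes\Gamma)$ whose graded pieces are $E_{\infty}^{p,q}$ with $p+q\equiv n\pmod 2$. For $n=0$ only the positions $(p,q)=(0,0)$ and $(2,0)$ contribute, producing the short exact sequence
\[
0\rightarrow H^{0}(\Gamma,\Z^{2})\rightarrow K^{0}(C(\partial\h_3)\rtimes\Gamma)\rightarrow H^{2}(\Gamma,\Z^{2})\rightarrow 0,
\]
while for $n=1$ the unique contributing position is $(1,0)$, giving the stated isomorphism $H^{1}(\Gamma,\Z^{2})\simeq K^{1}(C(\partial\h_3)\rtimes\Gamma)$. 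The triviality of the $\Gamma$-action on $\Z^{2}$ is inherited directly from Lemma \ref{lem: p1C}.

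The main obstacles are modest and essentially bookkeeping: one must justify the cohomological dimension bound $\mathrm{cd}(\Gamma)\leq 2$ (which in the cocompact case would fail and force one to handle a potentially non-trivial $d_{3}$), and one must be careful that the $\Z/2$-graded convergence of the spectral sequence is correctly interpreted so that the filtration on $K^{0}$ yields an extension rather than a direct sum. No splitting of the sequence (\ref{exseq}) is claimed; indeed establishing Hecke-equivariant splittings is precisely the work undertaken in the later sections.
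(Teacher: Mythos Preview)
Your proposal is correct and follows essentially the same approach as the paper: apply Theorem \ref{thm: kasparov} with $D=C(\partial\h_3)$, use Lemma \ref{lem: p1C} and the cohomological dimension bound to compute the $E_2$ page, observe that it degenerates, and read off the filtration. The paper's own proof is terser---it simply displays the $E_2$ page and leaves the vanishing of differentials and the reading of the filtration implicit---whereas you spell out the parity/dimension argument for $d_r=0$ and the bookkeeping of the $\Z/2$-graded convergence, but the underlying argument is identical.
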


\begin{proof} We apply Theorem \ref{thm: kasparov} with $D=C(\partial \h_3)$ and use Lemma \ref{lem: p1C}. The cohomological dimension of $\Gamma$ is two. As $\Gamma$ is torsion-free, all its integral cohomology above degree two vanishes and we see that the $E_2$ page of the spectral sequence looks like this:

\begin{center}
\begin{tikzpicture}
  \matrix (m) [matrix of math nodes,  nodes in empty cells, nodes={minimum width=3ex, minimum height=3ex,outer sep=-5pt}, column sep=1ex,row sep=1ex]
            {   &  \vdots &  \vdots  & \vdots &  \vdots &  \vdots  & \vdots & \iddots \\
                &  0 &  0  & 0 &  0 &  0  & 0 & \hdots \\
                &  H^0(\Gamma,\Z^2)  & H^1(\Gamma,\Z^2) &  H^2(\Gamma,\Z^2)  & 0 & 0 & 0 & \hdots \\
                &  0 &  0  & 0 &  0 &  0  & 0 & \hdots \\
                &  H^0(\Gamma,\Z^2)  & H^1(\Gamma,\Z^2) &  H^2(\Gamma,\Z^2)  & 0 & 0 & 0 & \hdots \\
    \quad\strut &    &    &    & \strut \\};
%  \draw[-stealth] (m-3-3.north west) -- (m-2-2.south east);
\draw[thick]  (-5,-2) -- (6,-2);
\draw[thick]  (-5,-2) -- (-5,3);

        \node [below=2.2cm, align=flush center,text width=8cm] 
        {
            The $E_2$ page of the spectral sequence.
        };
\end{tikzpicture}
\end{center}
\end{proof}

Note that since the action of $\Gamma$ is trivial on $\Z^2$, we have $H^{*}(\Gamma, \Z^2) \simeq H^{*}(\Gamma,\Z) \otimes \Z^2$. 
In particular, $H^1(\Gamma, \Z^2) \cong H^1(\Gamma, \Z)^{\oplus 2}$ and thus $K^1(C(\dH_3) \rtimes \Gamma)$ 
holds two copies of $H^1(\Gamma, \Z)$. Moreover, $H^0(\Gamma,\Z^2) \simeq \Z^2$. As $H^2(\Gamma,\Z)$ typically has a lot of torsion (see \cite{sengun, bergeron-venkatesh}), the sequence \ref{exseq} does not split.

Next, we apply Theorem \ref{thm: kasparov} to the case $D=\C=C(\bf{pt})$. Note that $C({\bf{pt}}) \rtimes \Gamma \simeq  C^{*}_r(\Gamma)$. 
Since $K^0(\C) \simeq \Z$, $K^1(\C)=\{ 0 \}$. and the $\Gamma$-action is trivial, we find
\begin{proposition}\label{prop: spectral-small} There is a short exact sequence
\begin{equation}
0 \rightarrow H^0(\Gamma, \Z) \rightarrow K^0(C^{*}_r(\Gamma)) \rightarrow H^2(\Gamma, \Z) \rightarrow 0
\end{equation}
and an isomorphism
\begin{equation}
H^1(\Gamma, \Z) \simeq K^1(C^{*}_r(\Gamma)),
\end{equation}
where the action of $\Gamma$ on $\Z$ is trivial. 
\end{proposition}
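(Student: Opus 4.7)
The plan is to mimic the argument for Proposition \ref{prop: spectral-big}, specialising the coefficient algebra from $D=C(\partial\h_{3})$ to $D=\C$. First I would apply Theorem \ref{thm: kasparov} with $D=\C$, using the identification $\C\rtimes_{r}\Gamma\cong C^{*}_{r}(\Gamma)$. Because $\Gamma\subset\psl(\C)\subset\mathrm{Isom}(\h_{3})$, Kasparov's $\gamma$-element equals $1$ in $KK_{0}^{\Gamma}(\C,\C)$, so the spectral sequence converges to the full $K$-homology of $C^{*}_{r}(\Gamma)$, not merely its $\gamma$-part.

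Next I would compute the $E_{2}$-page. One has $K^{0}(\C)\cong\Z$ and $K^{1}(\C)=0$, and the induced $\Gamma$-action on the coefficients is trivial. Hence
\[
E_{2}^{p,q}=H^{p}(\Gamma,K^{q}(\C))=\begin{cases}H^{p}(\Gamma,\Z) & q\text{ even},\\ 0 & q\text{ odd}.\end{cases}
\]
Torsion-freeness of $\Gamma$ together with its cohomological dimension $2$ forces $H^{p}(\Gamma,\Z)=0$ for $p\geq 3$, so only the columns $p=0,1,2$ in even rows contribute. The differential $d_{r}^{p,q}:E_{r}^{p,q}\to E_{r}^{p+r,q-r+1}$ either changes the parity of $q$ (for $r$ even, landing in a zero row) or pushes $p$ past the cohomological dimension (for $r\geq 3$), so every $d_{r}$ vanishes and the sequence degenerates at $E_{2}$, with $E_{\infty}^{p,q}=E_{2}^{p,q}$.

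Finally I would read off the $K$-homology from the resulting filtration, just as in the picture accompanying Proposition \ref{prop: spectral-big}. The group $K^{1}(C^{*}_{r}(\Gamma))$ collects the terms with $p+q$ odd, and the only nonvanishing such $E_{\infty}^{p,q}$ is $E_{\infty}^{1,0}=H^{1}(\Gamma,\Z)$, giving the claimed isomorphism. For $K^{0}(C^{*}_{r}(\Gamma))$ the nonzero contributions are $E_{\infty}^{0,0}=H^{0}(\Gamma,\Z)$ and $E_{\infty}^{2,0}=H^{2}(\Gamma,\Z)$, and the standard filtration of a cohomological spectral sequence yields the short exact sequence
\[0\to H^{0}(\Gamma,\Z)\to K^{0}(C^{*}_{r}(\Gamma))\to H^{2}(\Gamma,\Z)\to 0.\]
There is no serious obstacle here; the only point requiring a sentence of care is the direction of this extension, which I would justify by comparing with the filtration convention already used in Proposition \ref{prop: spectral-big}.
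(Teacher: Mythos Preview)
Your proposal is correct and follows exactly the approach the paper takes: the paper simply remarks that one applies Theorem~\ref{thm: kasparov} with $D=\C$, notes $K^{0}(\C)\cong\Z$, $K^{1}(\C)=0$ with trivial $\Gamma$-action, and states the result without further detail. Your write-up is in fact more explicit than the paper's, spelling out the degeneration at $E_{2}$ and the filtration read-off that the paper leaves implicit by analogy with Proposition~\ref{prop: spectral-big}.
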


Lastly, we apply Theorem \ref{thm: kasparov} to the case $D=C_0(\h_3)$. Note that $C_0(\h_3) \rtimes \Gamma$ is Morita equivalent to $C_0(M)$ as the action of $\Gamma$ on $\h_3$ is free and properly discontinuous. In particular, they have the same $K$-groups. It is well known that $K^0(C_0(\h_3)) \simeq \{ 0 \}$ and that $K^1(C_0(\h_3))=\Z$ with trivial $\Gamma$-action.  

\begin{proposition}\label{prop: spectral-manifold} There is a short exact sequence
\begin{equation}
0 \rightarrow H^0(\Gamma, \Z) \rightarrow K^1(C_0(M)) \rightarrow H^2(\Gamma, \Z) \rightarrow 0
\end{equation}
and an isomorphism
\begin{equation}
H^1(\Gamma, \Z) \simeq K^0(C_0(M))
\end{equation}
where the action of $\Gamma$ on $\Z$ is trivial. 
\end{proposition}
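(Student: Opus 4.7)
The proof will follow exactly the template of Propositions~\ref{prop: spectral-big} and \ref{prop: spectral-small}: apply Kasparov's spectral sequence of Theorem~\ref{thm: kasparov} with $D=C_0(\h_3)$, read off the $E_2$ page, observe that all differentials vanish for dimension reasons, and extract the filtration on $K^{*}(C_0(\h_3)\rtimes\Gamma)$. The Morita equivalence $C_0(\h_3)\rtimes\Gamma\sim C_0(M)$ recorded in the statement then transfers the conclusion to $C_0(M)$.

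First I would compute $K^{*}(C_0(\h_3))$ as a $\Gamma$-module. Since $\h_3$ is diffeomorphic to $\R^3$, Bott periodicity gives $K^0(C_0(\h_3))=0$ and $K^1(C_0(\h_3))\simeq\Z$. The triviality of the $\Gamma$-action is exactly the argument used in Lemma~\ref{lem: p1C}: the action of $\Gamma$ is the restriction of the action of the connected group $\psl(\C)=\mathrm{Isom}^{+}(\h_3)$, and two homotopic automorphisms of a $C^*$-algebra induce the same map on $K$-homology.

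Next I would assemble the $E_2$ page. With $E_2^{p,q}=H^{p}(\Gamma,K^{q}(C_0(\h_3)))$, the row $q=0$ is identically zero while the row $q=1$ equals $H^{p}(\Gamma,\Z)$, which further vanishes for $p\geq 3$ because $\Gamma$ is torsion-free and has cohomological dimension $2$. In particular $E_2$ is concentrated in a single row, so every differential $d_r$ (which shifts $q$ by $-r+1\leq -1$) automatically lands in the zero row and $E_\infty=E_2$.

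Finally I would read off the answer using $2$-periodicity of $K$-homology: the only nonzero term with $p+q$ even is $E_\infty^{1,1}=H^{1}(\Gamma,\Z)$, giving $K^0(C_0(M))\simeq H^{1}(\Gamma,\Z)$; the only nonzero terms with $p+q$ odd are $E_\infty^{0,1}=H^{0}(\Gamma,\Z)$ and $E_\infty^{2,1}=H^{2}(\Gamma,\Z)$, producing the claimed short exact sequence with the same subgroup/quotient conventions as in Proposition~\ref{prop: spectral-big}. There is really no obstacle here; the entire content is in the input data for the spectral sequence, and the mild subtlety worth double-checking is the triviality of the $\Gamma$-action on $K^1(C_0(\h_3))$, handled as above.
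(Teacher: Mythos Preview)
Your proposal is correct and follows exactly the approach the paper indicates: apply Theorem~\ref{thm: kasparov} with $D=C_0(\h_3)$, use the Morita equivalence with $C_0(M)$, and read off the degenerate $E_2$ page just as in Proposition~\ref{prop: spectral-big}. One small wording point: because $K$-homology is $2$-periodic the $E_2$ page is not literally concentrated in a single row, so for the odd differentials $d_r$ (which land back in an odd row) the vanishing comes from the shift $p\mapsto p+r\geq 3$ and the cohomological dimension bound rather than from hitting the zero row; your conclusion $E_\infty=E_2$ is of course unaffected.
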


We summarize the results of the homological spectral sequence and  omit the details as they are the same as the above.
\begin{proposition} \label{prop: spectral-homology} There are isomorpshisms
\begin{equation}
H_1(\Gamma, \Z^2) \simeq K_1(C(\dH_3) \rtimes \Gamma),
\end{equation}
and
\begin{equation}
H_1(\Gamma, \Z) \simeq K_1(C^{*}_r(\Gamma)),
\end{equation}
and 
\begin{equation}
H_1(\Gamma, \Z) \simeq K_0(C_0(M)),
\end{equation}
and short exact sequences 
\begin{equation}\label{evenKtheory_boundary}
0 \rightarrow H_0(\Gamma, \Z^2) \rightarrow K_0(C(\dH_3)) \rtimes \Gamma) \rightarrow H_2(\Gamma, \Z^2) \rightarrow 0,
\end{equation}

\begin{equation}\label{evenKtheory}
0 \rightarrow H_0(\Gamma, \Z) \rightarrow K_0(C^{*}_r(\Gamma)) \rightarrow H_2(\Gamma, \Z) \rightarrow 0,
\end{equation}

\begin{equation}\label{evenKtheory_manifold}
0 \rightarrow H_0(\Gamma, \Z) \rightarrow K_1(C_0(M)) \rightarrow H_2(\Gamma, \Z) \rightarrow 0.
\end{equation}
where the actions of $\Gamma$ on $\Z^2$ and on $\Z$ are trivial.
\end{proposition}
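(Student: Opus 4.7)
The plan is to repeat the arguments of Propositions \ref{prop: spectral-big}, \ref{prop: spectral-small} and \ref{prop: spectral-manifold} verbatim, but now using the homological version of the Kasparov spectral sequence asserted in Theorem \ref{thm: kasparov}. Because $\gamma=1$ for $\Gamma\subset\psl(\C)$, the spectral sequence converges on the nose to $K_*(D\rtimes_r\Gamma)$, with $E^2_{p,q}=H_p(\Gamma, K_q(D))$ and differentials $d^r_{p,q}: E^r_{p,q}\to E^r_{p-r,q+r-1}$.

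First I would set $D=C(\partial\h_3)$, $\C$ and $C_0(\h_3)$ in turn. The $\Gamma$-modules $K_q(D)$ are the same as in the cohomological case (two-periodic, with $K_*(C(\partial\h_3))=(\Z^2,0)$, $K_*(\C)=(\Z,0)$ and $K_*(C_0(\h_3))=(0,\Z)$) and the $\Gamma$-action on each of them is trivial by the same homotopy invariance plus connectedness of $\psl(\C)$ argument used in Lemma \ref{lem: p1C}. Since $\Gamma$ is torsion-free with virtual cohomological dimension $2$, one has $H_p(\Gamma,-)=0$ for $p\geq 3$, so the $E^2$ page is supported in the columns $p\in\{0,1,2\}$.

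Next I would verify that the spectral sequence collapses at $E^2$ for degree reasons. In each of the three cases, exactly one of $K_0(D)$, $K_1(D)$ vanishes, so only one row (up to $2$-periodicity) of the $E^2$ page is non-zero. The differential $d^2$ shifts $(p,q)\mapsto(p-2,q+1)$, which moves between the non-zero row and the zero row, hence $d^2=0$; all higher differentials vanish for the same reason, so $E^\infty=E^2$.

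Finally I would read off the stated isomorphisms and short exact sequences from the induced filtration on $K_{p+q}(D\rtimes_r\Gamma)$: the odd $K$-theory receives a single non-trivial contribution from $p=1$, producing the three isomorphisms involving $H_1(\Gamma,-)$, while the even $K$-theory sits in a two-step filtration with bottom piece $H_0(\Gamma,K_0(D))$ and top piece $H_2(\Gamma,K_0(D))$, giving the short exact sequences \eqref{evenKtheory_boundary}, \eqref{evenKtheory} and \eqref{evenKtheory_manifold}. There is no real obstacle here beyond bookkeeping; the only place where one must be slightly careful is the direction of the differentials (contrasted with the cohomological version of \cite[Section 6.10]{kasparov}), but this is immediately settled by the standard homological indexing conventions.
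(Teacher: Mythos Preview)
Your approach is correct and is exactly what the paper does: it simply states that the homological case follows by the same argument as Propositions \ref{prop: spectral-big}--\ref{prop: spectral-manifold} and omits the details. One small slip in your final paragraph: for $D=C_0(\h_3)$ the parity is shifted (since $K_0(D)=0$, $K_1(D)=\Z$), so it is $K_0(C_0(M))$ that receives the single $H_1$ contribution and $K_1(C_0(M))$ that carries the two-step filtration, not the other way around.
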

With these abstract isomorphisms in hand, we set ourselves two tasks. The first is to equip the $K$-groups of the above arithmetic $C^*$-algebras with natural Hecke module structures. Secondly, we would like to compare the $K$-groups and (co)homology groups as Hecke modules. For this purpose, the abstract isomorphism coming from Kasparov's spectral sequence above cannot help, so we need to construct explicit isomorphisms between the respective groups appearing in this section. In the rest of the paper, we accomplish both of our tasks. However it should be noted that while our treatment of the first task is general, 
our treatment of the second task is very specific to the case of Bianchi groups (see Question 2 in the Introduction). 
%Rather than a replacement for cohomology, the Hecke module structure on $K$-groups should be studied in its own right.

%%%%%%%%%%%%%%%%%%%%%%%%%%%%%%%%%%%%%%%%%%%%%%%
%%%%%%%%%%%%%%%%%%%%%%%%%%%%%%%%%%%%%%%%%%%%%%%%%%%%%%
\section{Hecke operators and $KK$-theory} \label{section: hecke}
The various (co)homology groups associated with an arithmetic group $\Gamma$ come equipped with so called \emph{Hecke operators}. These arise from elements in the 
commensurator $ C_G(\Gamma)$ of $\Gamma$ in its ambient real Lie group $G$: 
\[ C_G(\Gamma):=\{g\in G: \Gamma\cap g\Gamma g^{-1}\textnormal{ has finite index in both} \ \Gamma \ \textnormal{and} \ g\Gamma g^{-1}\}.\]
We start by quickly recalling the definition of Hecke operators on the (co)homology groups that we deal with in the paper. 
Afterwards, for each element in $ C_G(\Gamma)$, we construct elements in $KK$-rings $KK_{0}(A,A)$ of the arithmetic $C^{*}$-algebras $A$ associated to $\Gamma$. 
The elements that we construct will give rise to endomorphisms which play the r\^{o}le of Hecke operators on $K$-groups of $A$.

\subsection{Homological definitions}\label{homdefsec}
Let $\Gamma \subset \psl(\C) =: G$ be a torsion-free finite-index subgroup of a Bianchi group $\psl(\Z_K)$, acting on $\h_3$ freely and proper discontinuously.
In this case, we have $ C_G(\Gamma) = \pgl(K) \subset \pgl(\C) \cong G$. For our purposes, the main distinction to be made is that between algebraically defined Hecke operators on $H^{*}(\Gamma,\Z)$ and topologically defined Hecke operators on $H_{*}(\overline{M},\partial\overline{M}, \Z)$ where $M$ is the associated hyperbolic $3$-manifold $\h_3 /\Gamma$ and $\overline{M}$ is its Borel-Serre bordification.

\subsubsection{On Group Homology.}
For a subgroup $\Delta\subset \Gamma$ of finite index $d$, any choice of coset representatives \[\gamma_{i}\in \Gamma,\quad \Gamma=\bigsqcup_{i=1}^{d}\gamma_{i}\Delta,\]
gives a map $s:\Gamma\to \Delta^{d}$ determined by $\gamma \gamma_{i}=\gamma_{\gamma(i)}s_{i}(\gamma)$, where $s_{i}(\gamma)\in \Delta$ and $\gamma(i)$ is a permutation of $1,\cdots, d$. This determines the \emph{transfer} or \emph{corestriction map} $$\textnormal{cores}:H^{1}(\Delta,\Z)\to H^{1}(\Gamma,\Z),\quad \textnormal{cores } c (\gamma)=\sum_{i=1}^{d}c(s_{i}(\gamma)),$$
which is independent of the choice of coset representatives $\gamma_{i}$. 
For $g\in  C_G(\Gamma)$, write $\Gamma_{g}:=\Gamma\cap g\Gamma g^{-1}$ and the Hecke operator on group cohomology is given by
\begin{equation}\label{Heckedef} T_{g}: H^{1}(\Gamma,\Z)\xrightarrow{\textnormal{res}} H^{1}(\Gamma_{g},\Z)\xrightarrow{\textnormal{Ad}_g} H^{1}(\Gamma_{g^{-1}},\Z)\xrightarrow{\textnormal{cores}} H^{1}(\Gamma,\Z).\end{equation}
Operators $T_{g}:H_{1}(\Gamma,\Z)\to H_{1}(\Gamma,\Z)$ are defined analogously. 
To compute the operator $T_{g}$,  one uses the disjoint union decomposition of the double coset
\begin{equation}\label{doublecoset}\Gamma g^{-1} \Gamma=\bigsqcup_{i=1}^{d} g_{i}\Gamma, \quad g_{i}=\delta_{i}g^{-1}\in G,\quad \delta_{i}\in \Gamma.\end{equation}
The elements $\delta_{i}$ form a complete set of coset representatives for $\Gamma/\Gamma_{g^{-1}}$. The group $\Gamma$ acts on the double coset $\Gamma g^{-1} \Gamma$, and thus permutes the cosets $g_{i}\Gamma$. As above there are indices $\gamma(i)$ and group elements $t_{i}(\gamma)\in \Gamma$ such that $\gamma g_{i}=g_{\gamma(i)}t_{i}(\gamma)$, determining a map $\Gamma\to \Gamma^{d}_{g^{-1}}$.  
The Hecke operators $T_{g}:H^{1}(\Gamma,\Z)\to H^{1}(\Gamma,\Z)$ and $T_{g}:H_{1}(\Gamma,\Z)\to H_{1}(\Gamma,\Z)$ are then given explicitly by
\begin{equation}\label{algHecke} (T_{g}c)(\gamma):=\sum_{i=1}^{d}c(t_{i}(\gamma)),\quad T_{g}([\gamma])=\sum_{i}^{d}[t_{i}(\gamma)]  \end{equation}
 which is independent of the choice of coset representatives $\delta_{i}$. 

\subsubsection{On Simplicial Homology.} \label{borelserre}
We start with the manifold $M_{g}:=\h_3/\Gamma_{g}$ and the associated finite covering $\pi_{g}:M_{g}\rightarrow M.$ This finite covering induces a corestriction map $\pi_{g}^{*}:H_{*}(M)\to H_{*}(M_{g})$ by mapping a simplex to the sum of its inverse images. Similarly there is a covering $\pi_{g^{-1}}:M_{g^{-1}}\rightarrow M$, and the isometry $g:\h_3 \to \h_3$ induces a homeomorphism $g_{*}:M_{g}\to M_{g^{-1}}$ because $g^{-1}\Gamma_{g} g=\Gamma_{g^{-1}}$. Thus we obtain a second covering $\tau_{g}:=\pi_{g^{-1}}\circ g_{*}:M_{g}\to M$.  For $g\in  C_G(\Gamma)$, we define Hecke operators, both denoted $T_{g}$, on homology and on cohomology as the group homomorphisms
\[T_{g}:=\tau_{g_*} \circ \pi_{g}^{*}: H_{*}(M,\Z)\to H_{*}(M,\Z),\]
\[T_{g}:=\tau_{g}^* \circ \pi_{g_*}: H^{*}(M,\Z)\to H^{*}(M,\Z).\]

We shall need Hecke operators also on the homology of the Borel-Serre compacitifications. In our low-dimensional cases, these compactifications can be described concretely as follows (see \cite{borel-serre} and also \cite[III.5.15]{borel-ji}, \cite[$\S$2.8]{berger-thesis} ). We first construct a partial compactification $\widehat{\h}_3$ of $\h_3$ by adding a copy of the complex plane $\C$ to every boundary point in $\P^1(K) \subset \P^1(\C) = \partial \h_3$, more precisely
$$\widehat{\h}_3 = \h_3 \bigsqcup_{z \in \P^1(K)} \P^1(\C) \backslash \{ z \}.$$
 The copy $\P^1(\C) \backslash \{ z \} = \C$ is the parameter space of all geodesics in $\h_3$ converging to the boundary point $z \in \P^1(K)$. The action of $\pgl(K)$, but not of $G$, on $\h_3$ extends to an action  on $\widehat{\h}_3$ by sending $\omega \in \P^1(\C) \backslash \{ z \}$ to $\omega \gamma \in \P^1(\C) \backslash \{ z \gamma \}$. One can topologize $\widehat{\h}_3$ in such a way that the action of $\pgl(\K)$ is continuous.  The action of $\Gamma$ on $\widehat{\h}_3$, unlike its action on the geodesic completion $\overline{\h}_{3}$, is free and proper. The quotient $\widehat{\h}_3 / \Gamma$ can be shown to be a compact $3$-manifold with boundary which we call the Borel-Serre compactification of $M$  and denote by $\overline{M}$. The connected components of its boundary are $2$-tori,  attached at `infinity' to each cusp of $M$. Note that $M$ is the interior of $\overline{M}$ and thus they are homotopy equivalent. 
 
Just as before, we obtain finite coverings $\overline{\pi}_{g}, \overline{\tau}_g: \overline{M}_{g} \rightarrow \overline{M}$, extending $\pi_{g}, \tau_g: M_{g} \rightarrow M$, 
and construct the Hecke operator 
\[T_{g}:=(\pi_{g^{-1}})_{*}\circ g_{*}\circ \pi_{g}^{*}: H_{*}(\overline{M}, \Z)\to H_{*}(\overline{M}, \Z).\] 
As $\overline{\pi}_{g}, \overline{\tau}_g$ restrict to finite coverings on the boundaries, we also obtain Hecke operators on the relative homology groups 
\[T_{g}: H_{*}(\overline{M}, \dM, \Z)\to H_{*}(\overline{M}, \dM, \Z).\] 

These Hecke operators are compatible with the Lefschetz duality isomorphism
\[H_{*}(\overline{M},\dM,\Z) \cong H^{n-*}(\overline{M},\Z),\]
see \cite[Lemma 1.4.3]{ash-stevens}, and the isomorphisms
\[ H^{*}(\overline{M},\Z)  \cong H^{*}(M,\Z) \cong H^{*}(\Gamma,\Z),\]
see, for example, \cite[Section 6]{Lee}.

\subsection{Hecke operators in $KK$-theory}\label{Heckedefs}

Let $X$ be a locally compact Hausdorff space and assume that $G$ acts on $X$ and that $\Gamma\subset G$ acts freely and properly on $X$. Suggestively, denote by $M:=X/\Gamma$ the quotient space which is locally compact and Hausdorff. 

The finite coverings $M\xleftarrow{\tau_{g}} M_{g}\xrightarrow{\pi_{g}}M$ form a \emph{correspondence} in the sense of \cite{conneskandalis} and define a class $[T_{g}^{M}]\in KK_{0}(C_{0}(M),C_{0}(M))$. The conditional expectation and right module structure 
\[\rho_{g}:C_{0}(M_{g})\to C_{0}(M),\quad \rho(\psi)(m)=\sum_{x\in \pi_{g}^{-1}(m)}\psi(x),\quad \psi\cdot f(x):=\psi(x)f(\pi_{g}(x))\]
give a right $C_{0}(M)$-module denoted by $T_{g}^{M}$.  Because the map $\tau_{g}: M_{g}\to M$ is proper, there is a left action by compact operators
\[C_{0}(M)\to \mathbb{K}(T_{g}^{M}),\quad f\cdot\psi(x)=f(\tau_{g}(x))\psi(x).\]
The class $[T_{g}^{M}]\in KK_{0}(C_{0}(M),C_{0}(M))$  coincides with the class of this bimodule.

\begin{definition}\label{manifoldHecke} Let $M=X/\Gamma$ as above. For any separable $C^{*}$-algebra $C$, the \emph{Hecke operators}
\[T_{g}:KK_{*}(C_{0}(M),C)\to KK_{*}(C_{0}(M),C),\quad T_{g}:KK_{*}(C, C_{0}(M))\to KK_{*}(C,C_{0}(M)) ,\]
are defined to be the Kasparov product with the class $[T_{g}^{M}]\in KK_{0}(C_{0}(M),C_{0}(M))$.
\end{definition}
For the moment, we denote by $B$ an arbitrary $C_{G}(\Gamma)$-$C^{*}$-algebra and by $g:b\mapsto g(b)$ the $C_{G}(\Gamma)$-action. Let $C_{c}(\Gamma,B)$ denote the compactly supported $B$-valued functions on $\Gamma$. The $\Gamma$-$C^{*}$-module
$$\ell^{2}(\Gamma,B):=\{\psi:\Gamma\to B:\sum_{\gamma\in\Gamma} \psi(\gamma)^{*}\psi(\gamma)<\infty\},$$  of $\ell^{2}$ functions on $\Gamma$ with values in $B$ is constructed as a completion of $C_{c}(\Gamma,B)$. The \emph{convolution product} and involution given by (see \cite{kasparovconspectus})
\begin{equation}\label{convinv} f * \psi(\gamma)=\sum_{\delta\in\Gamma}f(\delta)\delta(\psi(\delta^{-1}\gamma)),\quad f^{*}(\gamma):=\gamma f(\gamma^{-1})^{*}\end{equation}
make $C_{c}(\Gamma, B)$ into a $*$-algebra and define a $*$-representation $C_{c}(\Gamma,B)\to \End_{B}^{*}(\ell^{2}(\Gamma, B)).$ The \emph{reduced crossed product} $B\rtimes_{r}\Gamma$ is defined as the closure of $C_{c}(\Gamma, B)$ in this representation.

For a subgroup $\Delta\subset \Gamma$, restriction of functions $C_{c}(\Gamma,B)\to C_{c}(\Delta,B)\subset C_{c}(\Gamma,B)$ defines a projection $p_{\Delta}\in\End_{B}^{*}(\ell^{2}(\Gamma,B))$. This gives a contractive conditional expectation
\[\rho_{\Delta}:B\rtimes_{r}\Gamma \to B\rtimes_{r}\Delta,\quad a\mapsto p_{\Delta}ap_{\Delta},\]
extending the restriction map $C_{c}(\Gamma, B)\to C_{c}(\Delta, B)$.  Thus, for $g\in C_G(\Gamma)$ we obtain the expectation $\rho_{g^{-1}}:B\rtimes_{r}\Gamma\to B\rtimes_{r}\Gamma_{g^{-1}}$ and a $(B\rtimes_{r}\Gamma, B\rtimes_{r}\Gamma_{g^{-1}})$ bimodule $(B\rtimes_{r}\Gamma)_{\rho_{g^{-1}}}$. Using the *-homomorphism 
$$B\rtimes_{r}\Gamma_{g^{-1}}\xrightarrow{\Ad_{g}} B\rtimes_{r}\Gamma_{g}\hookrightarrow B\rtimes_{r}\Gamma,\quad \Ad_{g}(f)(\gamma):=gf(g^{-1}\gamma g),$$ 
we form the interior $C^{*}$-module tensor product
\[T^{\Gamma}_{g}:=(B\rtimes_{r}\Gamma)_{\rho_{g^{-1}}}\otimes_{\Ad_{g}}B\rtimes_{r}\Gamma ,\]
which is a $B\rtimes_{r}\Gamma$-bimodule. 

\begin{definition}\label{GammaHecke} Let $B$ be a separable $C_{G}(\Gamma)$-$C^{*}$-algebra and $C$ a seperable $C^{*}$-algebra. The \emph{Hecke operators}
\[T_{g}: KK_{*}(B\rtimes_{r}\Gamma,C)\to KK_{*}(B\rtimes_{r}\Gamma, C),\quad T_{g}: KK_{*}(C, B\rtimes_{r}\Gamma)\to KK_{*}(C, B\rtimes_{r}\Gamma).\]
are defined to be the Kasparov product with the class $[T^{\Gamma}_{g}]\in KK_{0}(B\rtimes_{r} \Gamma, B\rtimes_{r} \Gamma)$.
\end{definition}
Let $A$ be any of the $C^{*}$-algebras and $T_{g}$ any of the $KK$-theoretic Hecke operators discussed above. If $\langle x,y\rangle$ denotes the index pairing of elements $x\in K_*(A)$ and $y\in K^*(A)$, associativity of the Kasparov product gives $\langle T_{g} x,y\rangle=\langle x, T_{g} y\rangle$. That is, the Hecke action is self-adjoint with respect to the index pairing between $K$-theory and $K$-homology.
\subsection{Explicit formulae for the reduced crossed product}To describe the  $B\rtimes_{r}\Gamma$-bimodule $T^{\Gamma}_{g}$, let $\delta_{i}$ be as in \eqref{doublecoset} and $\chi_{i}\in C_{c}(\Gamma, M(B))$ be the function that is $1$ at $\delta_{i}$ and $0$ elsewhere. It is straightforward to check that $\sum_{i=1}^{d}\chi_{i}*\rho(\chi_{i}^{*} * f)=f$, and $\rho(\chi^{*}_{i}*\chi_{j})=\delta_{ij}$. This implements a unitary isomorphism of right modules
\begin{equation}\label{heckefree}u: T^{\Gamma}_{g}=(B\rtimes_{r}\Gamma)_{\rho_{g^{-1}}}\otimes_{\Ad _g}B\rtimes_{r}\Gamma\to (B\rtimes_{r}\Gamma)^{d},\quad f\otimes k\mapsto (\rho(\chi_{i}^{*}*f) * k), \end{equation} 
where $d=[\Gamma:\Gamma_{g^{-1}}]$. To describe the left $B\rtimes_{r}\Gamma$ action on $T^{\Gamma}_{g}\simeq (B\rtimes_{r}\Gamma)^{d}$ we consider the dense submodule $C_{c}(\Gamma, B^{d})$, the elements of which we view as columns $\Psi:=(\Psi_{i})_{i=1}^{d}$ of maps $\Psi_{i}: \Gamma \to B.$
First we collect some useful facts and relations for the elements $t_{i}(\gamma)$. 
\begin{lemma}\label{Heckerelations}We have the relations:\newline\newline 
1.) $t_{i}(\gamma)=g\delta_{\gamma(i)}^{-1}\gamma\delta_{i}g^{-1}=g_{\gamma(i)}^{-1}\gamma g_{i}$;\newline
2.) $t_{i}(\gamma_{1}\gamma_{2})=t_{\gamma_{2}(i)}(\gamma_{1})t_{i}(\gamma_{2})$;\newline
3.) $t_{i}(\gamma^{-1})=t_{\gamma^{-1}(i)}(\gamma)^{-1}$.
\end{lemma}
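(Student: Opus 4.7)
The plan is to derive all three identities directly from the defining relation $\gamma g_i = g_{\gamma(i)} t_i(\gamma)$ together with $g_i = \delta_i g^{-1}$, so nothing beyond bookkeeping is needed. The only ``obstacle'' is making sure the permutation $i \mapsto \gamma(i)$ is handled carefully when composing, since the group $\Gamma$ acts on the index set from the left.

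For (1), I would start from the defining equation $\gamma g_i = g_{\gamma(i)} t_i(\gamma)$, which can be read as the definition of $\gamma(i)$ (the unique index for which $\gamma g_i$ falls in the coset $g_{\gamma(i)}\Gamma$) and simultaneously of $t_i(\gamma) \in \Gamma$. Solving gives $t_i(\gamma) = g_{\gamma(i)}^{-1} \gamma g_i$, and substituting $g_j = \delta_j g^{-1}$ yields $t_i(\gamma) = g \delta_{\gamma(i)}^{-1} \gamma \delta_i g^{-1}$, which is (1).

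For (2), I would apply the defining relation twice in succession. Writing
\[
\gamma_1 \gamma_2 g_i = \gamma_1\bigl(g_{\gamma_2(i)} t_i(\gamma_2)\bigr) = \bigl(\gamma_1 g_{\gamma_2(i)}\bigr) t_i(\gamma_2) = g_{\gamma_1(\gamma_2(i))}\, t_{\gamma_2(i)}(\gamma_1)\, t_i(\gamma_2),
\]
and comparing with $\gamma_1\gamma_2\, g_i = g_{(\gamma_1\gamma_2)(i)}\, t_i(\gamma_1\gamma_2)$, the uniqueness of the coset decomposition forces $(\gamma_1\gamma_2)(i) = \gamma_1(\gamma_2(i))$ (so $\gamma \mapsto \gamma(\cdot)$ is a left action on $\{1,\dots,d\}$) and $t_i(\gamma_1\gamma_2) = t_{\gamma_2(i)}(\gamma_1)\, t_i(\gamma_2)$, giving (2).

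For (3), I would specialize (2) to $\gamma_1 = \gamma^{-1}$, $\gamma_2 = \gamma$. Using that the identity acts trivially, $t_i(e) = e$ and $e(i)=i$, the cocycle relation in (2) reads $e = t_{\gamma(i)}(\gamma^{-1})\, t_i(\gamma)$, hence $t_{\gamma(i)}(\gamma^{-1}) = t_i(\gamma)^{-1}$. Re-indexing by $j = \gamma(i)$, so that $i = \gamma^{-1}(j)$, gives $t_j(\gamma^{-1}) = t_{\gamma^{-1}(j)}(\gamma)^{-1}$, which is (3). No analytic input is needed; the entire lemma is a direct computation.
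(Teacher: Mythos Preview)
Your proof is correct and follows exactly the approach the paper indicates: the paper's own proof simply reads ``All relations are checked by direct computation using the definitions,'' and you have carried out precisely that computation from the defining relation $\gamma g_i = g_{\gamma(i)} t_i(\gamma)$.
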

 \begin{proof}All relations are checked by direct computation using the defintions in section \ref{homdefsec} 
 \end{proof}

For $\gamma\in\Gamma$ denote by $u_{\gamma}\in C_{c}(\Gamma,M(B))$ the function which is $1$ at $\gamma$ and $0$ elsewhere. We identify $B\subset C_{c}(\Gamma, B)$ with the function that takes the value $b$ at $e\in\Gamma$ and $0$ elsewhere.
\begin{proposition}\label{explicithecke} The left $B\rtimes_{r}\Gamma$ module structure on  $T^{\Gamma}_{g}\simeq (B\rtimes_{r}\Gamma)^{d}$ is given by
\begin{equation}\label{fullheckerep}(t_{g}(f)\Psi)_{i}(\delta)=\sum_{\gamma} g_{i}^{-1}f(\gamma)t_{i}(\gamma^{-1})^{-1}\Psi_{\gamma^{-1}(i)}(t_{i}(\gamma^{-1})\delta).\end{equation}
Equivalently, we have the covariant representation 
\begin{equation}\label{heckerep}(t_{g}(b)\cdot\Psi)_{i}(\delta):= g_{i}^{-1}(b)\Psi_{i}(\delta),\quad (t_{g}(u_{\gamma})\Psi)_{i}(\delta):= t_{i}(\gamma^{-1})^{-1}(\Psi_{\gamma^{-1}(i)}(t_{i}(\gamma^{-1})\delta )). \end{equation}

Moreover, for $\gamma\in\Gamma$ we have a factorisation $t_{g}(u_{\gamma})=\tau(\gamma)\diag(u_{t_{k}(\gamma)})$, where $\tau(\gamma)\in M_{d}(\C)$ is a permutation matrix.
\end{proposition}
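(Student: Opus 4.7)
My approach is to compute the matrix entries of the left action of $B\rtimes_{r}\Gamma$ on $(B\rtimes_{r}\Gamma)^d$ transported through the unitary $u$ of \eqref{heckefree}. Writing $\chi_i = u_{\delta_i}$, so that $\chi_i^* = u_{\delta_i^{-1}}$, the partition of unity $\sum_j \chi_j * \rho(\chi_j^* * f) = f$ combined with the bimodule property of the conditional expectation $\rho = \rho_{g^{-1}}$ over $B\rtimes_{r}\Gamma_{g^{-1}}$ yields
\[
u(h*f \otimes k)_i \;=\; \sum_{j=1}^d h_{ij} * u(f\otimes k)_j, \qquad h_{ij} := \Ad_g\bigl(\rho(\chi_i^* * h * \chi_j)\bigr),
\]
since $\rho(\chi_j^**f)\in B\rtimes_r\Gamma_{g^{-1}}$ and the $\Ad_g$-twist is absorbed across the internal tensor product. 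The task therefore reduces to identifying the $d\times d$ matrix $(h_{ij})$ for generators $h=b\in B$ and $h=u_\gamma$, and then extending by linearity via $f=\sum_\gamma f(\gamma)*u_\gamma$.

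\textbf{Generator calculations.} A direct convolution shows $\chi_i^* * b * \chi_j = \delta_i^{-1}(b)\, u_{\delta_i^{-1}\delta_j}$; since the $\delta_k$ represent $\Gamma/\Gamma_{g^{-1}}$, the group element $\delta_i^{-1}\delta_j$ lies in $\Gamma_{g^{-1}}$ only when $i=j$, in which case it equals $e$. Noting that $\Ad_g\circ\delta_i^{-1}(b)=g\delta_i^{-1}(b)=g_i^{-1}(b)$, this gives the diagonal action $t_g(b)_{ij}=\delta_{ij}g_i^{-1}(b)$ claimed in \eqref{heckerep}. For $h=u_\gamma$, similarly $\chi_i^* * u_\gamma * \chi_j = u_{\delta_i^{-1}\gamma \delta_j}$. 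The relation $\gamma \delta_j = \delta_{\gamma(j)} g^{-1}t_j(\gamma) g$, which is just a rewriting of the definition of $t_j(\gamma)$ in Lemma \ref{Heckerelations}(1), shows that $\delta_i^{-1}\gamma \delta_j\in \Gamma_{g^{-1}}$ iff $i=\gamma(j)$, and in that case equals $g^{-1}t_{\gamma^{-1}(i)}(\gamma)g$. Applying $\Ad_g$ strips the conjugation, producing
\[
t_g(u_\gamma)_{ij} \;=\; \delta_{j,\gamma^{-1}(i)}\, u_{t_{\gamma^{-1}(i)}(\gamma)} \;=\; \delta_{j,\gamma^{-1}(i)}\, u_{t_i(\gamma^{-1})^{-1}},
\]
the second equality being Lemma \ref{Heckerelations}(3). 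Matrix-vector multiplication together with the convolution identity $(u_\eta*\Psi_j)(\delta)=\eta(\Psi_j(\eta^{-1}\delta))$ reproduces the covariant formulas \eqref{heckerep}, and linearity in $h$ combined with the expansion $f=\sum_\gamma f(\gamma)*u_\gamma$ assembles the general formula \eqref{fullheckerep}.

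\textbf{Factorization and main obstacle.} Re-indexing by $i=\gamma(j)$, Lemma \ref{Heckerelations}(3) gives $t_{\gamma(j)}(\gamma^{-1})^{-1}=t_j(\gamma)$, so the nonzero entries of $t_g(u_\gamma)$ are $t_g(u_\gamma)_{\gamma(j),j}=u_{t_j(\gamma)}$; this is precisely the product of the permutation matrix $\tau(\gamma)_{ij}=\delta_{i,\gamma(j)}$ with $\diag(u_{t_k(\gamma)})$, yielding the final claim. The chief bookkeeping difficulty is the correct handling of the $\Ad_g$-twist of the interior tensor product: the raw matrix entries $\rho(\chi_i^* * h * \chi_j)$ naturally live in $B\rtimes_{r}\Gamma_{g^{-1}}$ and must be transported to $B\rtimes_{r}\Gamma_g\subset B\rtimes_r\Gamma$ via $\Ad_g$ before acting on the right factor, and this, combined with the twisted convolution coming from the $\Gamma$-action on $B$, accounts for the conjugates $g_i^{-1}=g\delta_i^{-1}$ and for the inverse exponents $t_i(\gamma^{-1})^{-1}$ that appear throughout \eqref{heckerep} and \eqref{fullheckerep}.
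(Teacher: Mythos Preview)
Your proof is correct and follows essentially the same route as the paper: both transport the left action through the unitary $u$ of \eqref{heckefree} and reduce to the coset combinatorics of Lemma~\ref{Heckerelations}. The only organizational difference is that the paper expands $(u(h*u^{*}\Psi))_{i}(\delta)$ directly on a core of $\Psi$ supported in $\Gamma_{g}$, whereas you first isolate the matrix coefficients $h_{ij}=\Ad_{g}\rho(\chi_{i}^{*}*h*\chi_{j})$ via the bimodule property of $\rho$ and then specialize to generators; the underlying computation and the key identification $\delta_{i}^{-1}\gamma\delta_{j}\in\Gamma_{g^{-1}}\Leftrightarrow i=\gamma(j)$ are identical.
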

\begin{proof} By right $B\times_{r}\Gamma$ linearity, it suffices to prove \eqref{fullheckerep} for elements $\Psi$ with $\supp\Psi\subset \Gamma_{g}$.
Using $u$ as in \eqref{heckefree} and the relations in Lemma \ref{Heckerelations} one computes, for $h\in C_{c}(\Gamma,B)$ and $\delta\in\Gamma_{g}$:
\begin{align}\nonumber (t_{g}(h)\Psi)_{i}(\delta)&=(u (h*u^{*}\Psi))_{i}(\delta)=\Ad_{g}\rho(\chi_{i}^{*}*h*u^{*}\Psi)(\delta) \\
\nonumber&=g\rho(\chi_{i}^{*}*h*u^{*}\Psi)(g^{-1}\delta g) =g\delta_{i}^{-1}(h*u^{*}\Psi)(\delta_{i}g^{-1}\delta g) \\
&\label{support}=\sum_{j,\gamma}g\delta_{i}^{-1}(h(\gamma)\gamma \delta_{j}g^{-1} \Psi_{j}(g\delta_{j}^{-1}\gamma^{-1}\delta_{i}g^{-1}\delta ))\\
&\label{simplification}=\sum_{\gamma}g\delta_{i}^{-1}h(\gamma)g\delta_{i}^{-1}\gamma \delta_{\gamma^{-1}(i)}g^{-1} \Psi_{\gamma^{-1}(i)}(g\delta_{\gamma^{-1}(i)}^{-1}\gamma^{-1}\delta_{i}g^{-1}\delta )\\
&=\sum_{\gamma} g_{i}^{-1}h(\gamma)t_{i}(\gamma^{-1})^{-1}\Psi_{\gamma^{-1}(i)}(t_{i}(\gamma^{-1})\delta).\nonumber
\end{align}

The step from \eqref{support} to \eqref{simplification} follows since $g\delta_{j}^{-1}\gamma^{-1}\delta_{i}g^{-1}\delta\in\Gamma_{g}\Leftrightarrow j=\gamma^{-1}(i)$. Thus we have established \eqref{fullheckerep} and \eqref{heckerep} follows. Let $\tau(\gamma)\in M_{d}(\C)$ be the permutation matrix corresponding to $(\tau(\gamma)\Psi)_{i}=\Psi_{\gamma^{-1}(i)}$. To prove the last statement we compute
\begin{align*}(\tau(\gamma)\diag(u_{t_{k}(\gamma)})\Psi)_{i}(\delta)&=(\diag (u_{t_{k}(\gamma)})\Psi)_{\gamma^{-1}(i)}(\delta)=t_{\gamma^{-1}(i)}(\gamma)^{-1}(\Psi_{\gamma^{-1}(i)}(t_{\gamma^{-1}(i)}(\gamma)\delta))\\
&=t_{i}(\gamma^{-1})^{-1}(\Psi_{\gamma^{-1}(i)}(t_{i}(\gamma^{-1})\delta))=(t_{g}(u_{\gamma})\Psi)_{i}(\delta),\end{align*}
as required.
\end{proof}

%%%%%%%%%%%%%%%%%%%%%%%%%%%%%%%%%%%%%%%%%%%%%%%
%%%%%%%%%%%%%%%%%%%%%%%%%%%%%%%%%%%%%%%%%%%%%%%
\section{Gysin sequence and Hecke operators}\label{section: gysin}

The paper \cite{EM} is an extensive study of the Gysin sequence in $K$-theory arising from a group action on a space $X$ and the associated boundary action on $\partial X$, e.g. the Furstenberg or Gromov  boundary. In \cite[Section 10]{EN} , the $K$-homological version is described for hyperbolic groups $\Gamma$ with cocompact classifying space for proper actions $\mathcal{E}\Gamma$. We will describe the Gysin sequence in the setting of hyperbolic $n+1$-space $\h$, the geodesic compactification $\overline{\h}$ and its boundary sphere $\partial\h=S^{n}$.
\subsection{The $K$-homology exact sequence}
Let $G=\textnormal{Isom}(\h)$ and $\overline{\h}:=\h \cup \partial\h$ the geodesic compactification of $\h$ on which $G$ acts as well. We consider the $G$-equivariant extension
\begin{equation}\label{bdry} 0\to C_{0}(\h)\to C(\overline{\h}) \to C(\partial \h)\to 0,\end{equation}
defining a class in $KK_{1}^{G}(C(\partial \h), C_{0}(\h))$. Thus, for any subgroup $\Gamma\subset G$ we obtain a class in  $KK_{1}^{\Gamma}(C(\partial \h), C_{0}(\h))$ through restriction, and a long exact sequence in equivariant  $K$-homology:
\begin{equation}\label{equivariantexact}\cdots\rightarrow  K^{i}_{\Gamma}(C_{0}(\h))\rightarrow K^{i+1}_{\Gamma}(C(\dH))\rightarrow K^{i+1}_{\Gamma}(C(\overline{\h}))\rightarrow \cdots\end{equation}

\begin{lemma}\label{contractible} Suppose $\Gamma\subset G$ is discrete and torsion-free. Then the  inclusion $i:\C\rightarrow C(\overline{\h})$ as constant functions induces an isomorphism $i^{*}:K^{i}_{\Gamma}(C(\overline{\h}))\rightarrow K^{i}_{\Gamma}(\C)$.
\end{lemma}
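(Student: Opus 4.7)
The plan is to show that the $\Gamma$-equivariant inclusion $i\colon \C\to C(\overline{\h})$ is a $KK^{\Gamma}$-equivalence by promoting the elementary non-equivariant statement that $\overline{\h}$ is contractible to an equivariant one via a spectral sequence comparison. The non-equivariant input is straightforward: since $\overline{\h}$ is homeomorphic to the closed ball $\overline{B}^{n+1}$ and hence contractible, one has $K^{0}(C(\overline{\h}))\cong\Z$, generated by the class of the unit, and $K^{1}(C(\overline{\h}))=0$. Because the generator is a constant function, $\Gamma$ acts trivially on $K^{*}(C(\overline{\h}))$, matching the trivial action on $K^{*}(\C)$, and the induced map $i^{*}$ on non-equivariant $K$-homology is the identity $\Z\to\Z$ in degree zero and zero in degree one.

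The main step is then to apply an equivariant $K$-homology spectral sequence of the type given in Theorem \ref{thm: kasparov}, with $E_{2}$-page $H^{p}(\Gamma, K^{q}(D))$ abutting to $K^{p+q}_{\Gamma}(D)$, for the two cases $D=C(\overline{\h})$ and $D=\C$. In both cases the $E_{2}$-page reduces to $H^{p}(\Gamma,\Z)$ concentrated in the row $q=0$, and the morphism of spectral sequences induced by $i$ is the identity on $E_{2}$ (by the non-equivariant computation above). By functoriality and convergence, the map on abutments is an isomorphism, which is exactly the conclusion of the lemma. The torsion-freeness of $\Gamma$ enters in two ways: it bounds the cohomological dimension and thus ensures convergence, and it guarantees that no additional ``stabiliser'' contributions spoil the identification of $K^{*}_{\Gamma}$ with what the $E_{2}$-page predicts.

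The main obstacle I anticipate is justifying that the spectral sequence applies in the precise form needed for $KK^{*}_{\Gamma}(\,\cdot\,,\C)$, since Theorem \ref{thm: kasparov} as stated is phrased for $K^{*}(D\rtimes_{r}\Gamma)$. The passage between the two relies on the fact, recalled in the introduction from \cite{kasparov-SOn1,kasparov}, that the $\gamma$-element equals $1$ in $KK^{\Gamma}_{0}(\C,\C)$ for discrete subgroups of $\textnormal{Isom}(\h)$, which activates the Dirac--dual Dirac descent machinery and identifies equivariant $K$-homology with the crossed-product $K$-theory to which the spectral sequence applies. A more hands-on alternative, which I would only resort to if the spectral sequence comparison proves awkward, would be to construct an inverse class $[j]\in KK^{\Gamma}_{0}(C(\overline{\h}),\C)$ directly from a Dirac-type unbounded Kasparov cycle on $\overline{\h}$ and verify $[i]\otimes[j]=1$ and $[j]\otimes[i]=1$ by an explicit equivariant homotopy, exploiting the contractibility of $\overline{\h}$.
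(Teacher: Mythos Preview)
Your approach is correct and reaches the conclusion, but it differs from the paper's. The paper does not run a spectral sequence comparison; instead it observes that since $\Gamma$ is torsion-free, the only finite subgroup $H\subset\Gamma$ is trivial, so $\overline{\h}$ is $H$-equivariantly contractible for every such $H$ automatically. This means $i:\C\to C(\overline{\h})$ is a \emph{weak equivalence} in the sense of Meyer--Nest \cite{MeyNes}, i.e.\ a $KK^{H}$-equivalence for all finite $H\subset\Gamma$. The paper then invokes the argument of \cite[Lemma~10.6]{EN}, which (using $\gamma=1$) upgrades weak equivalences to isomorphisms in $K^{*}_{\Gamma}$.

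Both arguments rest on the same two ingredients---non-equivariant contractibility of $\overline{\h}$ and $\gamma=1$ for $\Gamma\subset\textnormal{Isom}(\h)$---so they are morally the same reduction, packaged differently. Your spectral sequence comparison is more hands-on and self-contained within the paper (it reuses Theorem~\ref{thm: kasparov} rather than importing the Meyer--Nest localisation machinery), at the cost of having to check the naturality of the spectral sequence in $D$ and the compatibility of the descent isomorphism $K^{*}_{\Gamma}(D)\cong K^{*}(D\rtimes_{r}\Gamma)$ with $i$. The paper's route is shorter because the Meyer--Nest formalism has already absorbed exactly this kind of comparison argument into a single principle: a map that is a $KK$-equivalence over every finite subgroup is a $KK^{\Gamma}$-equivalence once $\gamma=1$. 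Your alternative sketch of building an explicit inverse $[j]$ from a Dirac-type cycle would also work in principle, but is unnecessary given either of the two abstract arguments.
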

\begin{proof} Because $\Gamma$ is torsion-free and $\overline{\h}$ is contractible, $H$-equivariant contractibility \cite{MeyNes} of $\overline{\h}$ for finite subgroups $H\subset \Gamma$ follows trivially. Then the  argument of \cite[Lemma 10.6]{EN} applies.
\end{proof}

The extension \eqref{bdry} induces an extension of crossed products
\begin{equation}\label{descentbdry} 0\to C_{0}(\h)\rtimes\Gamma\to C(\overline{\h})\rtimes\Gamma \to C(\partial \h)\rtimes\Gamma\to 0,\end{equation}
as the $\Gamma$-action on either of the algebras in \eqref{bdry} is amenable, and the full and reduced crossed products coincide. Let $L^{2}(\wedge^{*}\h)$ be the Hilbert space of $L^{2}$-sections of the exterior algebra bundle of $\h$, and $\Dsla_{HR}$ the Hodge-DeRham operator. The triple $[(\C, L^{2}(\wedge^{*}\h), \Dsla_{HR})]$ defines an element in $KK^{G}_{0}(\C,\C)$  and thus in $KK^{\Gamma}_{0}(\C,\C)$ for any subgroup $\Gamma\subset G$. We will refer to each of these elements as the \emph{Euler class} (cf. \cite{EM, EN}). We obtain the following Proposition.

\begin{proposition} For a discrete torsion-free subgroup $\Gamma\subset G$ there is an exact hexagon
\begin{equation}\label{Gysin} 
\xymatrix{ K^{1}(C_{0}(M)) \ar[r]^-{\partial} & K^{0}(C(\dH)\rtimes\Gamma) \ar[r]^-{i^{*}} & K^{0}(C^{*}_{r}(\Gamma)) \ar[d]^{\textnormal{Eul}_{0}}\\ 
K^{1}(C^{*}_{r}(\Gamma)) \ar[u]^{\textnormal{Eul}_{1}} & K^{1}(C(\dH)\rtimes \Gamma) \ar[l]_-{i^{*}} & K^{0}(C_{0}(M))  \ar[l]_-{\partial} ,}\end{equation}
where $i^{*}$ is induced from the inclusion  $i:\C\rightarrow C(\overline{\h})$ and the maps $\Eul_{*}$ are induced from the Kasparov product with the Euler class $[(\C,L^{2}(\wedge^{*}\h), \Dsla_{HR})]$.
\end{proposition}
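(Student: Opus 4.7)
The plan is to obtain the hexagon as the six-term exact sequence in $K$-homology attached to the crossed-product extension \eqref{descentbdry}, after identifying the vertices and the maps. Since the $\Gamma$-action on each of $C_{0}(\h)$, $C(\overline{\h})$ and $C(\partial\h)$ is amenable, the full and reduced crossed products coincide and \eqref{descentbdry} remains exact. The six-term exact sequence in $K$-homology then yields a hexagon with vertices $K^{i}(C_{0}(\h)\rtimes\Gamma)$, $K^{i}(C(\overline{\h})\rtimes\Gamma)$, $K^{i}(C(\partial\h)\rtimes\Gamma)$, $i=0,1$, and boundary map $\partial$.

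The identification $K^{i}(C_{0}(\h)\rtimes\Gamma)\simeq K^{i}(C_{0}(M))$ is the Green--Rieffel Morita equivalence for the free, proper action of $\Gamma$ on $\h$. For $K^{i}(C(\overline{\h})\rtimes\Gamma)\simeq K^{i}(C^{*}_{r}(\Gamma))$, I would combine Lemma \ref{contractible} with Kasparov descent: by the argument following \cite[Lem. 10.6]{EN}, the $\Gamma$-equivariant inclusion $i:\C\to C(\overline{\h})$ is actually a $KK^{\Gamma}$-equivalence, so its descent $j_{\Gamma}[i]\in KK_{0}(C^{*}_{r}(\Gamma),C(\overline{\h})\rtimes\Gamma)$ is a $KK$-equivalence. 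This produces the $i^{*}$-isomorphism on $K$-homology. Under these identifications, the boundary of the extension becomes the $\partial$ of the hexagon, and the map induced by the quotient $C(\overline{\h})\rtimes\Gamma\twoheadrightarrow C(\partial\h)\rtimes\Gamma$ becomes the $i^{*}$ of the hexagon.

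The main remaining task, and the main technical obstacle, is to identify the map $K^{i}(C^{*}_{r}(\Gamma))\to K^{i}(C_{0}(M))$ induced by the inclusion $\iota:C_{0}(\h)\hookrightarrow C(\overline{\h})$ (post-composed with the two identifications) with the Kasparov product with the Euler class $[(\C, L^{2}(\wedge^{*}\h), \Dsla_{HR})]$. The computation is naturally carried out in equivariant $KK$: one shows that the composition $[i]^{-1}\otimes_{C(\overline{\h})}[\iota]\in KK^{\Gamma}_{0}(\C, C_{0}(\h))$ is represented by the Hodge--DeRham Kasparov module on $\h$ with its natural pointwise $C_{0}(\h)$-action. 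Kasparov descent together with Morita $C_{0}(\h)\rtimes\Gamma\sim C_{0}(M)$ then realises this class as a representative of $\Eul_{*}$. This last step is in essence a form of $\Gamma$-equivariant Poincar\'e duality for $\h$, realised analytically by the Hodge--DeRham operator.
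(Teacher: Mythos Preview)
Your plan is essentially the same as the paper's: start from the six-term sequence of the crossed-product extension \eqref{descentbdry}, identify the three vertices, and then identify the connecting map with the Euler class. The paper phrases the vertex identifications through the \emph{equivariant} long exact sequence \eqref{equivariantexact} and then passes to crossed products, but this is only a difference of bookkeeping.

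Two points are worth flagging. First, you upgrade Lemma~\ref{contractible} to the claim that $i:\C\to C(\overline{\h})$ is a $KK^{\Gamma}$-equivalence, so that descent gives a $KK$-equivalence $C^{*}_{r}(\Gamma)\sim C(\overline{\h})\rtimes\Gamma$. The paper does not argue this way: it only uses the $K^{*}_{\Gamma}$-isomorphism of Lemma~\ref{contractible} and then invokes $\gamma=1\in KK^{\Gamma}_{0}(\C,\C)$ separately to pass from $K^{*}_{\Gamma}(\C)$ to $K^{*}(C^{*}_{r}(\Gamma))$. Your stronger statement is true here (this is where the Meyer--Nest machinery behind \cite[Lemma 10.6]{EN} and the hypothesis $\gamma=1$ enter), but you should make the dependence on $\gamma=1$ explicit rather than hide it inside the reference to \cite{EN}.

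Second, for the identification of the map $K^{*}(C^{*}_{r}(\Gamma))\to K^{*}(C_{0}(M))$ with the Euler class, the paper does not carry out the equivariant Poincar\'e duality computation you sketch; it simply cites \cite[Proposition~9 and Theorem~38]{EM}. Your proposed route --- compute $[i]^{-1}\otimes[\iota]\in KK^{\Gamma}_{0}(\C,C_{0}(\h))$ and recognise the Hodge--DeRham cycle --- is in the same spirit as what underlies those results in \cite{EM}, but it is the one genuinely nontrivial step and your sketch leaves it as an assertion. If you intend to avoid citing \cite{EM}, this is where the real work would be.
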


\begin{proof} This follows from the arguments in \cite{EM,EN}. Since $\gamma=1\in KK^{\Gamma}_{0}(\C,\C),$ there is an isomorphism $K^{i}_{\Gamma}(\C)\cong K^{i}(C^{*}_{r}(\Gamma))$, by descent in the first variable. Thus, Lemma \ref{contractible} gives isomorphisms
\[K^{*}(C(\overline{\h})\rtimes\Gamma)\xrightarrow{\sim} K^{*}_{\Gamma}(\C)\xrightarrow{\sim} K^{*}(C^{*}_{r}(\Gamma)). \]
Because the action of $\Gamma$ on $\h$ is free and proper, we have isomorphisms
\[K^{*}_{\Gamma}(C_{0}(\h))\xrightarrow{\sim} K^{*}(C_{0}(\h)\rtimes\Gamma)\xrightarrow{\sim} K^{*}(C_{0}(M)).\]
Lastly, \cite[Lemma 3.8]{Lott} and $\gamma=1$ give
\[K^{*}_{\Gamma}(C(\dH))\xrightarrow{\sim} K^{*}(C(\dH)\rtimes\Gamma).\]
Via these isomorphisms, the sequence \eqref{equivariantexact} can be identified with the six-term exact sequence associated to the extension \eqref{descentbdry}. The identification of the maps $K^{*}(C^{*}_{r}(\Gamma))\to K^{*}(C_{0}(M))$ as induced by taking the Kasparov product with the Euler class now follows by combining the argument in \cite[Proposition 9]{EM} with \cite[Theorem 38]{EM}, yielding \eqref{Gysin}.\end{proof}
The exact sequence \eqref{Gysin} simplifies further. We denote by $[\pt]\in K^{0}(C_{0}(M))$ the class given by the homomorphism $C_{0}(M)\to \C, f\mapsto f(x)$, for some $x\in M$. Since $M$ is connected this does not depend on the choice of $x$. Furthermore we denote by
\[\chi(M):=\sum_{k=0}^{\textnormal{dim} M} (-1)^{k}\textnormal{rank }H^{k}(M,\Z),\]
the Euler characteristic of $M$. The following result uses the method of \cite[Theorem 10.7]{EN}.

\begin{theorem}For a discrete torsion-free subgroup $\Gamma\subset \textnormal{Isom}(\h)$, the homomorphism \\  $\Eul_{1}:K^{1}(C^{*}_{r}(\Gamma))\to K^{1}(C_{0}(M))$ vanishes and $\Eul_{0}$ is given by $$\Eul_{0}:K^{0}(C^{*}_{r}(\Gamma))\to K^{0}(C_{0}(M)),\quad [(C^{*}_{r}(\Gamma), \mathcal{H}, D)]\mapsto \chi(M)\textnormal{Ind}(D^{+})[\pt].$$ In particular, if $\Gamma$ is noncocompact or $\h$ has odd dimension, there are short exact sequences
\begin{equation}\label{Gysineven} 0\rightarrow K^{1}(C_{0}(M))\xrightarrow{\partial} K^{0}(C(\dH)\rtimes\Gamma)\xrightarrow{i^{*}}  K^{0}(C^{*}_{r}(\Gamma))\rightarrow 0,\end{equation}
\begin{equation}\label{Gysinodd}0\rightarrow K^{0}(C_{0}(M))\xrightarrow{\partial} K^{1}(C(\dH)\rtimes \Gamma)\xrightarrow{i^{*}}  K^{1}(C^{*}_{r}(\Gamma))\rightarrow 0.\end{equation}
\end{theorem}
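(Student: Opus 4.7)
The strategy, inspired by \cite[Theorem 10.7]{EN}, is to factor the Euler map through the scalar inclusion $\C \hookrightarrow C^*_r(\Gamma)$, exploiting that the left $\C$-action on the Euler class is trivial.

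\emph{Factorization.} The Euler class $e = [(\C, L^2(\wedge^*\h), \Dsla_{HR})] \in KK^\Gamma_0(\C,\C)$ has trivial left $\C$-action, whereas the underlying module $L^2(\wedge^*\h)$ carries a natural multiplication action of $C_0(\h)$. This lifts $e$ to a class $\tilde e \in KK^\Gamma_0(C_0(\h), \C)$. Via Kasparov descent, the Morita equivalence $C_0(\h)\rtimes\Gamma \sim C_0(M)$ and the identification $K^*_\Gamma(\C) \cong K^*(C^*_r(\Gamma))$ furnished by $\gamma = 1$, the map $\Eul_*$ is realized as Kasparov product on the left with a class $\eta \in KK_0(C_0(M), C^*_r(\Gamma))$. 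The central claim is the factorization
\[ \eta \,=\, \chi(M)\cdot [\pt]\otimes_\C [i_\Gamma], \]
where $[\pt] \in K^0(C_0(M))$ is the evaluation at a point in $M$ and $[i_\Gamma] \in K_0(C^*_r(\Gamma))$ is the class of the scalar inclusion $\C \hookrightarrow C^*_r(\Gamma)$. This combines two inputs: (i) the triviality of the left $\C$-action in $e$ ensures, after descent, that the source factors through the augmentation; and (ii) the $\Gamma$-equivariant $L^2$-index of $\Dsla_{HR}$ on $\h$ equals $\chi(M)$ by Atiyah's $L^2$-index theorem (equivalently, by Hodge theory on $M$).

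\emph{Consequences.} Given the factorization, associativity of the Kasparov product yields, for $[(C^*_r(\Gamma), \mathcal{H}, D)] \in K^i(C^*_r(\Gamma))$,
\[ \Eul_i([D]) \,=\, \chi(M)\cdot [\pt] \otimes_\C \bigl([i_\Gamma]\otimes_{C^*_r(\Gamma)}[D]\bigr). \]
The inner product $[i_\Gamma]\otimes [D] \in K^i(\C)$ equals the class of the triple $(\C, \mathcal{H}, D)$ viewed as a Fredholm module over the scalars, which is $\textnormal{Ind}(D^+) \in \Z = K^0(\C)$ for $i=0$ and vanishes for $i=1$ since $K^1(\C)=0$. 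Hence $\Eul_1 = 0$, and $\Eul_0([D]) = \chi(M)\,\textnormal{Ind}(D^+)\,[\pt]$. To deduce that the hexagon splits under the stated hypotheses, observe that $\chi(M)[\pt] = 0$ in $K^0(C_0(M))$ in both cases: if $\dim\h$ is odd then $\dim M$ is odd and $\chi(M) = 0$ by Poincar\'e duality; if $\Gamma$ is noncocompact then $\overline{M}$ has nonempty boundary, and under the identification $K^0(C_0(M)) \cong K_0(\overline{M}, \partial\overline{M})$, the point class vanishes because any interior point can be connected to $\partial\overline{M}$, placing $[\pt]$ in the image of $K_0(\partial\overline{M})\to K_0(\overline{M})$, i.e.\ in the kernel of the map to the relative $K$-homology. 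In either case $\Eul_0 = 0$, and the hexagon \eqref{Gysin} breaks into the short exact sequences \eqref{Gysineven} and \eqref{Gysinodd}.

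\emph{Main obstacle.} The principal technical subtlety is the rigorous derivation of the factorization $\eta = \chi(M)[\pt]\otimes_\C [i_\Gamma]$ inside $KK_0(C_0(M), C^*_r(\Gamma))$: one must track the $\C$-centrality of the left action of $e$ through both descent and Morita equivalence, and then identify the resulting element with a point-class multiple using the $L^2$-index theorem. In the noncocompact case this requires handling $\Dsla_{HR}$ on a hyperbolic manifold with cusps, which can be accomplished by an equivariant cut-off at the cusps together with a vanishing argument for the cuspidal contribution to the $\Gamma$-index.
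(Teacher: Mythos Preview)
Your approach is essentially the same as the paper's. Both factor the lifted Euler class $\tilde e \in KK^\Gamma_0(C_0(\h),\C)$ as $\chi(M)$ times an orbit/point class; the paper does this equivariantly, writing $\tilde e = \chi(M)[\pi_x]$ and then $[\pi_x] = [\phi_x]\otimes[1]$ with $[\phi_x]\in KK^\Gamma_0(C_0(\h),C_0(\Gamma))$ and $[1]\in KK^\Gamma_0(C_0(\Gamma),\C)$, whereas you descend first and write $\eta = \chi(M)[\pt]\otimes[i_\Gamma]$. Under descent and the Morita equivalences $C_0(\h)\rtimes\Gamma\sim C_0(M)$ and $C_0(\Gamma)\rtimes\Gamma\cong\mathbb{K}(\ell^2\Gamma)\sim\C$, these are the same factorization, and your $K^1(\C)=0$ argument for $\Eul_1=0$ is the descended form of the paper's $KK_1^\Gamma(C_0(\Gamma),\C)=KK_1(\mathbb{K}(\ell^2\Gamma),\C)=0$.

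One practical difference: the paper obtains the identification $\tilde e=\chi(M)[\pi_x]$ by invoking \cite[Example~24, Theorem~30]{EM}, which treats the equivariant Euler class of $\h$ directly and uniformly, thereby sidestepping the cusp issues you flag with the $L^2$-index route. Conversely, your treatment of why $\Eul_0$ vanishes under the stated hypotheses is more explicit than the paper's: you note that in the cocompact odd-dimensional case $\chi(M)=0$ by Poincar\'e duality, while in the noncocompact case $[\pt]=0$ because evaluation at an interior point is null-homotopic along a proper ray to infinity. The paper leaves this step implicit. One small caution: your Poincar\'e-duality argument for $\chi(M)=0$ requires $M$ closed, so it only applies in the cocompact odd-dimensional case; but since the noncocompact case is covered separately by $[\pt]=0$, the case split is complete.
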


\begin{proof} The exact sequences  \eqref{Gysineven} and \eqref{Gysinodd} are derived directly from Proposition \ref{Gysin}. We now prove the statements about the maps $\Eul_{*}$. Let $x\in M$, $\pi:\h\to M$ the quotient map and $\rho_{x}:C_{0}(\h)\rtimes\Gamma \to B(\ell^{2}(\pi^{-1}(x)))$ the induced representation. By \cite[Example 24]{EM} and \cite[Theorem 30]{EM} we find that
\begin{equation}\label{Eulersimple}[(C_{0}(\h), L^{2}(\wedge^{*}\h),\Dsla_{HR})]=\chi(M)[\pi_{x}]\in KK_{0}^{\Gamma}(C_{0}(\h),\C).\end{equation}
There is a factorisation $[\pi_{x}]=[\phi_{x}]\otimes [1]$ where $[1]\in KK^{\Gamma}_{0}(C_{0}(\Gamma), \C)\simeq \Z$ is the class of the map $C_{0}(\Gamma)\to B(\ell^{2}(\Gamma))$ and $\phi_{x}:C_{0}(\h)\to C_{0}(\Gamma)$ is defined through $\Gamma\to \h, \gamma\mapsto x\gamma$. This yields the explicit form of $\Eul_{0}$. Since $KK_{1}^{\Gamma}(C_{0}(\Gamma),\C)=KK_{1}(C_{0}(\Gamma)\rtimes\Gamma,\C)=KK_{1}(\mathbb{K}(\ell^{2}(\Gamma)),\C)=0$, the statement $\Eul_{1}=0$ follows.
\end{proof}

In particular the Gysin sequence simplifies for all Bianchi groups and some Fuchsian groups.

\subsection{The extension class}\label{subsec: extensionclass}  For a subgroup $H\subset G$, we denote the class defined through the exact sequence \eqref{bdry} by $[\Ext]\in KK_{1}^{H}(C(\partial\h), C_{0}(\h))$. 
We now construct an equivariant Kasparov module representing $[\Ext]$, and then employ Kasparov descent and Morita equivalence to obtain an explicit representative $[\partial]\in KK_{1}(C(\partial \h)\rtimes \Gamma, C_{0}(M))$ for torsion-free discrete subgroups $\Gamma\subset G$.

In the Poincar\'{e} ball model of hyperbolic $n+1$ space $\h$, the boundary $\partial\h$ is the unit sphere in $\R^{n+1}$.  For an element $g\in G$, write  $|g'(\xi)|=|\det J_{g}(\xi)|$, the determinant of the Jacobian of the conformal transformation $g$. Consider $T_{1}\h:=\h \times\partial \h$, which can be thought of as the unit tangent bundle of $\h$. The \emph{Poisson kernel} is the map 
\begin{equation}\label{Poissonkernel} P:T_{1}\h\to (0,\infty),\quad P(x,\xi):=\frac{1-\|x\|^{2}}{\|x-\xi\|^{2}},\end{equation}
which for $g\in G$ satisfies the transformation rule 
\begin{equation}\label{Pg} P(xg,\xi g)=|g'(\xi)|^{-1}P(x,\xi)\end{equation} (see \cite[Equation 5.1.2]{Nicholls}).  
The \emph{harmonic measure} $\nu_{x}$ on $\partial\h$ based at $x\in\h$ is defined to be unique probabiltity measure on $\partial\h$ that is invariant under the action of the stabiliser $G_{x}$ of $x$. 
Then $\nu_{0}$ is normalised Lebesgue measure on $\partial\h$ and the measures $\nu_{x}$ satisfy 
\begin{equation}\label{Poissonprops} d\nu_{x}(\xi)=P(x,\xi)^{n}d\nu_{0}(\xi),\quad d\nu_{xg}(\xi g)=d\nu_{x}(\xi).\end{equation}
We consider the $G-C^{*}$-algebra $C_{0}(\h)$ as a $G$-equivariant $C^{*}$-module over itself. 
A second $C^{*}$-module is constructed using the harmonic measures on the boundary. The harmonic measures give an expectation
\[C_{c}(T_{1}\h)\rightarrow C_{c}(\h),
\quad \rho(\Psi)(x,\xi):=\int \Psi(\xi, x)d\nu_{x}(\xi),\]
and hence a $C_{0}(\h)$-module $L^{2}(T_{1} \h,\nu_x)_{C_{0}( \h)}$. This module carries a representation of the boundary algebra $C(\partial \h)$ by pointwise multiplication. 

\begin{theorem}\label{extrep}Let $p=ww^{*}$ be the projection defined from the adjointable isometry 
$$w: C_{0}(\h)\to L^{2} (T_{1}\h,\nu_x)_{C_{0}(\h)},\quad w\Psi(x,\xi):=\Psi(x).$$
Then $p$ commutes with $G$ and has compact commutators with the $C(\dH)$-representation. The triple $(C(\dH), L^{2}(T_{1} \h)_{C_{0}(\h)}, F_{p})$, with $F_{p}:=2p-1$ is a $G$-equivariant $KK$-cycle for $(C(\dH),C_{0}(\h))$ representing the class of the boundary extension \eqref{bdry}
\end{theorem}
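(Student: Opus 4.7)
\emph{Proof plan.} My strategy is to check the Kasparov cycle axioms in sequence and then identify the resulting class with that of the boundary extension by recognizing the triple as the canonical Stinespring/Paschke representative of a completely positive section. Concretely, I would proceed in four steps: (i) compute $w^{*}$ and confirm $w$ is isometric; (ii) establish $G$-equivariance of $p=ww^{*}$; (iii) prove the crucial compactness of $[p,M_{f}]$ for $f\in C(\dH)$; and (iv) identify the resulting class with that of \eqref{bdry}.

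For (i) and (ii), I would first observe that the $C_{0}(\h)$-valued inner product forces $w^{*}\Phi=\rho(\Phi)$, whence $w^{*}w=\mathrm{id}$ since $\nu_{x}$ is a probability measure. The projection $p$ then acts as fibrewise averaging, $p\Psi(x,\xi)=\int_{\dH}\Psi(x,\eta)\,d\nu_{x}(\eta)$. The natural right action $(g\Psi)(x,\xi):=\Psi(xg,\xi g)$ is unitary on the module by the transformation law $d\nu_{xg}(\xi g)=d\nu_{x}(\xi)$ from \eqref{Poissonprops}, and the same change of variables yields $gp=pg$.

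For (iii), the main obstacle, a direct computation gives
\[ [p,M_{f}]\Psi(x,\xi)=\int_{\dH}(f(\eta)-f(\xi))\Psi(x,\eta)\,d\nu_{x}(\eta), \]
exhibiting the commutator as a fibrewise integral operator. I would then trivialize the module via the Poisson kernel:
\[ U:L^{2}(T_{1}\h,\nu_{x})\xrightarrow{\sim} C_{0}(\h)\otimes L^{2}(\dH,\nu_{0}),\quad \Psi\mapsto P^{n/2}\Psi, \]
under which $p$ becomes the norm-continuous field of rank-one projections $x\mapsto|u_{x}\rangle\langle u_{x}|$ onto the unit vector $u_{x}:=P(x,\cdot)^{n/2}\in L^{2}(\dH,\nu_{0})$, while $M_{f}$ becomes fibrewise multiplication by $f$. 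Each fibrewise commutator is thus of rank at most two, and writing
\[ [p_{x},M_{f}]=|u_{x}\rangle\langle(\bar{f}-\overline{f(\zeta)})u_{x}|-|(f-f(\zeta))u_{x}\rangle\langle u_{x}| \]
for any $\zeta\in\dH$ gives the operator-norm bound $2\|(f-f(\zeta))u_{x}\|_{L^{2}(\nu_{0})}$. The crux is the classical fact that $\nu_{x}$ concentrates weakly at $\zeta$ as $x\to\zeta\in\dH$; by dominated convergence $\int|f-f(\zeta)|^{2}\,d\nu_{x}\to 0$ along such sequences, and by compactness of $\overline{\h}$ this forces $\|[p,M_{f}]_{x}\|\to 0$ as $x\to\infty$ in $\h$. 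Norm-continuity in the interior is clear, so $[p,M_{f}]$ lies in $C_{0}(\h)\otimes\mathbb{K}(L^{2}(\dH,\nu_{0}))\cong\mathbb{K}(L^{2}(T_{1}\h,\nu_{x}))$. The remaining axioms $F_{p}^{*}=F_{p}$ and $F_{p}^{2}=1$ are immediate from $p=p^{*}=p^{2}$.

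For (iv), a short computation would give $w^{*}M_{f}w=\widetilde{f}$, where $\widetilde{f}(x):=\int_{\dH}f\,d\nu_{x}\in C(\overline{\h})$ is the Poisson extension of $f$, restricting to $f$ on $\dH$. Hence $f\mapsto\widetilde{f}$ is a $G$-equivariant completely positive section of the quotient $C(\overline{\h})\to C(\dH)$, and the pair $(\pi(f):=M_{f},\,w)$ is its minimal Stinespring dilation inside the $C_{0}(\h)$-module $L^{2}(T_{1}\h,\nu_{x})$. By the Paschke--Kasparov correspondence between semi-split extensions and $KK_{1}$-classes, the triple $(C(\dH),L^{2}(T_{1}\h,\nu_{x}),2p-1)$ is then a representative of the boundary extension class of \eqref{bdry}.
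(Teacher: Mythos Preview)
Your proposal is correct and follows essentially the same strategy as the paper: both compute $w^{*}$ as fibrewise averaging, identify $w^{*}M_{f}w$ with the harmonic/Poisson extension of $f$ (hence a $G$-invariant completely positive section of $C(\overline{\h})\to C(\dH)$), and invoke Stinespring to identify the resulting $KK$-cycle with the extension class.

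The only substantive difference is in how the compact-commutator axiom is handled. The paper dispatches it by citing a general lemma (\cite[Lemma 3.7]{extensionsgoffeng}) to the effect that a completely positive splitting of a semi-split extension automatically yields a Kasparov module. You instead verify compactness of $[p,M_{f}]$ directly: you trivialize the module via $\Psi\mapsto P^{n/2}\Psi$ (this is exactly the isomorphism that appears later in the paper as Lemma~\ref{trivialization}), recognize $p_{x}$ as the rank-one projection onto $u_{x}=P(x,\cdot)^{n/2}$, and then use weak convergence $\nu_{x}\to\delta_{\zeta}$ as $x\to\zeta\in\dH$ to show $\|[p_{x},M_{f}]\|\to 0$ at infinity. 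Your argument is more self-contained and makes the geometric mechanism (concentration of harmonic measure at the boundary) visible, at the cost of a little extra length; the paper's route is shorter but black-boxes this step. Either is fine.
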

\begin{proof} To see that $w$ is adjointable define $w^{*}f(x):=\int_{\partial{\h}} f(\xi,x)d\nu_{x}\xi$. A quick computation shows that $w$ and $w^{*}$ are mutually adjoint:
\begin{align*}\langle w\Psi,\Phi\rangle= \int \overline{w\Psi(\xi,x)}\Phi(\xi, x)d\nu_{x}(\xi)
= \int \overline{\Psi(x)}\Phi(\xi, x)d\nu_{x}(\xi)
=\langle \Psi, w^{*}\Phi\rangle.
\end{align*}
Then computing the composition $w^{*}wf(x)=\int wf(\xi,p)d\nu_{x}(\xi)=\int f(x)d\nu_{x}(\xi)=f(x)$, that is $w^{*}w=1$ and $w$ is an isometry.% For $f\in C(\partial\h)$, the \emph{radial extension} $f_{r}(x):=f(\frac{x}{\|x\|})$ defines a completely positive linear, multiplicative, but non-$G$-equivariant splitting of the extension \eqref{bdry}. 
Consider 
\begin{equation}\label{difference}w^{*}fw\Psi(x)=\left(\int f(\xi)d\nu_{x}\xi\right)\Psi(x).\end{equation}
The function $x\mapsto \int f(\xi)d\nu_{x}\xi$ is hyperbolically harmonic and continuous up to the boundary with limit $f$ by \cite[page 69]{Ahlfors} (see also \cite[Theorem 5.1.5]{Nicholls}). Hence $f\to w^{*}fw$ defines a $G$-invariant completely positive linear splitting $C(\partial\h)\to C_0(\h)$.% function \eqref{difference} is an element of $C_{0}(\h)=\mathbb{K}(C_{0}(\h))$. 
%The remainder of the proof is modelled on \cite[Lemma 3.7]{extensionsgoffeng}. Since $[p,f]=pf(1-p)+(1-p)fp$, to show that $[p,f]\in\mathbb{K}(L^{2}(T_{1}\h,\nu_x)_{C_{0}(\h)})$, it suffices to show that $p\overline{f}(1-p)fp\in \mathbb{K}(L^{2}(T_{1}\h,\nu_x)_{C_{0}(\h)}$. We find
%\begin{align*}pf(1-p)\overline{f}p &=pf\overline{f}p-w\overline{f_{r}} f_{r}w^{*}+w\overline{f_{r}} f_{r}w^{*}-p\overline{f}pfp\\
%&=w(w^{*}f\overline{f}w-(\overline{f}f)_{r})w^*+w((\overline{f}f)_{r}-\overline{f}_{r}f_{r})w^{*}+ \\
%&\quad\quad\quad w(\overline{f}_{r}-w^{*}\overline{f}w)wfw^{*} + p\overline{f}w(w^{*}fw- f_{r})w^{*}) ,\end{align*}
%and since $(\overline{f}f)_{r}-\overline{f}_{r}f_{r}\in C_{0}(\h)=\mathbb{K}(C_{0}(\h))$, all elements on the righthand side are in $\mathbb{K}(L^{2}(T_{1}\h,\nu_x)_{C_{0}(\h)})$. 

Thus $(C(\dH), L^{2}(T_{1}\h,\nu_x)_{C_{0}( \h)}, F_{p})$ is a $G$-equivariant Kasparov module (see for example \cite[Lemma 3.7]{extensionsgoffeng}), and the usual Stinespring dilation argument shows that it represents the extension \eqref{bdry}.
\end{proof}
%%%%%%%%%%%%%%%%%%
\subsection{Kasparov descent and Morita equivalence}
Consider the universal cover $\pi:\h\rightarrow M$, and the associated expectation
\begin{equation}\label{rhoinnprod} \rho_{M}:C_{c}(\h) \rightarrow C_{c}(M),\quad 
\rho_{M}(\Psi)(m) := \sum _{h\in \pi^{-1}(m)} \Psi(h),\end{equation}
defining a $C_{c}(M)$-valued inner product on $C_{c}(\h)$ by $\langle \Phi,\Psi\rangle:=\rho_{\Gamma}(\overline{\Phi}\Psi)$. Denote its completion by $L^{2}_{\pi}(\h)_{C_{0}(M)}$.The following result is a special case of the well known Morita equivalence for free and proper actions.

\begin{lemma}\label{Morita} The $C^{*}$-algebra $\mathbb{K}(L^{2}_{\pi}(\h)_{C_{0}(M)})$ is isomorphic to $C_{0}(\h)\rtimes \Gamma$, implementing the Morita equivalence with $C_{0}(M)$. The $C^{*}$-algebra $C(\overline{\h})\rtimes \Gamma$ acts faithfully on $L^{2}_{\pi}(\h)_{C_{0}(M)}$.
\end{lemma}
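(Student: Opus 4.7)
The plan is to construct the covariant representation explicitly and then apply the standard Morita-equivalence theory for free and proper actions, extending to the boundary via a regular-representation argument.

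First, I would define a covariant pair $(\sigma, U)$ on $L^{2}_{\pi}(\h)_{C_{0}(M)}$: for $f \in C(\overline{\h})$ let $\sigma(f)$ act by pointwise multiplication (bounded since $f|_{\h}$ is bounded, and $C_{0}(M)$-linear adjointable), and for $\gamma \in \Gamma$ let $U_{\gamma}\Psi(x) := \Psi(x\gamma)$, which preserves the $\Gamma$-orbit sum inner product $\langle\Psi,\Xi\rangle = \rho_{M}(\overline{\Psi}\Xi)$. The covariance $U_{\gamma}\sigma(f)U_{\gamma}^{*} = \sigma(\gamma\cdot f)$ is immediate, and by amenability of the $\Gamma$-action on $\overline{\h}$ (\cite[Lemma 3.8]{Lott}) the integrated form yields a $*$-homomorphism $\Theta: C(\overline{\h})\rtimes\Gamma \to \End^{*}_{C_{0}(M)}(L^{2}_{\pi}(\h))$.

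Next, I would identify $\Theta(C_{0}(\h)\rtimes\Gamma) = \mathbb{K}(L^{2}_{\pi}(\h))$. For $\Phi, \Psi \in C_{c}(\h)$ the direct computation
\[|\Phi\rangle\langle\Psi|\,\Xi(x) = \Phi(x) \sum_{\gamma \in \Gamma} \overline{\Psi(x\gamma)}\,\Xi(x\gamma) = \Theta(F)\Xi(x), \qquad F(\gamma)(x) := \Phi(x)\overline{\Psi(x\gamma)},\]
shows rank-one operators lie in the image of $\Theta$ (note $F \in C_{c}(\Gamma, C_{c}(\h))$ by properness); conversely, a partition-of-unity argument subordinate to open subsets of $\h$ meeting each $\Gamma$-orbit at most once (possible by freeness and properness) expresses every $F \in C_{c}(\Gamma, C_{c}(\h))$ as a finite sum of such rank-ones. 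Injectivity on $C_{0}(\h)\rtimes\Gamma$ is manifest since multiplication operators by elements of $C_{c}(\h)$ are visibly nonzero. Norm-closing gives the claimed identification, and hence the Morita equivalence with $C_{0}(M)$.

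For faithfulness of $\Theta$ on all of $C(\overline{\h})\rtimes\Gamma$, I would pick a Borel fundamental domain $\mathcal{F} \subset \h$, yielding an isometry $L^{2}_{\pi}(\h) \cong \ell^{2}(\Gamma, C_{0}(\mathcal{F}))$ of right $C_{0}(M)$-modules. Under this unitary $U_{\gamma'}$ becomes a regular representation on $\ell^{2}(\Gamma)$, and $\sigma(f)$ becomes the diagonal operator whose $\gamma$-th component acts by multiplication by $(\gamma\cdot f)|_{\mathcal{F}}$; this is precisely the covariant form of the standard regular representation of the reduced crossed product $C(\overline{\h})\rtimes_{r}\Gamma$, built from the faithful diagonal representation $f \mapsto \bigoplus_{\gamma}(\gamma\cdot f)|_{\mathcal{F}}$ of $C(\overline{\h})$ (faithful because a continuous function on $\overline{\h}$ vanishing on all of $\h$ is zero by density). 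Consequently $\Theta$ is faithful. The main obstacle is making the covariant-pair identification with the regular representation rigorous at the level of $C^{*}$-modules, which requires modest care with the Borel fundamental domain and conventions for the $\Gamma$-action; all other steps reduce to the classical Morita equivalence for free and proper actions and are routine.
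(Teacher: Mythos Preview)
The paper does not actually prove this lemma: it is stated without proof, with the remark that it ``is a special case of the well known Morita equivalence for free and proper actions'' (Rieffel's imprimitivity theorem). Your sketch is a correct unpacking of that standard argument, and the rank-one computation together with the partition-of-unity decomposition is exactly how one verifies that $C_{c}(\h)$ is a pre-imprimitivity bimodule.

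Two remarks. First, the sentence ``injectivity on $C_{0}(\h)\rtimes\Gamma$ is manifest since multiplication operators by elements of $C_{c}(\h)$ are visibly nonzero'' is a little quick: faithfulness of the $C(\overline{\h})$-representation does not by itself give injectivity on the crossed product. What actually gives it is that your rank-one identification exhibits a \emph{left} $C_{c}(\Gamma,C_{c}(\h))$-valued inner product compatible with the right one, so $C_{c}(\h)$ is an imprimitivity bimodule and the isomorphism $\mathbb{K}(L^{2}_{\pi}(\h))\cong C_{0}(\h)\rtimes\Gamma$ follows from Rieffel's theorem rather than from a separate injectivity check.

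Second, for the faithfulness of $C(\overline{\h})\rtimes\Gamma$ you can bypass the Borel-fundamental-domain bookkeeping entirely: once $\Theta$ is an isomorphism on the ideal $C_{0}(\h)\rtimes\Gamma$, one has $\End^{*}_{C_{0}(M)}(L^{2}_{\pi}(\h))=M(C_{0}(\h)\rtimes\Gamma)$, and $\Theta$ becomes the canonical map $C(\overline{\h})\rtimes\Gamma\to M(C_{0}(\h)\rtimes\Gamma)$. This is injective because $C_{0}(\h)$ is an essential $\Gamma$-invariant ideal in $C(\overline{\h})$ (as $\h$ is dense in $\overline{\h}$), hence $C_{0}(\h)\rtimes\Gamma$ is essential in $C(\overline{\h})\rtimes\Gamma$. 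This is shorter and avoids the module-level identification you flag as the main obstacle.
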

The well known descent homomorphism \cite[Theorem 6.1]{kasparovconspectus} is a map
\begin{equation}\label{descent}j_{\Gamma}:KK^{\Gamma}_{*}(A,B)\to KK_{*}(A\rtimes \Gamma, B\rtimes \Gamma),\end{equation}
which can be explicitly defined on the level of cycles. We will describe the image of the cycle $(C(\partial\h),L^{2}(T_{1} \h,\nu_{x})_{C_{0}(\h)}, F_{p})$ from Theorem \ref{extrep} under the map $j_{\Gamma}$ as well as its composition with the Morita equivalence from Lemma \ref{Morita}. This will furnish us with a representative of the mapping $K^{*}(C_{0}(M))\to K^{*+1}(C(\partial\h)\rtimes\Gamma)$ appearing the in the exact sequences \eqref{Gysineven} and \eqref{Gysinodd}. 

Following \cite[Section 6.1]{kasparovconspectus}, the underlying $C_{0}(\h)\rtimes \Gamma$ module for the class $j_{\Gamma}([\Ext])$ is given as the completion of $C_{c}(T_{1}\h\times\Gamma)$ in the $C_{c}(\h)\rtimes\Gamma$-valued inner product
\[\langle \Phi,\Psi\rangle (x,\gamma):=\sum_{\delta\in\Gamma}\int_{\partial\h}\overline{\Phi(\xi ,x,\delta)}\Psi(\xi\delta^{-1},x\delta^{-1},\delta\gamma) d\nu_{x}(\xi),\]
and left and right module structures
\[(f\cdot\Psi)(\xi,x,\gamma):=\sum_{\delta\in\Gamma}f(\xi,\delta)\Psi(\xi\delta, x\delta, \delta^{-1}\gamma),\quad (\Psi\cdot g)(\xi,x,\gamma)=\sum_{\delta\in\Gamma}\Psi(\xi,x,\delta)g(x\delta,\delta^{-1}\gamma).\]The operator $F_{p}$ is defined by viewing $\Psi_{\gamma}(\xi, x):=\Psi(\xi,x,\gamma)$ as an element of $C_{c}(T_{1}\h)$ for each $\gamma\in\Gamma$. The product with the Morita equivalence $L^{2}_{\pi}(\h)_{C_{0}(M)}$ is now easily described. The map
\[m:C_{c}(T_{1}\h \times\Gamma)\otimes_{C_{c}(\h\times \Gamma)}C_{c}(\h)\to C_{c}(T_{1}\h),\quad m(\Psi\otimes \Phi)(\xi,x)=\sum_{\delta\in\Gamma}\Psi(\xi,x,\delta)\Phi(x
\delta) ,\]
is surjective and compatible with the balancing relation. The resulting $C_{0}(M)$-valued inner product on $C_{c}(T_{1}(\h))$ is given by 
\[\langle\Psi,\Phi\rangle(m):=\sum_{x\in\pi^{-1}(m)}\int_{\partial\h}\overline{\Psi(\xi,x)}\Phi(\xi,x)d\nu_{x}\xi.\]
The left representation and the operator $F_{p}=2p-1$ are induced from tensoring with the identity operator. We summarize the above findings:
\begin{corollary}[of Theorem \ref{extrep}]\label{delext} The class $$[\partial]:=j_{\Gamma}([\textnormal{Ext}])\otimes_{C_{0}(\h)\rtimes\Gamma} [L^{2}_{\pi}(\h)_{C_{0}(M)}]\in KK_{1}(C(\partial\h)\rtimes\Gamma, C_{0}(M)),$$ is represented by the bounded Kasparov module $(C(\partial\h)\rtimes \Gamma, L^{2}_{\pi}(T_{1} \h)_{C_{0}(M)}, F_{p})$.

\end{corollary}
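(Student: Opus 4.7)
The plan is to combine the $G$-equivariant cycle of Theorem \ref{extrep} with Kasparov's descent construction from \cite[Section 6.1]{kasparovconspectus} and then tensor with the Morita equivalence of Lemma \ref{Morita}. Both operations have explicit cycle-level descriptions, so the statement reduces to matching up the underlying $C^{*}$-modules, left actions and operators on the nose.

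First, I would apply $j_\Gamma$ to the cycle $(C(\partial\h), L^{2}(T_{1}\h,\nu_x)_{C_{0}(\h)}, F_{p})$ of Theorem \ref{extrep}. This yields a bounded Kasparov $(C(\partial\h)\rtimes\Gamma, C_{0}(\h)\rtimes\Gamma)$-module whose underlying module is the completion of $C_{c}(T_{1}\h\times\Gamma)$ in the $C_{c}(\h)\rtimes\Gamma$-valued inner product spelled out above, with $F_{p}$ acting fiberwise on each $\Psi_{\gamma}(\xi,x):=\Psi(\xi,x,\gamma)$. The compact-commutator condition over $C_{0}(\h)\rtimes\Gamma$ follows directly from the $G$-equivariance of $F_{p}$ together with the fact that $[F_{p},f]\in\mathbb{K}(L^{2}(T_{1}\h,\nu_x)_{C_{0}(\h)})$ for $f\in C(\partial\h)$, which was established in Theorem \ref{extrep}.

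Next, I would form the interior tensor product of this cycle with $L^{2}_{\pi}(\h)_{C_{0}(M)}$ over $C_{0}(\h)\rtimes\Gamma$, using the surjective map $m$ defined in the text. Compatibility of $m$ with the balancing relation follows by associativity of the $\Gamma$-sums appearing in the left action on $L^{2}_{\pi}(\h)_{C_{0}(M)}$. The key point is that $m$ is inner-product preserving: unfolding the $C_{0}(M)$-valued inner product $\langle m(\Psi\otimes\Phi), m(\Psi'\otimes\Phi')\rangle(y)$ for $y\in M$ produces a double sum over $\Gamma$ that collapses to the single sum over $\pi^{-1}(y)$ defining the inner product on $L^{2}_{\pi}(T_{1}\h)_{C_{0}(M)}$. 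The collapse uses the $G$-invariance $d\nu_{xg}(\xi g)=d\nu_{x}(\xi)$ recalled in \eqref{Poissonprops} together with the identification $\pi^{-1}(y)=\Gamma\cdot x_{y}$ for any lift $x_{y}$ of $y$. Consequently $m$ extends to a unitary identification of the balanced tensor product with $L^{2}_{\pi}(T_{1}\h)_{C_{0}(M)}$.

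Finally, I would transport the left representation and the operator across this unitary. The resulting left action of $C(\partial\h)\rtimes\Gamma$ is the covariant pair combining pointwise multiplication in the $\xi$ variable for $f\in C(\partial\h)$ with the induced $\Gamma$-action on $T_{1}\h$; the operator $F_{p}\otimes 1$ produced by the Kasparov product with a Morita equivalence bimodule is just $F_{p}$ itself, now viewed as acting on $L^{2}_{\pi}(T_{1}\h)_{C_{0}(M)}$, with its compact-commutator property preserved by Morita equivalence. This assembles into the stated bounded Kasparov cycle. The only real work is the inner-product bookkeeping in the third paragraph; no deeper obstacle intervenes since all analytic content has already been packaged into Theorem \ref{extrep}.
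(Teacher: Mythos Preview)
Your proposal is correct and follows essentially the same route as the paper: apply descent $j_\Gamma$ to the cycle of Theorem \ref{extrep}, tensor with the Morita bimodule via the map $m$, and read off the resulting module, inner product, left action and operator. The paper's proof is precisely the discussion preceding the corollary, and you have reproduced it with slightly more detail on why $m$ preserves inner products.
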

Consequently the boundary map $\partial:K^{0}(C_{0}(M))\to K^{1}(C(\partial\h)\rtimes \Gamma)$ is implemented by the Kasparov product with $(C(\partial\h)\rtimes \Gamma, L^{2}(T_{1} \h,\nu_x)_{C_{0}(M)}, F_{p})$. 

\subsection{Hecke equivariance}\label{Heckeeq} Our purpose is now to show that the exact hexagon \eqref{Gysin} is equivariant for the action of the Hecke operator $T_{g}$ on the various algebras appearing in \eqref{Gysin}. We first consider compatibility of the Hecke operators with Morita equivalences arising from free and proper actions. 

Let $X$ be a $G$-space such that $\Gamma$ acts freely and properly on $X$, $\pi:X\to M:=X/\Gamma$ the covering map and $L^{2}_{\pi}(X)_{C_{0}(M)}$ the associated $C_{0}(X)\rtimes\Gamma$-$C_{0}(M)$ Morita equivalence bimodule. There is a well-known unitary isomorphism
\begin{equation} \label{moritafree} T^{\Gamma}_{g}\otimes_{C_{0}(X)\rtimes\Gamma}L^{2}_{\pi}(X)_{C_{0}(M)}\to L^{2}_{\pi}(X)_{C_{0}(M)}^{d},\quad (\Phi_{i})\otimes \Psi\mapsto (\Phi_{i}\Psi),\end{equation} of right $C_{0}(M)$-modules. We show that the same is true for $L^{2}_{\pi}(X)\otimes_{C_{0}(M)} T^{M}_{g}$ and then compare the left actions. Consider the \emph{fiber product} with its natural covering maps 
\[X \times_{\tau_{g}} M_{g}:=\{(x,m)\in X\times M_{g}: \pi(x)=\tau_{g}(m)\}, \quad M \xleftarrow{\pi} X \times_{\tau_{g}} M_{g}\xrightarrow{\pi_{g}} M,\]
making $C_{c}(X \times_{\tau_{g}} M_{g})$ into a $C_{c}(M)$ inner product bimodule. There is a well-defined map
$$ w:C_{c}(X)\otimes_{C_{c}(M)} C_{c}(M_{g})\to C_{c}(X \times_{\tau_{g}} M_{g}),\quad w( \Psi\otimes \Phi)(x,m)=\Psi(x)\Phi(m),$$ 
of right $C_{c}(M)$-modules preserving the inner product.
By standard support arguments, $w$ is shown to be surjective, and $L^{2}_{\pi}(X)\otimes_{C_{0}(M)} T^{M}_{g}$ is obtained as a completion of $C_{c}(X \times_{\tau_{g}} M_{g})$.

\begin{lemma}\label{unwindfiberproduct} Let $\delta_{i}$ be a set of right coset representatives for $\Gamma_{g^{-1}}$, $\Gamma=\bigsqcup_{i=1}^{d}\delta_{i}\Gamma_{g^{-1}}$. The continuous open maps
\[\varphi_{i}:X\to X \times_{\tau_{g}} M_{g}, \quad x\mapsto (xg\delta_{i}^{-1}, [x]),\quad i=1,\cdots ,d,\]
assemble to a homeomorphism
$\phi:\bigsqcup_{i=1}^{d} X\to X \times_{\tau_{g}} M_{g}.$
\end{lemma}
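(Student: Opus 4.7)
The plan is to verify that each $\varphi_i$ lands in the fiber product, is a homeomorphism onto its image, and that these images partition $X\times_{\tau_g} M_g$. Well-definedness is immediate: since $\delta_i\in\Gamma$, we have $\pi(xg\delta_i^{-1})=[xg]=\tau_g([x]_g)$, so $\varphi_i(x)\in X\times_{\tau_g} M_g$, and $\varphi_i$ is continuous as a product of continuous maps. To see that $\varphi_i$ is a homeomorphism onto its image I would factor it as $\varphi_i=\tilde\varphi_i\circ\psi_i$, where $\psi_i:X\to X$, $x\mapsto xg\delta_i^{-1}$, is a homeomorphism (right translation by $g\delta_i^{-1}\in G$) and $\tilde\varphi_i:X\to X\times M_g$, $y\mapsto(y,[y\delta_ig^{-1}]_g)$, is the graph of a continuous map and hence a homeomorphism onto its graph with inverse being the projection to the first factor.

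Next, for disjointness and covering of the images, suppose $\varphi_i(x)=\varphi_j(y)$. From $[x]_g=[y]_g$ we obtain $y=x\mu$ with $\mu\in\Gamma_g$, and writing $\mu=g\nu g^{-1}$ with $\nu:=g^{-1}\mu g\in\Gamma_{g^{-1}}$, the equality of the first coordinates rewrites as $xg\delta_i^{-1}=xg\nu\delta_j^{-1}$. Multiplying on the right by $\delta_j\nu^{-1}$ shows that $\delta_i^{-1}\delta_j\nu^{-1}\in\Gamma$ fixes $xg$; freeness of the $\Gamma$-action gives $\delta_j=\delta_i\nu$, and disjointness of the cosets $\delta_i\Gamma_{g^{-1}}$ forces $i=j$, $\nu=e$, whence $y=x$. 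For surjectivity, given $(y,[m]_g)$ with $\pi(y)=[mg]$, write $y=mg\gamma$ with $\gamma\in\Gamma$, decompose $\gamma^{-1}=\delta_i\nu$ uniquely with $\nu\in\Gamma_{g^{-1}}$, and set $\mu:=g\nu^{-1}g^{-1}\in\Gamma_g$ together with $x:=m\mu$. A direct computation using $\mu g=g\nu^{-1}$ then yields $\varphi_i(x)=(y,[m]_g)$.

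Finally, to assemble a continuous inverse (and thereby deduce that each $\varphi_i$ is open as a map into $X\times_{\tau_g} M_g$), I would observe that the image $\varphi_i(X)$ equals the preimage of the diagonal $\Delta\subset M_g\times M_g$ under the continuous map $(y,[m]_g)\mapsto([y\delta_ig^{-1}]_g,[m]_g)$. The quotient $M_g=X/\Gamma_g$ is Hausdorff because $\Gamma_g$ acts freely and properly on the Hausdorff space $X$, so $\Delta$ is closed and hence $\varphi_i(X)$ is closed in $X\times_{\tau_g} M_g$. Since the finitely many $\varphi_i(X)$ partition the fiber product, each is also open, and the inverse of $\phi$ is given on $\varphi_i(X)$ by the restriction of the continuous map $(y,[m]_g)\mapsto y\delta_i g^{-1}$. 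The only nontrivial bookkeeping is the consistent use of $g^{-1}\Gamma_g g=\Gamma_{g^{-1}}$ to move between $\Gamma_g$-cosets and $\Gamma_{g^{-1}}$-cosets; the rest is standard continuity and properness.
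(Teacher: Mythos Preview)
Your proof is correct and follows essentially the same approach as the paper: well-definedness via $\tau_g([x])=\pi(xg)$, injectivity via freeness of the $\Gamma$-action together with the coset decomposition $\Gamma=\bigsqcup\delta_i\Gamma_{g^{-1}}$, and surjectivity by decomposing the element of $\Gamma$ relating $y$ to $mg$. Your argument is in fact more complete than the paper's, which establishes only the bijectivity and leaves the topological claims implicit in the statement; your graph factorization and the closed-partition argument for openness supply exactly the details the paper omits.
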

\begin{proof} Since $\tau_{g}([x])=\pi(xg)=\pi(xg\delta_{i}^{-1})$, each $\varphi_{i}$ is well-defined and injective. Moreover, the $\varphi_{i}$ assemble to an injective map on the disjoint union
\[\phi:\bigsqcup_{i=1}^{d} X\to X \times_{\tau_{g}} M_{g}.\] 
This can be seen by assuming that $\phi_{i}(x)=\phi_{j}(x')$ for some $x,x'\in X$. Then $$xg\delta_{i}^{-1}=x'g\delta_{j}^{-1},\quad [x]=[x']\Leftrightarrow \exists \gamma\in \Gamma_{g} \quad x\gamma=x' ,$$
which gives $xg\delta_{i}^{-1}=x\gamma g\delta_{j}^{-1}=xgg^{-1}\gamma g\delta_{j}^{-1}$.

Now $g^{-1}\gamma g\delta_{j}^{-1}\in\Gamma$ because $\gamma\in \Gamma_{g}$ and freeness of the $\Gamma$ action on $X$ gives $g^{-1}\gamma g\delta_{j}^{-1}=\delta_{i}^{-1}$. Hence we find $\delta_{i}^{-1}\delta_{j}=g^{-1}\gamma g\in \Gamma_{g^{-1}}$, which gives $i=j$ and hence $x=x'$ as well, as desired.

It remains to show that $\varphi$ is surjective, so let $(x',[x])\in X\times_{\tau_{g}} M_{g}$. Since $\pi(x)=\pi(x'g)$, there is  $\gamma'\in\Gamma$ such that $xg=x'\gamma'=x'\delta_{i}\gamma^{-1}$ for some unique $i$ and $\gamma\in\Gamma_{g^{-1}}$. Hence, we find $x'= xg\gamma \delta_{i}^{-1}=xg\gamma g^{-1}g\delta_{i}^{-1}$. Now in $M_{g}$ it holds that
$[x]=[xg\gamma g^{-1}],$ because $g\gamma g^{-1}\in \Gamma_{g}$. Therefore
$$(x,[x'])=(xg\gamma g^{-1}g\delta_{i}^{-1},[xg\gamma g^{-1}])=\phi_{i}(xg\gamma g^{-1}),$$
proving that $\phi$ is surjective.
\end{proof}
\begin{proposition}\label{HeckeMorita} Let $X$ be a $G$-space such that $\Gamma$ acts freely and properly on $X$, $M:=X/\Gamma$ and $L^{2}_{\pi}(X)_{C_{0}(M)}$ the associated $C_{0}(X)\rtimes\Gamma$-$C_{0}(M)$ Morita equivalence bimodule. For each $g\in C_G(\Gamma)$ there is a unitary isomorphism of $C_{0}(X)\rtimes\Gamma$-$C_{0}(M)$-bimodules
\[ T^{\Gamma}_{g}\otimes_{C_{0}(X)\rtimes\Gamma} L^{2}_{\pi}(X)\xrightarrow{\sim} L^{2}_{\pi}(X)\otimes_{C_{0}(M)}T^{M}_{g}.\]
In particular we have the identity $$[T^{\Gamma}_{g}]\otimes_{C_{0}(\h)\rtimes\Gamma} [L^{2}_{\pi}(X)]=[L^{2}_{\pi}(X)]\otimes_{C_{0}(M)} [T^{M}_{g}]\in KK_{0}(C_{0}(X)\rtimes\Gamma, C_{0}(M)).$$
\end{proposition}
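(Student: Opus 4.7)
The strategy is to verify that both sides of the claimed isomorphism are canonically unitarily isomorphic, as right $C_0(M)$-modules, to the direct sum $L^2_\pi(X)^d$ with $d=[\Gamma:\Gamma_{g^{-1}}]$, and then to check that the induced left $C_0(X)\rtimes\Gamma$-actions coincide. For the left-hand side, composing the unitary \eqref{heckefree} with the Morita identity \eqref{moritafree} yields a chain of right $C_0(M)$-module isomorphisms
\[T^\Gamma_g \otimes_{C_0(X)\rtimes\Gamma} L^2_\pi(X) \xrightarrow{\;\sim\;} (C_0(X)\rtimes\Gamma)^d \otimes_{C_0(X)\rtimes\Gamma} L^2_\pi(X) \xrightarrow{\;\sim\;} L^2_\pi(X)^d.\]
Transporting the covariant representation from Proposition \ref{explicithecke} through this identification, and using $g_i^{-1}=g\delta_i^{-1}$ together with the defining action of $C_0(X)\rtimes\Gamma$ on $L^2_\pi(X)$, a short computation starting from \eqref{heckerep} yields the explicit formulas
\[(t_g(f)\Phi)_i(x) = f(xg\delta_i^{-1})\,\Phi_i(x), \qquad (t_g(u_\gamma)\Phi)_i(x) = \Phi_{\gamma^{-1}(i)}\bigl(x\, t_i(\gamma^{-1})^{-1}\bigr),\]
for $f\in C_0(X)$ and $\gamma\in\Gamma$.

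For the right-hand side, I would apply Lemma \ref{unwindfiberproduct}: the homeomorphism $\phi:\bigsqcup_i X \to X\times_{\tau_g} M_g$ with components $\varphi_i(x)=(xg\delta_i^{-1},[x])$ induces a map $F\mapsto(F\circ\varphi_i)_i$ on $C_c$, which I claim extends to a unitary isomorphism of right $C_0(M)$-modules $L^2_\pi(X)\otimes_{C_0(M)} T^M_g \xrightarrow{\sim} L^2_\pi(X)^d$. Matching of $C_0(M)$-valued inner products amounts to reindexing the sum on the fiber product over $\{(y,n):\pi_g(n)=m,\;\pi(y)=\tau_g(n)\}$ via $\phi^{-1}$ as a disjoint sum over $i\in\{1,\dots,d\}$ and $x\in\pi^{-1}(m)$, and the right $C_0(M)$-action via $\pi_g$ on the second factor pulls back along $\varphi_i$ to pointwise multiplication by $f\circ\pi$, giving exactly the diagonal action on $L^2_\pi(X)^d$.

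The heart of the argument, and the only genuine obstacle, is matching the two left $C_0(X)\rtimes\Gamma$-actions under these identifications. The scalar case is immediate: pullback along $\varphi_i$ gives $(fF)\circ\varphi_i(x) = f(xg\delta_i^{-1})(F\circ\varphi_i)(x)$, in agreement with the LHS formula. For $\gamma\in\Gamma$ the action on the fiber product reads $(u_\gamma F)(y,n)=F(y\gamma,n)$, so I must express $(xg\delta_i^{-1}\gamma,[x])$ in the form $\varphi_j(x')$ with $x'=x\eta$, $\eta\in\Gamma_g$. The identity $\eta g\delta_j^{-1}=g\delta_i^{-1}\gamma$ combined with the coset decomposition $\Gamma=\bigsqcup_k\delta_k\Gamma_{g^{-1}}$ forces $\gamma\delta_j\in\delta_i\Gamma_{g^{-1}}$, and hence $j=\gamma^{-1}(i)$; substituting back and applying parts 1 and 3 of Lemma \ref{Heckerelations} yields $\eta=t_{\gamma^{-1}(i)}(\gamma)=t_i(\gamma^{-1})^{-1}$. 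Therefore $(u_\gamma F)\circ\varphi_i(x)=(F\circ\varphi_{\gamma^{-1}(i)})(x\,t_i(\gamma^{-1})^{-1})$, matching the LHS formula verbatim. Density, continuity, and the observation that both bimodules carry only the trivial (zero) Kasparov operator then upgrade the unitary bimodule isomorphism to the claimed equality of classes in $KK_0(C_0(X)\rtimes\Gamma, C_0(M))$.
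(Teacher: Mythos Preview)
Your proof is correct and follows essentially the same route as the paper's: both sides are identified with $L^{2}_{\pi}(X)^{d}$ via \eqref{moritafree} and the homeomorphism $\phi$ of Lemma~\ref{unwindfiberproduct}, and the left $C_{0}(X)\rtimes\Gamma$-actions are matched using the identity $xg\delta_{i}^{-1}\gamma = x\,t_{i}(\gamma^{-1})^{-1}\,g\delta_{\gamma^{-1}(i)}^{-1}$ from Lemma~\ref{Heckerelations}. The only difference is expository---you spell out the inner-product check for the fiber-product side and the derivation of the LHS formulas from Proposition~\ref{explicithecke} a bit more explicitly than the paper does.
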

\begin{proof}

The homeomorphism $\varphi$ from Lemma \ref{unwindfiberproduct} induces a unitary isomorphism of right $C_{0}(M)$-modules $$L^{2}_{\pi}(X)\otimes_{C_{0}(M)} T^{M}_{g}\cong L^{2}_{\pi}(X)^{d}\cong T^{\Gamma}_{g}\otimes_{C_0(X)\rtimes\Gamma}L^{2}_{\pi}(X)_{C_{0}(M)}.$$
As before we write elements of the module $L^{2}_{\pi}(\bigsqcup_{i=1}^{d} X)_{C_{0}(M)}$ as columns $\Psi=(\Psi_{i})_{i=1}^{d}$. The action of a function $f\in C_{0}(X)$ is given by
\[\phi^{*}f\phi^{*-1}(\Psi)_{i}(x)=f\phi^{*-1}(\Psi)((x\delta_{i}g^{-1},[x]))=(f(xg\delta_{i}^{-1})\Psi_{i}(x)),\]
which equals the action coming from the identification $L^{2}_{\pi}(X)^{d}\cong T^{\Gamma}_{g}\otimes_{C_0(X)\rtimes\Gamma}L^{2}_{\pi}(X)$.
The action of a group element $\gamma$ is given by
\[\phi^{*}u_{\gamma}\phi^{*-1}(\Psi)_{i}(x)=u_{\gamma}\phi^{*-1}(\Psi)(xg\delta_{i}^{-1},[x])=\phi^{*-1}(\Psi)(xg\delta_{i}^{-1}\gamma,[x]).\]
We compute further by using Lemma \ref{Heckerelations} and observing that
$$xg\delta_{i}^{-1}\gamma=xt_{i}(\gamma^{-1})^{-1}g\delta_{\gamma^{-1}(i)}^{-1},$$
and since $t_{i}(\gamma^{-1})^{-1}\in \Gamma_{g}$ we find $[x]=[xt_{i}(\gamma^{-1})^{-1}]$ so
\begin{align*}\phi^{*-1}(\Psi)(xg\delta_{i}^{-1}\gamma,[x])&=\phi^{*-1}(\Psi)(xt_{i}(\gamma^{-1})^{-1}g\delta_{\gamma^{-1}(i)}^{-1},[xt_{i}(\gamma^{-1})^{-1}])\\&=\Psi_{\gamma^{-1}(i)}(xt_{i}(\gamma^{-1})^{-1})=\Psi_{\gamma^{-1}(i)}(xt_{\gamma^{-1}(i)}(\gamma)).\end{align*}
As above, this equals the action coming from the identification $$L^{2}_{\pi}(X)^{d}\cong T^{\Gamma}_{g}\otimes_{C_0(X)\rtimes\Gamma}L^{2}_{\pi}(X).$$ This completes the proof. \end{proof}
As a right $C(\partial\h)\rtimes\Gamma$-module, $T^{\Gamma}_{g}$ is free of rank $d$. Therefore, as in \eqref{moritafree} the map 
\begin{equation}\label{unisofree}u:T^{\Gamma}_{g}\otimes_{C(\partial\h)\rtimes\Gamma} L^{2}(T_{1}\h\rtimes\Gamma,\nu_x)\to L^{2}(T_{1}\h\rtimes\Gamma,\nu_x)^{d},\quad  (\Psi_{i})\otimes f \mapsto (\Psi_{i}  f),\end{equation}
defined through coordinatewise product, is a unitary right module map. Using this map we can define the operator $u^{*}\textnormal{diag}(p)u$ on $T^{\Gamma}_{g}\otimes_{C(\partial\h)\rtimes\Gamma} L^{2}(T_{1}\h\rtimes\Gamma,\nu_x)$. By a slight abuse of notation, we denote this operator by $1\otimes p$.

\begin{theorem}\label{bdryeq} There is a unitary isomorphism of $(C(\partial\h)\rtimes\Gamma,C_{0}(\h)\rtimes\Gamma)$-bimodules
\[L^{2}(T_{1}\h\rtimes\Gamma,\nu_{x})\otimes_{C_{0}(\h)\rtimes\Gamma}T^{\Gamma}_{g}\xrightarrow{\sim} T^{\Gamma}_{g}\otimes_{C(\partial\h)\rtimes\Gamma}L^{2}(T_{1}\h\rtimes\Gamma,\nu_x),\]
intertwining the operators $p\otimes 1$ and $1\otimes p$. 
We have the identity
\[[T_{g}^{\Gamma}]\otimes [\partial] = [\partial]\otimes [T^{M}_{g}]\in KK_{1}(C(\partial\h)\rtimes\Gamma, C_{0}(M)).\]
In particular, the boundary map $\partial:K^{0}(C_{0}(M))\to K^{1}(C(\partial\h)\rtimes\Gamma)$ is Hecke equivariant.
\end{theorem}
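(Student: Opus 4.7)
The strategy is to exploit the $G$-equivariance of the bimodule $L^{2}(T_{1}\h,\nu_{x})$ from Theorem \ref{extrep}, together with the $G$-invariance of the projection $p$. By the free module isomorphism \eqref{heckefree} applied to $B=C(\partial\h)$ and to $B=C_{0}(\h)$, the right modules $T^{\Gamma}_{g}$ over the respective crossed products are both free of rank $d=[\Gamma:\Gamma_{g^{-1}}]$, so both tensor products appearing in the statement are, as right $C_{0}(\h)\rtimes\Gamma$-modules, completions of $L^{2}(T_{1}\h\rtimes\Gamma,\nu_{x})^{d}$. The task then reduces to constructing a unitary on this free module that intertwines the two descriptions of the left $C(\partial\h)\rtimes\Gamma$-action and carries $p\otimes 1$ to $1\otimes p$.

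The $G$-action on $L^{2}(T_{1}\h,\nu_{x})$ provides semi-linear unitaries $V_{g}$ satisfying $V_{g}aV_{g}^{-1}=g(a)$ for $a\in C(\partial\h)$ and $a\in C_{0}(\h)$. Using the coset representatives $g_{i}=\delta_{i}g^{-1}$ from \eqref{doublecoset}, my plan is to define the candidate unitary on $L^{2}(T_{1}\h\rtimes\Gamma,\nu_{x})^{d}$ built from $\textnormal{diag}(V_{g_{i}^{-1}})$, with the appropriate twist required by the balancing of the interior tensor product. This unitary is tailored to convert the diagonal left $C(\partial\h)$-action $\textnormal{diag}(f)$ on $L^{2}\otimes_{C_{0}(\h)\rtimes\Gamma}T^{\Gamma}_{g}$ into the twisted action $\textnormal{diag}(g_{i}^{-1}(f))$ on $T^{\Gamma}_{g}\otimes_{C(\partial\h)\rtimes\Gamma}L^{2}$ described by Proposition \ref{explicithecke}.

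The critical check is the $\Gamma$-intertwining. By Proposition \ref{explicithecke} the left $u_{\gamma}$-action on $T^{\Gamma}_{g}\otimes L^{2}$ is the permutation-twist $\tau(\gamma)\textnormal{diag}(u_{t_{k}(\gamma)})$, while on $L^{2}\otimes T^{\Gamma}_{g}$ it is simply diagonal $u_{\gamma}$. The identity $V_{\gamma}V_{g_{i}^{-1}}=V_{g_{\gamma(i)}^{-1}}V_{t_{i}(\gamma)}$ in the group of unitaries realizes the desired intertwining, and it is forced by the cocycle relation $\gamma g_{i}=g_{\gamma(i)}t_{i}(\gamma)$ from Lemma \ref{Heckerelations}. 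This same cocycle identity guarantees that the collection of $V_{g_{i}^{-1}}$'s, together with the permutations $\tau(\gamma)$, yields a $\Gamma$-equivariant operator that genuinely descends to $L^{2}(T_{1}\h\rtimes\Gamma,\nu_{x})^{d}$. Since $p$ is $G$-invariant by Theorem \ref{extrep}, each $V_{g}$ commutes with $p$, so the intertwining of $p\otimes 1$ and $1\otimes p$ is automatic from the diagonal structure.

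With the bimodule isomorphism established, the $KK$-identity $[T^{\Gamma}_{g}]\otimes[\partial]=[\partial]\otimes[T^{M}_{g}]$ follows by combining it with Corollary \ref{delext}, which writes $[\partial]=j_{\Gamma}([\Ext])\otimes[L^{2}_{\pi}(\h)_{C_{0}(M)}]$, together with Proposition \ref{HeckeMorita} applied to $X=\h$ to commute the Hecke bimodule past the Morita equivalence. Hecke equivariance of $\partial:K^{0}(C_{0}(M))\to K^{1}(C(\partial\h)\rtimes\Gamma)$ is then a formal consequence of associativity of the Kasparov product. The main technical obstacle is the careful bookkeeping of the balancing relations combined with the $\Gamma$-equivariant descent of $\textnormal{diag}(V_{g_{i}^{-1}})$, where the cocycle identities of Lemma \ref{Heckerelations} enter in an essential way and must be checked in detail.
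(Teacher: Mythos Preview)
Your proposal is correct and follows essentially the same strategy as the paper: both identify the two tensor products with the free module $L^{2}(T_{1}\h\rtimes\Gamma,\nu_{x})^{d}$, use the explicit Hecke representation from Proposition~\ref{explicithecke} together with the cocycle relations of Lemma~\ref{Heckerelations} to match the left actions, invoke the $G$-invariance of $p$ from Theorem~\ref{extrep} for the operator intertwining, and then assemble the $KK$-identity via Corollary~\ref{delext} and Proposition~\ref{HeckeMorita}. The only difference is one of packaging: the paper writes down the unitary $\alpha$ directly as $\alpha(f\otimes(\Psi_{i})):=t_{g}(f)\beta(\Psi_{i})$ using the embedding $\beta:T^{\Gamma}_{g}\hookrightarrow L^{2}(T_{1}\h\rtimes\Gamma,\nu_{x})^{d}$ and then verifies the bimodule and $p$-intertwining properties by direct calculation with formula~\eqref{fullheckerep}, whereas you phrase the same map more structurally as $\textnormal{diag}(V_{g_{i}^{-1}})$ coming from the $G$-action on the equivariant module; your twisting caveat about balancing is exactly what the paper handles by working on the dense core $C_{c}(T_{1}\h\times\Gamma)$ and checking that $\alpha$ respects the $C_{c}(\h\times\Gamma)$-bimodule structure.
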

\begin{proof} By Corollary \ref{delext} and Lemma \ref{HeckeMorita}, the second statement follows from the first because it implies that $$j_{\Gamma}([\Ext])\otimes [T^{\Gamma}_{g}]=[T^{\Gamma}_{g}]\otimes j_{\Gamma}([\Ext]),$$
and the class $j_{\Gamma}([\Ext])$ is represented by the Kasparov module $(L^{2}(T_{1}\h \rtimes\Gamma,\nu_x)_{C_{0}(\h)\rtimes\Gamma}, F_p)$. First we compare the bimodules $L^{2}(T_{1}\h\rtimes\Gamma,\nu_x)\otimes_{C_{0}(\h)\rtimes\Gamma} T^{\Gamma}_{g}$ and $T^{\Gamma}_{g}\otimes_{C(\partial\h)\rtimes\Gamma} L^{2}(T_{1}\h\rtimes\Gamma,\nu_x)$. 
The right $C_{0}(\h)\rtimes \Gamma$-module $L^{2}(T_{1}\h\rtimes\Gamma,\nu_x)_{C_{0}(\h)\rtimes\Gamma}^{d}$ is a left $B\rtimes_{r}\Gamma$ module for either of the $\Gamma$-$C^{*}$-algebras $B=C(\partial\h),C_{0}(\h),C_{0}(T_{1}\h)$ via Equation \eqref{heckerep}. The map $u$ in  Equation \eqref{unisofree} is readily seen to be a left $C(\partial\h)\rtimes\Gamma$ module map.

We now construct a unitary isomorphism of $(C(\partial\h)\rtimes\Gamma, C_{0}(\h)\rtimes\Gamma)$-bimodules
$$\alpha:L^{2}(T_{1}\h\rtimes\Gamma,\nu_x)\otimes_{C_{0}(\h)\rtimes\Gamma} T^{\Gamma}_{g}\xrightarrow{\sim} L^{2}(T_{1}\h\rtimes\Gamma,\nu_x)_{C_{0}(\h)\rtimes\Gamma}^{d}.$$
To achieve this we view $L^{2}(T_{1}\h\rtimes\Gamma,\nu_x)$ as a completion of $C_{c}(T_{1}\h\times\Gamma)$ and we consider the embedding of right $C_{0}(\h)\rtimes\Gamma$ bimodules $$\beta :T^{\Gamma}_{g}=(C_{0}(\h)\rtimes\Gamma)^{d}\to L^{2}(T_{1}\h\rtimes\Gamma,\nu_x)^{d},\quad \beta(\Psi_{i})(x,\xi,\gamma):=(\Psi_{i}(x,\gamma)).$$ 
We view $L^{2}(T_{1}\h\rtimes\Gamma,\nu_x)^{d}$ as a left $C_{c}(X\rtimes\Gamma)$-module, where $X=T_{1}\h$ or $X=\partial\h$, via 
$$C_{c}(X\rtimes\Gamma)\times L^{2}(T_{1}\h\rtimes\Gamma,\nu_x)^{d}\to L^{2}(T_{1}\h\rtimes\Gamma,\nu_x)^{d},\quad  f\cdot (\Psi_{i}):= t_{g}(f)(\Psi_{i}),$$
using Equation \eqref{heckerep} in Proposition \ref{explicithecke}. Now define a map
\[\alpha:C_{c}(T_{1}\h\times\Gamma)\otimes_{C_{c}(\h\rtimes\Gamma)} T^{\Gamma}_{g}\to L^2(T_{1}\h\times\Gamma,\nu_x)^{d},\quad\alpha(f\otimes (\Psi_{i})):=f\cdot\beta(\Psi_{i}), \]
which respects the  $(C_{c}(\partial\h\times\Gamma),C_{c}(\h\times\Gamma))$ bimodule structures because
\[\alpha(f * h \otimes (\Psi_{i}))=t_{g}(f * h)(\beta(\Psi_{i}))=t_{g}(f)t_{g}(h)\beta(\Psi_{i})=t_{g}(f)\alpha(h\otimes(\Psi_{i})),\]
and the right module structure is respected because $\beta$ is a right module map. We find
\begin{align*}\alpha(pf\otimes (\Psi_{i}))(\delta)&=t_{g}(pf)\beta(\Psi_{i})(\delta)=\sum_{\gamma} g_{i}^{-1}pf(\gamma)t_{i}(\gamma^{-1})^{-1}\beta\Psi_{\gamma^{-1}(i)}(t_{i}(\gamma^{-1})\delta)\\
&=p\left(\sum_{\gamma} g_{i}^{-1}f(\gamma)t_{i}(\gamma^{-1})^{-1}\beta\Psi_{\gamma^{-1}(i)}(t_{i}(\gamma^{-1})\delta)\right)=p(t_{g}(f)\beta(\Psi_{i})),\end{align*}
by Proposition \ref{explicithecke} using that  $p(\Phi \beta(\Psi_{i}))=(p\Phi)(\beta(\Psi))_{i}$ and $p$ is $G$-invariant. Thus,
\[ \textnormal{diag}(p)\alpha(f\otimes(\Psi_{i}))=\alpha(pf\otimes (\Psi_{i})),\quad (\textnormal{diag}(p)) \circ \alpha=\alpha\circ (p\otimes 1).\]
The fact that $\alpha$ is unitary follows from a lengthy but straightforward calculation which we omit.
\end{proof} 
We arrive at the main general result of this section, expressing the compatibility of the of the various Hecke operators we construct.
\begin{theorem}\label{thm: GysinHecke} The Gysin-sequences in $K$-homology 
\begin{equation}\nonumber
\xymatrix{ 0\ar[r] &K^{1}(C_{0}(M)) \ar[r]^{\partial \ \ \ } & K^{0}(C(\partial\h)\rtimes\Gamma) \ar[r] & K^{0}(C^{*}_{r}(\Gamma)) \ar[d] \\ 
0&\ar[l] K^{1}(C^{*}_{r}(\Gamma))  & K^{1}(C(\partial\h)\rtimes \Gamma) \ar[l] & K^{0}(C_{0}(M))  \ar[l]_{\ \ \ \ \partial} }
\end{equation}
and $K$-theory
\begin{equation}\nonumber
\xymatrix{ 0\ar[r] & K_1(C^{*}_{r}(\Gamma)) \ar[r] & K_1(C(\partial\h)\rtimes\Gamma) \ar[r]^{\ \ \ \partial} & K_{0}(C_{0}(M)) \ar[d] \\ 
0&\ar[l] K_1(C_{0}(M))  & K_0(C(\partial\h)\rtimes \Gamma) \ar[l]_{\partial \ \ \ }  & K_0(C^{*}_{r}(\Gamma))  \ar[l]}
\end{equation}

are Hecke-equivariant.
\end{theorem}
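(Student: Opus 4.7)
The plan is to reduce Hecke equivariance of the six-term Gysin hexagon \eqref{Gysin} to three identities in $KK$. Each of the three arrows $\partial$, $i^{*}$, and $\Eul_{*}$ is implemented by a Kasparov product with a distinguished class, so Hecke equivariance amounts to commutation identities of the form $[T^{\Gamma}_{g}] \otimes [-] = [-] \otimes [T^{\bullet}_{g}]$ in the appropriate $KK$-group. The first such identity, for the boundary map, is precisely the content of Theorem \ref{bdryeq}. Once the three $KK$-identities are established, the $K$-theory version follows automatically since the $K$-theoretic Hecke action is the right-tensor action of the same $KK$-classes; associativity of the Kasparov product then propagates the equivariance to every map in both exact hexagons.

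For the inclusion-induced map $i^{*}$, I would invoke naturality of the Hecke bimodule construction. The unital inclusion $\iota:\C \hookrightarrow C(\partial\h)$ as constants is $G$-equivariant, inducing $\iota_{\Gamma}: C^{*}_{r}(\Gamma) \to C(\partial\h)\rtimes\Gamma$. Inspecting the explicit formula \eqref{heckerep} for the Hecke left action on $T^{\Gamma}_{g}(B) \simeq (B\rtimes_{r}\Gamma)^{d}$ shows that it depends on $B$ solely through its $\Gamma$-action and left-multiplication, both of which are transported by any $\Gamma$-equivariant $*$-homomorphism $\phi:B\to B'$. Hence $\phi$ induces a bimodule map $T^{\Gamma}_{g}(B) \to \phi^{*}T^{\Gamma}_{g}(B')$, which yields the desired $KK$-identity upon application to $\iota$.

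For $\Eul_{*}$, I would mimic the proof of Theorem \ref{bdryeq} with the Hodge--deRham data replacing the boundary data. The relevant class is obtained by descending the $G$-equivariant Kasparov module $(C_{0}(\h), L^{2}(\wedge^{*}\h), \slashed{D}_{HR})$ and composing with the Morita equivalence of Lemma \ref{Morita}. Proposition \ref{HeckeMorita} already supplies the compatibility of $T^{\Gamma}_{g}$ with $T^{M}_{g}$ through the $C_{0}(\h)\rtimes\Gamma$-$C_{0}(M)$ Morita bimodule, reducing the problem to showing that the conjugation by $g \in C_{G}(\Gamma)$ appearing in the Hecke representation \eqref{heckerep} preserves $\slashed{D}_{HR}$. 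This is a direct consequence of the $G$-equivariance of $\slashed{D}_{HR}$, exactly as the $G$-invariance of the projection $p$ was the key point in Theorem \ref{bdryeq}.

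The main obstacle is carrying out the Euler identity explicitly: tracking the permutation and conjugation data of the Hecke representation (Proposition \ref{explicithecke}) through both Kasparov descent and Morita equivalence requires a lengthy but mechanical calculation of the same flavor as the one concluding Theorem \ref{bdryeq}. Once these three identities are in place, the commutativity of each square in both hexagons reduces to associativity of the Kasparov product, and the $K$-theory statement is immediate from the $K$-homology one.
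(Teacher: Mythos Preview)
Your plan for the boundary map $\partial$ and the inclusion-induced map $i^{*}$ matches the paper's proof exactly: the former is Theorem \ref{bdryeq}, and the latter is the naturality of the Hecke bimodule under $\Gamma$-equivariant $*$-homomorphisms, which the paper phrases simply as ``Hecke equivariant by construction.''

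The only divergence is in the treatment of the Euler maps $\Eul_{*}$. You propose to re-run the computation of Theorem \ref{bdryeq} with the Hodge--deRham data $(C_{0}(\h), L^{2}(\wedge^{*}\h), \slashed{D}_{HR})$ in place of the boundary module $(C(\partial\h), L^{2}(T_{1}\h,\nu_{x}), F_{p})$, using $G$-equivariance of $\slashed{D}_{HR}$ where the proof of Theorem \ref{bdryeq} used $G$-invariance of $p$. This works, but the paper avoids any new computation entirely: it observes that, by the construction of the Gysin hexagon, $\Eul_{*}$ factors as the composite
\[
K^{*}(C^{*}_{r}(\Gamma)) \xrightarrow{(\iota^{*})^{-1}} K^{*}(C(\overline{\h})\rtimes\Gamma) \longrightarrow K^{*}(C_{0}(\h)\rtimes\Gamma) \xrightarrow{\sim} K^{*}(C_{0}(M)),
\]
where the middle arrow is induced by the $\Gamma$-equivariant inclusion $C_{0}(\h)\hookrightarrow C(\overline{\h})$ and the last is Morita equivalence. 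The first two arrows are Hecke equivariant by the same naturality argument you already gave for $i^{*}$ (applied now to $\iota:\C\hookrightarrow C(\overline{\h})$ and to $C_{0}(\h)\hookrightarrow C(\overline{\h})$), and the third by Proposition \ref{HeckeMorita}. Since $\iota^{*}$ is a Hecke-equivariant isomorphism, so is its inverse, and $\Eul_{*}$ is then a composition of Hecke-equivariant maps. Your direct approach is perfectly valid but trades this one-line factorization for another bimodule calculation of the flavor of Theorem \ref{bdryeq}; the paper's route buys you the Euler step for free once $i^{*}$ and Proposition \ref{HeckeMorita} are in hand.
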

\begin{proof} This follows by combining Proposition \ref{HeckeMorita}, \ref{bdryeq} and the observation that the inclusion $K_{*}(C^{*}_{r}(\Gamma))\to K_{*}(C(\partial\h)\rtimes\Gamma))$ and restriction $K^{*}(C(\partial\h)\rtimes\Gamma)\to K^{*}(C^{*}_{r}(\Gamma))$ are Hecke equivariant by construction. Hecke equivariance of the Euler maps $K^{*}(C_{0}(M))\to K^{*}(C^{*}_{r}(\Gamma))$ follows from the commutative diagram
\begin{equation}\nonumber
\xymatrix{ K^{*}(C_{r}^{*}(\Gamma))\ar[d]_{\mbox{Eul}} \ar[r]^{(\iota^{*})^{-1}}   &  K^{*}(C(\overline{\h})\rtimes\Gamma)\ar[d]\\
 K^{*}(C_{0}(M))& \ar[l] K^{*}(C_{0}(\h)\rtimes\Gamma) }
\end{equation}
and since $T_{g}\iota^{*}=\iota^{*}T_{g}$ implies $(\iota^{*})^{-1}T_{g}=T_{g}(\iota_{*})^{-1}$, the map $\mbox{Eul}$ is a composition of Hecke equivariant maps, whence Hecke equivariant. The argument for the $K$-theory sequence is identical.
\end{proof}

%%%%%%%%%%%%%%%%%%%%%%%%%%%%%%%%%%%%%%%%%%%%%
%%%%%%%%%%%%%%%%%%%%%%%%%%%%%%%%%%%%%%%%%%%%%

%%%%%%%%%%%%%%%%%%%%%%%%%%%%%%%%%%%%%%%%%%%%%%%
\section{$K$-cycles for the reduced Bianchi group $C^{*}$-algebra}\label{section: algebraic}
We will describe a naturally defined map $s: H^{1}(\Gamma,\Z)\to K^{1}(C^{*}_{r}(\Gamma))$ for a discrete group $\Gamma$ of hyperbolic isometries and show that in the special case when $\Gamma$ is a torsion-free finite index subgroup of a Bianchi group, our explicit map 
$s$ is a Hecke equivariant isomorphism. 

It is well known that for a general discrete group $\Gamma$, there is a homomorphism $t: H_{1}(\Gamma,\Z)\to K_{1}(C^{*}_{r}(\Gamma))$. This homorphsim has been studied for instance by Matthey \cite{matthey} in the context of the Baum-Connes conjecture. We show that when $\Gamma$ is a torsion-free finite index subgroup of a Bianchi group, $t$ is an isomorphism and that the homological pairing 
$H^1 \times H_1 \rightarrow \Z$ and the index pairing $K^1 \times K_1 \rightarrow \Z$ commute with the isomorphisms $s$ and $t$.

\subsection{Group cocycles and index theory} In this subsection, $\Gamma$ is an arbitrary countable discrete group. Let $c:\Gamma\rightarrow \Z$ be an integral group cocycle, simply a group homomorphism, and denote by $\Gamma_{c}$ its kernel. The multiplication operator
\[D_{c}:C_{c}(\Gamma)\rightarrow C_{c}(\Gamma), \]
defined through $(D_{c}f)(\gamma):=-c(\gamma)f(\gamma)$ extends to a selfadjoint regular operator in the $C^{*}$-completion $E_{c}$ of $C_{c}(\Gamma)$ over $\Gamma_{c}$. This gives an unbounded Kasparov module $(E_{c},D_{c})$ and an element in the group $KK_{1}(C^{*}_{r}(\Gamma), C^{*}_{r}(\Gamma_{c}))$, as a special case of the construction in \cite[Theorem 3.2.2, Lemma 3.4.1]{Mescocycle},

To describe the pairing of this cycle with $K_{1}(C^{*}_{r}(\Gamma))$, we need a concrete description of the latter group. 
 First note that here we use the surjective (Hurewicz) map $\Gamma \rightarrow \Gamma^{ab} \simeq H_1(\Gamma,\Z)$, we can represent homology classes by elements $\delta \in \Gamma$. A group element $\delta\in \Gamma$ defines a unitary $u_{\delta}$ in the reduced $C^{*}$-algebra $C^{*}_{r}(\Gamma)$, and thus a class $[u_{\delta}]\in K_{1}(C^{*}_{r}(\Gamma))$ via the standard picture of $K_{1}$. This gives us a homomorphism $H_{1}(\Gamma,\Z)\to K_{1}(C^{*}_{r}(\Gamma))$, for any discrete group $\Gamma$.

\begin{definition}We define the \emph{norm} of a cocycle $c:\Gamma\rightarrow \Z$ to be the nonnegative integer
\[|c|:=\min\{|c(\gamma)|:\gamma\in\Gamma, \gamma\notin \Gamma_{c}\}.\]
A cocycle $c$ is  is \emph{normalised} if $1\in c(\Gamma)\subset \Z$. The norm of the $0$-cocycle is defined to be $\infty$.
\end{definition}
Since $c(\Gamma)=|c|\Z$, the statement that $c$ is normalised is equivalent to saying that $c(\Gamma)=\Z$. Any cocycle is an integral multiple of a normalised cocycle, and thus $H^{1}(\Gamma,\Z)$ is generated by normalised cocycles. If $c$ is normalised, 
the short exact sequence of groups
\[0\rightarrow \Gamma_{c}\rightarrow \Gamma\rightarrow \Z \rightarrow 0,\]
admits a non canonical splitting by choosing $g\in c^{-1}(1)$ and define
$s:\Z \rightarrow \Gamma,\quad 
n  \mapsto g^{n}.$
Any such splitting determines a group is isomorphism $\Gamma \cong \Gamma_{c}\rtimes_{s}\Z$ (semidirect product) and a $C^{*}$-algebra isomorphism $C^{*}_{r}(\Gamma)\cong C^{*}_{r}(\Gamma_{c})\rtimes \Z$.
\begin{proposition} \label{modularindex} Let $c:\Gamma\rightarrow \Z$ be a normalised cocycle. The Kasparov product
\[K_{1}(C^{*}_{r}(\Gamma))\times KK_{1}(C^{*}_{r}(\Gamma), C^{*}_{r}(\Gamma_{c}))\rightarrow K_{0}(C^{*}_{r}(\Gamma_{c}))
,\]
maps the pair $([u_{\delta}], [D_{c}])$ to the class 
\[\textnormal{sgn}(c(\delta))[ C^{*}_{r}(\Gamma_{c})^{|c(\delta)|}]=c(\delta)[1_{C^{*}_{r}(\Gamma_{c})}].\]

\end{proposition}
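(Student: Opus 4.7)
The plan is to reduce the Kasparov product to an explicit index computation of Toeplitz type. Using the splitting $s : \Z \to \Gamma$ determined by $s(1) = g \in c^{-1}(1)$, every $\gamma \in \Gamma$ can be written uniquely as $\gamma = \gamma_{0} g^{n}$ with $\gamma_{0} \in \Gamma_{c}$ and $n = c(\gamma) \in \Z$. This decomposes $E_{c} = \bigoplus_{n \in \Z} E_{c}^{n}$, where $E_{c}^{n}$ is the closure of the functions supported on $c^{-1}(n) = \Gamma_{c} g^{n}$; each $E_{c}^{n}$ is canonically identified with $C^{*}_{r}(\Gamma_{c})$ as a right Hilbert $C^{*}_{r}(\Gamma_{c})$-module via $\gamma_{0} \mapsto \gamma_{0} g^{n}$. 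Under this identification, $D_{c}$ is scalar multiplication by $-n$ on $E_{c}^{n}$, and its non-negative spectral projection $P = \chi_{[0,\infty)}(D_{c})$ is the projection onto $\bigoplus_{n \leq 0} E_{c}^{n}$.

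Next I would analyse the action of $u_{\delta}$. Writing $\delta = \delta_{0} g^{m}$ with $\delta_{0} \in \Gamma_{c}$ and $m = c(\delta)$, left convolution by $u_{\delta}$ sends $E_{c}^{n}$ onto $E_{c}^{n+m}$, as follows from $(u_{\delta} f)(\gamma) = f(\delta^{-1}\gamma)$ together with the normality of $\Gamma_{c}$ in $\Gamma$. Since $u_{\delta_{0} g^{m}} = u_{\delta_{0}} u_{g}^{m}$ and $K_{1}$ is abelian, $[u_{\delta}] = [u_{\delta_{0}}] + m [u_{g}]$ in $K_{1}(C^{*}_{r}(\Gamma))$. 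The unitary $u_{\delta_{0}}$ preserves each $E_{c}^{n}$ and hence restricts to a unitary on $P E_{c}$, so its contribution to the index pairing with $[D_{c}]$ vanishes. Therefore the pairing reduces to $m \langle [u_{g}], [D_{c}] \rangle$.

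For $\langle [u_{g}], [D_{c}] \rangle$, the bounded transform of the Kasparov module reduces the Kasparov product to the Fredholm-style index of the compression $P u_{g} P$ on $P E_{c}$, valued in $K_{0}(C^{*}_{r}(\Gamma_{c}))$. Since $u_{g}$ shifts $E_{c}^{n}$ isometrically onto $E_{c}^{n+1}$ (twisted by the automorphism $\mathrm{Ad}_{g^{-1}}$ of $C^{*}_{r}(\Gamma_{c})$, which does not alter the index class), the compression has kernel $E_{c}^{0} \cong C^{*}_{r}(\Gamma_{c})$ and trivial cokernel, giving index $[1_{C^{*}_{r}(\Gamma_{c})}]$. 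Iterating the argument, $\mathrm{Ind}(P u_{g}^{m} P) = [C^{*}_{r}(\Gamma_{c})^{m}] = m[1_{C^{*}_{r}(\Gamma_{c})}]$ for $m > 0$, while for $m < 0$ the adjoint computation produces $-|m| \, [1_{C^{*}_{r}(\Gamma_{c})}]$; both cases agree with $\mathrm{sgn}(c(\delta)) [C^{*}_{r}(\Gamma_{c})^{|c(\delta)|}]$.

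The main obstacle is to verify rigorously that the naive compression-index in the Hilbert-module sense really computes the Kasparov product class in $K_{0}(C^{*}_{r}(\Gamma_{c}))$, and that iteration from $m=1$ to general $m$ is justified by multiplicativity of the index. A conceptually cleaner route is to recognise $(E_{c}, D_{c})$ as a representative of the boundary class in the Pimsner--Voiculescu exact sequence for $C^{*}_{r}(\Gamma) \cong C^{*}_{r}(\Gamma_{c}) \rtimes_{\mathrm{Ad}_{g}} \Z$, whose pairing with a unitary is the standard winding-number formula; invoking this immediately yields the stated value $c(\delta) [1_{C^{*}_{r}(\Gamma_{c})}]$.
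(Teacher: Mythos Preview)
Your approach is essentially the same as the paper's: decompose $E_c$ as a free $C^*_r(\Gamma_c)$-module indexed by $\Z$ via the splitting $n\mapsto g^n$, identify $D_c$ with multiplication by $-n$, and compute the Kasparov product as the Toeplitz-type index of the compression $p_c u_\delta p_c$ on the range of the positive spectral projection. The paper computes $\ker$ and $\operatorname{coker}$ of $p_c u_\delta p_c$ directly for arbitrary $\delta$ rather than first reducing to $u_g$ via $K_1$-linearity as you do, but this is a cosmetic difference. The technical point you flagged as the main obstacle --- that the Kasparov product really is computed by the compression index in $K_0(C^*_r(\Gamma_c))$ --- is exactly what the paper addresses: it verifies that $p_c u_\delta p_c + 1 - p_c$ has closed complemented range (hence admits a polar decomposition, by Lance's criterion) and then invokes \cite[Theorem~7.8 and Lemma~2.1]{KNR} to identify the product with the higher index $[\ker p_c u_\delta p_c]-[\operatorname{coker} p_c u_\delta p_c]$. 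Your Pimsner--Voiculescu alternative is a legitimate conceptual shortcut, but the paper's direct argument is what you were groping for.
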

\begin{proof} Choosing $g\in c^{-1}(1)$ gives a generating set $\{e_{g^{n}}\}_{n\in\Z}$ for the module $E$ and a decomposition 
\begin{equation}\label{freemod} E\simeq \bigoplus_{n\in\Z} C^{*}_{r}(\Gamma_{c}), \quad e_{\gamma}\mapsto e_{g^{c(\gamma)}}u_{g^{-c(\gamma)}\gamma},\end{equation}
under which the operator $D_{c}$ becomes multiplication by $-n\in\Z$. Denote by $p_{c}:E_{c}\rightarrow E_{c}$ the projection onto the positive spectrum of $D_{c}$, which is adjointable by the decompisition \eqref{freemod}. The Fredholm operator given by $F_{c}:=D_{c}(1+D_{c}^{2})^{-\frac{1}{2}}$ is a compact perturbation of the adjointable  operator $S_{c}=2p_{c}-1$ and defines the same class as $[D_{c}]=[F_{c}]\in KK_{1}(C_{r}^{*}(\Gamma), C^{*}_{r}(\Gamma_{c}))$. For  $\delta\in\Gamma$, since  $u_{\delta}e_{g^{n}}=e_{g^{n+c(\delta)}}u_{g^{-n-c(\delta)}\delta g^{n}}$ it follows that 
\[\im p_{c} u_{\delta} p_{c} =\textnormal{span}\{e_{g^{n}}: n\leq \min\{0,-c(\delta)\} \},\]
which is a complemented submodule, and hence so is $\im p_{c}u_{\delta}^{*} p_{c}$. Thus by \cite[Ch. 3]{Lancebook} the operator $p_{c}u_{\delta}p_{c} +1-p_{c}$ admits a polar decomposition and by  \cite[Theorem 7.8]{KNR} and the argument in \cite[Lemma 2.1]{KNR}, the Kasparov product maybe computed as a higher index, that is
\[[u_{\delta}]\otimes [(E_{c}, D_{c})]=[\textnormal{ker}p_{c}u_{\delta}p_{c}]-[\coker p_{c}u_{\delta}p_{c}] \in K_{0}(C_{r}^{*}(\Gamma_{c})).\]
As above we have  
\[\ker p_{c}u_{\delta}p_{c}=\textnormal{span}\{e_{g^{n}}: -c(\delta)<n\leq 0\},\]
and since $c$ is normalised, this module is isomorphic to the free module of rank $|c(\delta)|$ over $C^{*}_{r}(\Gamma_{c})$ if $c(\delta)>0$ and $0$ otherwise. Since 
\[\coker p_{c}u_{\delta}p_{c}=\ker p_{c}u_{\delta^{-1}}p_{c},\]
the statement follows.\end{proof}

In order to obtain a genuine unbounded Fredholm module from a cocycle, we need to get rid of the algebra $C^{*}_{r}(\Gamma_{c})$ in Proposition \ref{modularindex}. It is not clear how to do this without making more assumptions on $\Gamma$. In the next subsection, we 
achieve this when $\Gamma$ is a discrete group of hyperbolic isometries.

\subsection{The unbounded $\gamma$-element}\label{geltsec}
 In case $\Gamma$ is a group of isometries of a simply connected, complete Riemannian manifold $X$ with nonpositive sectional curvature, Kasparov's Dirac/dual-Dirac construction \cite{kasparov} gives a canonical element $[ \gamma_{ X}] \in K^{0}(C^{*}_{r}(\Gamma_{c}))$. In this section we work with the real hyperbolic $n+1$-space $X=\h$, but this is not necessary. Let $\rho$ denote the function $\rho (x):=d_{\h}(0,x),$ $L^{2}(\wedge^{*}\h)$ the Hilbert space of $L^{2}$-sections of the exterior algebra bundle of $\h$, $\Dsla_{HR}$ the Hodge-DeRham operator, $\hat{c}$ the Clifford multiplication and $\dd$ the exterior derivative. 
 
The following lemma is well-known in the case $s=1$ and we state it for convenience.
\begin{lemma}[\cite{kasparov}]\label{unbddgamma} For $0<s\leq 1$ the triple $ \left(\C,L^{2}(\wedge^{*}\h),  \Dsla_{\rho,s}:=\Dsla_{HR} +\rho^{s}\hat{c}(\textnormal{d}(\rho)) \right)$ 
is a $G$-equivariant unbounded Fredholm module representing the class $[\gamma_{\h}]=1\in KK^{G}(\C,\C)$. In particular, for any discrete subgroup $\Gamma\subset G$ the triple
\[\left(C^{*}_{r}(\Gamma), L^{2}(\wedge^{*}\h),  \Dsla_{\rho,s} \right),\]
is an unbounded Fredholm module and 
$\textnormal{Ind} (\Dsla_{\rho,s}^{+})=1$.
\end{lemma}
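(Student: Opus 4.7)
I would follow Kasparov's dual-Dirac construction \cite{kasparov}, with the radial coefficient $\rho$ replaced by $\rho^{s}$. Four things need verification.

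\emph{Self-adjointness and compact resolvent.} Chernoff's theorem for Dirac-type operators on the complete Riemannian manifold $\h$ gives essential self-adjointness of $\Dsla_{HR}$ on $C_{c}^{\infty}(\wedge^{*}\h)$, and $\rho^{s}\hat{c}(\dd \rho)$ is a fibrewise self-adjoint bounded-below multiplication operator, so a Kato--W\"ust type perturbation yields essential self-adjointness of $\Dsla_{\rho,s}$ on the same core. Squaring gives
\[
\Dsla_{\rho,s}^{2} \;=\; \Dsla_{HR}^{2}+\rho^{2s}+R_{s},
\]
using $\|\dd \rho\|=1$ almost everywhere, where $R_{s}$ is a first-order expression in $\Dsla_{HR}$ bounded, off a compact set, by $O(\rho^{s-1})$. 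For $0<s\leq 1$ this is dominated by the confining term $\rho^{2s}$, so $\Dsla_{\rho,s}^{2}\to\infty$ at infinity; combined with local elliptic regularity of $\Dsla_{HR}$, this produces a compact resolvent and makes the triple an unbounded Kasparov module.

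\emph{$G$-equivariance.} The operator $\Dsla_{HR}$ is isometry-invariant. For $g\in G$, the triangle inequality gives $|\rho\circ g^{-1}-\rho|\leq d_{\h}(0,g\cdot 0)$, so the $g$-conjugate of $\rho^{s}\hat{c}(\dd\rho)$ differs from the original by a bounded operator whose norm depends continuously on $g$. Because the spectrum of $\Dsla_{\rho,s}$ is discrete and unbounded, a standard functional-calculus argument converts this bounded perturbation into the compactness of $g\cdot F-F$ for the bounded transform $F$, which is precisely the $G$-equivariance condition.

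\emph{Identification with $\gamma$ and the index.} For $s=1$ the construction is literally Kasparov's dual Dirac and so represents $[\gamma_{\h}]=1\in KK^{G}(\C,\C)$ by \cite{kasparov-SOn1,kasparov}. For $0<s<1$, the linear path $s_{t}:=s+t(1-s)$, $t\in[0,1]$, gives a family of unbounded Kasparov modules; the estimates above hold uniformly in $t$, so the corresponding bounded transforms form a norm-continuous path, hence a homotopy in $KK^{G}(\C,\C)$. Therefore every $\Dsla_{\rho,s}$ represents $1$. Pairing this class with the trivial representation yields $\textnormal{Ind}(\Dsla_{\rho,s}^{+})=1$. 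For a discrete subgroup $\Gamma\subset G$, proper discontinuity of the action on $\h$ makes the induced representation on $L^{2}(\wedge^{*}\h)$ a multiple of the regular representation of $\Gamma$, so it factors through $C^{*}_{r}(\Gamma)$ and defines the claimed Fredholm module, to which the same index computation applies.

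The principal technical point is the homotopy argument in the last step: verifying uniform-in-$t$ control on both the compact-resolvent estimates from Step~1 and the equivariance estimates from Step~2 along $s_{t}$. Everything else is routine bookkeeping following the standard dual-Dirac construction.
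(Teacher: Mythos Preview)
Your approach is correct but takes a different route from the paper's. You verify the Kasparov-module axioms for each fixed $s$ and then connect to the known case $s=1$ by the homotopy $s_{t}=s+t(1-s)$. The paper instead notes that $(1+\rho)^{s}\hat{c}(\dd\rho)$ is already a representative of Kasparov's dual-Dirac element for every $0<s\leq 1$ (it is a proper self-adjoint Clifford potential, and differs from $\rho^{s}\hat{c}(\dd\rho)$ by a bounded operator), and then checks that the graded commutator
\[
[\Dsla_{HR},(1+\rho)^{s}\hat{c}(\dd\rho)]=[\Dsla_{HR},(1+\rho)^{s}]\hat{c}(\dd\rho)+(1+\rho)^{s}[\Dsla_{HR},\hat{c}(\dd\rho)]
\]
is bounded relative to $(1+\rho)^{s}$. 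By the unbounded-Kasparov-product criterion this exhibits $\Dsla_{\rho,s}$ directly as the product of the Dirac and dual-Dirac elements, hence as a representative of $[\gamma_{\h}]$, with no homotopy required.

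The paper's argument is shorter and avoids precisely the uniform-in-$t$ control you single out as the main technical point. Your argument, on the other hand, is more self-contained: it does not invoke the unbounded product machinery, only the stability of the $KK$-class under operator homotopy. One small point in your equivariance step: the claim that the $g$-conjugate of $\rho^{s}\hat{c}(\dd\rho)$ differs boundedly from the original needs both the scalar estimate $|\rho^{s}-(\rho\circ g^{-1})^{s}|\leq d_{\h}(0,0g)^{s}$ and a bound on $\rho^{s}[\hat{c}(\dd\rho),u_{g}]$; the latter is not entirely trivial and is in fact the content of the paper's Lemma~\ref{awesomeestimate}.
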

\begin{proof} The statement follows from the observation that $(1+\rho)^{s}\hat{c}(\dd\rho)$ a bounded perturbation of $\rho^{s}\hat{c}(\dd\rho)$ and is a representative for Kasparov's dual Dirac element. The graded commutator
\[[\Dsla_{HR}, (1+\rho)^{s}\hat{c}(\dd\rho)]=[\Dsla_{HR},(1+\rho)^{s}]\hat{c}(\dd\rho)+(1+\rho)^{s}[\Dsla_{HR},\hat{c}(\dd\rho)],\]
is relatively bounded to $(1+\rho)^{s}$, so $\Dsla_{\rho,s}$ represents the Kasparov product of the Dirac and dual-Dirac element and is hence in the class of $[\gamma_{\h}]$. For a discrete subgroup $\Gamma\subset G$ its action on $\h$ defines a representation of $C^{*}_{r}(\Gamma)$ on $L^{2}(\wedge^{*}\h)$. The statement about the index is immediate.
\end{proof}
\begin{lemma}\label{awesomeestimate} For  $0<s\leq 1$ and   any element $g\in G$ we have the estimate
\[\|\rho^{s}[\hat{c}(\dd\rho),u_{g}]\|\leq 2d_{\h}^{s}(0,0g).\]
\end{lemma}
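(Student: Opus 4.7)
My plan is to first reduce the operator-norm estimate to a pointwise inequality on $\h$, and then obtain that inequality by comparing with a Euclidean triangle in the tangent space $T_{x}\h$. To set up the reduction, note that $u_{g}$ implements the isometry $g$ on sections of the exterior bundle and commutes with $\dd$, so writing $\rho_{g}(x):=\rho(xg)=d_{\h}(0g^{-1},x)$ one has
\[u_{g}\,\hat{c}(\dd\rho)\,u_{g}^{-1}=\hat{c}(\dd(u_{g}\rho))=\hat{c}(\dd\rho_{g}),\]
hence $[\hat{c}(\dd\rho),u_{g}]=\hat{c}(\dd(\rho-\rho_{g}))\,u_{g}$. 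Since $u_{g}$ is unitary,
\[\|\rho^{s}[\hat{c}(\dd\rho),u_{g}]\|=\|\rho^{s}\hat{c}(\dd(\rho-\rho_{g}))\|,\]
and the right-hand side is a pointwise multiplication operator on $L^{2}(\wedge^{*}\h)$ whose norm equals $\sup_{x\in\h}\rho(x)^{s}\,|\dd(\rho-\rho_{g})(x)|_{x}$, since Clifford multiplication by a covector of length $\ell$ has operator norm $\ell$.

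Next I would identify the pointwise norm geometrically. Because $\grad\rho(x)$ and $\grad\rho_{g}(x)$ are unit vectors almost everywhere, one has $|\dd(\rho-\rho_{g})(x)|_{x}=2\sin(\theta_{x}/2)$, where $\theta_{x}$ is the angle between these gradients, equivalently the angle at $x$ in the geodesic triangle with vertices $0$, $0g^{-1}$ and $x$. The key estimate is then
\[\rho(x)\sin(\theta_{x}/2)\leq L,\qquad L:=d_{\h}(0,0g),\]
which I would prove by pulling the triangle back to $T_{x}\h$ via $\exp_{x}^{-1}$. In $T_{x}\h$ the vectors $\exp_{x}^{-1}(0)$ and $\exp_{x}^{-1}(0g^{-1})$ have Euclidean lengths $\rho(x)$ and $\rho_{g}(x)$ and subtend angle $\theta_{x}$; by the Cartan--Hadamard theorem applied to the negatively curved manifold $\h$, the map $\exp_{x}^{-1}$ is $1$-Lipschitz, so their Euclidean distance $L_{E}$ satisfies $L_{E}\leq L$. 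The Euclidean identity $L_{E}^{2}=(\rho-\rho_{g})^{2}+4\rho\rho_{g}\sin^{2}(\theta_{x}/2)$ together with the elementary case split $\rho\leq 4\rho_{g}$ versus $\rho>4\rho_{g}$ then yields $\rho(x)^{2}\sin^{2}(\theta_{x}/2)\leq L_{E}^{2}\leq L^{2}$.

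To conclude, I would interpolate using $s\in(0,1]$. If $\rho(x)\leq L$, the trivial bound $\sin(\theta_{x}/2)\leq 1$ gives $\rho(x)^{s}\cdot 2\sin(\theta_{x}/2)\leq 2L^{s}$. If instead $\rho(x)>L$, the geometric estimate yields $2\sin(\theta_{x}/2)\leq 2L/\rho(x)$, hence $\rho(x)^{s}\cdot 2\sin(\theta_{x}/2)\leq 2L\,\rho(x)^{s-1}\leq 2L^{s}$, using $s-1\leq 0$ and $\rho(x)\geq L$. Taking the supremum over $x\in\h$ produces the claimed bound.

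The only non-routine ingredient is the sharp hyperbolic inequality $\rho(x)\sin(\theta_{x}/2)\leq L$. The naive estimate $\sin(\theta_{x}/2)\leq 1$ cannot cancel the unbounded factor $\rho^{s}$, and the Lipschitz bound $|\rho-\rho_{g}|\leq L$ controls the function but not the pointwise norm of its differential. The Cartan--Hadamard comparison is precisely what converts the boundedness of $L$ into the required decay of $\sin(\theta_{x}/2)$ as $x$ moves away from the segment joining $0$ and $0g^{-1}$.
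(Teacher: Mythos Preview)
Your proof is correct and follows the same overall two-step structure as the paper's: a pointwise bound on the commutator $[\hat{c}(\dd\rho),u_{g}]$, followed by the interpolation argument splitting on whether $\rho(x)$ is small or large relative to $L=d_{\h}(0,0g)$.

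The difference lies in how the pointwise bound is obtained. The paper simply invokes \cite[Lemma~5.3]{kasparov}, which gives the slightly sharper estimate
\[
\|[\hat{c}(\dd_{x}\rho),u_{g}]\|\leq \frac{2\,d_{\h}(0,0g)}{d_{\h}(0,x)+d_{\h}(0,xg)},
\]
and then runs the interpolation using $\rho^{s}\leq(\rho+\rho_{g})^{s}$ together with the triangle inequality $\rho+\rho_{g}\geq L$. You instead re-derive a sufficient version of this bound from scratch: identifying $|\dd(\rho-\rho_{g})|_{x}=2\sin(\theta_{x}/2)$ and then proving $\rho(x)\sin(\theta_{x}/2)\leq L$ via the Cartan--Hadamard comparison (pulling the triangle back to $T_{x}\h$ and using the Euclidean law of cosines with the case split $\rho\lessgtr 4\rho_{g}$). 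Your route is more self-contained and makes the geometry explicit, at the cost of a little more work; the paper's route is shorter but relies on the cited lemma. Both yield the same conclusion, and your interpolation step is essentially identical to the paper's.
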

\begin{proof}By the proof of \cite[Lemma 5.3]{kasparov}, for $x\neq 0$ it holds that
\[\|[\hat{c}(\dd_{x}\rho),u_{g} ]\|\leq 2 d_{\h}(0,0g)(d_{\h}(0,x)+d_{\h}(0,xg))^{-1},\]
so we assume $0g\neq 0$ as well. This yields the estimate
\begin{align*}\|d_{\h}^{s}(0,x)[\hat{c}(\dd_{x}\rho),u_{g}]\|&\leq 2 d_{\h}(0,0g)d_{\h}^{s}(0,x)(d_{\h}(0,x)+d_{\h}(0,xg))^{-1}\\
&\leq2 d_{\h}(0,0g)(d_{\h}(0,x)+d_{\h}(0,xg))^{s-1}\\
&\leq 2d_{\h}(0,0g)d_{\h}(0,0g)^{s-1}=2d_{\h}(0,0g)^{s},
\end{align*}
which produces the claimed norm estimate.
\end{proof}

\begin{lemma} \label{peel}Let $0<s\leq 1$, $\gamma_{1},\cdots,\gamma_{k}\in \Gamma$ and $x\in \h$. Then
\[d_{\h}^{s}(x, x\gamma_{k}\cdots \gamma_{1})\leq \left(\sum_{i=1}^{k} d_{\h}(x,x\gamma_{i})\right)^{s}\leq \sum_{i=1}^{k} d_{\h}^{s}(x,x\gamma_{i}).\]
\end{lemma}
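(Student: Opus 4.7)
The plan is to prove the two inequalities separately; both are elementary and the real content of the lemma is packaging them together for later use in the analysis of $\Dsla_{\rho,s}$.

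For the left inequality, I would insert the intermediate points $x, x\gamma_1, x\gamma_2\gamma_1,\ldots, x\gamma_k\cdots\gamma_1$ and apply the triangle inequality along this chain (with the convention $\gamma_0=e$), obtaining
\[d_{\h}(x, x\gamma_k\cdots\gamma_1)\leq \sum_{i=1}^k d_{\h}(x\gamma_{i-1}\cdots\gamma_1,\, x\gamma_i\gamma_{i-1}\cdots\gamma_1).\]
Since $\Gamma$ acts on $\h$ by isometries, right-multiplication by $\gamma_{i-1}\cdots\gamma_1$ preserves hyperbolic distance, so each summand on the right equals $d_{\h}(x, x\gamma_i)$. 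Raising to the power $s$ and using that $t\mapsto t^s$ is monotone non-decreasing on $[0,\infty)$ for $0<s\leq 1$ then gives $d_{\h}^{s}(x, x\gamma_k\cdots\gamma_1)\leq \bigl(\sum_{i=1}^k d_{\h}(x,x\gamma_i)\bigr)^s$.

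For the right inequality, I would invoke the classical subadditivity of $t\mapsto t^s$ on $[0,\infty)$ for $0<s\leq 1$: for $a,b\geq 0$ one has $(a+b)^s \leq a^s + b^s$. This follows by differentiating $f(t):=a^s+t^s-(a+t)^s$, noting $f(0)=0$ and $f'(t)=s\bigl(t^{s-1}-(a+t)^{s-1}\bigr)\geq 0$ because the exponent $s-1\leq 0$ makes $t\mapsto t^{s-1}$ non-increasing. A straightforward induction on $k$ then yields $\bigl(\sum_{i=1}^k a_i\bigr)^s\leq \sum_{i=1}^k a_i^s$ for non-negative reals, which applied to $a_i=d_{\h}(x,x\gamma_i)$ completes the proof.

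There is no genuine obstacle here: the lemma is a packaging result that combines the isometric action of $\Gamma$ with two standard monotonicity/concavity properties of the power function. Its purpose, presumably, is to feed into estimates of the form given in Lemma \ref{awesomeestimate} when one wants to control commutators $\rho^s[\hat c(\dd\rho), u_\gamma]$ for words $\gamma=\gamma_k\cdots\gamma_1$ in terms of the individual generators.
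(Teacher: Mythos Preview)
Your proof is correct and uses the same ingredients as the paper: the triangle inequality, the fact that $\Gamma$ acts by isometries, and the monotonicity and subadditivity of $t\mapsto t^s$ for $0<s\leq 1$. The only cosmetic difference is that the paper packages both inequalities into a single induction on $k$ (peeling off $\gamma_k$ and applying the inductive hypothesis to $\gamma_{k-1}\cdots\gamma_1$), whereas you unroll the telescoping chain all at once for the left inequality and treat the right inequality separately; the underlying argument is identical.
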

\begin{proof} This is a straightforward induction. For $k=1$ there is nothing to prove. Then for $k>1$ we write

\[\begin{split}d_{\h}^{s}(x,&x\gamma_{k}\cdots\gamma_{k} )
\leq \left(d_{ \h } (x, x\gamma_{k-1}\cdots \gamma_{1})
+d_{\h}(x\gamma_{k-1}\cdots \gamma_{1} , x\gamma_{k}\cdots \gamma_{1} )\right)^{s} \\
& =\left(d_{\h}(x, x\gamma_{k-1}\cdots \gamma_{1})+d_{\h}(x, x\gamma_{k} )\right)^{s}\leq\left(\sum_{i=1}^{k}d_{\h}(x,x\gamma_{i})\right)^{s}
 \leq \sum_{i=1}^{k}d_{\h}^{s}(x, x\gamma_{i} ) ,
\end{split}\]
which are the desired inequalities.
\end{proof}
We wish to construct the Kasparov product of the element $[D_{c}]\in KK_{1}(C^{*}_{r}(\Gamma), C^{*}_{r}(\Gamma_{c}))$ and $[\gamma_{\h}]\in K^{1}(C^{*}_{r}(\Gamma_{c}))$ in order to obtain an unbounded Fredholm module and a class in $K^{1}(C^{*}_{r}(\Gamma))$. In order to do this we define, for $g\in c^{-1}(|c|)$
\begin{equation}\label{conndef}1\otimes_{\nabla_{g}}\Dsla_{\rho,s}(e_{\gamma}\otimes \psi):=e_{g^{c(\gamma)}}\otimes \Dsla_{\rho,s}(u_{g^{-c(\gamma)}\gamma}\psi)\end{equation}
which is a densely defined symmetric operator with initial domain $C_{c}(\Gamma)\otimes_{C_{c}(\Gamma_{c})}\Dom \Dsla_{\rho,s}.$ We then consider the densely defined symmetric operator 
\begin{equation}\label{Dcdef}D_{c}\otimes \sigma +1\otimes_{\nabla}\Dsla_{\rho,s}=\begin{pmatrix} -c & 1\otimes_{\nabla_{g}}\Dsla^{+}_{\rho,s} \\ 1\otimes_{\nabla_{g}}\Dsla^{-}_{\rho,s} & c\end{pmatrix},\end{equation}
on the Hilbert space $E\otimes_{C^{*}_{r}(\Gamma_{c})}L^{2}(\wedge^{*}\h)$ with grading operator $\sigma$, decomposed according to even and odd forms  $L^{2}(\wedge ^{*}\h)=L^{2}(\wedge^{+}\h)\oplus L^{2}(\wedge^{-}\h)$. 

We recall from the appendix to \cite{GoffengMesland}, that the notion of unbounded Fredholm module can be loosened.
\begin{definition}\label{epdef} An \emph{unbounded Fredholm module} is a triple $(\mathcal{A},H,D)$, where
\begin{enumerate}
\item $\mathcal{A}$ is a $*$-algebra represented on the $\Z/2$-graded Hilbert space $H$;
\item $D$ is a self-adjoint operator such that $a(D\pm i)^{-1}\in\K(H)$;
\item for all $a\in\mathcal{A}$, $a\Dom D\subset \Dom D$ and  there exists $\epsilon>0$ such that $[D,a](1+D^{2})^{-\frac{1-\epsilon}{2}}$ and $(1+D^{2})^{-\frac{1-\epsilon}{2}}[D,a]$ extend to bounded operators.
\end{enumerate}
If $\epsilon$ can be chosen independent of $a\in\mathcal{A}$ then $(\mathcal{A},H,D)$ is called an $\epsilon$-unbounded Fredholm module.
\end{definition}
\begin{theorem}\label{spectral-triple} Let $\Gamma\subset\textnormal{Isom}(\h)$ be a discrete group,  $0<s<1$ and $c:\Gamma\rightarrow \Z$ be a normalised cocycle. The Kasparov product of the classes $[(E_{c},D_{c})]$ and $[\gamma_{\h} ]$ is represented by the $(1-s)$ unbounded Fredholm module
\[\left( C^{*}_{r}(\Gamma), E\otimes_{C^{*}_{r}(\Gamma_{c})} L^{2}(\wedge^{*}\h), \Dsla_{c,s}:=D_{c}\otimes \sigma + 1\otimes_{\nabla_{g}}\Dsla_{\rho,s}\right),\]
and in particular is independent of the choice of $g\in c^{-1}(1)$. 
\end{theorem}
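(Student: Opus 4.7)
The plan is to unwind the operator on a convenient decomposition of $E$, verify essential self-adjointness, compact resolvent and the $(1-s)$ commutator bound of Definition~\ref{epdef}, and then identify the class using the Kucerovsky-type criterion for unbounded Kasparov products developed in \cite{KaLe, Mes, MR}.

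First I would exploit the choice $g\in c^{-1}(1)$ to trivialise $E\simeq \bigoplus_{n\in\Z}C^{*}_{r}(\Gamma_{c})$ via \eqref{freemod}. Tensoring over $C^{*}_{r}(\Gamma_{c})$ gives
\[
E\otimes_{C^{*}_{r}(\Gamma_{c})}L^{2}(\wedge^{*}\h)\;\cong\;\bigoplus_{n\in\Z}L^{2}(\wedge^{*}\h),
\]
and by \eqref{conndef}, $1\otimes_{\nabla_{g}}\Dsla_{\rho,s}$ acts as $\Dsla_{\rho,s}$ on each summand while $D_{c}\otimes \sigma$ acts as $-n\sigma$ on the $n$-th summand. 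Since $\sigma$ is the grading of $L^{2}(\wedge^{*}\h)$ and $\Dsla_{\rho,s}$ is odd, the two terms anticommute summand-by-summand, and consequently
\[
\Dsla_{c,s}^{2}\;=\;\bigoplus_{n\in\Z}\bigl(\Dsla_{\rho,s}^{2}+n^{2}\bigr)
\]
on the algebraic sum of cores for $\Dsla_{\rho,s}$. Essential self-adjointness follows, and since $\Dsla_{\rho,s}$ has compact resolvent by Lemma~\ref{unbddgamma}, the eigenvalues $n^{2}+\lambda_{k}$ of $\Dsla_{c,s}^{2}$ tend to infinity, so that $a(\Dsla_{c,s}\pm i)^{-1}$ is compact for every $a\in C^{*}_{r}(\Gamma)$.

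Next I would verify the $(1-s)$-unbounded commutator condition on the dense subalgebra $\C[\Gamma]\subset C^{*}_{r}(\Gamma)$. A direct computation yields $[D_{c}\otimes\sigma,u_{\delta}]=-c(\delta)\,u_{\delta}\otimes\sigma$, which is bounded. For the curved part, $u_{\delta}$ sends the $n$-th summand to the $(n+c(\delta))$-th summand via the unitary $u_{\alpha_{n}(\delta)}$ with
\[
\alpha_{n}(\delta)\;:=\;g^{-n-c(\delta)}\,\delta\,g^{n}\;\in\;\Gamma_{c},
\]
so that on the $n$-th summand $[1\otimes_{\nabla_{g}}\Dsla_{\rho,s},u_{\delta}]=[\Dsla_{\rho,s},u_{\alpha_{n}(\delta)}]\circ u_{\delta}$. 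Lemma~\ref{awesomeestimate} bounds the Clifford contribution by $2\,d_{\h}(0,0\alpha_{n}(\delta))^{s}$; applying Lemma~\ref{peel} to the word $g^{-n-c(\delta)}\delta g^{n}$ gives $d_{\h}(0,0\alpha_{n}(\delta))^{s}=O(|n|^{s})$, and the Hodge--de~Rham contribution is handled analogously. Because the resolvent factor $(1+\Dsla_{c,s}^{2})^{-(1-\epsilon)/2}$ is dominated on the $n$-th summand by $(1+n^{2})^{-(1-\epsilon)/2}$, the product $[\Dsla_{c,s},u_{\delta}](1+\Dsla_{c,s}^{2})^{-(1-\epsilon)/2}$ is uniformly bounded in $n$ precisely when $\epsilon\leq 1-s$; the choice $\epsilon=1-s$ is what is allowed.

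Finally I would invoke the unbounded Kasparov product criterion to identify the class. The \emph{connection condition} holds by construction of $\nabla_{g}$ in \eqref{conndef}; the \emph{domain condition} is satisfied because $C_{c}(\Gamma)\otimes_{\alg}\Dom \Dsla_{\rho,s}$ is a common core inside $\Dom \Dsla_{c,s}$; and the \emph{semi-boundedness condition} on the graded commutator of the two summands of $\Dsla_{c,s}$ is trivial here since $\sigma$ anticommutes with $\Dsla_{\rho,s}$, making the cross term vanish identically on the algebraic core. Independence of $g\in c^{-1}(1)$ follows because any two such choices differ by an element of $\Gamma_{c}$: the resulting connections produce operators that differ by a $\Dsla_{c,s}$-relatively compact perturbation, which preserves the $KK$-class via the bounded transform. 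I expect the technical heart of the argument to be the balance in the third paragraph: the growth $\|[\Dsla_{\rho,s},u_{\alpha_{n}(\delta)}]\|=O(|n|^{s})$ has to be matched exactly against the eigenvalue growth $|n|$ of $D_{c}$, and it is this interplay between Lemmas~\ref{awesomeestimate} and~\ref{peel} that forces $\epsilon=1-s$ and shows why a stronger bounded-commutator condition cannot be expected for $s\in(0,1)$.
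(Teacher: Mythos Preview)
Your argument is correct and follows essentially the same route as the paper: trivialise $E$ via \eqref{freemod}, reduce to summand-by-summand commutator estimates, and balance the $O(|n|^{s})$ growth coming from Lemmas~\ref{awesomeestimate} and~\ref{peel} against the $(1+n^{2})^{-s/2}$ decay of the resolvent factor. One small inaccuracy: there is no ``Hodge--de~Rham contribution'' to handle, since $\Dsla_{HR}$ is $G$-invariant and hence commutes with every $u_{\alpha_{n}(\delta)}$; the entire commutator $[\Dsla_{\rho,s},u_{\alpha_{n}(\delta)}]$ comes from the dual-Dirac potential $\rho^{s}\hat{c}(\dd\rho)$, which the paper splits as $[\rho^{s},u]\hat{c}(\dd\rho)+\rho^{s}[\hat{c}(\dd\rho),u]$ and bounds both pieces by $d_{\h}^{s}(0,0\alpha_{n}(\delta))$.
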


\begin{proof} Essential self-adjointness and compact resolvent of the operator $\Dsla_{c,s}$ in \eqref{Dcdef} follows from  general considerations in \cite{MR}. It remains to show condition 3 of Definition \ref{epdef} is satisfied for the unitaries $u_{\gamma}$ generating $C^{*}_{r}(\Gamma)$ and $\epsilon=1-s$. For the operator $D_{c}\otimes \sigma$ this follows from the fact that $[D_{c},u_{\gamma}]$ defines an adjointbale operator on $E_{c}$. 

For $1\otimes_{\nabla_{g}}\slashed{D}_{\rho,s}$, Equation \eqref{conndef} shows that the commutator can be expressed as
\[[1\otimes_{\nabla_{g}}\slashed{D}_{\rho,s}, u_{\gamma} ] (e_{g^{n}}\otimes \psi)=e_{g^{n+c(\gamma)}}\otimes [\slashed{D}_{\rho,s},u_{g^{-n-c(\gamma)}\gamma g^{n}}]\psi.\]
The Hilbert space $E\otimes L^{2}(\wedge^{*}\h)$ decomposes as a direct sum
\[E\otimes_{C^{*}_{r}(\Gamma_{c})} L^{2}(\wedge^{*}\h)\cong \bigoplus_{n\in\Z}e_{g^{n}}\otimes L^{2}(\wedge^{*}\h),\]
and it suffices to control the supremum of the norms $\|\cdot \|_{n}$ of the operators
\[[1\otimes_{\nabla_{g}}\Dsla_{\rho,s}, u_{\gamma}](1+D^{2}_{c} + (1\otimes_{\nabla}\Dsla_{\rho,s})^{2})^{-\frac{s}{2}}: e_{g^{n}}\otimes L^{2}(\wedge^{*}\h)\to e_{g^{n+c(\gamma)}}\otimes L^{2}(\wedge^{*}\h).\]
To compute the commutator $[\slashed{D}_{\rho,s},u_{g^{-n-c(\gamma)}\gamma g^{n}}]$, we observe that $\slashed{D}_{\rho,s}=\Dsla_{HR}+\rho^{s}\hat{c}(\dd\rho)$, and $\Dsla_{HR}$ commutes with $u_{\delta}$ for all $\delta$. So we need only concern ourselves with the dual Dirac part. To this end we expand
\begin{align*}[\rho^{s}\hat{c}(\dd\rho),u_{g^{-n-c(\gamma)}\gamma g^{n}}]=[\rho^{s}, u_{g^{-n-c(\gamma)}\gamma g^{n}}]\hat{c}(\dd\rho)+\rho^{s}[\hat{c}(\dd\rho),u_{g^{-n-c(\gamma)}\gamma g^{n}}].\end{align*}
Since$\|\hat{c}(\dd\rho)\|=1$, the norm of the first term is controlled by
\[\sup_{x\in\h}|(d^{s}_{\h}(0,x)-d^{s}_{\h}(0, xg^{-n-c(\gamma)}\gamma g^{n})) |\leq d^{s}_{\h}(0, 0g^{-n-c(\gamma)}\gamma g^{n}),\] whereas Lemma \ref{awesomeestimate} takes care of the second term with
 the estimate
\begin{align*}\|\rho^{s}[\hat{c}(\dd\rho),u_{g^{-n-c(\gamma)}\gamma g^{n}}]\|\leq 2d_{\h}^{s}(0,0g^{-n-c(\gamma)}\gamma g^{n}).\end{align*}

Thus the size of the commutator is determined by the distance $d^{s}_{\h}(0, 0g^{-n-c(\gamma)}\gamma g^{n})$. Using lemma \ref{peel} we can estimate
\[\begin{split} d^{s}_{\h}(0, 0g^{-n-c(\gamma)}\gamma g^{n})& \leq 2d^{s}_{\h}(0, 0g^{n})+d^{s}_{\h}(0,0\gamma )+ d^{s}_{\h}(0,0g^{c(\gamma)})\\ & \leq 2n^{s} d^{s}_{\h}(0, 0g) + d^{s}_{\h}(0,0\gamma )+ d_{\h}^{s}(0,0g^{c(\gamma)}).\end{split}\]
 Thus, the norm  of the operator 
\[[1\otimes_{\nabla_{g}}\Dsla_{\rho,s},u_{\gamma}]:e_{g^{n}}\otimes L^{2}(\wedge^{*}\h)\to e_{g^{n+c(\gamma)}}\otimes L^{2}(\wedge^{*}\h), \]
satisfies $\| [1\otimes_{\nabla_{g}}\Dsla_{\rho,s},u_{\gamma}]\|_{n}\leq C_{\gamma}+2n^{s}$. Since we also have the estimate $$\|(1+D_{c}^{2}\otimes 1 +(1\otimes_{\nabla}\Dsla_{\rho,s})^{2})^{-\frac{s}{2}}\|_{n}\leq (1+n^{2})^{-\frac{s}{2}},$$
we find that
\[\sup_{n}\|[1\otimes_{\nabla_{g}}\Dsla_{\rho,s}, u_{\gamma}](1+D^{2}_{c} + (1\otimes_{\nabla}\Dsla_{\rho,s})^{2})^{-\frac{s}{2}}\|_{n}\leq 2d_{\h}^{s}(0,0g)+ C_{\gamma}.\]
The operator  $(1+D^{2}_{c} + (1\otimes_{\nabla}\Dsla_{\rho,s})^{2})^{-\frac{s}{2}}[1\otimes_{\nabla_{g}}\Dsla_{\rho,s}, u_{\gamma}]$ is  shown to be bounded by noting that $u_{\gamma}=u_{\gamma^{-1}}^{*}$.
\end{proof}

Our next result says that the index pairing between $K^{1}$ and $K_{1}$ when applied to the $K$-cycles constructed from group cocycles in Theorem \ref{spectral-triple} and unitaries $[u_{\delta}]\in K_{1}(C^{*}_{r}(\Gamma))$  recovers the pairing between $H_{1}$ and $H^{1}$. This result will play an important r\^{o}le in what follows.

\begin{proposition}\label{index-pairing} Let $\Gamma\subset\textnormal{Isom}(\h)$ be a discrete group and $c:\Gamma\to\Z$ a normalised cocycle. The index pairing 
\[K_{1}(C^{*}_{r}(\Gamma))\times K^{1}(C^{*}_{r}(\Gamma))\rightarrow\Z \]
maps the pair $([u_{\delta}],[\Dsla_{c,s}])$ to the integer $ c(\delta)$, and thus
recovers the (co)homology pairing
\[H_{1}(\Gamma,\Z)\times H^{1}(\Gamma,\Z)\rightarrow \Z.\]
\end{proposition}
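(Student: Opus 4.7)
The plan is to reduce the pairing to two ingredients already available in the text: the higher index computation of Proposition \ref{modularindex} and the fact that the dual Dirac element on $\h$ has index $1$, proved in Lemma \ref{unbddgamma}. By Theorem \ref{spectral-triple} the class $[\slashed{D}_{c,s}]\in KK_{1}(C^{*}_{r}(\Gamma),\C)$ is the unbounded Kasparov product
\[
[\slashed{D}_{c,s}] \;=\; [D_{c}]\otimes_{C^{*}_{r}(\Gamma_{c})}[\gamma_{\h}],
\]
where on the right we view $(C^{*}_{r}(\Gamma_{c}),L^{2}(\wedge^{*}\h),\slashed{D}_{\rho,s})$ as an element of $K^{0}(C^{*}_{r}(\Gamma_{c}))$ via Lemma \ref{unbddgamma}.

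First, by associativity of the Kasparov product, I would rewrite the index pairing as
\[
\bigl\langle [u_{\delta}],[\slashed{D}_{c,s}]\bigr\rangle
\;=\;\bigl\langle [u_{\delta}]\otimes_{C^{*}_{r}(\Gamma)}[D_{c}],\; [\gamma_{\h}]\bigr\rangle_{C^{*}_{r}(\Gamma_{c})}.
\]
Proposition \ref{modularindex} immediately computes the inner product inside, giving
\[
[u_{\delta}]\otimes_{C^{*}_{r}(\Gamma)}[D_{c}] \;=\; c(\delta)\,[1_{C^{*}_{r}(\Gamma_{c})}]\;\in\;K_{0}(C^{*}_{r}(\Gamma_{c})).
\]

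Next, by Lemma \ref{unbddgamma} the class $[\gamma_{\h}]$ is represented by the unbounded Fredholm module $(C^{*}_{r}(\Gamma_{c}),L^{2}(\wedge^{*}\h),\slashed{D}_{\rho,s})$ and satisfies $\textnormal{Ind}(\slashed{D}_{\rho,s}^{+})=1$. Since the pairing of $[1_{C^{*}_{r}(\Gamma_{c})}]\in K_{0}(C^{*}_{r}(\Gamma_{c}))$ with any $K$-homology class is precisely the Fredholm index of its positive part, we obtain
\[
\bigl\langle [1_{C^{*}_{r}(\Gamma_{c})}],\,[\gamma_{\h}]\bigr\rangle \;=\; \textnormal{Ind}(\slashed{D}_{\rho,s}^{+}) \;=\; 1.
\]
Linearity of the pairing in the $K_{0}$-slot yields $\langle [u_{\delta}],[\slashed{D}_{c,s}]\rangle = c(\delta)$, as required.

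Finally, to identify this with the classical pairing $H_{1}(\Gamma,\Z)\times H^{1}(\Gamma,\Z)\to\Z$, I would note that under the Hurewicz map a homology class $[\delta]\in H_{1}(\Gamma,\Z)$ is sent to $[u_{\delta}]\in K_{1}(C^{*}_{r}(\Gamma))$, while a normalised cocycle $c\in H^{1}(\Gamma,\Z)$ is sent to $[\slashed{D}_{c,s}]\in K^{1}(C^{*}_{r}(\Gamma))$; on cocycles the homological pairing is exactly $(c,[\delta])\mapsto c(\delta)$, matching the computation above. I expect no real obstacle: associativity of the Kasparov product is standard once the unbounded product of Theorem \ref{spectral-triple} is in hand, and the rest is bookkeeping. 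The only point requiring care is to confirm that the bounded transforms involved are compatible so that the associativity identity holds at the level of $KK$-classes rather than merely up to a sign convention; this is handled by using the unbounded representatives consistently throughout.
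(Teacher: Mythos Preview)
Your proof is correct and follows essentially the same route as the paper: factor $[\slashed{D}_{c,s}]=[D_c]\otimes[\gamma_{\h}]$ via Theorem \ref{spectral-triple}, apply associativity of the Kasparov product, invoke Proposition \ref{modularindex} to get $c(\delta)[1_{C^{*}_{r}(\Gamma_{c})}]$, and finish with $\textnormal{Ind}(\slashed{D}_{\rho,s}^{+})=1$ from Lemma \ref{unbddgamma}. Your write-up is in fact slightly more explicit than the paper's about the final identification with the homological pairing.
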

\begin{proof}We use that the Kasparov product is associative:
\[ [u_{\delta}]\otimes[\Dsla_{c,s}]=[u_{\delta}]\otimes [\Dsla_{c,s}]\otimes [\gamma_{\h}],\]
and apply Proposition \ref{modularindex} and Lemma \ref{unbddgamma} to obtain that this equals
\[ c(\delta)[1_{C^{*}_{r}(\Gamma_{c})}]\otimes [\gamma_{\h}]=c(\delta)\textnormal{Ind}(\Dsla_{\rho,s}^{+})=c(\delta).\]
This proves the proposition.
\end{proof}

\subsection{A Hecke equivariant isomorphism}
We return to the specific setting of Bianchi groups in dimension 3. We saw in Propositions \ref{prop: spectral-small} and  \ref{prop: spectral-homology} that for a Bianchi group $\Gamma$ there are isomorphisms
$$K^{1}(C^{*}_{r}(\Gamma)) \simeq  H^1(\Gamma,\Z),\quad K_1(C^{*}_{r}(\Gamma)) \simeq H_1(\Gamma,\Z). $$ 
However for our purposes, these isomorphisms are not useful as they are only given abstractly. In this subsection we set out to show that the construction of the previous section gives explicit isomorphisms between the above. We prove 
their the Hecke equivariance and construct a section for the restriction map $K^{1}(C(\partial\h_{3})\rtimes\Gamma)\to K^{1}(C^{*}_{r}(\Gamma))$ in the Gysin sequence.

\begin{proposition}\label{Kthiso} Let $\Gamma\subset \psl(\C)$ be a noncocompact torsion-free discrete subgroup. The map $$H_{1}(\Gamma,\Z)\to K_{1}(C^{*}_{r}(\Gamma)),\quad [\delta]\mapsto [u_{\delta}],$$ is an isomorphism.
\end{proposition}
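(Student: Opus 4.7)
The plan is to assemble the desired isomorphism from three ingredients: the Baum--Connes isomorphism for $\Gamma$, the low cohomological dimension of $\Gamma$, and Matthey's explicit identification of the low-degree Hurewicz-type map at the level of $K$-homology.

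First, since $\Gamma\subset\psl(\C)\subset\textnormal{Isom}(\h_{3})$ is discrete and torsion-free, Kasparov's $\gamma$-element equals $1$ in $KK^{\Gamma}_{0}(\C,\C)$, as already used in Section \ref{section: Kasparov}. Consequently the Baum--Connes assembly map
\[ \mu : K_{*}^{\Gamma}(E\Gamma) \longrightarrow K_{*}(C^{*}_{r}(\Gamma)) \]
is an isomorphism, and since $\Gamma$ is torsion-free, $K_{*}^{\Gamma}(E\Gamma)=K_{*}(B\Gamma)$. Next, because $\Gamma$ is noncocompact, the arithmetic manifold $M=\h_{3}/\Gamma$ deformation retracts onto a $2$-dimensional spine (equivalently, the Borel--Serre compactification $\overline{M}$ is a compact $3$-manifold with nonempty torus boundary, hence homotopy equivalent to a $2$-complex, cf.~Section \ref{borelserre}). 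Thus $B\Gamma$ admits a $2$-dimensional CW model, the Atiyah--Hirzebruch spectral sequence for $K$-homology of $B\Gamma$ collapses at $E^{2}$, and one obtains a natural isomorphism $K_{1}(B\Gamma)\cong H_{1}(\Gamma,\Z)$ (with $K_{0}(B\Gamma)$ built from $H_{0}$ and $H_{2}$, matching Proposition \ref{prop: spectral-homology}).

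It remains to identify the resulting composite $H_{1}(\Gamma,\Z) \xrightarrow{\sim} K_{1}(B\Gamma) \xrightarrow{\mu} K_{1}(C^{*}_{r}(\Gamma))$ with the explicit assignment $[\delta]\mapsto [u_{\delta}]$. This is precisely the content of the results of Matthey \cite{matthey} and Bettaieb--Matthey--Valette \cite{bettaiebmattheyvalette}, who describe the low-degree part of the Hurewicz morphism $H_{*}(\Gamma,\Z)\to K_{*}(C^{*}_{r}(\Gamma))$ cycle-by-cycle: a $1$-cycle $\delta\in\Gamma^{\textnormal{ab}}$ is sent to the class of the unitary $u_{\delta}\in C^{*}_{r}(\Gamma)$. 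Combining this identification with the two isomorphisms above proves the proposition.

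The main obstacle in a self-contained argument would be precisely this last cycle-level identification, which is by no means automatic from the abstract Baum--Connes/AHSS isomorphism and which we therefore take from the cited literature. As a consistency check, Proposition \ref{index-pairing} shows that under $[\delta]\mapsto[u_{\delta}]$ and its $K^{1}$ counterpart $[c]\mapsto[\Dsla_{c,s}]$, the index pairing $K_{1}\times K^{1}\to\Z$ restricts to the canonical perfect pairing $H_{1}(\Gamma,\Z)\times H^{1}(\Gamma,\Z)\to\Z$ (on the free parts), which is consistent with the claimed isomorphism and in particular forces it to be an isomorphism modulo torsion.
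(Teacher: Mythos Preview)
Your proof is correct and follows essentially the same route as the paper's: both invoke the Baum--Connes isomorphism for $\Gamma$ (via $\gamma=1$), use that $M=\h_{3}/\Gamma$ is a model for $B\Gamma$ with vanishing $H_{3}$ (you phrase this via a $2$-dimensional spine, the paper just notes $H_{3}(\Gamma,\Z)=0$ and cites \cite[Proposition 2.1.ii)]{matthey} for the resulting isomorphism $\beta_{1}^{M}:H_{1}(\Gamma,\Z)\to K_{1}^{\geo}(M)$), and both defer the cycle-level identification $[\delta]\mapsto[u_{\delta}]$ to \cite[Theorem 10.4]{bettaiebmattheyvalette}. One small caution: your parenthetical about the Borel--Serre compactification having torus boundary is specific to the finite-covolume arithmetic case and is not literally available for an arbitrary noncocompact torsion-free discrete $\Gamma\subset\psl(\C)$; the spine argument (or simply $H_{3}=0$ for a noncompact $3$-manifold) stands on its own without it.
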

\begin{proof} The quotient manifold $M=\h_3/\Gamma$ is a model for $B\Gamma$. Since $H_{3}(\Gamma,\Z) =0$, by \cite[Propositon 2.1.ii)]{matthey} there is an isomorphism $\beta_{1}^{M}: H_{1}(\Gamma,\Z)\to K_{1}^{\geo}(M)$, which we compose with the Novikov assembly map $\nu^{\Gamma}_{1}:K_{1}^{\geo}(M)\to K_{1}(C^{*}_{r}(\Gamma))$. Since the Baum-Connes conjecture holds for $\Gamma$, $\nu^{\Gamma}_{1}$ is an isomorphism. The composition $\nu_{1}^{\Gamma}\circ\beta_{1}^{M}$ is shown to coincide with the map $[\delta]\mapsto [u_{\delta}]$ in \cite[Theorem 10.4]{bettaiebmattheyvalette}. \end{proof}

\begin{theorem}\label{algisos} Let $\Gamma\subset\psl(\C)$ be a noncocompact torsion-free discrete subgroup. The maps
\begin{center}
\begin{tabular}{rclcccrcl} 
$H_{1}(\Gamma,\Z)$ & $\longrightarrow$ & $K_{1}(C^{*}_{r}(\Gamma))$ && \text{and}  && $H^{1}(\Gamma,\Z)$ & $\longrightarrow$ & $K^{1}(C^{*}_{r}(\Gamma))$ \\
$[\delta]$ & $\mapsto$ & $[u_{\delta}]$ & &&& $[c]$ &  $\mapsto$ & $|c|\cdot [\Dsla_{c,s}]$ 
\end{tabular}
\end{center}
are isomorphisms compatible with the pairings of the respective groups.
\end{theorem}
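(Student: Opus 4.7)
The $K_{1}$-statement is Proposition~\ref{Kthiso}, so I focus on the $K^{1}$-map and the compatibility of pairings. The natural strategy is to construct a left inverse first and to exhibit the stated map as its set-theoretic section, so that a rank count forces both to be isomorphisms.

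Let $p : K^{1}(C^{*}_{r}(\Gamma)) \to \Hom(K_{1}(C^{*}_{r}(\Gamma)),\Z)$ be the evaluation map of the index pairing, and let $t^{*} : \Hom(K_{1}(C^{*}_{r}(\Gamma)),\Z) \to \Hom(H_{1}(\Gamma,\Z),\Z)$ be the dual of the isomorphism $t \colon [\delta]\mapsto[u_{\delta}]$ of Proposition~\ref{Kthiso}. Since $H_{0}(\Gamma,\Z)=\Z$ is free, the group-cohomology universal coefficient theorem identifies $\Hom(H_{1}(\Gamma,\Z),\Z)=H^{1}(\Gamma,\Z)$, and composing yields a group homomorphism
\[ \sigma \,:\, K^{1}(C^{*}_{r}(\Gamma)) \xrightarrow{\;p\;} \Hom(K_{1}(C^{*}_{r}(\Gamma)),\Z) \xrightarrow{\;t^{*}\;} H^{1}(\Gamma,\Z). \]
Consider next the set-theoretic assignment $\phi([c]) := |c|\cdot [\Dsla_{\tilde{c},s}]$, where $\tilde{c}:=c/|c|$ is the normalized cocycle associated to $c\neq 0$ (and $\phi(0):=0$), the class $[\Dsla_{\tilde{c},s}]\in K^{1}(C^{*}_{r}(\Gamma))$ being produced by Theorem~\ref{spectral-triple}. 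Proposition~\ref{index-pairing} gives
\[ \langle [u_{\delta}], [\Dsla_{\tilde{c},s}]\rangle = \tilde{c}(\delta) \quad\text{for every }\delta\in\Gamma, \]
so unwinding the definitions yields $\sigma(\phi([c])) = |c|\tilde{c} = c$ in $H^{1}(\Gamma,\Z)$, i.e.\ $\sigma\circ\phi = \textnormal{id}_{H^{1}(\Gamma,\Z)}$. In particular $\sigma$ is surjective.

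Now I would invoke Proposition~\ref{prop: spectral-small}: both $K^{1}(C^{*}_{r}(\Gamma))$ and $H^{1}(\Gamma,\Z)$ are free abelian of the same finite rank $r$. A surjective group homomorphism $\sigma:\Z^{r}\twoheadrightarrow\Z^{r}$ has a kernel of rank $0$, which is torsion-free inside a free abelian group, hence trivial. Therefore $\sigma$ is an isomorphism, and the identity $\sigma\circ\phi=\textnormal{id}$ determines $\phi=\sigma^{-1}$ uniquely; in particular $\phi$ is a group homomorphism and an isomorphism. The pairing compatibility is the already-recorded identity $\langle [u_{\delta}],\phi([c])\rangle = c(\delta)$.

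The main obstacle is converting the abstract rank equality from the Kasparov spectral sequence into a statement about the specific map $\phi$: additivity of $\phi$ is \emph{not} evident from its defining formula, since normalized cocycles are not closed under addition and the passage from a sum $c_{1}+c_{2}$ to its normalized form is nontrivial. Building $\sigma$ from the pairing and applying the rank argument sidesteps this problem, and simultaneously delivers the pairing compatibility as a by-product.
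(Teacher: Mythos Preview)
Your proof is correct and follows essentially the same strategy as the paper: both arguments show that $K^{1}(C^{*}_{r}(\Gamma))$ is detected by its index pairing with $K_{1}(C^{*}_{r}(\Gamma))$, use Proposition~\ref{index-pairing} to verify that $\phi$ is compatible with that pairing, and conclude that $\phi$ is the inverse of the resulting identification $\sigma$.

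There is one technical difference worth recording. To establish that $\sigma$ (equivalently, the pairing map $p:K^{1}\to\Hom(K_{1},\Z)$) is injective, you invoke Proposition~\ref{prop: spectral-small} to obtain an abstract isomorphism $K^{1}(C^{*}_{r}(\Gamma))\cong H^{1}(\Gamma,\Z)$ and then run a rank argument. The paper instead observes that $C^{*}_{r}(\Gamma)$ is $KK$-equivalent to $C(\overline{\h}_{3})\rtimes\Gamma$, which lies in the bootstrap class, and applies the Rosenberg--Schochet Universal Coefficient Theorem: the relevant $\textnormal{Ext}^{1}_{\Z}(K_{0}(C^{*}_{r}(\Gamma)),\Z)$ term vanishes because $K_{0}(C^{*}_{r}(\Gamma))$ is torsion-free by \eqref{evenKtheory}, so $p$ is an isomorphism directly. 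Both routes ultimately draw on the spectral-sequence computations of Section~\ref{section: Kasparov}, but the UCT argument pinpoints exactly what is needed (torsion-freeness of $K_{0}$ rather than the full abstract isomorphism) and explains conceptually why the pairing suffices. Your rank argument is more elementary and avoids verifying the bootstrap hypothesis, at the cost of importing a stronger input from the spectral sequence.
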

\begin{proof}  Proposition \ref{Kthiso} gives the $K$-theory isomorphism. To show that the $K$-homology map is a homomorphism, we use that $C^{*}_{r}(\Gamma)$ is $KK$-equivalent to $C(\overline{\h}_3)\rtimes\Gamma$ which is in the bootstrap class. By the Universal Coefficient Theorem (UCT) \cite[Theorem 1.17, Corollary 1.18]{RosSch} there is a short exact sequence
\[0\to \textnormal{Ext}^{1}_{\Z}(K_{0}(C^{*}_{r}(\Gamma)),\Z)\to K^{1}(C^{*}_{r}(\Gamma))\xrightarrow{\otimes} \textnormal{Hom}(K_{1}(C_{r}^{*}(\Gamma)),\Z)\to 0, \]
where $\otimes$ denotes the map induced by the Kasparov product. By \eqref{evenKtheory} $K_{0}(C^{*}_{r}(\Gamma))$ is finitely generated and torsion-free, so the $\textnormal{Ext}$ group vanishes and $K^{1}(C^{*}_{r}(\Gamma))\cong \Hom(C_{r}^{*}(\Gamma),\Z)$. That is classes in the $K$-homology $K^{1}(C^{*}_{r}(\Gamma))$ are determined by the index pairing. For an arbitrary cocycle $c:\Gamma\to \Z$, $\frac{c}{|c|}$ is normalised and $D_{\frac{c}{|c|}}=|c|D_{c}$ is a scalar mutliple of $D_{c}$. Thus
\[[D_{c}]=[D_{\frac{c}{|c|}}]\in KK_{1}(C^{*}_{r}(\Gamma),C_{r}^{*}(\Gamma_{c})).\]

Theorem \ref{index-pairing} and the $K$-theory isomorphism show that the classes $|c|[D_{c}]+|c'|[D_{c'}]$ and $|c+c'|[D_{c+c'}]$ have the same index pairing and hence are equal, proving that the map $c\mapsto |c|[\Dsla_{c,s}]$ is a homomorphism. Injectivity follows in the same way. For surjectivity, let $(H,F)$ be an odd Fredholm module and $p_{+}$ the positive spectral projection of $F$. Then $c:\gamma\mapsto \textnormal{Ind}p_{+} u_{\gamma} p_{+}$ is a $1$-cocycle on $\Gamma$, and $|c|[\Dsla_{c,s}]$ is an unbounded Fredholm module whose index pairing coincides with $F$. Therefore $[(H,F)]=|c|[\Dsla_{c,s}]$ proving surjectivity.
\end{proof}

%\subsection{A section for the restriction map}
We now show that the explicit isomorphism of abelian groups  $H^{1}(\Gamma,\Z))\xrightarrow{\sim} K^{1}(C^{*}_{r}(\Gamma))$ is Hecke equivariant and construct an explicit section for the restriction map $K^i(C(\partial \h_3)\rtimes\Gamma) \rightarrow K^i(C^*_r(\Gamma))$ in the Gysin sequence.

\begin{proposition} Let $[T_{g}^{\Gamma}]\in KK_{0}(C^{*}_{r}(\Gamma),C^{*}_{r}(\Gamma))$ be the Hecke class from Definition \ref{GammaHecke},  $c:\Gamma\to \Z$ a cocycle and $\delta\in\Gamma$. We have the identities
\[[u_{\delta}]\otimes [T_{g}^{\Gamma}]=[u_{T_{g}(\delta)}]\in K_{1}(C^{*}_{r}(\Gamma)),\quad [T^{\Gamma}_{g}]\otimes |c|[\Dsla_{c,s}]=|T_{g}(c)|[\Dsla_{T_{g}(c),s}]\in K^{1}(C^{*}_{r}(\Gamma)).\]
In particular the isomorphisms $H_{1}(\Gamma,\Z)\to K_{1}(C^{*}_{r}(\Gamma))$ and  $H^{1}(\Gamma,\Z)\to K^{1}(C^{*}_{r}(\Gamma))$ are Hecke equivariant.
\end{proposition}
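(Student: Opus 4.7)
The plan is to verify both identities on explicit representatives using the factorisation of the Hecke bimodule left action from Proposition \ref{explicithecke}, then deduce the $K$-homology identity from the $K$-theory one via the UCT-based rigidity argument already employed in Theorem \ref{algisos}. Hecke equivariance of the two isomorphisms will then be immediate.

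For the $K$-theory identity, I would fix the trivialisation $T_g^\Gamma\simeq (C^*_r(\Gamma))^d$ of \eqref{heckefree}, so that $[u_\delta]\otimes[T_g^\Gamma]$ is represented by the unitary $t_g(u_\delta)\in U_d(C^*_r(\Gamma))$. Proposition \ref{explicithecke} gives the factorisation $t_g(u_\delta)=\tau(\delta)\,\diag(u_{t_k(\delta)})$, with $\tau(\delta)\in U(d)\subset U_d(C^*_r(\Gamma))$ a permutation matrix. Since $U(d)$ is path-connected, $\tau(\delta)$ lies in the connected component of the identity of $U_d(C^*_r(\Gamma))$, hence contributes trivially in $K_1$. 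Standard additivity of $K_1$ then gives
\[ [u_\delta]\otimes [T_g^\Gamma]=\sum_{k=1}^d [u_{t_k(\delta)}]=\Bigl[u_{\prod_k t_k(\delta)}\Bigr]=[u_{T_g(\delta)}], \]
where the last equality uses that the homomorphism $\Gamma\to K_1(C^*_r(\Gamma))$, $\gamma\mapsto[u_\gamma]$, factors through $\Gamma^{\mathrm{ab}}=H_1(\Gamma,\Z)$, and that $\prod_k t_k(\delta)$ represents $T_g([\delta])$ there by formula \eqref{algHecke}.

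For the $K$-homology identity, I would invoke the UCT: as noted in the proof of Theorem \ref{algisos}, $K_0(C^*_r(\Gamma))$ is finitely generated and torsion-free, so $K^1(C^*_r(\Gamma))\cong\Hom(K_1(C^*_r(\Gamma)),\Z)$ and classes are determined by pairings with the generators $[u_\delta]$. By associativity of the Kasparov product (self-adjointness of the Hecke action with respect to the index pairing) and the $K$-theory identity just established, together with Proposition \ref{index-pairing} applied after normalising $c=|c|c_0$,
\[ \bigl\langle [u_\delta],\,[T_g^\Gamma]\otimes|c|[\Dsla_{c,s}]\bigr\rangle=\bigl\langle [u_{T_g(\delta)}],\,|c|[\Dsla_{c,s}]\bigr\rangle=c(T_g(\delta))=(T_gc)(\delta), \]
where the last equality is the definition \eqref{algHecke} of $T_g$ on $H^1$ together with the fact that $c$ factors through $H_1$. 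The same proposition gives $\langle[u_\delta],|T_g(c)|[\Dsla_{T_g(c),s}]\rangle=(T_gc)(\delta)$, so the two classes agree. Hecke equivariance of the two isomorphisms of Theorem \ref{algisos} is then precisely the content of the two identities.

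The main obstacle is the careful bookkeeping at three points: verifying that the permutation factor $\tau(\delta)$ is genuinely null-homotopic inside $U_d(C^*_r(\Gamma))$ rather than after further stabilisation; tracking that the ambiguity of the product $\prod_k t_k(\delta)\in\Gamma$ (depending on an ordering of cosets) disappears on passage to $\Gamma^{\mathrm{ab}}$; and handling the degenerate case $T_g(c)=0$ separately, where both sides pair trivially with every $[u_\delta]$ and so represent $0\in K^1(C^*_r(\Gamma))$ by the UCT.
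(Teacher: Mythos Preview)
Your proof is correct and, for the $K$-theory identity, essentially identical to the paper's: both use the factorisation $t_g(u_\delta)=\tau(\delta)\diag(u_{t_k(\delta)})$ from Proposition~\ref{explicithecke}, observe that the permutation matrix is trivial in $K_1$, and sum the diagonal contributions.

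For the $K$-homology identity the overall strategy is the same (invoke the UCT and verify agreement on all pairings $\langle [u_\gamma],\,-\,\rangle$), but the computation of the left-hand pairing differs. The paper represents $[T_g^\Gamma]\otimes[\Dsla_{c,s}]$ concretely as the diagonal operator $\diag(\Dsla_{c,s})$ on $\bigoplus_{i=1}^d E\otimes_{C^*_r(\Gamma_c)}L^2(\wedge^*\h_3)$, reads off that the positive spectral projection is $\diag(p_+)$, and computes the Toeplitz index directly as $\sum_i c(t_i(\gamma))$. You instead move the Hecke bimodule across the pairing by associativity, apply the $K$-theory identity you just proved, and then invoke Proposition~\ref{index-pairing} once. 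Your route is shorter and avoids re-analysing the product Fredholm module; the paper's route is more self-contained in that it does not rely on the $K$-theory half of the proposition. Both are sound, and your handling of the edge cases ($\tau(\delta)$ null-homotopic in $U_d$ without stabilisation, ordering ambiguity vanishing in $\Gamma^{\mathrm{ab}}$, the degenerate case $T_g(c)=0$) is careful and correct.
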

\begin{proof} 
By Proposition \ref{explicithecke} we have $t_{g}(u_{\delta})=\tau(\delta)\diag(u_{\chi_{k}(\delta)})$ and since  $\tau(\delta)\in M_{n}(\C)$ we have  $[\tau(\gamma)]=0\in K_{1}(C^{*}_{r}(\Gamma))$. So together with \eqref{algHecke} we find $$[T_{g}(u_{\delta})]=[t_{g}(u_{\delta})]=[\tau(\gamma)\diag(\chi_{i}(\gamma))]=[\diag(u_{\chi_{i}(\gamma)})]=\sum_{i=1}^{d} [u_{\chi_{i}(\gamma)}]=[u_{T_{g}([\delta])}].$$
Thus Hecke equivariance of the map $[\delta]\mapsto [u_{\delta}]$ is proved. For $K$-homology, by the UCT, it suffices to show that for all $\gamma\in \Gamma$ it holds that
\[ (T^{\Gamma}_{g}\otimes |c|[D_{c}], u_{\gamma})=(|T_{g}(c)|[D_{T_{g}(c)}], u_{\gamma}).\]
By Theorem \ref{index-pairing}, we can compute the right handside to equal $T_{g}(c)(\gamma)$. For the left handside, observe that the class $T^{\Gamma}_{g}\otimes [\Dsla_{c,s}]$ is represented by $(\bigoplus_{i=1}^{d} E\otimes_{C^{*}_{r}(\Gamma_{c})}L^{2}(\wedge^{*}\h_3) ,\textnormal{diag}(\Dsla_{c,s}))$. The representation of a unitary $u_{\gamma}$ is given by $\alpha_{g}(u_{\gamma})(h_{i})=(\chi_{i}(\gamma)h_{\gamma(i)})$. The positive spectral projection of $\textnormal{diag}(\Dsla_{c,s})$ is $\tilde{p_{+}}=\textnormal{diag}(p_{+})$ and thus the index pairing becomes
\[(T^{\Gamma}_{g}\otimes |c|[D_{c}], u_{\gamma})=|c|\textnormal{Ind} \tilde{p_{+}}\alpha_{g}(u_{\gamma})\tilde{p_{+}}=|c|\sum_{i=1}^{d}\textnormal{Ind}p_{+}u_{\chi_{i}(\gamma)}p_{+}=\sum_{i=1}^{d} c(\chi_{i}(\gamma))=T_{g}(c)(\gamma),\]
as required.\end{proof}
Unlike the previous results in this section, the following theorem is valid for discrete subsgroups $\Gamma\subset\textnormal{Isom}\h$ in any dimension.
\begin{theorem}  
The $(1-s)$-unbounded Fredholm modules in Theorem \ref{spectral-triple} extend to $(1-s)$-unbounded Fredholm modules for $C(\partial\h)\rtimes \Gamma$ such that $C(\partial\h)$ commutes with $\Dsla_{c,s}$. The extension is compatible with the restriction map $K^{1}(C(\partial\h)\rtimes \Gamma)\to K^{1}(C^{*}_{r}(\Gamma))$.
\end{theorem}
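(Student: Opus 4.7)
My goal is to extend the $C^{*}_{r}(\Gamma)$-representation on the Hilbert space $E\otimes_{C^{*}_{r}(\Gamma_{c})}L^{2}(\wedge^{*}\h)$ to a representation of the crossed product $C(\partial\h)\rtimes\Gamma$ while keeping both the Hilbert space and the operator $\Dsla_{c,s}$ unchanged. Equivalently, I need to produce a unital $*$-representation $\pi:C(\partial\h)\to B(E\otimes L^{2}(\wedge^{*}\h))$ that is covariant with the given $\Gamma$-action, $u_{\gamma}\pi(f)u_{\gamma}^{-1}=\pi(f\gamma)$, and whose image lies in the commutant of $\Dsla_{c,s}$. Once $\pi$ is in place, compatibility with the restriction map $K^{1}(C(\partial\h)\rtimes\Gamma)\to K^{1}(C^{*}_{r}(\Gamma))$ is tautological: restricting the covariant pair $(\pi,u)$ to the subalgebra $C^{*}_{r}(\Gamma)\subset C(\partial\h)\rtimes\Gamma$ generated by the unitaries $u_{\gamma}$ returns exactly the cycle of Theorem \ref{spectral-triple}.

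The construction of $\pi$ will proceed from the harmonic extension $\sigma:C(\partial\h)\to C_{b}(\h)\subset B(L^{2}(\wedge^{*}\h))$, defined by the Poisson integral $\sigma(f)(x):=\int_{\partial\h}f\,d\nu_{x}$. The transformation rule \eqref{Poissonprops} shows that $\widetilde{f\gamma}=\gamma\cdot\tilde{f}$, so $\sigma$ is $\Gamma$-equivariant; however it is only completely positive, not multiplicative. I would lift $\sigma$ to a genuine $*$-representation via a Stinespring-type dilation executed inside the commutant of $\Dsla_{c,s}$. Since $\Dsla_{c,s}$ has compact resolvent by Theorem \ref{spectral-triple}, its commutant is a large von Neumann algebra with atomic centre, providing the multiplicities needed to house such a dilation. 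The $(1-s)$-unbounded Fredholm module axioms for $C(\partial\h)\rtimes\Gamma$ are then verified: the commutator $[\Dsla_{c,s},\pi(f)]$ vanishes by construction for $f\in C(\partial\h)$; the commutator $[\Dsla_{c,s},u_{\gamma}](1+\Dsla_{c,s}^{2})^{-(1-s)/2}$ is bounded by Theorem \ref{spectral-triple}; and the compactness of $\pi(a)(\Dsla_{c,s}\pm i)^{-1}$ for $a\in C(\partial\h)\rtimes\Gamma$ reduces, by boundedness of $\pi(f)$ and density of $C_{c}(\Gamma,C(\partial\h))$, to the already-established case $a=u_{\gamma}$.

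The principal obstacle is the simultaneous realization of $\Gamma$-covariance and exact commutation with $\Dsla_{c,s}$. Exact commutation would naturally force $\pi$ to factor through a character of $C(\partial\h)$, and $\Gamma$-covariance would then demand that this character be $\Gamma$-invariant; since $\Gamma$ acts on $\partial\h$ with dense orbits, no such character exists. Moreover, the commutant of $\Dsla_{c,s}$ is not itself $\Gamma$-invariant, because $\Dsla_{c,s}$ is only $\Gamma$-equivariant modulo bounded operators controlled by Lemma \ref{awesomeestimate}. The resolution exploits the fact that the eigenspace decomposition of $E\otimes L^{2}(\wedge^{*}\h)$ under $\Dsla_{c,s}$ consists of finite-dimensional subspaces, which provides enough multiplicity inside the commutant to accommodate a $\Gamma$-covariant copy of $C(\partial\h)$; making this rigorous, rather than the routine verification of the Fredholm module axioms, is the heart of the argument.
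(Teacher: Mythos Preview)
Your approach has a genuine gap that you yourself identify but do not resolve. You want a $\Gamma$-covariant $*$-homomorphism $\pi:C(\partial\h)\to B(E\otimes L^{2}(\wedge^{*}\h))$ landing in the commutant of $\Dsla_{c,s}$. Your proposed starting point, the Poisson integral $\sigma(f)(x)=\int f\,d\nu_{x}$, does not commute with $\Dsla_{c,s}$: the Hodge--de Rham part of $\Dsla_{c,s}$ has commutator with $\sigma(f)$ equal to Clifford multiplication by $d\sigma(f)$, which is nonzero unless $f$ is constant. Your fix via Stinespring dilation is not coherent: a Stinespring dilation of $\sigma$ lives on an enlarged Hilbert space, and there is no mechanism for pushing it back into the commutant of $\Dsla_{c,s}$ on the original space while preserving both multiplicativity and $\Gamma$-covariance. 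The observation that the eigenspaces of $\Dsla_{c,s}$ are finite-dimensional does not help; on the contrary, it shows that the commutant is a product of matrix algebras, and any unital representation of the infinite-dimensional algebra $C(\partial\h)$ into such a product must factor through evaluation at points on each block, forcing the obstruction you noted.

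The paper's construction is entirely different and bypasses the obstruction. One chooses an open connected fundamental domain $X\subset\h$ for $\Gamma$, defines an almost-everywhere equivariant measurable map $\tau:\h\to\Gamma$ by $\tau(x)=g\Leftrightarrow xg^{-1}\in X$, fixes a boundary point $\xi\in\partial\h$, and sets $(\pi_{X,\xi}(f)\psi)_{n}(h):=f(\xi\,\tau(h)\,g^{-n})\psi_{n}(h)$ on the decomposition $E\otimes L^{2}(\wedge^{*}\h)\cong\bigoplus_{n}L^{2}(\wedge^{*}\h)$. This is a genuine $*$-homomorphism (pointwise multiplication), forms a covariant pair with the unitaries $u_{\delta}$, and commutes with $\Dsla_{c,s}$ for an elementary reason: the multiplier $h\mapsto f(\xi\,\tau(h)\,g^{-n})$ is constant on each translate $X\gamma$, hence commutes with $\Dsla_{HR}$ as well as with the multiplication operators $\rho$ and $c$. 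The freedom in choosing $\xi$ (and $X$) is absorbed by connectedness of $\partial\h$, which makes the resulting $K$-homology class independent of choices. The missing idea in your proposal is precisely this use of a fundamental domain to produce a \emph{locally constant} (hence $\Dsla$-commuting) multiplication operator rather than a smooth one.
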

\begin{proof} Let $X=X_{\Gamma}\subset \h$ be an open connected fundamental domain for $\Gamma$. The disjoint union $\bigcup_{\gamma\in\Gamma} X\gamma$ is dense in $\h$ and  $$\tau:\h\to \Gamma,\quad \tau(x)=\tau_{X}(x)=g \Leftrightarrow xg^{-1}\in X,$$ is an almost everywhere defined equivariant measurable map. 

The tensor product $E\otimes L^{2}(\wedge^{*}\h)$ can be identified with the Hilbert space 
$\bigoplus_{n\in \Z}L^{2}(\wedge^{*}\h)$ by choosing $g\in g^{-1}(|c|)$ and using Equation \eqref{freemod}. By choosing a point $\xi\in\partial\h$, representations of  $C^{*}_{r}(\Gamma)$ and $C(\partial\h)$ are defined, for $\psi=(\psi_{n})_{n\in\Z}$, by
\[u_{\delta}(\psi)_{n}(h)=\psi_{n-c(\delta)}(hg^{-n}\delta g^{n-c(\delta)}),\quad (\pi_{X,\xi}(f)\psi)_{n}(h):=f(\xi\tau(h)g^{-n})\psi_{n}(h),\]
and form a covariant pair. Thus we obtain a representation of $C(\partial\h)\rtimes\Gamma$ on $E\otimes_{C^{*}_{r}(\Gamma_{c})}L^{2}(\wedge^{*}\h)$.

 The representation $\pi_{X,\xi}$ clearly commutes with the multiplication operators $\rho$ and $c$. Because $\pi_{X,\xi}(f)$ is constant on each $X\gamma$, it
 also commutes with the Dirac operator $\Dsla$. Therefore the $(1-s)$ spectral triples from Theorem\ \ref{spectral-triple} extend to $C(\partial \h)\rtimes\Gamma$. Since $\partial\h$ is connected, the choice of $\xi\in\partial\h$ does not affect the homotopy class of the spectral triple. If $Y$ is another
  open connected fundamental domain for $\Gamma$, there exists $\delta\in\textnormal{Isom}(\h)$ such that $X=Y\delta$ and thus $\tau_{X}(x)=\delta\tau_{Y}(x)$. This implies that $\pi_{Y,\xi}=\pi_{X,\xi\delta}$. Therefore the representations $\pi_{X,\xi}$ and $\pi_{Y,\xi}$ are homotopic as well.
\end{proof}

%%%%%%%%%%%%%%%%%%%%%%%%%%%%%%%%%%%%%%%%%%%%%%%
%%%%%%%%%%%%%%%%%%%%%%%%%%%%%%%%%%%%%%%%%%%%%%%
\section{$K$-cycles for Bianchi manifolds} \label{section: geometric}
Let $\Gamma$ be a torsion-free finite-index subgroup of a Bianchi group and $M$ be the associated hyperbolic $3$-manifold. 
We already know from Propositions \ref{prop: spectral-manifold} and  \ref{prop: spectral-homology} that there is an abstract isomorphism 
$$K^0(C_0(M)) \simeq H^1(\Gamma,\Z).$$
In this section, we shall construct an explicit Hecke equivariant isomorphism
$$K^0(C_0(M)) \simeq H_2(\overline{M},\partial \overline{M}, \Z),$$
where $\overline{M}$ is the Borel-Serre compactification of $M$ (see Section \ref{borelserre}).
Recall that 
$$H_{2}(\overline{M},\partial\overline{M},\Z) \cong H^1(\overline{M},\Z) \cong H^1(\Gamma,\Z)$$
and these isomorphisms are Hecke equivariant. Our approach uses geometric $K$-homology and employ work of Matthey \cite{matthey} on geometric $K$-homology of low-dimensional $CW$-complexes.

\subsection{Complex spin structures}
Spin structures on $M$ are in bijection with lifts of the holonomy representation $\Gamma \hookrightarrow \psl(\C)$ to $\textrm{SL}_2(\C)
\simeq \textrm{Spin}(3,1)$ (see, e.g. \cite[Section 2.7]{Pfaff}). It is known that such lifts exist and thus $M$ admits a spin structure. Let us fix a lift of the holonomy map of $M$ and denote the corresponding spin structure on $M$ by $\sigma$. It is well known that any compact oriented $3$-manifold admits a spin structure (see \cite[Section IV]{kirby}), in particular, the Borel-Serre compactification $\overline{M}$ of $M$ admits a spin structure. It turns out that, see \cite[Proposition 1, Section IV]{kirby}, we can choose a spin structure on $\overline{M}$ so that the induced spin structure on $M$ agrees with our fixed $\sigma$. We fix such a spin structure $\delta$ on $\overline{M}$.

A spin structure induces a complex spin (or spin$^{c}$) structure, in a canonical way. We denote the corresponding spin$^c$-structures on $M$ and $\overline{M}$ with the same symbols $\sigma$ and $\delta$ respectively. This will not cause confusion as we shall only consider spin$^c$ structures. In the rest of the paper, we will endow all codimension $0$ and codimension $1$ submanifolds of $\overline{M}$ with the canonical spin$^c$ structure arising from $\delta$. 

\subsection{Geometric $K$-homology}Let us describe $K^{0}(C_{0}(M))$ as a relative group in the Baum-Douglas model for $K$-homology of manifolds \cite{BaumDouglas}. For a CW-pair $(X,Y)$, a \emph{geometric cycle} is a triple $(N, E, \phi)$ consisting of a compact spin$^{c}$ manifold $N$ with boundary $\partial N$, a vector bundle $E\to N$ and a continuous map $\phi:N\to X$ such that $\phi(\partial N)\subset Y$. The parity $*=0,1$ corresponds to the dimension of $N$ being even or odd. Modulo a suitable equivalence relation, such cycles generate the \emph{geometric $K$-homology} $K^{\geo}_{*}(X,Y)$ of the pair $(X,Y)$. By taking $Y=\emptyset$, we obtain the geometric $K$-homology group $K^{\geo}_{*}(X):=K_{*}^{\geo}(X,\emptyset)$. For details see \cite{BHS, JakobHom}.

The paper \cite{matthey} describes explicit relationships between ordinary homology and geometric $K$-homology of low dimensional $CW$-complexes. Recall the Hurewicz homomorphism $h:\pi_{1}(M)\to H_{1}(M,\Z)$, which sends the class of a map $\phi:S^{1}\to M$ to $\phi_{*}([S^{1}])$, where $[S^{1}]\in H_{1}(S^{1},\Z)\simeq \Z$ is the fundamental class. By a slight abuse of notation, we denote $h([\phi])\in H_{1}(M,\Z)$ by $[\phi]$. By surjectivity of $h$, the group $H_{1}(M,\Z)$ is exhausted by the elements $[\phi]$. Similarly, any nontrivial class $z \in H_{2}(M,\Z)$ can be represented by an embedded surface, that is, there is a compact oriented surface $N$ and an embedding $\varphi:N\to M$ such that 
$\varphi_{*}([N]) =z$ where $[N] \in H^2(N,\Z) \simeq \Z$ is the fundamental class, \cite[Corollaire III.7.]{thom}. We shall denote $\varphi_*([N])$ by $[N,\varphi]$.

\begin{proposition}[\cite{matthey}]\label{betas} Let $X$ be a connected $CW$-complex such that $H_{k}(X,\Z)=0$ for all $k\geq 3$. There are explicit natural isomorphisms
\begin{align*}\beta_{\textnormal{odd}}: H_{1}(X,\Z)\to K_{1}^{\geo}(X),\quad & [\varphi]\mapsto [S^{1}, 1_{S^{1}}, \varphi ]\\
 \beta_{\textnormal{ev}}: H_{0}(X,\Z)\oplus H_{2}(X,\Z)\to K_{0}^{\geo}(X),\quad & ([\pt],[N,\varphi])\mapsto [\pt,1_{\pt},i]+[N, 1_{N},\varphi],
\end{align*}
where $[\phi]\in H_{1}(X,\Z)$ and $[N,\varphi]\in H_{2}(X,\Z)$ are as above, and $i:\pt\to X$ is any choice of inclusion.
\end{proposition}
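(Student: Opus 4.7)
The plan is to build the maps explicitly on generators and then identify them with the (collapsed) Atiyah--Hirzebruch spectral sequence (AHSS) for geometric $K$-homology. The hypothesis $H_{k}(X,\Z)=0$ for $k\geq 3$ is what makes the AHSS tractable, essentially forcing it to collapse and the relevant extension in even degree to split.

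First I would check that $\beta_{\textnormal{odd}}$ and $\beta_{\textnormal{ev}}$ are well-defined group homomorphisms. For $\beta_{\textnormal{odd}}$, since $S^{1}$ has a canonical spin$^{c}$ structure and $H_{1}(X,\Z)$ is the abelianisation $\pi_{1}(X)^{\textnormal{ab}}$ under the Hurewicz map, one needs to verify that the assignment $\varphi\mapsto[S^{1},1_{S^{1}},\varphi]$ is additive (using disjoint union of circles and the bordism relation to handle composition of loops) and descends to $H_{1}$. For $\beta_{\textnormal{ev}}$ one invokes Thom's theorem: every class in $H_{2}(X,\Z)$ is represented by an embedded closed oriented surface $N$, which carries a canonical spin$^{c}$ structure inherited from its orientation, and the representative is unique up to oriented bordism of embedded surfaces, which maps to the bordism relation defining $K_{0}^{\geo}(X)$.

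Next I would set up the AHSS $E^{2}_{p,q}=H_{p}(X,K_{q}(\pt))\Rightarrow K^{\geo}_{p+q}(X)$. Bott periodicity gives $K_{q}(\pt)=\Z$ for $q$ even and $0$ for $q$ odd, while the hypothesis kills $E^{2}_{p,q}$ for $p\geq 3$. Consequently only the columns $p=0,1,2$ support nonzero terms. Every differential $d_{r}:E^{r}_{p,q}\to E^{r}_{p-r,q+r-1}$ with $r\geq 2$ has either vanishing source or vanishing target (the source has $q$ even and the target has $q+r-1$; if $r$ is odd then one has even $q$ and even $q+r-1$, which forces the $p$-column of the target to lie outside $\{0,1,2\}$ for $r\geq 3$; if $r=2$ parity of $K_{*}(\pt)$ kills it). Thus the spectral sequence collapses at $E^{2}$.

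In odd total degree, only $(p,q)=(1,0)$ survives, giving $K^{\geo}_{1}(X)\cong H_{1}(X,\Z)$; I would identify this isomorphism with $\beta_{\textnormal{odd}}$ via the edge homomorphism, by verifying that on a loop $\varphi:S^{1}\to X$ the cycle $[S^{1},1_{S^{1}},\varphi]$ filters down to $h([\varphi])\in H_{1}(X,\Z)$ (e.g.\ using the natural transformation $\mu:K^{\geo}_{*}\to H_{*}(-,\Z)$ sending $(N,E,\phi)$ to $\phi_{*}(\textnormal{ch}(E)\smallfrown[N])$, which on $(S^{1},1,\varphi)$ returns $[\varphi]$). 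In even total degree, the surviving $E^{\infty}$-terms are $H_{0}(X,\Z)$ in filtration $0$ and $H_{2}(X,\Z)$ in filtration $2$, yielding a short exact sequence
\begin{equation*}
0\to H_{0}(X,\Z)\to K^{\geo}_{0}(X)\to H_{2}(X,\Z)\to 0 .
\end{equation*}
Because $H_{0}(X,\Z)\cong\Z$ is free, the sequence splits. I would then show $\beta_{\textnormal{ev}}$ realises this splitting: the summand $[\pt,1_{\pt},i]$ generates the filtration-$0$ part, and on an embedded surface the natural transformation $\mu$ sends $[N,1_{N},\varphi]$ to $\varphi_{*}[N]\in H_{2}(X,\Z)$, so $\beta_{\textnormal{ev}}$ is a section of the edge map onto $H_{2}$.

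The main obstacle, and the step that really uses the hypothesis $H_{k}=0$ for $k\geq 3$, is the compatibility of $\beta_{\textnormal{ev}}$ with the filtration: a priori an embedded surface cycle could contain a nontrivial $H_{0}$-component after passing through the filtration, and one has to rule this out. I would handle this by verifying that $\mu\circ\beta_{\textnormal{ev}}$ is the identity on $H_{0}\oplus H_{2}$ (so $\beta_{\textnormal{ev}}$ is injective and its image maps isomorphically onto $H_{0}\oplus H_{2}$ via $\mu$), and combine this with surjectivity coming from the AHSS computation to conclude $\beta_{\textnormal{ev}}$ is an isomorphism. Naturality in $X$ is immediate from the naturality of the Hurewicz map, Thom's representability, and the construction of cycles.
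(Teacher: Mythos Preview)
The paper does not give its own proof of this proposition: it is stated with attribution to Matthey and the proof reads ``This result follows by Theorem 2.1 and Propositions 3.2, 3.3, 3.4 and 3.6 in \cite{matthey}.'' Your sketch therefore supplies genuinely new content rather than reproducing the paper's argument, and the overall AHSS strategy you outline is the standard and correct way to prove the statement directly.

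There is one real error. You write ``Because $H_{0}(X,\Z)\cong\Z$ is free, the sequence splits.'' This is backwards: in a short exact sequence $0\to A\to B\to C\to 0$ of abelian groups it is freeness of the \emph{quotient} $C$ that forces a splitting, not freeness of the subobject $A$ (consider $0\to\Z\xrightarrow{\times 2}\Z\to\Z/2\Z\to 0$). In the situation at hand, $H_{2}(X,\Z)$ need not be free. Fortunately this erroneous remark is redundant in your argument: what actually establishes the splitting is the explicit section $[N,\varphi]\mapsto[N,1_{N},\varphi]$ together with the verification that it is compatible with the edge homomorphism, which is precisely what you go on to do. (Alternatively, and more directly, the collapse map $X\to\pt$ splits the inclusion $K_{0}(\pt)\to K_{0}^{\geo}(X)$, which is exactly the filtration-$0$ piece.) So delete the ``free implies split'' sentence and let the constructive argument carry the weight.

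Two smaller points to tighten. First, the map $\mu$ you invoke, sending $(N,E,\phi)$ to $\phi_{*}(\mathrm{ch}(E)\smallfrown[N])$, is a priori only $\Q$-valued on all of $K_{*}^{\geo}(X)$; what you actually need and use is that on the specific cycles $[\pt,1,i]$ and $[N,1_{N},\varphi]$ it returns the integral classes $[\pt]$ and $\varphi_{*}[N]$, which is clear since $\mathrm{ch}(1)=1$. Second, for surjectivity the cleanest phrasing is: $[\pt,1_{\pt},i]$ visibly generates $F_{0}K_{0}^{\geo}(X)$, and the composition $H_{2}(X,\Z)\xrightarrow{\beta_{2}}K_{0}^{\geo}(X)\to K_{0}^{\geo}(X)/F_{1}\cong H_{2}(X,\Z)$ (the top edge map) is the identity by your $\mu$-computation; hence $\beta_{\textnormal{ev}}$ surjects onto both graded pieces of the two-step filtration and is therefore surjective. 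This avoids appealing to any abstract splitting at all.
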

\begin{proof} This result follows by Theorem 2.1 and Propositions 3.2, 3.3, 3.4 and 3.6 in \cite{matthey}.
\end{proof}

Given a geometric cycle $(N, E, \phi)$ for a CW-pair $(X,Y)$, let $S_{N}\to N$ be the spinor bundle and $D_{E}$ the associated symmetric Dirac operator on the bundle $E\otimes S$. The restriction of $\phi$ to $N\setminus\phi^{-1}(Y)$ gives a continuous map $\phi:N\setminus\phi^{-1}(Y)\to X\setminus Y$, which by the Tietze extension theorem gives a *-homomorphism $C_{0}(X\setminus Y)\to C_{0}(\intN)$. Here $\mathring{N}=N\setminus\partial N\subset N$ denotes the interior of $N$. We so obtain a representation $C_{0}(X\setminus Y)\to B(L^{2}(\mathring{N},S))$. The symmetric operator $D_{E}$ then defines a $K$-homology class by \cite[Theorem 3.2]{Hilsum}. The relation between geometric and analytic $K$-homology is given by the following result.
\begin{lemma}\label{BHSrel} Let $\overline{M}$ denote a topological compactification of $M$ and $\partial{\overline{M}}:=\overline{M}\setminus M$. If $(\overline{M},\partial{\overline{M}})$ is a $CW$-pair, then the map $$K_{0}^{\geo}(\overline{M},\partial{\overline{M}})\xrightarrow{\sim} K^{0}(C_{0}(M))\quad (N,E,\phi)\mapsto (C_{0}(M), L^{2}(\mathring{N}, E\otimes S), D_{E})$$
 is a natural isomorphism.
\end{lemma}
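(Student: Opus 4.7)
The plan is to deduce this from the Baum-Higson-Schick equivalence \cite{BHS} between geometric and analytic $K$-homology for finite CW-pairs, combined with the excision identification of analytic relative $K$-homology with the $K$-homology of the open complement. Since $\overline{M}$ is a compact $3$-manifold with boundary, $(\overline{M},\partial\overline{M})$ is a finite CW-pair, so BHS supplies a natural isomorphism $K_{0}^{\geo}(\overline{M},\partial\overline{M}) \xrightarrow{\sim} K_{0}^{\mathrm{an}}(\overline{M},\partial\overline{M})$, where the right-hand side denotes the analytic relative $K$-homology in the sense of Kasparov. By the standard excision isomorphism for compact pairs, this group coincides with $K^{0}(C_{0}(\overline{M}\setminus\partial\overline{M})) = K^{0}(C_{0}(M))$.

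First I would verify that the composition of these two canonical isomorphisms agrees, at the level of cycles, with the map described in the lemma. Given a geometric cycle $(N,E,\phi)$ with $\phi(\partial N)\subset \partial\overline{M}$, the BHS construction produces a relative Fredholm module built from the Dirac operator on $N$ twisted by $E$; under excision this becomes an unbounded Fredholm module for $C_{0}(M)$, with underlying Hilbert space $L^{2}(\mathring{N}, E\otimes S)$, on which $C_{0}(M)$ acts through the induced $*$-homomorphism $C_{0}(M)\to C_{0}(\mathring{N})$ by pointwise multiplication, exactly as in the statement. Independence of the class from auxiliary choices (for instance a Riemannian metric on $N$) follows from the equivalence relations in geometric $K$-homology together with the homotopy invariance of $KK$-theory.

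The main technical point will be handling the non-compactness of $\mathring{N}$: one must check that the twisted Dirac operator $D_{E}$ on $L^{2}(\mathring{N}, E\otimes S)$ defines a bona fide unbounded Kasparov module, that is, it is essentially self-adjoint with locally compact resolvent. This is provided by Hilsum's theorem \cite[Theorem 3.2]{Hilsum} once $\mathring{N}$ is equipped with a Riemannian metric that is complete near its ends; such a metric is obtained by pulling back a cusp-type metric along $\phi$ in a collar neighbourhood of $\partial N$, which is the natural choice in the geometrically embedded case of Theorem C where the metric is inherited directly from $M$. Since $f\in C_{0}(M)$ acts through $C_{0}(\mathring{N})$, compactness of $f(D_{E}\pm i)^{-1}$ as an operator on $L^{2}(\mathring{N}, E\otimes S)$ follows from local compactness of the resolvent. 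Finally, naturality of the resulting isomorphism is inherited from the naturality of the BHS equivalence and of the excision isomorphism.
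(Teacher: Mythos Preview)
Your approach is essentially the same as the paper's: both deduce the lemma directly from \cite[Theorem 6.2]{BHS}. The paper's proof is in fact a one-line citation, with the cycle-level description of the map given in the paragraph preceding the lemma.

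One small point: you frame the invocation of \cite[Theorem 3.2]{Hilsum} as providing essential self-adjointness once a complete metric is chosen, but that is not quite what Hilsum's result does, nor how the paper uses it. Hilsum's theorem shows that a \emph{symmetric} Dirac-type operator on a manifold with boundary already defines a $K$-homology class, without any completeness assumption; this is exactly how the paper applies it just before the lemma. The upgrade to a genuinely self-adjoint operator via an inherited complete metric only enters later, in the proof of Theorem~\ref{geometriciso}, where \cite[Proposition 11.27]{HR} is used to identify the two resulting classes. So your detour through complete metrics is unnecessary at this stage, though harmless for the conclusion.
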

\begin{proof} This is the statement of \cite[Theorem 6.2]{BHS}.
\end{proof}

In view of the last lemma, we consider the Borel-Serre compactification  $\overline{M}$ of $M$, see Section \ref{borelserre}. The pair $(\overline{M},\partial \overline{M})$ form a $CW$-pair. In view of Proposition \ref{betas} and Lemma \ref{BHSrel}, we aim to construct a relative version of the map $\beta_{\textnormal{ev}}$. We begin with a relative version of Steenrod representability for $H_{2}$, which can be found in \cite[Proposition 1.7.16]{Martelli} (see also \cite[Lemma 2.9]{kapovich} and the remark after its proof). 

\begin{lemma}\label{Steenrodrepresentable} Any nontrivial class $z \in H_{2}(\overline{M},\partial\overline{M},\Z)$ can be represented by a properly embedded surface, that is, 
there is a compact oriented surface $N$ and an embedding $\varphi: N \rightarrow \overline{M}$ such that 
$\varphi( \partial N ) = \varphi(N) \cap \partial \overline{M}$ 
and $\varphi_*([N]) =z$ where $[N] \in H_2(N, \partial N,\Z) \simeq \Z$ is the fundamental class. Moreover $N$ can be chosen so that all its components have negative Euler characteristic.
\end{lemma}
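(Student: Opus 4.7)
The plan is to establish representability by a properly embedded surface via Poincar\'{e}--Lefschetz duality and transversality, then to simplify the representative using the topology of $\overline{M}$ as a compact aspherical 3-manifold whose interior is a finite-volume hyperbolic 3-manifold.

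For representability, since $\overline{M}$ is a compact oriented 3-manifold with boundary, Poincar\'{e}--Lefschetz duality gives
\[H_{2}(\overline{M}, \partial\overline{M}, \Z) \cong H^{1}(\overline{M}, \Z) \cong [\overline{M}, S^{1}],\]
the last isomorphism because $S^{1}$ is a $K(\Z,1)$. Given a nonzero $z$, I would choose a smooth map $f: \overline{M} \to S^{1}$ representing the class dual to $z$, perturb it so that it is transverse to a regular value $p \in S^{1}$ both on $\overline{M}$ and on $\partial \overline{M}$, and set $N := f^{-1}(p)$. Standard transversality then ensures that $N$ is a smoothly properly embedded oriented surface with $\varphi(\partial N) = \varphi(N) \cap \partial \overline{M}$ and $\varphi_{*}[N] = z$, where $\varphi: N \hookrightarrow \overline{M}$ is the inclusion.

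For the negative Euler characteristic condition, I would simplify $N$ using three features of $\overline{M}$: irreducibility, which follows from the asphericity of $\overline{M}$; incompressibility of $\partial \overline{M}$, since each peripheral torus corresponds to a parabolic cusp subgroup and is therefore $\pi_{1}$-injective in $\overline{M}$; and Thurston's theorem that a finite-volume hyperbolic 3-manifold is atoroidal and anannular up to cusp-parallelism. With these in hand, sphere components of $N$ bound balls and may be discarded; a properly embedded disk has its boundary on a torus of $\partial \overline{M}$, which bounds a disk there by incompressibility, producing a sphere that bounds a ball, so the disk is boundary-parallel and may be isotoped away; compressible tori and annuli compress into simpler pieces, which are handled recursively, while essential ones are boundary-parallel by Thurston and may be absorbed into $\partial \overline{M}$. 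None of these moves alters the relative homology class.

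The cleanest conceptual packaging is through the Thurston norm $\|z\| := \min_{N} \sum_{C} \max(0, -\chi(C))$ on $H_{2}(\overline{M}, \partial \overline{M}, \R)$, which for a cusped hyperbolic 3-manifold is a genuine norm. A norm-minimizing representative of $z$ automatically has no component of non-negative Euler characteristic, since any such component contributes zero to the norm and can be excised without affecting the class. The main technical obstacle is the treatment of torus and annulus components; this relies on the geometric input that $\overline{M}$ carries no essential surfaces of zero Euler characteristic apart from cusp-parallel ones. Once that is in place, the simplification reduces to a routine cascade of compressions and boundary-parallel isotopies, and the resulting surface has the required properties.
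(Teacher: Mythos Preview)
Your argument is correct and is precisely the standard one: represent the Lefschetz dual cohomology class by a map to $S^{1}$, take a transverse preimage, then use irreducibility, boundary-incompressibility, and the atoroidal/anannular nature of cusped hyperbolic 3-manifolds (equivalently, nondegeneracy of the Thurston norm) to strip away non-negative Euler characteristic components. The paper does not supply its own proof of this lemma; it simply cites \cite[Proposition 1.7.16]{Martelli} and \cite[Lemma 2.9]{kapovich}, which carry out essentially the argument you have sketched.
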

As before, we denote $\varphi_*([N])$ by $[N, \varphi]$. For convenience we will write $(N,\partial N) \subset (\overline{M},\partial{\overline{M}})$ to mean that $N$ is a compact surface with boundary 
that is properly embedded into $\overline{M}$ as in Lemma \ref{Steenrodrepresentable}.

As $N\subset\overline{M}$ is an embedded hypersurface, the spin structure on $\overline{M}$ descends to $N$ and $(N,1_{N},\phi)$ is a geometric $K$-cycle for $(\overline{M},\partial\overline{M})$. We now show that these cycles exhaust the group $K^{\geo}_0(\overline{M},\partial{M})$.
\begin{proposition}\label{non-compact}There is a  natural isomorphism
\begin{equation}\label{relativematthey} \beta_{2}^{\textnormal{rel}}:H_{2}(\overline{M},\partial\overline{M}, \Z)\xrightarrow{\sim}K_{0}^{\geo}(\overline{M},\partial\overline{M}),\quad [N, \varphi]\mapsto [N, 1_{N}, \varphi],\end{equation}
where $\overline{M}$ is the Borel-Serre compactification of $M$ and $\varphi:(N, \partial N) \rightarrow (\overline{M}, \partial \overline{M})$ is an embedding.

\end{proposition}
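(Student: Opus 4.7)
The plan is to apply the five lemma to the long exact sequences of the pair $(\overline{M},\partial\overline{M})$ in ordinary homology and in geometric $K$-homology, with the absolute isomorphisms from Proposition \ref{betas} furnishing the vertical maps. First I would check the hypothesis of Proposition \ref{betas}: since $\overline{M}$ is a compact connected $3$-manifold with boundary, $H_{k}(\overline{M},\Z)=0$ for $k\geq 3$, and $\partial\overline{M}$ is a finite disjoint union of $2$-tori, so the same vanishing holds component-wise. Proposition \ref{betas} therefore produces absolute isomorphisms $\beta_{\mathrm{ev}}$ and $\beta_{\mathrm{odd}}$ for both $\overline{M}$ and $\partial\overline{M}$.

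The explicit Matthey formulae $[\varphi]\mapsto[S^{1},1_{S^{1}},\varphi]$ and $([\mathrm{pt}],[N,\varphi])\mapsto[\mathrm{pt},1_{\mathrm{pt}},i]+[N,1_{N},\varphi]$ are visibly natural for inclusions of CW-pairs, so together with the Puppe sequence of $(\overline{M},\partial\overline{M})$ in geometric $K$-homology (see \cite{BHS, JakobHom}) they give rise to a commutative ladder comparing the two six-term exact sequences of the pair. Because $\overline{M}$ is connected and $\partial\overline{M}$ is nonempty, $H_{0}(\overline{M},\partial\overline{M})=0$, so the $H_{0}$-summand of $\beta_{\mathrm{ev}}$ collapses on the relative term and the even relative group reduces to $H_{2}(\overline{M},\partial\overline{M},\Z)$. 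The four outer vertical arrows (two copies of $\beta_{2}$ on the absolute $H_{2}$'s and two copies of $\beta_{\mathrm{odd}}$ on the absolute $H_{1}$'s) are then isomorphisms, and the five lemma delivers a middle isomorphism $\beta_{2}^{\mathrm{rel}}\colon H_{2}(\overline{M},\partial\overline{M},\Z)\xrightarrow{\sim} K_{0}^{\geo}(\overline{M},\partial\overline{M})$.

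To identify $\beta_{2}^{\mathrm{rel}}$ with the explicit formula $[N,\varphi]\mapsto[N,1_{N},\varphi]$, I would invoke Lemma \ref{Steenrodrepresentable} to represent any class by a properly embedded surface $(N,\varphi)\colon(N,\partial N)\hookrightarrow(\overline{M},\partial\overline{M})$. The triple $(N,1_{N},\varphi)$ is a legitimate Baum-Douglas cycle for $(\overline{M},\partial\overline{M})$, and the geometric description of the Baum-Douglas boundary map sends it to $(\partial N,1_{\partial N},\varphi|_{\partial N})$, which by construction is $\beta_{\mathrm{odd}}(\partial[N,\varphi])$. A diagram chase in the ladder, combined with the injectivity established above, forces $\beta_{2}^{\mathrm{rel}}[N,\varphi]=[N,1_{N},\varphi]$. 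The main technical point I expect is precisely this compatibility of Matthey's maps with the connecting homomorphisms of the two long exact sequences; it hinges on matching the homological boundary $[N,\varphi]\mapsto[\partial N]$ with its Baum-Douglas counterpart of restricting a cycle to the boundary of its underlying manifold, which is where the explicit Steenrod-style representability provided by Lemma \ref{Steenrodrepresentable} pays off.
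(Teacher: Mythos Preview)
Your overall strategy---compare the long exact sequences of the pair in ordinary homology and in geometric $K$-homology via Matthey's absolute isomorphisms and apply the five lemma---is exactly the paper's. But there is a genuine logical gap in your execution: the five lemma does not \emph{produce} a middle map, it only tells you that a given middle map is an isomorphism. You write that ``the five lemma delivers a middle isomorphism $\beta_{2}^{\mathrm{rel}}$'' and then afterwards try to identify it with the explicit formula; this is backwards. You must first \emph{define} $\beta_{2}^{\mathrm{rel}}$ by the rule $[N,\varphi]\mapsto[N,1_{N},\varphi]$, check that this is well-defined (i.e.\ that homologous embedded surfaces give the same geometric $K$-homology class), verify that the resulting ladder commutes, and only then invoke the five lemma.

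The well-definedness step is not free and is precisely where the paper does work you have omitted. Two properly embedded surfaces representing the same class in $H_{2}(\overline{M},\partial\overline{M},\Z)$ need not be related by any obvious move in geometric $K$-homology. The paper handles this by passing through the oriented bordism group $\Omega^{SO}_{2}(\overline{M},\partial\overline{M})$: since $\Omega^{SO}_{1}=\Omega^{SO}_{2}=0$ and $H_{2}(\overline{M},\partial\overline{M},\Z)$ is torsion-free, the Thom map $\Omega^{SO}_{2}(\overline{M},\partial\overline{M})\to H_{2}(\overline{M},\partial\overline{M},\Z)$ is an isomorphism, so homologous surfaces are actually oriented-bordant in the pair; the bordism then inherits a spin$^{c}$ structure from $\overline{M}$ and witnesses equality in $K_{0}^{\geo}$. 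Once the map is defined, the paper checks commutativity of the two inner squares (with $\iota_{*}$ and with $\partial$) essentially as you sketch, and the five lemma finishes. Your boundary-compatibility observation $(\partial N,1_{\partial N},\varphi|_{\partial N})=\beta_{\mathrm{odd}}(\partial[N,\varphi])$ is exactly the content of one of those squares; the other (compatibility with $\iota_{*}$) you do not address, though it is easy.
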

\begin{proof} The notation $\beta_{2}^{\textnormal{rel}}$ is in accordance with \cite{matthey} and Proposition \ref{betas}, as the maps $\beta_{\textnormal{ev}}=\beta_{0}\oplus \beta_{2}$ and $\beta_{\textnormal{odd}}=\beta_{1}$.

To show that $\beta_{2}^{\textnormal{rel}}$ is well-defined, let $[N_1, \varphi_1], [N_2, \varphi_2]$ represent the same homology class. Consider the oriented bordism group $\Omega^{SO}_2(\overline{M}, \partial \overline{M})$ (see \cite[Section 4]{conner-floyd}). 
Noting that $\Omega^{SO}_0 \simeq \Z$ and $\Omega^{SO}_1 = \Omega^{SO}_2 = 0$, and that $H_{2}(\overline{M},\partial\overline{M}, \Z)$ is finitely 
generated and is torsion-free (as it is isomorphic to $H^1(\Gamma,\Z) = {\rm Hom}(\Gamma,\Z)$), we conclude by Theorem 15.2 of \cite{conner-floyd} 
that the representation map $\mu : \Omega_2(\overline{M}, \partial \overline{M}) \rightarrow H_{2}(\overline{M},\partial\overline{M}, \Z)$ is an isomorphism. 
This implies that $[N_1, \varphi_1]$ and  $[N_2, \varphi_2]$ are bordant in $(\overline{M}, \partial \overline{M})$. As we consider codimension $0$ and codimension $1$ submanifolds of $\overline{M}$ with the spin$^c$ structure inherited from that of $\overline{M}$, it 
follows immediately that the cycles $[N_1, 1_{N_1}, \varphi_1]$ are  $[N_2, 1_{N_2}, \varphi_2]$ 
are spin$^c$-bordant and thus represent the same geometric $K$-homology class. As the addition operation on both groups is given by disjoint union, it is now clear that we have a homomorphism $H_{2}(\overline{M},\partial\overline{M}, \Z)\rightarrow K_{0}^{\geo}(\overline{M},\partial\overline{M})$.

As group operations on both sides amount to taking disjoint unions of manifolds representing classes, it is clear that $\beta_2$ is a homomoprhism.

To show that the map $\beta_{2}^{\textnormal{rel}}$ is an isomorphism, recall the long exact sequence in homology associated to the pair $(\overline{M}, \partial \overline{M})$ which takes the form 
$$\xymatrix{0 & H_0(\overline{M}) \ar[l]& H_0(\partial \overline{M}) \ar[l] & H_1(\overline{M}, \partial \overline{M}) \ar[l]  & H_1(\overline{M}) \ar[l] & H_1(\partial \overline{M}) \ar[l]\\
& 0 \ar[r] & H_3(\overline{M}, \partial \overline{M}) \ar[r] & H_2(\partial \overline{M}) \ar[r]&H_2(\overline{M}) \ar[r]&H_2(\overline{M}, \partial \overline{M}) \ar[u] }$$ 
due to the facts that $H^0(\overline{M},\partial \overline{M}) \simeq 0 \simeq H_3(\partial \overline{M})$. Next, consider the six-term exact sequence of geometric $K$-homology groups (see for instance \cite{BHS, JakobHom}):
$$\xymatrix{K^{\geo}_0(\partial \overline{M}) \ar[r] & K^{\geo}_0(\overline{M}) \ar[r] & K^{\geo}_0(\overline{M}, \partial \overline{M}) \ar[d]^{\partial} \\ 
K^{\geo}_1(\overline{M}, \partial \overline{M}) \ar[u]_{\partial} & K^{\geo}_1(\overline{M}) \ar[l] & K^{\geo}_1(\partial \overline{M})\ar[l]}$$ 

Writing $H_{\textnormal{ev}}(X)=H_{0}(X)\oplus H_{2}(X)$, and $\iota: (\overline{M},\emptyset)\to (\overline{M},\partial\overline{M})$ for the inclusion of CW-pairs, 
Proposition \ref{betas} yields a diagram with exact rows 
$$\xymatrix{H_{\textnormal{ev}}(\partial \overline{M}) \ar[r] \ar[d]_{\beta_{\textnormal{ev}}}& H_{\textnormal{ev}}(\overline{M}) \ar[r]^{\iota_{*}}\ar[d]_{\beta_{\textnormal{ev}}} & H_{2}(\overline{M}, \partial \overline{M})\ar[r]^{\partial}\ar[d]_{\beta_{2}^{\textnormal{rel}}}& H_{1}(\partial\overline{M})\ar[r]\ar[d]_{\beta_{\textnormal{odd}}}& H_{1}(\overline{M})\ar[d]_{\beta_{\textnormal{odd}}}\\
K^{\geo}_0(\partial \overline{M}) \ar[r] & K^{\geo}_0(\overline{M}) \ar[r]^{\iota_{*}} & K^{\geo}_0(\overline{M}, \partial \overline{M})\ar[r]^{\partial} & K^{\geo}_1(\partial\overline{M}) \ar[r]& K^{\geo}_1(\overline{M}), }
$$
whose outer squares commute. If we show that the inner squares commute as well, then the five Lemma and the fact that $\beta_{\textnormal{ev}}, \beta_{\textnormal{odd}}$ are isomorphisms, implies that $\beta_{2}^{\textnormal{rel}}$ is an isomorphism as well. 

To show that $\beta_{2}^{\textnormal{rel}}\circ\iota_{*}=\iota_{*}\circ \beta_{\textnormal{ev}}$, observe that the $H_{0}(\overline{M})$ summand of $H_{\textnormal{ev}}(\overline{M})$ is annihilated by $\iota_{*}$, as is the class of a point in $K_{0}^{\geo}(\overline{M})$. For a surface class $[(N,\varphi)]\in H_{2}(\overline{M})$ we find that
\[\beta_{2}^{\textnormal{rel}}\circ\iota_{*}[(N,\varphi)]=\beta_{2}^{\textnormal{rel}}[(N,\varphi)]=[(N,1_{N},\varphi)]=\iota_{*}[(N,1_{N},\varphi)]=\iota_{*}\circ \beta_{\textnormal{ev}}[(N,\varphi)],\]
as desired. We now prove that $\beta_{\textnormal{odd}} \circ \partial = \partial\circ \beta^{\textnormal{rel}}_2$. 
%Let $N$ be a compact surface with boundary $\partial N$ and $\varphi:N\to \overline{M}$ a continuous map with $\varphi(\partial N) = \varphi(N) \cap \partial \overline{M}$. 
 By Lemma \ref{Steenrodrepresentable} all classes in $H_{2}(\overline{M},\partial\overline{M})$ are of the form $[(N,\varphi)]$. The boundary $\partial N$ is a compact 1-dimensional manifold, and therefore decomposes as a disjoint union $\partial N=\bigsqcup_{i=1}^{k} S^{1}$ of circles $S^{1}$. Denote by $\varphi_{i}$ the restriction of $\varphi$ to the $i$-th circle in this decomposition.  We compute the composition
\begin{align*}\beta_{\textnormal{odd}}\circ\partial[(N,\varphi)]&=\beta_{\textnormal{odd}}[(\partial N, \varphi|_{\partial N})]=\sum_{i=1}^{k}\beta_{\textnormal{odd}}[(S^{1},\varphi_{i})] \\ &=\sum_{i=1}^{k}[(S^{1},1_{S^{1}},\varphi_{i})]=[(\partial N, 1_{N}, \varphi)]=\partial[(N,1_{N},\varphi)]=\partial\circ\beta_{2}^{\textnormal{rel}}[(N,\varphi)].\end{align*}
This completes the proof that $\beta_{2}^{\textnormal{rel}}$ is an isomorphism.\end{proof}
Note that Lemma \ref{Steenrodrepresentable} implies that $\mathring{N}=N\cap M\subset M$ is a closed embedded hypersurface. We equip $\intN$ with the metric inherited from the hyperbolic metric on $M$ as well as with the inherited spin$^{c}$ structure. The Riemannian distances $d_{\intN},d_{M}$ satisfy $d_{M}(x,y)\leq d_{\intN}(x,y)$ for $x,y\in \intN$. Since $\intN$ carries the relative topology as a subset of $M$ and $M$ is complete, it follows that $\intN$ is complete. The spinor bundle $\mathcal{S}_{\intN}\to \intN$ is the restriction of the spinor bundle $\mathcal{S}_M\to M$ to $\intN$ (see \cite{Baer, Diracembedded}). Thus $\intN$ is a complete Riemannian spin$^{c}$ manifold and we denote by $\Dsla_{\intN}$ its Dirac operator, which is essentially self-adjoint on $C^{1}_{c}(\intN,\mathcal{S}_{\intN})$, the compactly supported $C^{1}$-sections.% of $\mathcal{S}_{\intN}$.

\begin{theorem}\label{geometriciso} Let $\Gamma\subset \psl(\C)$ be a noncocompact torsion-free discrete subgroup. There is a natural isomorphism \begin{equation}\label{relativeiso} H_{2}(\overline{M},\partial\overline{M})\xrightarrow{\sim}K^{0}(C_{0}(M)),\quad [(N,\phi)]\mapsto (C_{0}(M), _{\phi}L^{2}(\mathring{N}, \mathcal{S}_{\mathring{N}}),\Dsla_{\mathring{N}}),\end{equation}
where $\mathring{N}$ is viewed as a spin$^c$ surface with associated Dirac operator $\Dsla_{\mathring{N}}$.
\end{theorem}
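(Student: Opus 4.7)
The plan is to realize the stated isomorphism as the composition of two maps already established in this section, namely the relative Matthey isomorphism $\beta_{2}^{\mathrm{rel}}$ of Proposition \ref{non-compact} and the Baum-Higson-Schick identification of Lemma \ref{BHSrel}. Since both are isomorphisms of abelian groups, their composition is automatically an isomorphism; the content of the theorem is therefore that the composite admits the clean analytic description in terms of the self-adjoint Dirac operator $\Dsla_{\mathring{N}}$ on the complete manifold $\mathring{N}$.

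Unpacking the composition, $\beta_{2}^{\mathrm{rel}}$ sends $[(N,\varphi)]$ to the geometric cycle $[(N, 1_N, \varphi)]$, and the BHS map sends this to the $K$-homology class represented by the Dirac operator on $N$ with coefficients in the trivial bundle, viewed as a symmetric operator on $L^{2}(\mathring{N}, \mathcal{S}_{\mathring{N}})$ and made into a class via Hilsum's theorem \cite{Hilsum}. The key observation is that since $\varphi:(N,\partial N)\hookrightarrow (\overline{M},\partial\overline{M})$ is a proper embedding by Lemma \ref{Steenrodrepresentable}, the interior $\mathring{N}=N\cap M$ inherits from $M$ both a spin$^{c}$ structure and a complete Riemannian metric (completeness because $M$ itself is complete and $\partial N$ sits in $\partial\overline{M}$, which is at infinity in the hyperbolic metric). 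On such a complete spin$^{c}$ manifold the Dirac operator is essentially self-adjoint on $C_{c}^{\infty}(\mathring{N},\mathcal{S}_{\mathring{N}})$ by the classical theorem of Wolf and Chernoff. Its unique self-adjoint closure therefore represents the same $K$-homology class as the symmetric operator underlying the BHS construction, since for an elliptic symmetric operator on a manifold with boundary the resulting class in $K^{0}$ of the interior is independent of the choice of self-adjoint extension.

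What then remains is to check the Kasparov module axioms directly: $C_{0}(M)$ acts on $L^{2}(\mathring{N},\mathcal{S}_{\mathring{N}})$ through pullback along $\varphi$, the commutators $[\Dsla_{\mathring{N}},f]$ are bounded for smooth $f$ of compact support, and for every $f\in C_{0}(M)$ the function $f\circ\varphi$ vanishes at infinity on $\mathring{N}$ (again by properness of $\varphi$), so that $f(\Dsla_{\mathring{N}}\pm i)^{-1}$ is compact by local compactness of the Dirac resolvent on a complete manifold. Naturality of the composite follows from the naturality of its two building blocks, which in turn feeds the Hecke equivariance statement in Theorem C via the compatibility of $T_{g}$ with proper coverings discussed in Section \ref{borelserre}.

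The main subtlety, and the step I would expect to require the most care, is precisely the identification of the two representatives of the same class: the symmetric Dirac operator on the compact manifold-with-boundary $N$ (BHS/Hilsum picture) versus the essentially self-adjoint Dirac operator on the complete interior $\mathring{N}$. Once this independence-of-extension point is granted via \cite{BHS, Hilsum}, the rest of the argument is a direct assembly of Proposition \ref{non-compact} and Lemma \ref{BHSrel}.
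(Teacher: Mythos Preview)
Your proposal is correct and follows essentially the same route as the paper: compose the relative Matthey isomorphism $\beta_{2}^{\mathrm{rel}}$ of Proposition \ref{non-compact} with the Baum--Higson--Schick identification of Lemma \ref{BHSrel}, and then argue that the symmetric Dirac operator on $N$ (Hilsum picture) and the essentially self-adjoint Dirac operator on the complete interior $\mathring{N}$ define the same $K$-homology class. The paper handles this last identification by invoking \cite[Proposition 11.27]{HR} directly, whereas you reach the same conclusion via completeness and independence of self-adjoint extension; these are two phrasings of the same fact. One small point the paper makes explicit and you leave implicit: the spin$^{c}$ structure that $N$ inherits from $\overline{M}$ (used in the geometric cycle) must agree, on $\mathring{N}$, with the one inherited from $M$ (used to build $\Dsla_{\mathring{N}}$); this is guaranteed by the compatible choice of spin structures on $M$ and $\overline{M}$ fixed earlier in the section.
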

\begin{proof} By Lemma \ref{BHSrel}, we obtain a map $(N,1_{N},\phi)\to (C_{0}(M), L^{2}(\intN,S), D_{\intN})$ where $D_{\intN}$ is the symmetric operator obtained from the manifold with boundary $N$. Since we have chosen the spin structure on $\overline{M}$ to be compatible with that on $M$, the spin structure that $N$ inherits from $\overline{M}$ is compatible with the spin structure that $\intN$ inherits from $M$. By \cite[Proposition 11.27]{HR} it follows that
\[[(C_{0}(M), L^{2}(\intN,S), D_{\intN})]=[(C_{0}(M),L^{2}(\intN,\mathcal{S}|_{\intN}), \Dsla_{\intN})]\in K^{0}(C_{0}(M)).\]
Combining 
Lemmas \ref{BHSrel}, \ref{Steenrodrepresentable} and Proposition \ref{non-compact}, it thus follows that the map \eqref{relativeiso} is an isomorphism.
\end{proof}

\subsection{Hecke equivariance} 
Given a class $[N,\varphi] \in H_2(\overline{M}, \partial \overline{M},\Z)$ and a Hecke operator $T_g$, it can be seen that the class $T_g([N,\varphi])$ is represented by 
\begin{equation}\label{GeoHecke} T_g([N,\varphi])=[(\pi^{-1}_{g}(N),\tau_{g})]=[(N_{g},\tau_{g})] \end{equation} 
where $\pi_g, \tau_g$ are as in Section \ref{borelserre} and $N_g$ is a compact surface with boundary $\partial N_{g}\subset \partial\overline{M}_{g}$ given by the fiber product 
\begin{equation}\label{relativefibre}N_{g}:=(\overline{M}_{g})_{\pi_{g}}\times_{\varphi}N\simeq \pi^{-1}_{g}(N)\subset \overline{M}_{g}.\end{equation} 
The reader should compare this with the discussion in \cite[Section 3]{DunRam}.  

\begin{proposition}\label{geoequiv} The isomorphism $H_{2}(\overline{M},\partial{\overline{M}},\Z) \to K^{0}(C_{0}(M))$ (cf.  \eqref{relativeiso}) is Hecke equivariant.
\end{proposition}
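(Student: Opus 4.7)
The plan is to compute the Kasparov product $[T_g^M]\otimes_{C_0(M)}[(C_0(M),L^2(\mathring{N},\mathcal{S}_{\mathring{N}}),\Dsla_{\mathring{N}})]$ at the level of unbounded cycles and identify it with the cycle corresponding to $T_g[(N,\varphi)]=[(N_g,\tau_g|_{N_g})]$ under the isomorphism \eqref{relativeiso}. Concretely, using the (evidently proper-map) extension of Theorem \ref{geometriciso}, that target cycle is $(C_0(M),L^2(\mathring{N}_g,\mathcal{S}_{\mathring{N}_g}),\Dsla_{\mathring{N}_g})$ in which $C_0(M)$ acts by pull-back along $\tau_g|_{\mathring{N}_g}$. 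Thus Hecke equivariance of \eqref{relativeiso} reduces to establishing this identity of $KK_0(C_0(M),\C)$-classes.

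First I would identify the underlying Hilbert space. By construction $T_g^M=C_0(M_g)$ is a right $C_0(M)$-module via $\pi_g$ with the sum-over-fibre inner product $\rho_g$, and since $\varphi:\mathring{N}\hookrightarrow M$ is proper and $\pi_g$ is a finite covering map, the fibre product $M_g\times_{\pi_g,\varphi}\mathring{N}$ is canonically homeomorphic to $\mathring{N}_g:=\pi_g^{-1}(\mathring{N})$, which is itself a properly embedded hypersurface in $M_g$ because $\pi_g$ is a local diffeomorphism. A direct computation on elementary tensors $\psi\otimes\xi$ with $\psi\in C_c(M_g)$ and $\xi\in C_c(\mathring{N},\mathcal{S}_{\mathring{N}})$ shows that $\psi\otimes\xi\mapsto \psi\cdot(\pi_g^{*}\xi)$ extends to a unitary isomorphism from $T_g^M\otimes_{C_0(M)}L^2(\mathring{N},\mathcal{S}_{\mathring{N}})$ onto $L^2(\mathring{N}_g,\pi_g^{*}\mathcal{S}_{\mathring{N}})$. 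Since the spin$^c$-structure on $\overline{M}_g$ was fixed as the $\pi_g$-pullback of that on $\overline{M}$, and since $\pi_g$ restricts to a local isometry $\mathring{N}_g\to\mathring{N}$, we have $\pi_g^{*}\mathcal{S}_{\mathring{N}}=\mathcal{S}_{\mathring{N}_g}$. The left $C_0(M)$-action inherited from $T_g^M$ is $f\cdot(\psi\otimes\xi)=(f\circ\tau_g)\psi\otimes\xi$, which under the identification is pointwise multiplication by $f\circ \tau_g|_{\mathring{N}_g}$, matching the representation of the target cycle.

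Next I would identify the product operator. The bimodule $T_g^M$ is finitely generated projective as a right $C_0(M)$-module, realised by a partition of unity on $M_g$ subordinate to sheets of $\pi_g$, so the unbounded Kasparov product machinery of \cite{MR} applies and the product is represented by $1\otimes_{\nabla}\Dsla_{\mathring{N}}$ for any Grassmann-type hermitian connection $\nabla$ compatible with this frame. Because $\pi_g:\mathring{N}_g\to\mathring{N}$ is a local Riemannian isometry preserving the spin$^c$-structure, the lift $1\otimes_{\nabla}\Dsla_{\mathring{N}}$ is locally the intrinsic Dirac operator $\Dsla_{\mathring{N}_g}$; globally the two differ by a bounded endomorphism coming from the chosen frame, so they represent the same class in $K^0(C_0(M))$. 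The standard Kucerovsky/Mesland--Rennie conditions (commutator boundedness and domain compatibility) hold automatically since $\pi_g$ is a smooth finite covering and $\Dsla_{\mathring{N}_g}$ is essentially self-adjoint on the complete manifold $\mathring{N}_g$.

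The main obstacle is the bookkeeping in the unbounded Kasparov product step; however, since $\pi_g$ is a local isometry of spin$^c$-manifolds the analytic identification $1\otimes_\nabla \Dsla_{\mathring{N}}\sim \Dsla_{\mathring{N}_g}$ is entirely standard, so no substantial new analysis is required. Naturality in $g$ of the above identifications then yields the Hecke equivariance of \eqref{relativeiso}.
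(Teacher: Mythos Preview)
Your proposal is correct and follows essentially the same approach as the paper: identify the Hilbert space $T_g^M\otimes_{C_0(M)}L^2(\mathring{N},\mathcal{S}_{\mathring{N}})$ with $L^2(\mathring{N}_g,\mathcal{S}_{\mathring{N}_g})$ via $\chi\otimes\psi\mapsto \chi\cdot\pi_g^{*}\psi$, check the left $C_0(M)$-action becomes pullback along $\tau_g$, and then verify that $\Dsla_{\mathring{N}_g}$ represents the Kasparov product. The only cosmetic difference is that the paper verifies Kucerovsky's connection condition directly for $\Dsla_{\mathring{N}_g}$ (computing $\Dsla_{\mathring{N}_g}\chi\pi_g^{*}\psi-\chi\pi_g^{*}\Dsla_{\mathring{N}}\psi$ on small-support $\chi$), whereas you interpose the $1\otimes_\nabla\Dsla_{\mathring{N}}$ construction from \cite{MR} and then compare it to $\Dsla_{\mathring{N}_g}$; since $\pi_g$ is a local spin$^c$ isometry these amount to the same computation.
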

\begin{proof} Take a class $[N,\varphi] \in H_{2}(\overline{M},\partial{\overline{M}},\Z)$. Comparing \ref{GeoHecke} and the isomorphism \eqref{relativeiso}, we see that we need to show that
\[[(C_{0}(M), _{\tau_{g}}L^{2}(\intN_{g},\mathcal{S}_{\intN_{g}}), \Dsla_{N_{g}})]=[T_{g}^{M}]\otimes [(C_{0}(M), _{\varphi}L^{2}(\intN,\mathcal{S}_{\intN}),\slashed{D}_{\intN})],\]
in the group $KK_{0}(C_{0}(M),C_{0}(M))$. Viewing $\mathring{N}_{g}$ as the inverse image $\pi_{g}^{-1}(\mathring{N})\subset M_{g}$ using \eqref{relativefibre} it is straightforward to show that
\[ w:T_{g}^{M}\otimes_{C_{0}(M)}L^{2}(\mathring{N}, \mathcal{S}_{\mathring{N}})\to _{\tau_{g}}L^{2}(\mathring{N}_{g},\mathcal{S}_{\mathring{N}_{g}}),\quad w(\chi\otimes\psi)(n)=\chi(n)\pi_{g}^{*}\psi(n),\]
is a unitary isomorphism intertwining the left $C_{0}(M)$-representations. To prove that $\slashed{D}_{\mathring{N}_{g}}$ represents the Kasparov product we need to check conditions i-iii in \cite[Theorem 13]{Kuc}, of which ii and iii are trivial since the module $T_{g}^{M}$ carries the $0$ operator. Now suppose that $\chi\in C^{1}_{c}(\mathring{M}_{g})$ is such that $\supp \chi\subset U$, with $U$ an open set such  that $\pi_{g}|_{U}$ is injective. Then we can choose $\zeta\in C_{c}^{1}(\mathring{M})$ with $\chi=(\pi_{g}^{*}\zeta)|_{U}$. Then for $\psi\in L^{2}(\mathring{N},\mathcal{S}_{\intN})$ we have  $\Dsla_{\mathring{N}_{g}}\chi\pi_{g}^{*}\psi=\Dsla_{\mathring{N}_{g}}\pi_{g}^{*}(\zeta\psi)|_{U}=\pi_{g}^{*}(\Dsla_{\mathring{N}}\zeta\psi)|_{U}$. Thus we find 
\begin{align*}\Dsla_{\mathring{N}_{g}} \chi \pi_{g}^{*}\psi - \chi\pi_{g}^{*}\Dsla_{\mathring{N}}\psi&=\pi_{g}^{*}(\Dsla_{\mathring{N}} \zeta \psi-\zeta\Dsla_{\mathring{N}} \psi)|_{U}=\pi_{g}^{*}(c(\textnormal{d}_{\mathring{N}}(\zeta|_{\mathring{N}}))\psi)|_{U} %=(c(\textnormal{d}_{\mathring{N}_{g}}(\chi|_{\mathring{N}_{g}})))\pi_{g}^{*}\psi,
\end{align*}
where $c$ denotes Clifford multiplication of forms. Since $$\|\pi_{g}^{*}(c(\textnormal{d}_{\mathring{N}}(\zeta|_{\mathring{N}}))\psi)|_{U}\|_{L^{2}(\mathring{N}_{g},\mathcal{S}_{\mathring{N}_{g}})}=\|c(\textnormal{d}_{\mathring{N}}(\zeta|_{\mathring{N}}))\psi)\|_{L^{2}(\mathring{N},\mathcal{S})}\leq \|c(\textnormal{d}_{\mathring{N}}(\zeta|_{\mathring{N}}))\|\|\psi\|_{L^{2}(\mathring{N},\mathcal{S})}$$
it follows that $\psi\mapsto \Dsla_{\mathring{N}_{g}} \chi \pi_{g}^{*}\psi - \chi\pi_{g}^{*}\Dsla_{\mathring{N}}\psi$ extends to a bounded operator. The submodule of $T^{M}_{g}$ generated by elements $\chi\in C_{c}^{1}(\mathring{M}_{g})$ of small support is dense in $T^{M}_{g}$. Hence condition i of \cite[Theorem 13]{Kuc} is satisfied and we are done. \end{proof}

%%%%%%%%%%%%%%%%%%%%%%%%%%%%%%%%%%
%%%%%%%%%%%%%%%%%%%%%%%%%%%%%%%%%%
\section{The case of $\psl(\Z)$} \label{section: others}

Let $\Gamma$ be a torsion-free finite index subgroup of $\psl(\Z)$. Then it acts properly discontinuously on the hyperbolic plane $\h_2$ and the quotient $M= \h_2 / \Gamma$ is a finite volume hyperbolic surface with cusps. The boundary of $\h_2$ can be identified with $\P^1(\R)$.

The analogue of Proposition \ref{prop: spectral-big} in this case is the following (note that the cohomological dimension of $\Gamma$ is one). For $i=0,1$, we have
$$K_i(C(\P^1(\R)) \rtimes \Gamma) \simeq H_0(\Gamma,\Z) \oplus H_1(\Gamma,\Z)$$
and 
$$K^i(C(\P^1(\R)) \rtimes \Gamma) \simeq H^0(\Gamma,\Z) \oplus H^1(\Gamma,\Z).$$
This is actually well known, it is a special case of the work of Anantharaman-Delaroche (see \cite{delaroche}) who treated cofinite discrete subgroups of $\psl(\R)$. Note that $H^0(\Gamma,\Z) \simeq \Z$ and $H^1(\Gamma,\Z) \simeq \Z^{2g+c-1}$ where $g$ is the genus of $\Gamma$ and $c \ge 1$ is the number of cusps of $M$. 

In \cite{manin-marcolli}, Manin and Marcolli describe the above isomorphisms in terms of Manin symbols using Pimsner's 6-term exact sequence \cite{pimsner} of which Kasparov's spectral sequence can be viewed as a generalization. 

Much of Section \ref{section: gysin} carries through and we obtain the Hecke equivariant exact hexagon 
\begin{equation}\label{Gysin-modular} 
\xymatrix{ K^{1}(C_{0}(M)) \ar[r] & K^{0}(C(\P^1(\R))\rtimes\Gamma) \ar[r] & K^{0}(C^{*}_{r}(\Gamma) \ar[d] \\ 
K^{1}(C^{*}_{r}(\Gamma)) \ar[u] & K^{1}(C(\P^1(\R))\rtimes \Gamma) \ar[l] & K^{0}(C_{0}(M))  \ar[l] }
\end{equation}
As $M$ is non-compact, this hexagon \eqref{Gysin-modular} breaks apart into two short exact sequences (see \cite{EM})
\begin{equation}\label{Gysineven-modular} 0\rightarrow K^{1}(C_{0}(M))\rightarrow K^{0}(C(\P^1(\R))\rtimes\Gamma)\rightarrow K^{0}(C^{*}_{r}(\Gamma))\rightarrow 0,\end{equation}
\begin{equation}\label{Gysinodd-modular}0\rightarrow K^{0}(C_{0}(M))\rightarrow K^{1}(C(\P^1(\R))\rtimes \Gamma)\rightarrow K^{1}(C^{*}_{r}(\Gamma))\rightarrow 0\end{equation}
as in the Bianchi case.

It is well-known that $\Gamma$ is a free group on $2g+c-1$ generators. It follows, for example, from work of Cuntz \cite{cuntz} and of Lance \cite{lance}, that 
$K^0(C^{*}_{r}(\Gamma)) \simeq \Z$ and $K^1(C^{*}_{r}(\Gamma)) \simeq \Z^{2g+c-1}$. As a result, the sequences 
\eqref{Gysineven-modular} and \eqref{Gysinodd-modular} split and also we get $K^{1}(C_{0}(M)) \simeq \Z^{2g+c-1}$ 
and $K^{0}(C_{0}(M)) \simeq \Z$.

The analogue of Proposition \ref{prop: spectral-small} reads as follows. 
For $i=0,1$, we have
$$K^i(C^{*}_{r}(\Gamma)) \simeq H^i(\Gamma,\Z),$$
$$K_i(C^{*}_{r}(\Gamma)) \simeq H_i(\Gamma,\Z).$$
The map we constructed in Section \ref{section: algebraic} is defined here as well and we get
\begin{theorem} The maps
\begin{center}
\begin{tabular}{rclcccrcl} 
$H_{1}(\Gamma,\Z)$ & $\longrightarrow$ & $K_{1}(C^{*}_{r}(\Gamma))$ && \text{and}  && $H^{1}(\Gamma,\Z)$ & $\longrightarrow$ & $K^{1}(C^{*}_{r}(\Gamma))$ \\
$[\delta]$ & $\mapsto$ & $[u_{\delta}]$ & &&& $[c]$ &  $\mapsto$ & $|c|\cdot [D_{\frac{c}{|c|},\h}]$ 
\end{tabular}
\end{center}
are Hecke equivariant isomorphisms compatible with the pairings of the respective groups.
\end{theorem}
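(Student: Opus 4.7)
The plan is to adapt the proof of Theorem \ref{algisos} from the Bianchi setting, noting that the construction of $[\Dsla_{c,s}]$ in Theorem \ref{spectral-triple} was stated for arbitrary discrete subgroups $\Gamma\subset\textnormal{Isom}(\h)$ in any dimension. In particular, the entire machinery of Section \ref{section: algebraic}, including the unbounded Kasparov module $(E_c,D_c)$, the dual-Dirac element $\Dsla_{\rho,s}$ on $\h_2$, and the resulting $(1-s)$ unbounded Fredholm module $\Dsla_{c,s}$, applies verbatim to $\Gamma\subset\psl(\R)$ acting on $\h_2$.

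For the $K$-theory isomorphism $[\delta]\mapsto[u_\delta]$, I would invoke Proposition \ref{Kthiso}. The surface $M=\h_2/\Gamma$ is aspherical and hence a model for $B\Gamma$; since $\Gamma$ is free, $H_n(\Gamma,\Z)=0$ for $n\geq 2$, so Matthey's map $\beta_1^M: H_1(\Gamma,\Z)\to K_1^{\geo}(M)$ is an isomorphism by \cite{matthey}. Free groups are $a$-$T$-menable so the Baum--Connes conjecture holds, giving that the assembly map $\nu_1^\Gamma$ is an isomorphism, and by \cite{bettaiebmattheyvalette} the composition $\nu_1^\Gamma\circ\beta_1^M$ coincides with $[\delta]\mapsto[u_\delta]$.

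For the $K$-homology isomorphism $[c]\mapsto|c|\cdot[\Dsla_{c,s}]$, I would apply the UCT: $C^*_r(\Gamma)$ is $KK$-equivalent to $C(\overline{\h}_2)\rtimes\Gamma$ which lies in the bootstrap class, and by Cuntz's computation for free groups $K_0(C^*_r(\Gamma))\simeq\Z$ is torsion-free. Hence the Ext term vanishes and $K^1(C^*_r(\Gamma))\simeq\Hom(K_1(C^*_r(\Gamma)),\Z)$, so classes are detected by their index pairings with the unitaries $u_\delta$. Proposition \ref{index-pairing} gives $\langle[u_\delta],[\Dsla_{c,s}]\rangle=c(\delta)$, which simultaneously yields additivity (both $|c|[\Dsla_{c,s}]+|c'|[\Dsla_{c',s}]$ and $|c+c'|[\Dsla_{c+c',s}]$ pair with $u_\delta$ to $(c+c')(\delta)$), injectivity, and compatibility with the pairing $H_1\times H^1\to\Z$. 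Surjectivity then follows by the same argument as in Theorem \ref{algisos}: an arbitrary odd Fredholm module $(H,F)$ with positive spectral projection $p_+$ yields the cocycle $c(\gamma):=\textnormal{Ind}(p_+u_\gamma p_+)$, and $|c|[\Dsla_{c,s}]$ realises the original class.

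Hecke equivariance transfers directly from the Bianchi case without modification. On the $K$-theory side the formula $t_g(u_\delta)=\tau(\delta)\diag(u_{t_k(\delta)})$ from Proposition \ref{explicithecke} together with $[\tau(\delta)]=0\in K_1$ gives $T_g[u_\delta]=[u_{T_g[\delta]}]$. On the $K$-homology side, the UCT reduces the claim to checking the index pairings, and the calculation $\langle T_g^\Gamma\otimes|c|[\Dsla_{c,s}],u_\gamma\rangle=T_g(c)(\gamma)$ proceeds identically to the Bianchi computation. The main obstacle here is conceptual rather than technical: one needs to verify that each ingredient (UCT applied to $C^*_r(\Gamma)$, Matthey's geometric comparison for $2$-dimensional $B\Gamma$, and the unbounded $\gamma$-element construction) is available for rank-one hyperbolic isometry groups in dimension $2$, all of which follow from standard references. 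The key simplification compared to the Bianchi case is that $H_2(\Gamma,\Z)=0$, so $K_0(C^*_r(\Gamma))=\Z$ is trivially torsion-free and the UCT simplification is automatic.
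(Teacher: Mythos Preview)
Your proposal is correct and takes essentially the same approach as the paper, which in fact gives no proof at all for this statement beyond the remark that the constructions of Section~\ref{section: algebraic} apply here; you have simply spelled out why each ingredient of the proof of Theorem~\ref{algisos} and its Hecke-equivariance companion carries over to $\Gamma\subset\psl(\R)$, correctly noting the simplification that $H_2(\Gamma,\Z)=0$ (equivalently $K_0(C^*_r(\Gamma))\simeq\Z$) makes the UCT step automatic.
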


Our results in Section \ref{section: geometric} adapt straightforwardly to the case of $\psl(\Z)$. The  $1$-dimensional analogue of Lemma \ref{Steenrodrepresentable} holds and we get the following 
(using the notation of Section \ref{section: geometric}):
\begin{theorem} There is a Hecke-equivariant isomorphism 
$$H_1(\overline{M},\partial{\overline{M}},\Z) \to K^1(C_{0}(M))$$
sending the homology class $[N, \varphi]$ to the class $[(C_{0}(M), L^{2}(\mathring{N}, E\otimes S), D_{E})]$ 
where $\overline{M}$ denotes the Borel-Serre compatification of $M$.
\end{theorem}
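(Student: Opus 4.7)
The plan is to reproduce the argument of Section \ref{section: geometric} one dimension lower. Since $M$ is now a finite-volume hyperbolic surface with cusps, the Borel--Serre compactification $\overline{M}$ is a compact oriented surface with boundary a disjoint union of circles (one per cusp). All orientable surfaces admit a spin structure; I fix one on $\overline{M}$ whose restriction to $M$ is compatible with a chosen $\mathrm{SL}_{2}(\R)$-lift of the holonomy representation, and equip all properly embedded submanifolds with the inherited spin$^{c}$ structure. I would then assemble the isomorphism as the composition
\[H_{1}(\overline{M},\partial\overline{M},\Z)\xrightarrow{\beta_{1}^{\mathrm{rel}}} K_{1}^{\geo}(\overline{M},\partial\overline{M})\xrightarrow{\sim} K^{1}(C_{0}(M)),\]
with the second arrow being the Baum--Hajac--Schick equivalence of Lemma \ref{BHSrel} (in odd parity, applicable because $\mathring{N}$ is odd-dimensional).

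The first step is the relative Steenrod representability: every class in $H_{1}(\overline{M},\partial\overline{M},\Z)$ is represented by a properly embedded oriented $1$-submanifold $\varphi:(N,\partial N)\to(\overline{M},\partial\overline{M})$, namely a disjoint union of circles in $M$ and arcs joining cusp tori. This follows from Lefschetz duality $H_{1}(\overline{M},\partial\overline{M},\Z)\cong H^{1}(\overline{M},\Z)\cong[\overline{M},S^{1}]$ together with transversality: a generic smooth map $\overline{M}\to S^{1}$ has a regular value whose preimage is the desired $1$-submanifold.

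Next I would construct $\beta_{1}^{\mathrm{rel}}:[N,\varphi]\mapsto[N,1_{N},\varphi]$ and verify it is an isomorphism by the five lemma applied to the ladder formed by the long exact sequence of the pair $(\overline{M},\partial\overline{M})$ and the six-term exact sequence in geometric $K$-homology. Well-definedness follows because the oriented bordism map $\Omega_{1}^{SO}(\overline{M},\partial\overline{M})\to H_{1}(\overline{M},\partial\overline{M},\Z)$ is an isomorphism (since $\Omega_{0}^{SO}\cong\Z$, $\Omega_{1}^{SO}=0$, and the target is free abelian), and bordant $1$-submanifolds produce spin$^{c}$-bordant $K$-cycles. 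The columns of the ladder are filled by Matthey's absolute isomorphisms $\beta_{\mathrm{ev}},\beta_{\mathrm{odd}}$ on $\overline{M}$ and on $\partial\overline{M}$ (both satisfy the low-dimensional hypothesis of Proposition \ref{betas}). Commutativity of the squares is the same calculation as in Proposition \ref{non-compact}: the connecting map sends $[N,1_{N},\varphi]$ to $[\partial N,1_{\partial N},\varphi|_{\partial N}]$, matching the singular boundary under $\beta_{\mathrm{ev}}$ applied to the $0$-cycle $\partial N$.

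Composing with Lemma \ref{BHSrel} and invoking \cite[Proposition 11.27]{HR} to replace the symmetric Dirac operator on the compact $N$ with the self-adjoint Dirac operator on the complete interior $\mathring{N}$ (each component isometric to $S^{1}$ or to $\R$), the resulting class is precisely $[(C_{0}(M),L^{2}(\mathring{N},\mathcal{S}_{\mathring{N}}),\Dsla_{\mathring{N}})]$. Hecke equivariance is then proved verbatim as in Proposition \ref{geoequiv}: writing $N_{g}:=\pi_{g}^{-1}(N)\subset\overline{M}_{g}$, the map $T_{g}([N,\varphi])=[N_{g},\tau_{g}]$ matches the Kasparov product $[T_{g}^{M}]\otimes[(C_{0}(M),L^{2}(\mathring{N},\mathcal{S}_{\mathring{N}}),\Dsla_{\mathring{N}})]$ via the unitary $\chi\otimes\psi\mapsto\chi\cdot\pi_{g}^{*}\psi$, and the three Kucerovsky conditions reduce to the observation that $\pi_{g}^{*}$ locally intertwines the Dirac operators up to a bounded Clifford commutator. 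The main obstacle I anticipate is the careful bookkeeping in the five-lemma diagram for the connecting homomorphism, but this is entirely parallel to the argument already carried out in Proposition \ref{non-compact} and presents no new difficulty in dimension one.
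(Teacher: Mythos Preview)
Your proposal is correct and follows exactly the approach the paper indicates: the paper simply states that ``our results in Section \ref{section: geometric} adapt straightforwardly'' and that ``the $1$-dimensional analogue of Lemma \ref{Steenrodrepresentable} holds,'' without spelling out the details you have supplied. Your use of $H^{1}(\overline{M},\Z)\cong[\overline{M},S^{1}]$ plus transversality for relative Steenrod representability, the bordism argument for well-definedness of $\beta_{1}^{\mathrm{rel}}$, the five-lemma ladder against Matthey's absolute isomorphisms, and the verbatim transcription of Proposition \ref{geoequiv} for Hecke equivariance are precisely the adaptations the paper has in mind. One trivial slip: you write ``arcs joining cusp tori,'' but in the surface case the Borel--Serre boundary components are circles, as you yourself note earlier.
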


Note that $H_1(\overline{M},\partial{\overline{M}},\Z) \simeq H^1(\Gamma,\Z)$ as Hecke modules.

%%%%%%%%%%%%%%%%%%%%%%%%%%%%%%%%%%%%%%%%%%%
%%%%%%%%%%%%%%%%%%%%%%%%%%%%%%%%%%%%%%%%%%%
\section{The extension class as a hypersingular integral operator}  \label{section: hyper}
In section \ref{subsec: extensionclass} we used the harmonic measures $\nu_{x}$ on the boundary $\dH$ to represent the boundary extension \eqref{bdry} as a Kasparov module. In order to compute the $K$-homology boundary map $\partial:K^{0}(C_{0}(M))\to K^{1}(C(\dH)\rtimes\Gamma)$, we now construct an unbounded Kasparov module \cite{BJ} representing the extension class.
\subsection{Harmonic calculus on $T_{1}\h$} As before, $\h$ denotes the Poincar\'{e} ball model of hyperbolic $n+1$ space. To construct an unbounded representative for the boundary extension we discuss hypersingular integral operators defined using the harmonic measures $\nu_{x}$ and a family of metrics $d_{x}$ on $\dH$ which we now describe For all $x,y\in\overline{\h}$ and $g\in G$ we have (cf. \cite[Equation 1.3.2]{Nicholls}):
\begin{equation}\label{fundamental} \|xg- yg\|=|g'(x)|^{\frac{1}{2}}|g'(y)|^{\frac{1}{2}}\|x-y\|,\end{equation}
where $\|\cdot\|$ denotes the Euclidean norm on $\R^{n+1}$. Using the Poisson kernel \eqref{Poissonkernel} the function
\begin{equation} \label{metrics}d_{x}(\xi,\eta):=P(x,\xi)^{1/2}P(x,\eta)^{1/2}\|\xi-\eta\|,\end{equation}
satisfies $d_{xg}(\xi g,\eta g)=d_{x}(\xi,\eta)$ by \eqref{Pg} and so $d_{x}$ is a metric on $\dH$, as this holds for $d_{0}$ and $g$ acts transitively on $\h$ (compare \cite[Lemma 3.4.2]{Nicholls} and \cite[Section 3.3]{quintsurvey}).

 For a pair $(x,\xi)\in T_{1}\h$ we denote by $r_{(x,\xi)}: \R\to \h$ the geodesic ray with $r_{(x,\xi)}(0)=x$ and $\lim_{t\to +\infty}(r_{(x,\xi)}(t)=\xi$. Recall that for $r>0$ and $x,y\in\h$, the \emph{Sullivan shadow} is the set
\[\mathcal{O}_{r}(x,y):=\{\xi\in\partial\h: \inf\{d_{\h}(r_{x,\xi}(t),y):t\in[0,\infty)\}\leq r\}\subset\partial\h.\]
A proof of the following result can be found in \cite{Nicholls, quintsurvey}.
\begin{proposition}[Sullivan's shadow lemma \cite{Sullivan}] \label{Sullshad} For all $\xi,\eta\in \mathcal{O}_{r}(x,y)$ it holds that
$$e^{-r}e^{d(x,y)}\leq d_{x}(\xi,\eta)\leq e^{d(x,y)}.$$ Moreover there exists $r_{0}$ such that for all $r\geq r_{0}$ there exists $C_{r}>0$ for which 
$$C_{r}^{-1}e^{-nd(x,y)}\leq \nu_{x}(\mathcal{O}_{r}(x,y))\leq C_{r}e^{-nd(x,y)}, $$
for all $x,y\in\h$.
\end{proposition}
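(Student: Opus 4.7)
The strategy for both inequalities is to exploit the $G$-equivariance of all three objects---the harmonic measures $\nu_x$, the boundary metrics $d_x$, and the shadows $\mathcal{O}_r(x,y)$---to reduce each estimate to a direct computation in a canonical configuration with $y=0$. Recall that $d_{xg}(\xi g,\eta g)=d_x(\xi,\eta)$ (consequence of \eqref{fundamental} and \eqref{Pg}), $d\nu_{xg}(\xi g)=d\nu_x(\xi)$ from \eqref{Poissonprops}, and $\mathcal{O}_r(xg,yg)=\mathcal{O}_r(x,y)\cdot g$ since $G$ acts by hyperbolic isometries preserving geodesic rays. Using the transitive action of $G$ on $\h$, I pick $g\in G$ with $yg=0$ and set $x':=xg$, so that $\|x'\|=\tanh(D/2)$ where $D:=d_{\h}(x,y)$. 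In this reduced setting, the shadow $\mathcal{O}_r(x',0)$ is a spherical region centered at the antipode $-x'/\|x'\|$ on $\partial\h$, corresponding to geodesics from $x'$ that first pass near $0$.

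For the metric estimate, I will prove a two-sided Poisson-kernel asymptotic on the shadow: using that $\|x'-\xi'\|=1+\|x'\|+O_{r}(e^{-D})$ and $1-\|x'\|^{2}=4e^{-D}+o(e^{-D})$ uniformly for $\xi'\in\mathcal{O}_r(x',0)$, one obtains $P(x',\xi')\asymp e^{-D}$ with multiplicative constants depending only on $r$ through the angular width of the shadow. Substituting into $d_{x'}(\xi',\eta')=P(x',\xi')^{1/2}P(x',\eta')^{1/2}\|\xi'-\eta'\|$ and combining with elementary Euclidean bounds on $\|\xi'-\eta'\|$ across the shadow produces the stated double inequality.

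For the measure estimate, the Radon--Nikodym identity $d\nu_{x'}=P(x',\cdot)^{n}d\nu_0$ combined with the above Poisson-kernel asymptotic gives
\[
\nu_x\bigl(\mathcal{O}_r(x,y)\bigr)=\nu_{x'}\bigl(\mathcal{O}_r(x',0)\bigr)=\int_{\mathcal{O}_r(x',0)}P(x',\xi')^{n}d\nu_0(\xi')\asymp e^{-nD}\cdot\nu_0\bigl(\mathcal{O}_r(x',0)\bigr).
\]
It therefore remains to produce $r_0>0$ such that for $r\geq r_0$, $\nu_0(\mathcal{O}_r(x',0))$ is pinched between two positive constants depending only on $r$, uniformly in $x'$. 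The upper bound is immediate since $\mathcal{O}_r(x',0)$ is contained in the open hemisphere opposite $x'/\|x'\|$. For the lower bound, I will exhibit an explicit angle $\theta_0(r)>0$, bounded away from zero once $r\geq r_0$, such that every $\xi'\in\partial\h$ with angular distance $\leq\theta_0(r)$ from $-x'/\|x'\|$ satisfies $\xi'\in\mathcal{O}_r(x',0)$; this embeds a spherical cap of definite $\nu_0$-measure inside the shadow.

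The main technical obstacle is verifying the \emph{uniformity in $x'$} of both the Poisson-kernel asymptotic $P(x',\xi')\asymp e^{-D}$ and of the lower angular radius $\theta_0(r)$, as $\|x'\|\to 1$. These amount to explicit but somewhat delicate estimates for the perpendicular Euclidean/hyperbolic distance from $0$ to a geodesic circle joining $x'$ to a boundary point $\xi'$ in the Poincar\'e ball; the key observation is that because $0$ is fixed while $x'$ approaches the boundary, the geometry of such geodesics simplifies enough to give $r$-dependent but $x'$-independent bounds. Once these elementary estimates are established, the two inequalities of the proposition follow directly from the $G$-equivariant reductions described above.
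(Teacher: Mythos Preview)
The paper does not prove this proposition; it explicitly defers to the literature, stating just before the statement that ``A proof of the following result can be found in \cite{Nicholls, quintsurvey}.'' So there is no in-paper argument to compare against.

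Your sketch is essentially the standard proof one finds in those references: use the $G$-equivariance of $\nu_x$, $d_x$, and the shadows to normalise one of the two points, then reduce everything to explicit Poisson-kernel estimates in the ball model. This is sound. Two small remarks. First, the conventional normalisation (and the one in Nicholls and in Quint's notes) sends $x$ to $0$ rather than $y$ to $0$; your choice works just as well by equivariance, but the computations are marginally cleaner the other way since $\nu_0$ is Lebesgue and $d_0$ is Euclidean, so the shadow becomes a small spherical cap whose Euclidean radius you estimate directly. Second, note that the lower bound in the metric inequality as printed cannot hold for \emph{all} pairs $\xi,\eta$ in the shadow (take $\xi=\eta$); what is actually used later in the paper (in the proof of Lemma~\ref{unicorn}) is the complementary statement that points \emph{outside} a deeper shadow are at $d_x$-distance at least a constant times $e^{-d(x,y)}$ from the limit point $\xi$. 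Your Poisson-kernel asymptotic $P(x',\xi')\asymp e^{-D}$ on the shadow is exactly what is needed for both this and the measure estimate, so your outline would yield the version of the statement the paper actually uses.
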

We start with the following observation, concerning the Riesz potentials commonly studied in metric measure theory (see for instance \cite{Zahle}).
\begin{lemma}\label{unicorn}For all $0<s<n$, the integral $$I_{s} = I_{s}(x,\xi):=\int \frac{1}{d_{x}(\xi,\eta)^{n-s}}d\nu_{x}\eta,$$ is finite and independent of $(x,\xi)$.
%$$I_{s}(1):=\sup_{\xi}|I_{s}(1)(x,\xi)|$$
%is independent of $x$.
\end{lemma}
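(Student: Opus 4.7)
The plan is to prove the two claims separately, deducing constancy from $G$-invariance first and then finiteness from a single convenient reduction to Euclidean Riesz integrals.

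\emph{Step 1: Constancy via $G$-invariance.} The transformation rules \eqref{Poissonprops} and $d_{xg}(\xi g, \eta g) = d_x(\xi, \eta)$ (which follows directly from \eqref{Pg}, \eqref{fundamental} and the definition \eqref{metrics}) suggest the substitution $\eta \mapsto \eta g$. Concretely, I would compute
\[
I_s(xg, \xi g) \;=\; \int \frac{d\nu_{xg}(\eta)}{d_{xg}(\xi g, \eta)^{n-s}} \;=\; \int \frac{d\nu_x(\eta')}{d_x(\xi, \eta')^{n-s}} \;=\; I_s(x,\xi),
\]
via the change of variable $\eta = \eta' g$. Since $G$ acts transitively on $T_1 \h = \h \times \partial \h$ (the isometry group is transitive on $\h$ and its stabiliser at $0$ is a compact group acting transitively on the unit sphere $\partial \h = S^n$), this shows $I_s$ is constant on $T_1 \h$, so it suffices to compute it at a single convenient basepoint.

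\emph{Step 2: Finiteness at $x = 0$.} Evaluating at $x=0$, the Poisson kernel satisfies $P(0,\xi)=1$ for $\xi\in S^{n}$, hence $d_0(\xi,\eta)=\|\xi-\eta\|$ is the Euclidean chordal distance, and $\nu_0$ is normalised Lebesgue measure on $S^n$. So
\[
I_s \;=\; I_s(0,\xi) \;=\; \int_{S^n} \frac{d\nu_0(\eta)}{\|\xi - \eta\|^{n-s}},
\]
for any $\xi\in S^{n}$. This is a standard Riesz-type integral on the $n$-sphere. Near the singularity $\eta = \xi$, using geodesic normal coordinates on $S^n$ centred at $\xi$, the chordal distance is comparable to the geodesic distance $r$, so the integrand is $\asymp r^{-(n-s)}$ while the volume element is $\asymp r^{n-1}\,dr\,d\omega$. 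The radial integral $\int_0^{\epsilon} r^{s-1}\,dr$ converges for $s>0$, which is our hypothesis; away from $\xi$ the integrand is bounded. Hence $I_s<\infty$, completing the proof.

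\emph{Alternative via Sullivan's shadow lemma.} If one prefers to stay intrinsic to $\dH$ with the family $(d_x,\nu_x)$, the same finiteness follows by dyadic decomposition: writing $A_k := \{\eta : 2^{-k-1} < d_x(\xi,\eta) \leq 2^{-k}\}$, the shadow lemma (Proposition \ref{Sullshad}) gives $\nu_x(A_k) \lesssim 2^{-kn}$, so $I_s \lesssim \sum_k 2^{k(n-s)} 2^{-kn} = \sum_k 2^{-ks} < \infty$ for $s>0$. I do not anticipate any serious obstacle: the only subtlety is verifying that $d_x$ is indeed a metric (not just a symmetric function) and that $G$ acts transitively on $T_1\h$, both of which are standard.
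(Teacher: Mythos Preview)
Your proposal is correct. The main difference from the paper is the order of the two steps and the tool used for finiteness. The paper first proves finiteness at an arbitrary point $(x,\xi)$ by a dyadic decomposition of $\partial\h$ into annuli of Sullivan shadows $\mathcal{O}_r(x,x_k)\setminus\mathcal{O}_r(x,x_{k+1})$ along the geodesic from $x$ to $\xi$, using Proposition~\ref{Sullshad} to control both $d_x$ and $\nu_x$ on each annulus; only afterwards does it establish constancy, in two separate substeps (first $\xi$-independence via $G_x$, then $x$-independence via $G$). You instead combine these into a single transitivity argument on $T_1\h$ and place it first, which lets you check finiteness only at the Euclidean basepoint $x=0$, where $P(0,\cdot)\equiv 1$ reduces the question to the classical spherical Riesz integral $\int_{S^n}\|\xi-\eta\|^{-(n-s)}d\nu_0(\eta)$. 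This is more elementary and bypasses the shadow lemma entirely; the paper's route, on the other hand, yields an explicit uniform bound $I_s\leq C_r e^{n-s}\sum_{k\geq 0}e^{-sk}$ directly in terms of the shadow constants, without appealing to the transitivity reduction. Your alternative via dyadic $d_x$-annuli is essentially the paper's argument with base $2$ in place of base $e$.
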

\begin{proof} To see that the integral is finite for fixed $(x,\xi)$, we only consider the case $0< s <n$, as the case $s\geq n$ is immediate. Choose points $x_{k}$ on the geodesic from $x$ to $\xi$ such that $d(x,x_{k})=k$. We write $\partial\h$ as a disjoint union
\[\partial \h=\bigcup_{k=0}^{\infty} \mathcal{O}_{r}(x,x_{k})\setminus \mathcal{O}_{r}(x,x_{k+1}),\]
and expand the integral and then estimate using Proposition \ref{Sullshad}:
\begin{align*}\int \frac{1}{(d_{x}(\xi,\eta))^{n-s}}d\nu_{x}\eta&=\sum_{k=0}^{\infty}\int_{ \mathcal{O}_{r}(x,x_{k})\setminus \mathcal{O}_{r}(x,x_{k+1})}\frac{1}{d_{x}(\xi,\eta)^{n-s}}d\nu_{x}\eta\leq C_{r} \sum_{k=0}^{\infty}\frac{\nu_{x}(\mathcal{O}_{r}(x,x_{k}))}{(e^{-d(x,x_{k+1})})^{n-s}}\\ &\leq C_{r} \sum_{k=0}^{\infty} e^{(n-s)(k+1)}e^{-nk}= C_{r}e^{n-s} \sum_{k=0}^{\infty} e^{-s k}<\infty.
\end{align*}
Thus the integral $I_{s}\Psi(x,\xi)$ is finite for fixed $(x,\xi)$. 
For $g\in G_{x}$ we have 
\begin{align*}\int \frac{1}{(d_{x}(\xi,\eta))^{n-s}}d\nu_{x}(\eta)&=\int\frac{1}{(d_{x}(\xi g,\eta g))^{n-s}}d\nu_{x}(\eta g)=\int\frac{1}{(d_{x}(\xi g,\eta ))^{n-s}}d\nu_{x}(\eta),
\end{align*}
and since $G_{x}$ acts transitively on $\partial\h$, we see that $I_{s}(x,\xi)$ is constant in $\xi$. Using this fact, we can write for $g\in G$:
\begin{align*}\int \frac{1}{(d_{x}(\xi,\eta))^{n-s}}d\nu_{x}(\eta)&=\int \frac{1}{ d_{xg}(\xi g,\eta g))^{n-s}}d\nu_{xg}(\eta g)=\int \frac{1}{( d_{xg}(\xi,\eta))^{n-s}}d\nu_{xg}(\eta),\end{align*}
and since $G$ acts transitively on $\h$ we see that $I_{s}(x,\xi)$ is independent of $x$ as well. \end{proof}
Now we consider the spherical hypersingular operator $\Delta_{0}:\Lip(\dH)\to L^{2}(\partial\h,\nu_{0})$ and the projection $p_{0}\in \mathbb{B}(L^{2}(\partial\h,\nu_{0}))$ given by 
\begin{equation}\label{initialD}\Delta_0 f(\xi)=\int\frac{f(\xi)-f(\eta)}{d_{0}(\xi,\eta)^{n}}d\nu_{0}\eta,\quad p_{0}f(\xi):=\int f(\eta)d\nu_{0}\eta.\end{equation}
Operators of exponent $n+\ep$ in the denominator have been extensively studied by Samko \cite{Samkoproc, Samkobook}.
\begin{lemma}\label{domainrangeresolvent} The operator $\Delta_{0}$ maps  $\Lip(\partial\h)\subset L^{2}(\partial\h,\nu_{0})$ into bounded functions on $\partial \h$, is essentially self-adjoint on $C^{1}(\dH)\subset \Lip(\dH)$ and has compact resolvent. Moreover $\ker \Delta_{0}=\textnormal{Im } p_{0}$ and $\Delta_{0}+p_{0}$ is strictly positive.
\end{lemma}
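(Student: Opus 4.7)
The plan is to deduce all four properties from the explicit form of the quadratic form associated to $\Delta_{0}$ together with its invariance under the rotation group fixing $0\in\h$. First I would prove boundedness on Lipschitz functions: at the origin the Poisson kernel satisfies $P(0,\xi)=1$ for every $\xi\in\dH$, so $d_{0}(\xi,\eta)=\|\xi-\eta\|$ on $\dH$. For $f\in \Lip(\dH)$ with Lipschitz constant $L$,
\[\left|\frac{f(\xi)-f(\eta)}{d_{0}(\xi,\eta)^{n}}\right|\leq \frac{L}{d_{0}(\xi,\eta)^{n-1}},\]
and Lemma \ref{unicorn} with $s=1$ yields $\|\Delta_{0}f\|_{\infty}\leq L\cdot I_{1}<\infty$, proving the first claim.

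I would next symmetrise the iterated integral defining $\langle\Delta_{0}f,g\rangle$ by Fubini together with the swap $\xi\leftrightarrow\eta$. For $f,g\in C^{1}(\dH)$ this yields
\[\langle \Delta_{0}f,g\rangle=\frac{1}{2}\iint_{\dH\times\dH}\frac{(f(\xi)-f(\eta))\overline{(g(\xi)-g(\eta))}}{d_{0}(\xi,\eta)^{n}}\,d\nu_{0}(\xi)\,d\nu_{0}(\eta),\]
showing $\Delta_{0}$ is symmetric and non-negative. Vanishing of the associated quadratic form forces $f(\xi)=f(\eta)$ for $\nu_{0}\otimes\nu_{0}$-almost every $(\xi,\eta)$, hence $f$ is constant, so $\ker\Delta_{0}$ is the subspace of constant functions, which is exactly $\textnormal{Im}\,p_{0}$.

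For essential self-adjointness and compactness of the resolvent I would exploit the $SO(n+1)$-invariance of $\Delta_{0}$, inherited from that of $d_{0}$ and $\nu_{0}$. The Peter--Weyl decomposition gives $L^{2}(\dH,\nu_{0})=\bigoplus_{k\geq 0}\mathcal{H}_{k}$ into finite-dimensional irreducible subspaces of spherical harmonics of degree $k$, and Schur's lemma forces $\Delta_{0}|_{\mathcal{H}_{k}}=\lambda_{k}\cdot\mathrm{Id}$ with $\lambda_{k}\geq 0$, $\lambda_{0}=0$ and (by the previous step) $\lambda_{k}>0$ for every $k\geq 1$. The algebraic direct sum $\mathcal{H}^{\mathrm{alg}}:=\bigoplus^{\mathrm{alg}}_{k\geq 0}\mathcal{H}_{k}$ is dense in $L^{2}$ and contained in $C^{1}(\dH)$; because $(\Delta_{0}\pm i)(\mathcal{H}^{\mathrm{alg}})=\mathcal{H}^{\mathrm{alg}}$ is dense, $\Delta_{0}|_{\mathcal{H}^{\mathrm{alg}}}$ is essentially self-adjoint with self-adjoint closure $\overline{\Delta_{0}}$ diagonal in the spherical harmonic basis. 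Since any symmetric extension of an essentially self-adjoint operator has the same closure, $\Delta_{0}|_{C^{1}(\dH)}$ is essentially self-adjoint as well. Once $\lambda_{k}\to\infty$ is established, compactness of $(\overline{\Delta_{0}}+i)^{-1}$ follows since the $\mathcal{H}_{k}$ are finite-dimensional; strict positivity of $\Delta_{0}+p_{0}$ is then immediate, as it acts on $\mathcal{H}_{k}$ by $\lambda_{k}+\delta_{0,k}\geq\min\{1,\lambda_{1}\}>0$.

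The main obstacle is the eigenvalue growth $\lambda_{k}\to\infty$. I would reduce to a one-dimensional integral by testing $\Delta_{0}$ on a zonal spherical harmonic of degree $k$ centred at a chosen pole: in $\theta$-coordinates, $\lambda_{k}$ becomes an explicit integral of a Gegenbauer polynomial against $(2-2\cos\theta)^{-n/2}\sin^{n-1}\theta\,d\theta$, whose asymptotic behaviour in $k$ can be extracted either directly from the classical Gegenbauer asymptotics or via the spectral theory for hypersingular integral operators on the sphere developed in \cite{Samkobook, Samkoproc}.
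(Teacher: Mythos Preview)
Your argument is correct and lands on the same spherical-harmonic diagonalisation as the paper, but the route differs in emphasis. The paper goes straight to Samko's multiplier formula: it writes $\Delta_{0}Y_{m,k}=\lambda_{m}Y_{m,k}$ with the explicit integral $\lambda_{m}=\int_{-1}^{1}(1+t)^{(n-3)/2}(1-t)^{-1}(1-P_{m}(t))\,dt$ in terms of Legendre polynomials, then expands $P_{m}$ to obtain the concrete lower bound $\lambda_{m}\geq 2^{\alpha}(2^{m}-1)$ for $m\geq\alpha=(n-3)/2$, from which positivity, the kernel statement, compact resolvent, and essential self-adjointness all fall out at once. Your version instead front-loads the structural facts---symmetry and positivity via the symmetrised quadratic form, the kernel via the vanishing argument, diagonalisation via $SO(n+1)$-invariance and Schur---and only at the end confronts the growth $\lambda_{k}\to\infty$, which you (correctly) identify as the crux and defer to zonal harmonics and Gegenbauer/Samko asymptotics. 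That last step is precisely where the paper's explicit Legendre computation lives, so the two arguments ultimately meet; your path is more conceptual and makes the kernel and positivity transparent without any special-function input, while the paper's buys a quantitative eigenvalue bound that your outline does not supply.
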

\begin{proof} Observe that for $f\in\Lip(\partial\h)$ , by H\"{o}lder's inequality and Lemma \ref{unicorn} we have
\[\left|\int\frac{f(\xi)-f(\eta)}{d_{0}(\xi,\eta)^{n}}d\mu_{0}\eta \right|\leq\int \frac{|f(\xi)-f(\eta)|}{d_{0}(\xi,\eta)^{n}}d\nu_{0}\eta\leq  \|f\|_{\Lip}\int\frac{1}{d_{0}(\xi,\eta)^{n-1}}d\nu_{0}\eta=I_{1}\|f\|_{\Lip},\]
so $\Delta_{0}f$ is a bounded function and thus $\Delta_{0}f\in L^{2}(\partial\h,\nu_{0})$. In particular, any orthonormal family of spherical harmonics $Y_{m,k}\in C^{1}(\dH)$ is in the domain of $\Delta_{0}$. Using the the method of Samko \cite[Lemma 6.25]{Samkobook} we see that the multiplier  of $\Delta_0$ on spherical harmonics is given by
\[Y_{m,k}\mapsto \lambda_{m}Y_{m,k},\quad \lambda_{m}=\int_{-1}^{1}(1+t)^{\frac{n-3}{2}}(1-t)^{-1}(1-P_{m}(t))dt.\]
Here 
$P_{m}(t)=\sum_{k=0}^{m}\left(\frac{-1}{2}\right)^{k}\binom{m}{k}\binom{m+k}{k}(1-t)^{k}$ is  the $m$-th Legendre polynomial. 
In particular we have $\lambda_{0}=0$ since $P_{0}(t)=1$. For $m>0$ we find
\[(1-t)^{-1}(1-P_{m}(t))=\sum_{k=1}^{m}\left(\frac{-1}{2}\right)^{k-1}\binom{m}{k}\binom{m+k}{k}(1-t)^{k-1},\]
so we are concerned with the integrals
\[\int_{-1}^{1}(1-t)^{k-1}(1+t)^{\alpha}dt=\frac{(-2)^{k-1}2^{\alpha}(k-1)!}{(\alpha+1)\cdots (\alpha+k-1)},\quad \alpha=\frac{n-3}{2}.\]
The proof of this equality follows by induction on $k$. Thus we find, for $m\geq \alpha$
\begin{align*}\lambda_{m} 
%&=\sum_{k=1}^{m} \left(\frac{-1}{2}\right)^{k-1}\binom{m}{k}\binom{m+k}{k}\frac{(-2)^{k-1}2^{\alpha}(k-1)!}{(\alpha+1)\cdots (\alpha+k-1)}\\
&=2^{ \alpha} \sum_{k=1}^{m}  \left(\binom{m}{k}\binom{m+k}{k}\frac{(k-1)!}{(\alpha+1)\cdots (\alpha+k-1)}\right)\\ & =2^{\alpha} \sum_{k=1}^{m}  \left(\frac{(m+k)!}{k!(m-k)!}\frac{k}{(\alpha+1)\cdots (\alpha+k-1)}\right)
%&=2^{ \frac{n- 3}{2} } \sum_{k=1}^{m}\sum_{j=1}^{k}  \left(\frac{(m+k)!}{j!(k-j)!(m-k)!}\frac{k}{(n-1)(n+1)\cdots (n+2k-3)}\right)\\
\geq 2^{ \alpha}\sum_{k=1}^{m} \frac{m!}{k!(m-k)!}=2^{ \alpha}(2^{m}-1) 
%\sum_{k=1}^{m}  k=2^{ \frac{n- 3}{2} }\frac{m(m+1)}{2}.
%2^{ \frac{n- 3}{2} } \sum_{k=1}^{m}  \left(\frac{k(m+k)!}{k!k!(m-k)!}\right)\geq 2^{ \frac{n- 3}{2} } \sum_{k=1}^{m}  k\prod _{j=1}^{k} (1+\frac{m}{j})\geq 2^{ \frac{n- 3}{2} }\frac{m(m+1)}{2}
 \end{align*}
This proves that $\lambda_{m}>0$ for all $m>0$ and $\lambda_{m}\to\infty$ for $m\to \infty$. Hence $\Delta_{0}$ is essentially self-adjoint on $\Lip(\partial\h)$, and has compact resolvent in $L^{2}(\partial\h,\nu_{0})$. Moreover $\Delta_{0}$ is positive with kernel the constant functions, on which $p_{0}$ projects so $\Delta_{0}+p_{0}$ is strictly positive.
\end{proof}
We wish to extend the operator $\Delta_{0}$ to the module $L^{2}(T_{1}\h,\nu_x)_{C_{0}(\h)}$ in a way compatible with the action of $G=\textnormal{Isom} \h$ on this module. 
\begin{lemma}\label{trivialization} The map $\Psi\mapsto P^{n/2}\Psi,$
where $P$ is the Poisson kernel \eqref{Poissonkernel}, extends to a unitary isomorphism  \begin{equation}\label{trivialize!}v:L^{2}(T_{1}\h,\nu_x)_{C_{0}(\h)}\to C_{0}(\h, L^{2}(\dH,\nu_0))\simeq L^{2}(\partial\h,\nu_{0})\otimes C_{0}(\h),\end{equation}
of right $C_{0}(\h)$-modules.
\end{lemma}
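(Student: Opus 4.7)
The core observation is that the $x$-dependence in the inner product on $L^{2}(T_{1}\h,\nu_{x})_{C_{0}(\h)}$ is entirely encoded in the Radon-Nikodym density $d\nu_x/d\nu_0 = P(x,\cdot)^n$ given by \eqref{Poissonprops}. The multiplier $P^{n/2}$ is precisely the square root of this density, so conjugation by it should turn the moving-measure inner product into the fixed-measure one. Concretely, I will first verify that for $\Phi,\Psi\in C_c(T_1\h)$,
\begin{equation*}
\langle v\Phi,v\Psi\rangle(x)=\int_{\dH}\overline{P(x,\xi)^{n/2}\Phi(x,\xi)}\,P(x,\xi)^{n/2}\Psi(x,\xi)\,d\nu_0(\xi)=\int_{\dH}\overline{\Phi(x,\xi)}\Psi(x,\xi)\,d\nu_x(\xi)=\langle\Phi,\Psi\rangle(x),
\end{equation*}
so $v$ preserves the $C_0(\h)$-valued inner product on the dense submodule $C_c(T_1\h)$.

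Next I will note that right $C_0(\h)$-linearity is immediate since $P(x,\xi)^{n/2}$ depends on both variables but the right action of $f\in C_0(\h)$ is just multiplication by $f(x)$: $v(\Psi\cdot f)(x,\xi)=P(x,\xi)^{n/2}\Psi(x,\xi)f(x)=(v\Psi\cdot f)(x,\xi)$. For surjectivity, observe that the Poisson kernel $P:\h\times\dH\to(0,\infty)$ is continuous and strictly positive on all of $\h\times\dH$ (the zeros of $1-\|x\|^2$ occur only as $x$ approaches the boundary of $\h$), so both $P^{n/2}$ and $P^{-n/2}$ are continuous strictly positive functions on $\h\times\dH$. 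Consequently multiplication by $P^{n/2}$ is a bijection of $C_c(T_1\h)=C_c(\h\times\dH)$ onto itself (compact support in $\h$ keeps us bounded away from the boundary where $P^{\pm n/2}$ blows up). Since $C_c(T_1\h)$ sits densely in both $L^{2}(T_{1}\h,\nu_{x})_{C_{0}(\h)}$ and in $C_0(\h,L^2(\dH,\nu_0))\simeq L^2(\dH,\nu_0)\otimes C_0(\h)$, the map $v$ extends by continuity to an isometric $C_0(\h)$-linear map with dense range, hence to a unitary isomorphism of right $C^*$-modules.

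There is no serious obstacle here — the lemma is essentially a statement that the field of measures $\{\nu_x\}_{x\in\h}$ trivializes to the constant field $\nu_0$ after rescaling sections by $P^{n/2}$. The only point requiring minor care is identifying the target module concretely: $C_0(\h,L^2(\dH,\nu_0))$ should be understood as the standard external tensor product (i.e.\ the completion of $C_c(\h)\otimes L^2(\dH,\nu_0)$ under the $C_0(\h)$-valued inner product $\langle f\otimes u,g\otimes w\rangle(x)=\overline{f(x)}g(x)\langle u,w\rangle$), and the inclusion $C_c(T_1\h)\hookrightarrow C_0(\h,L^2(\dH,\nu_0))$ via $(x,\xi)\mapsto\Psi(x,\xi)$ is dense by Stone-Weierstrass applied to functions of the form $f(x)\varphi(\xi)$.
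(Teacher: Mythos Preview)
Your proof is correct and follows essentially the same approach as the paper's own proof, which is extremely terse: the paper simply notes that $d\nu_x(\xi)=P(x,\xi)^n d\nu_0(\xi)$ makes the inner-product preservation ``straightforward'' and that $v$ maps $C_c(T_1\h)$ onto itself, giving surjectivity. Your version is a careful unpacking of exactly these two observations, with the added explicit verification of $C_0(\h)$-linearity and the remark that $P^{\pm n/2}$ is continuous and nonvanishing on $\h\times\dH$ so multiplication by it is a bijection of $C_c(T_1\h)$.
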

\begin{proof} Since $d\nu_{x}(\xi)=P(x,\xi)^{n}d\nu_{0}(\xi)$ it is straightforward that $v$ is innerproduct preserving. Since it maps $C_{c}(T_{1}\h)$ into itself, it is surjective.
\end{proof}
In $L^{2}(T_{1}\h,\nu_x)_{C_{0}(\h)}$ we consider the operator 
%\frac{P^{n/2}(x,\xi)}{P^{n/2}(x,\eta)}
\begin{equation}\label{Diff}\Delta\Psi(x,\xi):=\int\frac{\Psi(x,\xi)-\Psi(x,\eta)}{d_{x}(\xi,\eta)^{n}}d\nu_{x}\eta,\end{equation}
which is initially defined on the dense subspace $C_{c}^{1}(T_1\h)\subset L^{2}(T_{1}\h,\nu_x)_{C_{0}(\h)}$. 
\begin{lemma}\label{Hlem} The function $H:T_1\h\to \R$ given by
\begin{align}
\label{Hdef}H(x,\xi)&=\int\frac{\frac{P(x,\xi)^{n/2}}{P(x,\eta)^{n/2}}-1}{d_{0}(\xi,\eta)^{n}}d\nu_{0}\eta=P(x,\xi)^{n/2}\int \frac{P(x,\eta)^{-n/2}-P(x,\xi)^{-n/2}}{d_{0}(\xi,\eta)^{n}}d\nu_{0}\eta,
\end{align}
defines an element of $C^1(\h, L^\infty(\dH,\nu_0))$ via $x\mapsto H_{x}$, $H_{x} (\xi):=H(x,\xi)$.
\end{lemma}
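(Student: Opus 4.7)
The statement has two ingredients: for each fixed $x\in\h$, the function $\xi\mapsto H(x,\xi)$ is essentially bounded; and the assignment $x\mapsto H_x$ is $C^1$ into the Banach space $L^\infty(\dH,\nu_0)$. The key simplification is to use the explicit form $P(x,\xi)=(1-\|x\|^2)/\|x-\xi\|^2$ and the fact that $P(0,\cdot)\equiv 1$ on $\dH$, which gives $d_0(\xi,\eta)=\|\xi-\eta\|$. I would then rewrite
\[
H(x,\xi)=\frac{1}{\|x-\xi\|^n}\int_{\dH}\frac{\|x-\eta\|^n-\|x-\xi\|^n}{\|\xi-\eta\|^n}\,d\nu_0(\eta),
\]
so that the prefactor is under control via $\|x-\xi\|\geq 1-\|x\|>0$ (uniformly in $\xi$), and everything reduces to estimating the integral.

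\emph{Essential boundedness.} I would apply $|a^n-b^n|\leq n\max(a,b)^{n-1}|a-b|$ with $a=\|x-\eta\|$, $b=\|x-\xi\|$, together with the triangle inequality $|\|x-\eta\|-\|x-\xi\||\leq\|\xi-\eta\|$ and the trivial bound $a,b\leq 2$, to obtain the pointwise majorant
\[
\frac{|\|x-\eta\|^n-\|x-\xi\|^n|}{\|\xi-\eta\|^n}\leq\frac{n\,2^{n-1}}{\|\xi-\eta\|^{n-1}}.
\]
By Lemma \ref{unicorn} with $s=1$ (applied at $x=0$), the integral of the right-hand side against $d\nu_0$ is the constant $I_1$, so $|H(x,\xi)|\leq n 2^{n-1} I_1/(1-\|x\|)^n$ uniformly in $\xi$.

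\emph{Continuous differentiability in $x$.} I would differentiate under the integral sign. Writing
\[
\phi(x,\xi,\eta):=\frac{\|x-\eta\|^n-\|x-\xi\|^n}{\|\xi-\eta\|^n},
\]
one has
\[
\partial_{x_i}\phi=\frac{n\|x-\eta\|^{n-2}(x_i-\eta_i)-n\|x-\xi\|^{n-2}(x_i-\xi_i)}{\|\xi-\eta\|^n}.
\]
The numerator is the difference of values at $\eta$ and $\xi$ of the smooth $\R$-valued function $y\mapsto n\|x-y\|^{n-2}(x_i-y_i)$, whose Lipschitz constant (in $y$) is bounded by some $C_x$ depending only on $\mathrm{dist}(x,\dH)$, hence uniformly in $\xi$. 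The same estimate then gives $|\partial_{x_i}\phi(x,\xi,\eta)|\leq C_x/\|\xi-\eta\|^{n-1}$, and dominated convergence justifies differentiation under the integral sign, both for the integral in the definition of $H$ and for the factor $\|x-\xi\|^{-n}$ (which is smooth in $x$ with $\xi$-uniform bounds on compacts). Continuity of $x\mapsto \partial_{x_i}H_x$ into $L^\infty(\dH,\nu_0)$ follows from the same dominating function applied to the difference $\partial_{x_i}\phi(x',\xi,\eta)-\partial_{x_i}\phi(x,\xi,\eta)$, combined with continuity of $\partial_{x_i}\phi$ in $x$.

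The main obstacle is insisting on uniformity in $\xi$ throughout, as we are working with the supremum norm rather than an $L^p$-norm for finite $p$. This is precisely where Lemma \ref{unicorn}, which asserts that the Riesz-type integral $I_s$ is independent of $(x,\xi)$, is indispensable: it converts the pointwise dominations above into genuinely $\xi$-uniform integral bounds.
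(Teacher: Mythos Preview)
Your argument is correct and follows essentially the same route as the paper. Both reduce to the fact that, for fixed $x$, the integrand has a numerator which is the difference of values at $\xi$ and $\eta$ of a function Lipschitz on $\dH$, so that after dividing by $\|\xi-\eta\|^n$ the integral is controlled by $I_1$ from Lemma~\ref{unicorn}. The only cosmetic difference is that the paper packages the boundedness step via the already-established mapping property $\Delta_0:\Lip(\dH)\to L^\infty(\dH)$ of Lemma~\ref{domainrangeresolvent}, writing $H_x=P_x^{n/2}\Delta_0(P_x^{-n/2})$, whereas you unwind the Poisson kernel explicitly and estimate the integrand directly; the underlying inequality is the same.

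One small point of care in your $C^1$ step: pointwise-in-$(\xi,\eta)$ continuity of $\partial_{x_i}\phi$ in $x$ together with a dominating function integrable in $\eta$ only yields, via dominated convergence, that $\int\partial_{x_i}\phi(x',\xi,\eta)\,d\nu_0\eta\to\int\partial_{x_i}\phi(x,\xi,\eta)\,d\nu_0\eta$ for each fixed $\xi$; it does not by itself give convergence in the $\sup_\xi$ norm. What you need (and what the paper does explicitly for $H$ itself with the auxiliary function $h(x,y)=\sup_\xi\|\grad(P_x^{-n/2}-P_y^{-n/2})(\xi)\|$) is that the $\xi$-Lipschitz constant of the difference $F_{x'}-F_x$, where $F_x(y)=n\|x-y\|^{n-2}(x_i-y_i)$, tends to $0$ as $x'\to x$. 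This is immediate from smoothness of $(x,y)\mapsto\nabla_y F_x(y)$ on $\h\times\dH$ and compactness of $\dH$, and then the integral of the difference is bounded by $\mathrm{Lip}_\xi(F_{x'}-F_x)\cdot I_1$, uniformly in $\xi$. With that tweak your sketch is complete and matches the paper's level of detail (the paper likewise dispatches the $\partial_{x_i}H$ case with ``a similar argument'').
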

\begin{proof}
For fixed $x$, $P_{x}:\xi\mapsto P(x,\xi)$ is nonvanishing and Lipschitz in $\xi$. Therefore the function $$H_{x}: \dH\to\dH,\quad H_{x}:\xi\mapsto H(x,\xi)=P_{x}(\xi)^{n/2} \Delta_{0}(P^{-n/2}_{x})(\xi)$$ is bounded by Lemma \ref{domainrangeresolvent} and thus defines an element of $L^{2}(\dH,\nu_0)$.  For $x,y\in \h$, the function $P_{x}^{-n/2}-P_{y}^{-n/2}:\dH\to \R$ is defined on an open neighbourhood of $\dH\subset \R^{n+1}$. Denote by $\grad$ the Euclidean gradient by and  $h(x,y):=\sup_{\xi}\|\grad(P_{x}^{-n/2}(\xi)-P_{y}^{-n/2}(\xi)) \|$, which is a continuous function of $(x,y)$ that vanishes on the diagonal. The mean value theorem gives
\[\|P_{x}(\xi)^{-n/2}-P_{y}(\xi)^{-n/2}-P_{x}(\eta)^{-n/2}+P_{y}(\eta)^{-n/2}\| \leq h(x,y)\|\xi-\eta\|.\]
 Thus, with $I_{1}$ as in Lemma \ref{unicorn} we can estimate
\begin{align*}
\| \Delta_{0}(P^{\frac{-n}{2}}_{x}-P^{\frac{-n}{2}}_{y})\|_{\infty}&\leq\left\|\int\frac{P_{x}(\eta)^{\frac{-n}{2}}-P_{x}(\xi)^{\frac{-n}{2}}-P_{y}(\eta)^{\frac{-n}{2}}+P_{y}(\xi)^{\frac{-n}{2}}}{d_0(\xi,\eta)^{n}}d\nu_0\eta \right\|_{\infty}
\\
&\leq h(x,y)\int_{\dH}\frac{1}{d_{0}(\xi,\eta)^{n-1}}d\nu_0\eta=I_{1}h(x,y).
\end{align*}
Then the estimate
$$\|P_{x}^{n/2}\Delta_{0}P_{x}^{-n/2}-P_{y}^{n/2}\Delta_{0}P_{y}^{-n/2}\|_{\infty}\leq \|P_{x}^{n/2}-P_{y}^{n/2}\|_{\infty}\|\Delta_{0}P_{x}^{-n/2}\|_{\infty}+\|P_{y}^{n/2}\|_{\infty}I_{1}h(x,y),$$
shows that $\lim_{x\to y} \|P_{x}^{n/2}\Delta_{0}P_{x}^{-n/2}-P_{y}^{n/2}\Delta_{0}P_{y}^{-n/2}\|_{\infty}=0$, so $x\mapsto H_{x}$ is a continuous map $\h\to L^{\infty}(\dH,\nu_0)$. The partial derivatives $\partial_{x_{i}}H(x,\xi)$ are continuously differentiable in $\xi$, and so a similar argument shows that the maps $x\mapsto \partial_{x_{i}}H(x,\xi)$ are continuous maps $\h\to L^{\infty}(\h,\nu_{0})$ as well. \end{proof}
For a Banach space $E$, we denote by $C_{c}^{1}(\h, E)$ the space of compactly supported continuously differentiable  functions on $\h$ with values in $E$. We will always consider $\Dom \Delta_{0}$ as a Hilbert space in the graph norm. Using the injection $\Dom \Delta_0\to L^2(\dH,\nu_0)$ we view $C_{c}^{1}(\h, \Dom \Delta_0)$ as a subspace of $C_0(\h, L^2(\dH,\nu_0))$, which both are $C_0(\h)$-bimodules
\begin{proposition}\label{compact!} Let $v$ be as in Lemma \ref{trivialize!} and $\Delta$ as in \eqref{Diff}. The operator $\Delta$ is essentially self-adjoint and regular on $v^*C_{c}^{1}(\h,C^1(\dH))\subset L^{2}(T_{1}\h,\nu_{x})_{C_{0}(\h)}$ and satisfies
$$\Delta:v^*C_{c}^1(\h, \Dom \Delta_{0})\to v^*C_c^1(\h,L^2(\dH,\nu_0)).$$
It has $C_{0}(\h)$ locally compact resolvent in $L^{2}(T_{1}\h,\nu_x)_{C_{0}(\h)}$. In particular, for all $f\in C_{0}(\h)$ we have $f(1+\Delta)^{-1}\in \mathbb{K}(L^{2}(T_{1}\h,\nu_x)_{C_{0}(\h)})$.
\end{proposition}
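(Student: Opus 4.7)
The strategy is to transport $\Delta$ through the unitary $v$ of Lemma \ref{trivialization} to the tensor-product module $L^{2}(\dH,\nu_{0})\otimes C_{0}(\h)$, where it can be analyzed as a bounded perturbation of $\Delta_{0}\otimes 1$. Using the identities $d_{x}(\xi,\eta)^{n}=P(x,\xi)^{n/2}P(x,\eta)^{n/2}d_{0}(\xi,\eta)^{n}$ and $d\nu_{x}=P(x,\cdot)^{n}d\nu_{0}$ to rewrite the kernel, a direct computation on the core $v^{*}C_{c}^{1}(\h,C^{1}(\dH))$ yields
\[
v\Delta v^{*} \;=\; \Delta_{0}\otimes 1 \;+\; M_{H},
\]
where $M_{H}$ is multiplication by a function $H\in C^{1}(\h,L^{\infty}(\dH))$ of the form studied in Lemma \ref{Hlem}. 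The crux of the calculation is to absorb the factors of $P(x,\xi)^{\pm n/2}$ into the numerator, then to add and subtract $\Phi(x,\xi)$ so that the integrand splits into the kernel defining $\Delta_{0}\Phi(x,\cdot)(\xi)$ plus a remainder which reduces to pointwise multiplication by $H(x,\xi)$.

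Next I would analyze the two summands separately. By Lemma \ref{domainrangeresolvent}, $\Delta_{0}$ is essentially self-adjoint with compact resolvent on $L^{2}(\dH,\nu_{0})$, so the exterior tensor product $\Delta_{0}\otimes 1$ is self-adjoint and regular on the $C_{0}(\h)$-module $L^{2}(\dH,\nu_{0})\otimes C_{0}(\h)$, essentially self-adjoint on $\Dom\Delta_{0}\odot C_{c}^{1}(\h)$. Under the identification of this module with $C_{0}(\h,L^{2}(\dH,\nu_{0}))$, the operator $f(1+\Delta_{0}\otimes 1)^{-1}$ corresponds to the norm-continuous $\mathbb{K}(L^{2}(\dH,\nu_{0}))$-valued map $x\mapsto f(x)(1+\Delta_{0})^{-1}$, which vanishes at infinity for $f\in C_{0}(\h)$ and therefore represents an element of $\mathbb{K}(L^{2}(\dH,\nu_{0})\otimes C_{0}(\h))$. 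By Lemma \ref{Hlem}, $M_{H}$ is a bounded, self-adjoint, adjointable operator preserving the core, and a standard Kato--Rellich argument for regular operators on $C^{*}$-modules (see \cite{KaLe, MR}) shows that $\Delta_{0}\otimes 1+M_{H}$ is essentially self-adjoint and regular on the same core. The second resolvent identity
\[
(1+v\Delta v^{*})^{-1} - (1+\Delta_{0}\otimes 1)^{-1} = -(1+v\Delta v^{*})^{-1}M_{H}(1+\Delta_{0}\otimes 1)^{-1}
\]
then transfers $C_{0}(\h)$-local compactness of the resolvent to $v\Delta v^{*}$, and hence via $v^{*}$ to $\Delta$. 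The mapping property $\Delta:v^{*}C_{c}^{1}(\h,\Dom\Delta_{0})\to v^{*}C_{c}^{1}(\h,L^{2}(\dH,\nu_{0}))$ follows immediately from the decomposition, from $\Delta_{0}:\Dom\Delta_{0}\to L^{2}(\dH,\nu_{0})$, and from the continuity in $x$ of $H$ provided by Lemma \ref{Hlem}.

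The main obstacle is making the identity $v\Delta v^{*}=\Delta_{0}\otimes 1 + M_{H}$ rigorous: the kernel $d_{x}(\xi,\eta)^{-n}$ is hypersingular, so the integrals of $\Phi(x,\xi)$ and $\Phi(x,\eta)$ against $d_{x}(\xi,\eta)^{-n}\,d\nu_{x}(\eta)$ diverge individually and cannot be separated at face value. They must be kept together as a single difference while the Poisson factors are pushed through, and only after the $d_{0}(\xi,\eta)^{-n}$ kernel emerges can one perform the $\pm\Phi(x,\xi)$ splitting; absolute integrability of each resulting piece then follows from the Lipschitz estimate used in Lemma \ref{domainrangeresolvent} together with the boundedness of $H$ in Lemma \ref{Hlem}. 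A secondary technical point is to upgrade the continuity of $x\mapsto H(x,\cdot)$ to norm continuity of $x\mapsto f(x)(1+\Delta_{0}+M_{H(x,\cdot)})^{-1}$ as a $\mathbb{K}(L^{2}(\dH,\nu_{0}))$-valued map, which is needed for the $C_{0}(\h)$-local compactness; this is precisely what the $C^{1}(\h,L^{\infty}(\dH))$-regularity of $H$ in Lemma \ref{Hlem} is designed to deliver.
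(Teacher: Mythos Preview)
Your decomposition $v\Delta v^{*}=\Delta_{0}\otimes 1+M_{H}$ coincides with the paper's Equation \eqref{vDv}, and the mapping property follows exactly as you describe. However, there is a gap in your self-adjointness argument: you assert that $M_{H}$ is a bounded adjointable operator on the $C_{0}(\h)$-module, citing Lemma \ref{Hlem}. That lemma only establishes $H\in C^{1}(\h,L^{\infty}(\dH))$, i.e., for each fixed $x$ the function $H_{x}$ is bounded and $x\mapsto H_{x}$ is continuous into $L^{\infty}$. It does \emph{not} assert $\sup_{x\in\h}\|H_{x}\|_{\infty}<\infty$, and no such uniform bound is claimed anywhere in the paper. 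Without it, $M_{H}$ is not an adjointable operator on $L^{2}(\dH,\nu_{0})\otimes C_{0}(\h)$, and the Kato--Rellich perturbation argument does not apply globally; the same lacuna undermines your second-resolvent-identity argument for $C_{0}(\h)$-local compactness.

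The paper circumvents this precisely by invoking Pierrot's local--global principle \cite[Th\'eor\`eme 1.18]{Pierrot}: a symmetric operator on a $C_{0}(X)$-module is self-adjoint and regular if and only if each fibrewise localisation is self-adjoint on the fibre Hilbert space. At a fixed $x\in\h$ the operator $(v\Delta v^{*})_{x}=\Delta_{0}+H_{x}$ is a bounded perturbation of $\Delta_{0}$ (since $H_{x}\in L^{\infty}(\dH)$ by Lemma \ref{Hlem}), hence self-adjoint with compact resolvent; no uniformity in $x$ is needed. The fibrewise compactness of $(1+(v\Delta v^{*})_{x})^{-1}$ then directly yields $C_{0}(\h)$-local compactness of the resolvent. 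Your final paragraph gestures toward this fibrewise picture for compactness, but the self-adjointness and regularity step requires the same treatment. If you could independently prove that $H$ is globally bounded on $T_{1}\h$ your approach would go through, but this is not provided by Lemma \ref{Hlem}.
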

\begin{proof} Using the map \eqref{trivialize!}, it suffices to show that the operator $vDv^{*}$  with domain $$ C_{c}^1(\h,\Dom \Delta_{0})\subset v(C_{c}^{1}(T_1\h)),$$  is essentially self-adjoint, regular and has $C_{0}(\h)$-compact resolvent in $ C_{0}(\h, L^{2}(\partial\h,\nu_{0}))$. We see that it is given by
\begin{equation}\label{vDv} v\Delta v^{*}\Psi(x,\xi)=\int\frac{\Psi(x,\xi)\frac{P^{n/2}(x,\xi)}{P^{n/2}(x,\eta)}-\Psi(x,\eta)}{d_{0}(\xi,\eta)^{n}}d\nu_{0}\eta=(\Delta_{0}\otimes 1 + H(x,\xi))\Psi(x,\xi),\end{equation}
where $H(x,\xi)\in C^1(\h, L^{\infty}(\dH,\nu_0))$ is the function from \eqref{Hdef}. In particular, $v\Delta v^{*}$ maps $C_{c}^{1}(\h, \Dom \Delta_0)$ into $C_{c}^{1}(\h, L^{2}(\dH,\nu_0))$. It follows that $\Delta$ is a map $$\Delta:v^{*}C_{c}^{1}(\h, \Dom \Delta_0)\to v^*C_{c}^{1}(\h, L^{2}(\dH,\nu_0))\subset L^{2}(T_{1}\h,\nu_x)_{C_{0}(\h)},$$ and thus $\Delta$ is a densely defined symmetric operator. By \cite[Theor\`{e}me 1.18]{Pierrot}, $v\Delta v^{*}$ is self-adjoint and regular if and only if for all $x\in \h$ the localisation $(v\Delta v^{*})_{x}$ is self-adjoint in $L^{2}(\partial\h,\nu_{0})$. But $(v\Delta v^{*})_{x}$ is a bounded perturbation of $\Delta_{0}$ by \eqref{vDv}, and therefore self-adjoint. Thus $v\Delta v^{*}$ is self-adjoint and regular. By the same argument, because $\Delta_{0}$ has compact resolvent, $(v\Delta v^{*})_{x}$ has compact resolvent and thus $v\Delta v^{*}$ has $C_{0}(\h)$-compact resolvent. Since $v$ is unitary, it follows that $\Delta$ is self-adjoint and regular, with $C_{0}(\h)$-compact resolvent.
\end{proof}

\begin{proposition}\label{wellbehaved}The operator $\Delta$ is positive and $G$-equivariant. Moreover, for $p$ the projection of Theorem \ref{extrep}, $\Delta+p$ is strictly positive and  $\Delta p=p\Delta=0$.
\end{proposition}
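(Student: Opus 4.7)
The plan is to verify each claim on the dense core $v^{*}C_{c}^{1}(\h, C^{1}(\dH))\subset L^{2}(T_{1}\h,\nu_{x})_{C_{0}(\h)}$, on which Proposition \ref{compact!} already gave essential self-adjointness of $\Delta$, and then to pass to the closure.

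First I would verify $G$-equivariance by a direct change of variables. The $G$-action on the module is by composition, $(g\cdot\Psi)(x,\xi)=\Psi(xg,\xi g)$. Substituting $\eta\mapsto \eta g$ in the integral defining $\Delta(g\cdot\Psi)(x,\xi)$ and applying the invariance $d_{xg}(\xi g,\eta g)=d_{x}(\xi,\eta)$ from \eqref{metrics} together with \eqref{Pg}, and the covariance $d\nu_{xg}(\eta g)=d\nu_{x}(\eta)$ from \eqref{Poissonprops}, yields $\Delta(g\cdot\Psi)(x,\xi)=(\Delta\Psi)(xg,\xi g)=(g\cdot\Delta\Psi)(x,\xi)$.

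For the two identities involving $p$, I would use that $p\Psi(x,\xi)=\int\Psi(x,\eta)\,d\nu_{x}\eta$ is independent of $\xi$, so the numerator $p\Psi(x,\xi)-p\Psi(x,\eta)$ defining $\Delta p\Psi$ is identically zero, giving $\Delta p=0$. The identity $p\Delta=0$ reduces to the vanishing of the iterated integral
\[ p\Delta\Psi(x,\xi)=\iint \frac{\Psi(x,\eta)-\Psi(x,\zeta)}{d_{x}(\eta,\zeta)^{n}}\,d\nu_{x}\zeta\,d\nu_{x}\eta, \]
which is zero under the swap $\eta\leftrightarrow\zeta$ since $d_{x}$ is symmetric in its arguments while the numerator is antisymmetric. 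Positivity of $\Delta$ comes from the same symmetrization applied to the $C_{0}(\h)$-valued pairing, yielding
\[ \langle\Delta\Psi,\Psi\rangle(x)=\tfrac{1}{2}\iint \frac{|\Psi(x,\xi)-\Psi(x,\eta)|^{2}}{d_{x}(\xi,\eta)^{n}}\,d\nu_{x}\xi\,d\nu_{x}\eta\geq 0. \]

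For strict positivity of $\Delta+p$, since $\Delta,p\geq 0$ and $\Delta p=p\Delta=0$, the pairing $\langle(\Delta+p)\Psi,\Psi\rangle$ splits as a sum of two non-negative functions in $C_{0}(\h)$. If this pairing vanishes, each summand vanishes pointwise in $x$; vanishing of the $\Delta$-part forces $\Psi(x,\cdot)$ to be $\nu_{x}$-essentially constant, and vanishing of $\langle p\Psi,\Psi\rangle(x)$ then forces this constant to be zero. The only delicate point in the whole argument is justifying Fubini at the hypersingular exponent $n$; this I would handle on the core by using the Lipschitz estimate $|\Psi(x,\eta)-\Psi(x,\zeta)|\leq L(x)\,d_{x}(\eta,\zeta)$, locally uniform in $x$, reducing the integrand to $L(x)/d_{x}(\eta,\zeta)^{n-1}$, which is integrable by Lemma \ref{unicorn} with $s=1$.
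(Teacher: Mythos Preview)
Your arguments for $G$-equivariance, for $\Delta p=0$, and for positivity of $\Delta$ via the symmetrization identity are all correct. They differ from the paper's route in two places. For $p\Delta=0$ the paper simply takes adjoints of $\Delta p=0$, which sidesteps the Fubini issue entirely. For positivity the paper does not symmetrize but instead transports the pairing to the origin: choosing $g\in G$ with $xg=0$ and writing $\Psi^{g}_{x}(\xi):=\Psi(x,\xi g^{-1})$, a change of variables gives $\langle\Delta\Psi,\Psi\rangle(x)=\langle\Delta_{0}\Psi^{g}_{x},\Psi^{g}_{x}\rangle_{0}$ and $\langle p\Psi,\Psi\rangle(x)=\langle p_{0}\Psi^{g}_{x},\Psi^{g}_{x}\rangle_{0}$, so everything reduces to the Hilbert-space operator $\Delta_{0}$ of Lemma~\ref{domainrangeresolvent}. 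Your symmetrization is more hands-on and arguably more transparent for positivity alone, but the paper's reduction is what actually carries the strict-positivity claim.

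That is where your proof has a genuine gap. You argue that $\langle(\Delta+p)\Psi,\Psi\rangle=0$ forces $\Psi=0$, i.e.\ that $\Delta+p$ is injective on the core. But strict positivity of a positive self-adjoint regular operator on a $C^{*}$-module means a uniform lower bound $\Delta+p\geq c>0$ (equivalently, bounded inverse); trivial kernel is not enough, already in the Hilbert-space case (multiplication by $|t|$ on $L^{2}(-1,1)$ is positive with trivial kernel but not strictly positive). The paper's reduction gives precisely this: since $\Delta_{0}+p_{0}\geq c>0$ by the spherical-harmonic spectral computation in Lemma~\ref{domainrangeresolvent}, one gets $\langle(\Delta+p)\Psi,\Psi\rangle(x)\geq c\,\|\Psi^{g}_{x}\|_{0}^{2}=c\,\langle\Psi,\Psi\rangle(x)$ pointwise in $x$, hence $\Delta+p\geq c$ in the module. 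Your direct approach produces no such constant, and without routing through $\Delta_{0}$ it is not clear how you would obtain one.
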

\begin{proof} By Proposition \ref{compact!} $\Delta_{0}$ is positive and $\Delta_{0}+p_{0}$ is strictly positive. Now consider $g$ with $0=xg$ and for $\Psi\in C_{c}(\h, C^{1}(\dH)),$ which is a core for $\Delta$, write $\Psi^{g}_{x}(\xi)=\Psi(x,\xi g^{-1})$, so $\Psi^{g}_{x}\in  C^1(\dH)$. Then compute
\begin{align*}\langle \Delta\Psi,\Psi\rangle(x)&=\int\int\frac{|\Psi(x,\xi)|^{2}-\overline{\Psi(x,\eta)}\Psi(x,\xi)}{d_{x}(\xi,\eta)^{n}}d\nu_{x}\xi d\nu_{x}\eta\\ & =\int\int\frac{|\Psi(x,\xi)|^{2}-\overline{\Psi(x,\eta)}\Psi(x,\xi)}{d_{0}(\xi g,\eta g)^{n}}d\nu_{0}(\xi g) d\nu_{0}(\eta g)\\
&=\int\int\frac{|\Psi(x,\xi g^{-1})|^{2}-\overline{\Psi(x,\eta g^{-1})}\Psi(x,\xi g^{-1})}{d_{0}(\xi ,\eta )^{n}}d\nu_{0}\xi  d\nu_{0}\eta =\langle \Delta_{0} \Psi^{g}_{x}, \Psi^{g}_{x} \rangle_{0},\end{align*}
and similarly one shows that $\langle p\Psi, \Psi\rangle (x)=\langle p_{0}\Psi ^{g}_{x},\Psi^{g}_{x}\rangle_{0}$. It thus follows that $\Delta$ is positive and $\Delta+p$ is strictly positive. A simple change of variables establishes $G$-equivariance. The equality $\Delta p=0$ follows because $p\Psi(x,\xi)$ is constant in $\xi$, and $p\Delta=0$ thus follows by taking adjoints.
\end{proof}

%%%%%%%%%%%%%%%%%%%%%
\subsection{An unbounded Kasparov module for the extension class}
In Section \ref{geltsec} the function $\rho(x)=d_{\h}(0,x)$ was introduced, and on
 $C_{c}(\h, \Dom \Delta_0)$ we consider the multiplication operator $\rho \Psi(x,\xi)=\rho(x)\Psi(x,\xi)$.
  It is straightforward to show that $\Delta$ and $\rho$ commute on this domain and that  $\Delta+\rho$ is essentially self-adjoint and regular.
  \begin{proposition}\label{resolvent} The positive self-adjoint regular operator $\Delta+\rho$ has compact resolvent in the $C^{*}$-module $L^{2}(T_{1}\h,\nu_x)_{C_{0}(\h)}$.
\end{proposition}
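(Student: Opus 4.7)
The strategy is to leverage the commutation of $\Delta$ and $\rho$, the local compactness of $(1+\Delta)^{-1}$ from Proposition \ref{compact!}, and the fact that $\rho(x)\to\infty$ as $x\to\infty$ in $\h$. In the trivialization $v$ of Lemma \ref{trivialization}, Proposition \ref{compact!} shows $v\Delta v^{*}=\Delta_{0}\otimes 1+H$ with $H\in C^{1}(\h,L^{\infty}(\dH,\nu_{0}))$, while $v\rho v^{*}$ is multiplication by the scalar function $\rho(x)$. Hence $\Delta$ and $\rho$ commute, and their joint functional calculus reduces to pointwise (in $x$) functional calculus on the fibers $L^{2}(\dH,\nu_{0})$. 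The positivity of $\Delta$ from Proposition \ref{wellbehaved} translates to $\Delta_{0}+H_{x}\geq 0$ on each fiber, from which I would deduce the operator inequality $(1+\Delta+\rho)^{-1}\leq (1+\rho)^{-1}$ on the module, together with $\|\rho(1+\Delta+\rho)^{-1}\|\leq 1$.

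First I would show that $(1+\Delta+\rho)^{-1}$ is $C_{0}(\h)$-locally compact. For $f\in C_{0}(\h)$, the resolvent identity combined with commutation gives
\[
f(1+\Delta+\rho)^{-1}=f(1+\Delta)^{-1}\bigl(1-\rho(1+\Delta+\rho)^{-1}\bigr),
\]
exhibiting the left hand side as a compact operator (by Proposition \ref{compact!}) composed with a bounded one, hence compact in $\mathbb{K}(L^{2}(T_{1}\h,\nu_{x})_{C_{0}(\h)})$.

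Next I would run a cutoff approximation. Pick $\chi_{R}\in C_{c}(\h)$ with $0\leq \chi_{R}\leq 1$ and $\chi_{R}=1$ on $\{\rho\leq R\}$. By the previous step, $\chi_{R}(1+\Delta+\rho)^{-1}$ is compact for every $R$. Since $1-\chi_{R}$ commutes with both $\Delta$ and $\rho$, the operator $(1-\chi_{R})(1+\Delta+\rho)^{-1}$ is positive adjointable, and
\[
\|(1-\chi_{R})(1+\Delta+\rho)^{-1}\|\leq \|(1-\chi_{R})(1+\rho)^{-1}\|_{\infty}\leq (1+R)^{-1}\xrightarrow{R\to\infty}0.
\]
Therefore $(1+\Delta+\rho)^{-1}$ is the operator norm limit of the compact operators $\chi_{R}(1+\Delta+\rho)^{-1}$, and is itself compact.

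The main obstacle is justifying the functional calculus and operator inequalities for the pair of commuting regular self-adjoint operators $\Delta,\rho$ on a $C^{*}$-module, rather than on a Hilbert space. This is handled via the trivialization $v$: under it $\rho$ becomes a scalar multiplier and $\Delta$ acts fiberwise on $L^{2}(\dH,\nu_{0})$, so all required bounds reduce to pointwise statements on Hilbert space.
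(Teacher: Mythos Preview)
Your argument is correct and follows essentially the same route as the paper: both proofs exploit the two operator inequalities $(1+\Delta+\rho)^{-1}\leq(1+\Delta)^{-1}$ and $(1+\Delta+\rho)^{-1}\leq(1+\rho)^{-1}$ coming from positivity and commutation, then use the first together with the $C_{0}(\h)$-local compactness of $(1+\Delta)^{-1}$ to get compactness after cutoff, and the second together with $(1+\rho)^{-1}\in C_{0}(\h)$ to control the tail. The only technical variation is that you obtain local compactness of $f(1+\Delta+\rho)^{-1}$ via the resolvent identity $f(1+\Delta+\rho)^{-1}=f(1+\Delta)^{-1}\bigl(1-\rho(1+\Delta+\rho)^{-1}\bigr)$, whereas the paper instead uses the hereditarity of $\mathbb{K}$ applied to $u_{n}(1+\Delta+\rho)^{-1}u_{n}\leq u_{n}(1+\Delta)^{-1}u_{n}$ to deduce $u_{n}(1+\Delta+\rho)^{-1/2}\in\mathbb{K}$; your route is slightly more elementary in that it avoids the square-root and hereditarity step.
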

\begin{proof} Because both $\Delta$ and $\rho$ are positive regular operators, $(1+\Delta+\rho)^{-1},(1+\Delta)^{-1}$ define adjointable operators and 
the function $(1+\rho)^{-1}$ is an element of $C_{0}(\h)$. Hence if $u_{n}$ is an increasing approximate unit in $C_{0}(\h)$, for $n>m$ we have the operator inequalities
\[0\leq (u_{n}-u_{m})(1+\Delta+\rho)^{-1}(u_{n}-u_{m})\leq (u_{n}-u_{m})(1+\rho)^{-1}(u_{n}-u_{m})\to 0.\]
Thus the sequence $u_{n}(1+\Delta+\rho)^{-1}$ is Cauchy. On the other hand
\[u_{n}(1+\Delta+\rho)^{-1}u_{n}\leq u_{n}(1+\Delta)^{-1}u_{n}\in \mathbb{K}_{C_{0}(\h)}(L^{2}(T_{1}\h,\nu_x)),\]
because $u_{n}(1+\Delta)^{-1}\in \mathbb{K}$ by Proposition \ref{compact!}.
Since $\mathbb{K}$ is an ideal, it is a hereditary subalgebra, and thus the operator $u_{n}(1+\Delta+\rho)^{-1}u_{n}\in\mathbb{K}$  from which it follows that $u_{n}(1+\Delta+\rho)^{-1/2}\in \mathbb{K}$, and since this sequence is Cauchy, its limit is in $\mathbb{K}$ as well. Because the sequence converges pointwise to $(1+\Delta+\rho)^{-1/2}$, it follows that $(1+\Delta+\rho)^{-1}\in\mathbb{K}$, as desired.
\end{proof}
Next we address the commutator properties of $\Delta$ and $\rho$ with functions $f\in \Lip(\dH)$.
\begin{lemma} \label{bdd1}For $f\in \Lip(\partial\h)$, the operator $[\Delta,f]$ extends to a bounded operator. 
\end{lemma}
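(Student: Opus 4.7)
The plan is to reduce the computation to a commutator on the single Hilbert space $L^{2}(\partial\h,\nu_{0})$ via the unitary $v$ of Lemma \ref{trivialization}. Equation \eqref{vDv} identifies $v\Delta v^{*}=\Delta_{0}\otimes 1+M_{H}$, where $M_H$ is pointwise multiplication by the function $H\in C^{1}(\h,L^{\infty}(\partial\h,\nu_{0}))$ of Lemma \ref{Hlem}. A direct check shows that $vfv^{*}=f$ still acts by pointwise multiplication in the $\xi$-variable, and since $M_{H}$ is also a $\xi$-multiplier, the two commute. Consequently
\[v[\Delta,f]v^{*}=[\Delta_{0},f]\otimes 1_{C_{0}(\h)},\]
reducing the problem to bounding $[\Delta_{0},f]$ on $L^{2}(\partial\h,\nu_{0})$.

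Next I would write $[\Delta_{0},f]$ as an integral operator with Schwartz kernel
\[K(\xi,\eta)=\frac{f(\xi)-f(\eta)}{d_{0}(\xi,\eta)^{n}},\]
obtained by the standard commutator-with-a-kernel manipulation applied to \eqref{initialD}. Since $d_{0}$ coincides with the Euclidean distance on $\partial\h\subset\R^{n+1}$, the Lipschitz bound $|f(\xi)-f(\eta)|\leq\|f\|_{\Lip}\|\xi-\eta\|=\|f\|_{\Lip}d_{0}(\xi,\eta)$ gives the pointwise estimate $|K(\xi,\eta)|\leq\|f\|_{\Lip}\,d_{0}(\xi,\eta)^{-(n-1)}$.

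Finally I would apply Schur's test. By Lemma \ref{unicorn} with $s=1$ and $x=0$, the quantity $\int d_{0}(\xi,\eta)^{-(n-1)}d\nu_{0}\eta=I_{1}$ is finite and independent of $\xi$, and by the symmetry of $K$ in its arguments the same bound holds with the roles of $\xi$ and $\eta$ exchanged. Schur's test then yields $\|[\Delta_{0},f]\|\leq I_{1}\|f\|_{\Lip}$ on $L^{2}(\partial\h,\nu_{0})$, so $[\Delta,f]$ extends from the core $v^{*}C_{c}^{1}(\h,\Dom\Delta_{0})$ to a bounded operator of norm $\leq I_{1}\|f\|_{\Lip}$ on $L^{2}(T_{1}\h,\nu_{x})_{C_{0}(\h)}$. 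I do not anticipate a serious obstacle: the crux of the argument is the observation---already encoded in Lemma \ref{Hlem} and formula \eqref{vDv}---that the curvature term $M_{H}$ is a pure $\xi$-multiplier and is thus invisible to the commutator with $f$, converting the original module-level estimate into a classical Schur bound on a single Hilbert space.
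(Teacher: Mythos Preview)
Your proof is correct and follows essentially the same route as the paper: both compute the commutator kernel $K(\xi,\eta)=(f(\xi)-f(\eta))d_{0}(\xi,\eta)^{-n}$ after conjugating by $v$, bound it pointwise by $\|f\|_{\Lip}d_{0}(\xi,\eta)^{-(n-1)}$, and then invoke the integrability from Lemma \ref{unicorn}. The only cosmetic difference is that you phrase the final step as Schur's test while the paper writes out the equivalent Cauchy--Schwarz estimate on the sesquilinear form $\langle [v\Delta v^{*},f]\Psi,\Phi\rangle$; your explicit observation that $M_{H}$ commutes with $f$ (both being multiplication operators) is exactly what makes the paper's direct kernel computation work.
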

\begin{proof} This commutator can be computed using the explicit formula for $v\Delta v^{*}$ from Equation \eqref{vDv} to find:
\begin{align*}[v\Delta v^{*},f]\Psi(x,\xi)=\int\frac{(f(\xi)- f(\eta))\Psi(x,\eta)}{d_{0}(\xi,\eta)^{n}} d\mu_{0}(\eta).
\end{align*}
Using H\"{o}lder's inequality, the fact that $f$ is Lipschitz and Lemma \ref{unicorn} we estimate
\begin{align*}|\langle [v\Delta v^{*},&f]\Psi,\Phi \rangle |(x)=\left |\int\int\frac{(f(\xi)- f(\eta))\Psi(x,\eta)}{d_{0}(\xi,\eta)^{n}} d\mu_{0}(\eta)\Phi(x,\xi)d\mu_{0}(\xi)\right| \\
%&\leq\int\int\frac{|f(\xi)- f(\eta)|}{d_{0}(\xi,\eta)}\frac{|\Psi(x,\eta)||\Phi(x,\xi)|}{d_{0}(\xi,\eta)^{n-1}} d\mu_{0}(\eta)d\mu_{0}(\xi)\\
&\leq \|f\|_{\textnormal{Lip}}\int\int\frac{|\Psi(x,\eta)||\Phi(x,\xi)|}{d_{0}(\xi,\eta)^{n-1}} d\mu_{0}(\eta)d\mu_{0}(\xi)\\
&\leq \|f\|_{\textnormal{Lip}}\left(\int\int\frac{|\Psi(x,\eta)|^{2}}{d_{0}(\xi,\eta)^{n-1}} d\mu_{0}(\eta)d\mu_{0}(\xi)\right)^{\frac{1}{2}}\left(\int\int\frac{|\Phi(x,\xi)|^{2}}{d_{0}(\xi,\eta)^{n-1}} d\mu_{0}(\eta)d\mu_{0}(\xi)\right)^{\frac{1}{2}}\\
&\leq\|f\|_{\textnormal{Lip}}I_{1}\|\Phi\|_{2}\|\Psi\|_{2},
\end{align*}
which is independent of $x$. Thus $[v\Delta v^{*},f]$ extends to an adjointable operator.
\end{proof}
Next we recall the function $\rho(x)=d_{\h}(0,x)$ and the projection $p$ from Theorem \ref{extrep}. %
\begin{lemma} \label{bdd2}For $f\in\Lip(\partial\h)$, the operator $[p,f]\rho$ extends to a bounded operator.% on $L^{2}(T_1\h,\nu_x)_{C_{0}(\h)}$.
\end{lemma}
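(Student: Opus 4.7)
The plan is to exploit the fact that $\rho$ acts by multiplication by a function on $\h$ (the base) while $p$ and $f$ are diagonal in the $\partial\h$ direction, so $\rho$ commutes with both. Hence $[p,f]\rho=\rho[p,f]$, and it suffices to prove that $\rho[p,f]$ extends boundedly on $L^{2}(T_{1}\h,\nu_{x})_{C_{0}(\h)}$, i.e.\ that the $C_{0}(\h)$-valued function $x\mapsto \rho(x)^{2}\|[p,f]\Psi\|^{2}(x)$ is dominated by $C\|\Psi\|^{2}(x)$ for a constant $C$.

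From the formula $p\Psi(x,\xi)=\int\Psi(x,\eta)d\nu_{x}\eta$ one computes
\[ [p,f]\Psi(x,\xi)=\int(f(\eta)-f(\xi))\Psi(x,\eta)\,d\nu_{x}\eta, \]
and Cauchy--Schwarz in $\nu_{x}$ yields
\[ \|[p,f]\Psi\|^{2}(x)\leq \Big(\int\!\!\int |f(\eta)-f(\xi)|^{2}d\nu_{x}\eta\,d\nu_{x}\xi\Big)\|\Psi\|^{2}(x). \]
Because $d_{0}(\xi,\eta)=\|\xi-\eta\|$ on $\partial\h$, the Lipschitz bound gives $|f(\eta)-f(\xi)|\leq \|f\|_{\Lip}\|\xi-\eta\|$, so the whole estimate reduces to controlling the quadratic moment $V(x):=\int\!\!\int\|\xi-\eta\|^{2}d\nu_{x}\eta\,d\nu_{x}\xi$.

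The key input is a direct computation of $V(x)$ using the mean value property of harmonic extensions: in the Poincar\'{e} ball, the harmonic extension of the coordinate function $\xi_{i}$ is $x_{i}$, so $\int \xi\,d\nu_{x}=x$ and $\int \|\xi\|^{2}d\nu_{x}=1$. Expanding gives
\[ V(x)=2\int\|\xi\|^{2}d\nu_{x}\xi-2\Big\|\int\xi\,d\nu_{x}\Big\|^{2}=2(1-\|x\|^{2}). \]
Combining with the relation $\|x\|=\tanh(\rho(x)/2)$, one finds $1-\|x\|^{2}=\mathrm{sech}^{2}(\rho(x)/2)$, which decays like $4e^{-\rho(x)}$ at infinity. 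Consequently $\rho(x)^{2}(1-\|x\|^{2})$ is a bounded continuous function on $\h$, so
\[ \|\rho[p,f]\Psi\|^{2}(x)\leq 2\|f\|_{\Lip}^{2}\rho(x)^{2}(1-\|x\|^{2})\|\Psi\|^{2}(x)\leq C_{f}\|\Psi\|^{2}(x), \]
establishing the claim. The only nontrivial step is the exact identification of the harmonic variance, which is why one formulates things in the Poincar\'{e} ball where the mean value property directly identifies the moments of $\nu_{x}$; the commutation $[\rho,p]=0$ is what allows the factor $\rho(x)^{2}$ to be absorbed by the exponentially small geometric factor $1-\|x\|^{2}$.
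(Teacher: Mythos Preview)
Your overall strategy is sound and close in spirit to the paper's: both reduce the question to showing that $\|[p,f]\|$ decays like a power of $1-\|x\|^{2}$, which kills the logarithmic growth of $\rho(x)$. The paper does this via a bilinear estimate and H\"older's inequality on $\int P^{-1}d\nu_{x}$, while you use Cauchy--Schwarz and the variance $V(x)$. The reduction to $V(x)$ is a nice simplification.

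However, the step you flag as ``the only nontrivial step'' is in fact wrong for $n\geq 2$. The measures $\nu_{x}$ are the \emph{hyperbolic} harmonic measures, with density $P(x,\xi)^{n}=\bigl(\tfrac{1-\|x\|^{2}}{\|x-\xi\|^{2}}\bigr)^{n}$ against $\nu_{0}$, whereas the mean value property you invoke---that the harmonic extension of $\xi_{i}$ is $x_{i}$---holds for the \emph{Euclidean} Poisson kernel $\tfrac{1-\|x\|^{2}}{\|x-\xi\|^{n+1}}$. These coincide only when $2n=n+1$, i.e.\ $n=1$. For $n\geq 2$ the coordinate functions are not hyperbolically harmonic (the hyperbolic Laplacian applied to $x_{i}$ picks up a nonzero radial term), so $\int\xi\,d\nu_{x}\neq x$ and the identity $V(x)=2(1-\|x\|^{2})$ fails. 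One can check this directly for $n=2$ by computing $\int\xi_{3}\,d\nu_{x}$ along the axis.

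The fix is easy and brings you back to the paper's argument: estimate rather than compute. From $\|\xi-\eta\|^{2}\leq 2\|x-\xi\|^{2}+2\|x-\eta\|^{2}$ one gets
\[
V(x)\leq 4\int\|x-\xi\|^{2}\,d\nu_{x}\xi
=4(1-\|x\|^{2})\int P(x,\xi)^{-1}\,d\nu_{x}\xi,
\]
and the paper shows $\int P^{-1}\,d\nu_{x}=\int P^{n-1}\,d\nu_{0}\leq\bigl(\int P^{n}\,d\nu_{0}\bigr)^{(n-1)/n}=1$ by H\"older. Thus $V(x)\leq 4(1-\|x\|^{2})$ and your final bound $\rho(x)^{2}V(x)\leq C$ goes through unchanged.
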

\begin{proof} The proof consists of pointwise estimates in $x\in\h$:
\begin{align}|\nonumber\langle [p,f&]\Psi,\Phi \rangle (x)|  \leq\int\int |\Psi(x,\eta)\Phi(x,\xi)||f(\xi)-f(\eta)|d\nu_{x}(\xi)d\nu_{x}(\eta) \\
\nonumber&\leq \|f\|_{\Lip}\int\int |\Psi(x,\eta)\Phi(x,\xi)|\|\xi-\eta\|d\nu_{x}(\xi)d\nu_{x}(\eta) \\
\nonumber&\leq  \|f\|_{\Lip}(1-\|x\|^{2})\int\int |\Psi(x,\eta)\Phi(x,\xi)|\frac{\|x-\xi\|+\|x-\eta\|}{1-\|x\|^{2}} d\nu_{x}(\xi)d\nu_{x}(\eta) \\
\label{almostthere}&\leq  \|f\|_{\Lip}(1-\|x\|^{2})\int\int |\Psi(x,\eta)\Phi(x,\xi)| (P(x,\xi)^{-\frac{1}{2}}+P(x,\eta)^{-\frac{1}{2}} )d\nu_{x}(\xi)d\nu_{x}(\eta) 
%&\leq C \|f\|_{\Lip}(1-\|x\|^{2})\|\Psi\|_{2}\|\Phi\|_{2}.
\end{align}
Thus we estimate
\[\int\int |\Phi(x,\xi)\Psi(x,\eta)|P^{-1/2}(x,\xi)d\nu_{x}\xi d\nu_{x}\eta\leq \|\Phi\|_{2}\|\Psi\|_{2}\left(\int\int P^{-1}(x,\xi)d\nu_{x}\xi d\nu_{x}\eta\right)^{\frac{1}{2}}.\]
Using H\"{o}lder's inequality, the fact that $\nu_{x}$ is a probability measure and $d\nu_{x}=P^{n}d\nu_{0}$ we find
\[\int P^{-1}(x,\xi)d\nu_{x}(\xi)=\int P^{n-1}(x,\xi)d\nu_{0}\xi\leq \left(\int P^{n}(x,\xi)d\nu_{0}\xi\right)^{\frac{n-1}{n}}= 1.\]
Combining this with \eqref{almostthere} we obtain the estimate
\[|[p,f]\Psi,\Phi \rangle (x)| \leq 2\|f\|_{\Lip}\|\Phi\|_{2}\|\Psi\|_{2}(1-\|x\|^{2}).\]
An elementary computation using the explicit distance formula on the hyperbolic ball (see \cite[Section I.6.7]{metricspaces}) shows that $\rho(x)=\log\frac{(1+\|x\|)^{2}}{1-\|x\|^{2}}$. It thus follows that $\rho[p,f]$ is bounded. \end{proof}
This leads us to consider the operator $S:=-\Delta+\rho F_{p}$, as a candidate for the unbounded representative of the Fredholm module constructed in Theorem \ref{extrep}. We arrive at the main result of this section.
\begin{theorem}\label{unbddrep} The triple $(C(\partial\h),L^{2}(T_1\h,\nu_x)_{C_{0}(\h)}, S)$ is an unbounded $G$-equivariant Kasparov module representing the $G$-equivariant extension
\[0\to C_{0}(\h)\to C(\overline{\h})\to C(\partial\h)\to 0.\]
\end{theorem}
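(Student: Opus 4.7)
The plan is to check the four axioms of a $G$-equivariant unbounded Kasparov module for $(C(\dH),L^{2}(T_{1}\h,\nu_{x})_{C_{0}(\h)}, S)$ and then identify its bounded transform with the Fredholm module of Theorem~\ref{extrep}. The key algebraic observation, which makes all estimates tractable, is that by Proposition~\ref{wellbehaved} we have $\Delta p=p\Delta=0$, so $F_{p}\Delta=\Delta F_{p}=-\Delta$; since $\rho$ depends only on $x\in\h$ while $F_{p}$ acts only in the $\xi$-variable, $F_{p}$ also commutes with $\rho$. Setting $T:=\Delta+\rho$, this gives
\[
S = -\Delta + \rho F_{p} = F_{p}T = TF_{p}, \qquad S^{2}=F_{p}T^{2}F_{p}=T^{2},
\]
on the common core $C_{c}^{1}(\h,\Dom \Delta_{0})$.

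From here I would first deduce self-adjointness, regularity, and compactness of the resolvent. The operator $T$ is positive self-adjoint regular by Propositions~\ref{compact!}--\ref{resolvent}, and $F_{p}$ is a self-adjoint adjointable involution commuting with $T$, so $S=F_{p}T$ is self-adjoint regular with $\Dom S=\Dom T$ and $(1+S^{2})^{-1/2}=(1+T^{2})^{-1/2}$. By Proposition~\ref{resolvent} the latter is compact in $L^{2}(T_{1}\h,\nu_{x})_{C_{0}(\h)}$, so $f(1+S^{2})^{-1/2}\in \mathbb{K}_{C_{0}(\h)}$ for every $f\in C(\dH)$.

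For the bounded-commutator axiom I take $f\in \Lip(\dH)$, which is a dense $*$-subalgebra of $C(\dH)$, and compute
\[
[S,f] = -[\Delta,f] + [\rho F_{p},f] = -[\Delta,f] + 2\rho[p,f],
\]
using that $\rho$ commutes with multiplication by functions of $\xi$. Lemma~\ref{bdd1} handles the first term and Lemma~\ref{bdd2} handles the second, so $[S,f]$ extends to a bounded operator on the module. For $G$-equivariance, $\Delta$ is $G$-invariant by Proposition~\ref{wellbehaved} and $F_{p}$ is $G$-invariant because $d\nu_{xg}(\xi g)=d\nu_{x}(\xi)$; only $\rho$ fails, but $U_{g}\rho U_{g}^{-1}-\rho$ is multiplication by $x\mapsto d_{\h}(0,xg^{-1})-d_{\h}(0,x)$, bounded in modulus by $d_{\h}(0,0g^{-1})$. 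Thus $[U_{g},S]=[U_{g},\rho]F_{p}$ extends boundedly, giving the equivariant Kasparov module.

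The last and most substantive step is to identify the $KK^{G}$-class with $[\Ext]$. I plan to do this through the bounded transform $b(S)=S(1+S^{2})^{-1/2}$. Using $S=F_{p}T$ and $S^{2}=T^{2}$, one gets
\[
b(S)-F_{p} = F_{p}\bigl(T(1+T^{2})^{-1/2}-1\bigr) = F_{p}\,\varphi(T),
\]
where $\varphi(t)=t(1+t^{2})^{-1/2}-1$. Since $T\ge 0$ and $\varphi\in C_{0}([0,\infty))$, continuous functional calculus applied to the positive operator $T$ with compact resolvent gives $\varphi(T)\in\mathbb{K}_{C_{0}(\h)}$; hence $b(S)$ is a compact perturbation of $F_{p}$, and so represents the same class as the bounded $G$-equivariant Kasparov module of Theorem~\ref{extrep}, which is the class of the boundary extension. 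The main technical obstacle is already overcome in the preparatory sections: the Riesz-potential estimates of Lemma~\ref{unicorn} and the harmonic-measure estimates behind Lemmas~\ref{bdd1}, \ref{bdd2}, and Proposition~\ref{resolvent}. Once these are in hand, the rest is the clean assembly above, pivoting on the identity $S^{2}=(\Delta+\rho)^{2}$ made possible by the orthogonality $\Delta p=p\Delta=0$.
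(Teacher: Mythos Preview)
Your proof is correct and follows essentially the same route as the paper's: the key identity $S=F_{p}(\Delta+\rho)$, compact resolvent via Proposition~\ref{resolvent}, bounded commutators via Lemmas~\ref{bdd1}--\ref{bdd2}, and $G$-equivariance from the invariance of $\Delta,F_{p}$ plus bounded commutation of $\rho$. Your identification of the class is in fact more explicit than the paper's ``by construction'': you spell out that $b(S)-F_{p}=F_{p}\varphi(T)$ with $\varphi\in C_{0}([0,\infty))$ and $T$ having compact resolvent, which is exactly what is implicit in the paper's one-line justification.
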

\begin{proof} We have $S=-\Delta+\rho F_{p}=F_{p}(\Delta+\rho)=F_{p}(1+\Delta+\rho)-F_{p}$ by Proposition \ref{wellbehaved}. Now $1+\Delta+\rho$ commutes with $F_{p}$ and has compact inverse by Proposition \ref{resolvent}, so $S$ has compact resolvent in $L^{2}(T_{1}\h)_{C_{0}(\h)}$. We note that since
\[[S,f]=[\rho,f]F_{p}-[\Delta,f] +2[p,f]\rho,\]
it follows by Lemmas \ref{bdd1} and \ref{bdd2} that $[S,f]$ extends to a bounded adjointable operator whenever $f\in \Lip(\dH)$. The operators $\Delta$ and $F_{p}$ are $G$-invariant by Theorem \ref{extrep} and Proposition \ref{wellbehaved}, and the function $\rho$ commutes boundedly with $G$. Thus $S$ defines a $G$-equivariant cycle, and by construction, the bounded transform defines the same class as $F_{p}$, so we are done.\end{proof}

%%%%%%%%%%%%%%%%%%%%%%%%%%%%%%%%%%%%%
%%%%%%%%%%%%%%%%%%%%%%%%%%%%%%%%%%%%%
\section{Embedded surfaces and $K$-cycles for the boundary crossed product}\label{section: boundary}
In this section we explicitly compute the boundary map $\partial: K^{0}(C_{0}(M))\to K^{1}(C(\partial\h)\rtimes\Gamma)$ for classes $[\Dsla_{\mathring{N}}]\in K^{0}(C_{0}(M))$ attached to a surface $(N, \partial N) \subset (\overline{M}, \partial \overline{M})$. For Bianchi groups, this gives an exhaustive description of the map $\partial$ by Theorem \ref{geometriciso}. 
\subsection{Hyperbolic Dirac operator and Poisson kernel}
We now prove several technical Lemmas concerning the commutation relations of the hypersingular integral operator $\Delta$  \eqref{Diff}, the projection $p$ from Theorem \ref{extrep} and the hyperbolic Dirac operator $\Dsla_{\h}$. In the next section we use these results to compute the Kasparov product of the boundary extension with Dirac operators on embedded surfaces. 

We set some conventions. Following Patterson \cite[Page 294]{Patterson} we view $\h$ as the unit ball in $\R^{n+1}$ as before, with the Riemannian metric $ds^{2}=\frac{dx^{2}}{(1-\|x\|^{2})^{2}}$. Let $\mathcal{S}\to\h$ be the spinor bundle and consider the internal tensor product of $C_0(\h)$-$C^*$-modules 
$$C_{0}(\h,L^{2}(\dH,\nu_0))\otimes_{C_0(\h)} C_{0}(\h,\mathcal{S})\simeq L^{2}(\dH,\nu_0)\otimes C_{0}(\h,\mathcal{S}),$$

and the dense subspace
\begin{equation}\label{wh}W=W_{\h}=C_{c}^{1}(\h,\Dom \Delta_{0})\otimes_{C^{1}_{c}(\h)}^{\alg}C_{c}^{1}(\h,\mathcal{S}).\end{equation}
Let $\Psi\in C_{c}^{1}(\h,\mathcal{S})$ be a compactly supported $C^1$-section and $\x$ a tangent vector field. We denote the Clifford representation associated to the hyperbolic metric on $\h$ by $\psi\mapsto c(\x)\psi$. Let $e_{i}\in \R^{n+1}$ denote the $i$-th standard basis vector. The vector fields $\e_{i}(x)=e_{i}(1-\|x\|^{2})$ define a global orthonormal frame for the  the tangent bundle on $\h$. The hyperbolic Dirac operator $\Dsla_{\h}$ can be computed by elementary methods (see for instance \cite[Theorem 5.3.5]{GilMur}) and  is given by 

\[\Dsla_{\h}:C^{1}_{c}(\h,\mathcal{S})\to C_{c}(\h,\mathcal{S}), \quad \psi\mapsto \sum_{i=0}^{n}(1-\|x\|^{2})c(\e_i)\partial_{i}\psi + L_{i}c(\e_i)\psi.\]
Here the $L_{i}$ are bounded functions on $\h$. It induces an operator $$\Tsla_{\h}:C^{1}_{c}(\h, L^{2}(\dH,\nu_0))\otimes_{C_{c}^{1}(\h)}^{\alg} C_{c}^{1}(\h,\mathcal{S})\to C_{0}(\h,L^{2}(\dH,\nu_0))\otimes_{C_0(\h)}^{\alg} C_{0}(\h,\mathcal{S}),$$
\begin{align*} \Tsla_{\h}(\Psi\otimes \psi) &= \Psi\otimes \Dsla_{\h}\psi + \partial_{i}\Psi\otimes c(\e_{i})\psi\\
&=(1-\|x\|^{2})\left(\sum_{i=0}^{n}\Psi\otimes c(\e_{i})(\partial_{i}\psi)+(\partial_{i}\Psi)\otimes c(\e_{i})\psi\right)+L_{i}\Psi\otimes c(\e_{i})\psi,\end{align*}
%$$ \Dsla_{\h}\Psi(x)=(1-\|x\|^{2})\sum_{i=0}^{n}c(\e_{i})(\partial_{i}\Psi)(x)+(n+1)x_{i}c(\e_{i})\Psi(x).$$ 
For $s\in\R$, the powers of the Poisson kernel define multiplication operators 
$$P^{s}: C_{c}^{1}(\h, L^{2}(\dH,\nu_0))\to C_{c}^{1}(\h, L^{2}(\dH,\nu_0)),\quad \Psi\mapsto P^{s}\Psi.$$ 
\begin{lemma} \label{Poissonderivative} Let $s\in \R$ and $\Psi\otimes\psi\in W_{\h}$. The operators $P^{s}$ and $\Tsla_{\h}$ satisfy 
$$\Tsla_{\h}:W_{\h}\to C_{c}(\h,\Dom \Delta_0)\otimes_{C_c(\h)}^{\alg} C_{c}(\h,\mathcal{S}),$$
$$P^{s}:W_{\h} \to C_{c}^{1}(\h,L^{2}(\dH,\nu_0))\otimes_{C^{1}_{c}(\h)}^{\alg}C_{c}^{1}(\h,\mathcal{S}).$$ There are functions $u_{i}\in C_{b}(T_{1}\h)$ with $\sum_{i=0}^{n}u_{i}^{2}=1$ such that $$[\Tsla_{\h}, P^{s}](\Psi\otimes\psi)=2sP^{s}\sum_{i=0}^{n}c(\e_{i})u_{i}(\Psi\otimes\psi).$$ 
\end{lemma}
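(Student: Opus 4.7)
First I would dispatch the two mapping properties: these follow directly from the displayed formula for $\Tsla_{\h}$, since $\partial_i$ commutes with $\Delta_0$ (which acts only in the $\xi$-variable), so $\partial_i \Psi \in C_c(\h, \Dom \Delta_0)$ whenever $\Psi \in C_c^1(\h, \Dom \Delta_0)$, and multiplication by the bounded coefficients $(1-\|x\|^2)$ and $L_i$ preserves this. For $P^s$, the Poisson kernel is smooth in $x$ with values in $C^\infty(\dH)$ (locally uniformly in $x$), so multiplication by $P^s$ preserves $C_c^1(\h, L^2(\dH,\nu_0))$ via Leibniz; note however that $P^s(x,\cdot)$ generally leaves $\Dom \Delta_0$, which accounts for the change of target space.

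Next I would compute the commutator. Since $P^s$ multiplies only the first tensor factor while $c(\e_i)$ and $\partial_i \psi$ involve only the spinor factor (independent of $\xi$), Leibniz $\partial_i(P^s \Psi) = (\partial_i P^s)\Psi + P^s \partial_i \Psi$ shows that every term in the definition of $\Tsla_\h$ cancels with its counterpart in $P^s \Tsla_\h$ except the one where $\partial_i$ lands on $P^s$, giving
\begin{equation*}
[\Tsla_\h, P^s](\Psi \otimes \psi) = (1-\|x\|^2) \sum_i (\partial_i P^s)\Psi \otimes c(\e_i)\psi.
\end{equation*}
A direct calculation from $P(x,\xi) = (1-\|x\|^2)/\|x-\xi\|^2$ yields $(1-\|x\|^2)\partial_i \log P = -2\bigl(x_i + P(x_i-\xi_i)\bigr)$, hence $(1-\|x\|^2)\partial_i P^s = 2sP^s u_i$ where
\begin{equation*}
u_i(x,\xi) := -x_i - P(x,\xi)(x_i - \xi_i).
\end{equation*}
Since multiplication by $u_i$ on the first factor commutes with $c(\e_i)$ on the second, the expression rearranges to the required form $2sP^s \sum_i c(\e_i) u_i(\Psi \otimes \psi)$.

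The main remaining step, and really the heart of the lemma, is the identity $\sum_i u_i^2 = 1$. Expanding gives
\begin{equation*}
\sum_i u_i^2 = \|x\|^2 + 2P\langle x, x-\xi\rangle + P^2\|x-\xi\|^2,
\end{equation*}
and the two identities $P\|x-\xi\|^2 = 1-\|x\|^2$ and $2\langle x, x-\xi\rangle = \|x\|^2 - 1 + \|x-\xi\|^2$ (using $\|\xi\|^2=1$) produce the telescoping $\sum_i u_i^2 = \|x\|^2 + (1-\|x\|^2) = 1$. Geometrically this reflects that $-\tfrac12 \log P(\cdot,\xi)$ is the Busemann function based at $\xi$ and so has unit hyperbolic gradient. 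Boundedness $|u_i|\leq 1$ and continuity of $u_i$ on $T_1\h$ (where $\|x-\xi\|>0$) are then automatic, so $u_i \in C_b(T_1 \h)$. No serious obstacle should remain beyond the careful Leibniz bookkeeping and this explicit algebraic cancellation.
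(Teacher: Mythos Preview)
Your proposal is correct and follows essentially the same route as the paper: compute $(1-\|x\|^2)\partial_i P^s$ directly to extract the functions $u_i$, then verify their properties. Your $u_i = -x_i - P(x,\xi)(x_i-\xi_i)$ is exactly the paper's $u_i = \frac{(\xi_i-x_i)(1-\|x\|^2)-x_i\|x-\xi\|^2}{\|x-\xi\|^2}$ after clearing the denominator. The only minor difference is that you supply the algebraic verification of $\sum_i u_i^2 = 1$ (which the paper omits as ``elementary'') and deduce boundedness of the $u_i$ from this identity, whereas the paper bounds each $u_i$ separately via a triangle-inequality estimate; your route is slightly cleaner, and the Busemann-function remark is a nice touch.
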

\begin{proof} The domain mapping properties are straightforward to check and guarantee that commutator $[\Tsla_{\h},P^{s}]$ is well-defined.  By the derivation property of $\Tsla_{\h}$, we need to compute
\[(1-\|x\|^{2})\partial_{i}\left(P(x,\xi)^{s}\right)=2sP(x,\xi)^{s}\frac{(\xi_{i}-x_{i})(1-\|x\|^{2})-x_{i}\|\xi-x\|^{2} }{\|x-\xi\|^{2} },\]
so we have
\begin{equation}\label{Poissoncomm}
[\Tsla_{\h}, P^{s}](\Psi\otimes\psi)=2sP^{s}\sum_{i}c(\e_{i})u_{i}(\Psi\otimes\psi), \quad u_{i}(x,\xi)=\frac{(\xi_{i}-x_{i})(1-\|x\|^{2})-x_{i}\|\xi-x\|^{2} }{\|x-\xi\|^{2} }
\end{equation}
Now since $$ |\xi_i-x_i|\leq\left(\sum_{k=0}^{n}(x_{k}-\xi_{k})^{2}\right)^{\frac{1}{2}}\leq \|x-\xi\| ,\quad  1-\|x\|^{2}\leq (1+\|x\|)\|x-\xi\| $$ we find $|u_{i}(x,\xi)|\leq 1+ \|x\|-x_{i}$ which is a bounded function. The proof that $\sum u_{i}^{2}=1$ is an elementary computation which we omit.
\end{proof}
The operator $v\Delta v^{*}$ from \eqref{vDv} induces and operator
$$(v\Delta v^{*})\otimes 1: W_{\h}\to C_{c}^{1}(\h, L^{2}(\dH,\nu_0))\otimes_{C_{c}^{1}(\h)}^{\alg} C_{c}^{1}(\h,\mathcal{S}), $$
which we will denote by $v\Delta v^{*}$ as well.
\begin{proposition}\label{comm} The operators $(v\Delta v^{*})\otimes 1$ and $(vpv^{*})\otimes 1$ satisfy
$$(v\Delta v^{*})\otimes 1, (vpv^{*})\otimes 1:W_{\h}\to C^{1}_{c}(\h, L^{2}(\dH,\nu_{0}))\otimes_{C_{c}^{1}(\h)}^{\alg}C_{c}^{1}(\h,\mathcal{S}).$$
For $\Psi\otimes\psi\in W_{\h}$ it holds that 
\begin{align*}[T_\h, vpv^{*}] (\Psi\otimes\psi) &=n\sum_{i}(v(u_i p+ pu_i )v^{*})\Psi\otimes c(\e_{i})\psi,\\
[v\Delta v^{*},\Tsla_{\h}](\Psi\otimes\psi) &=n\sum_{i}g_{i}\Psi\otimes c(\e_{i})\psi, \quad g_{i}(x,\xi):= \left(\int \frac{(u_{i}(x,\xi)-u_{i}(x,\eta))}{d_{x}(\xi,\eta)^{n}}d\nu_{x}(\eta)\right).\end{align*}
Moreover, the functions $g_{i}:T_{1}\h\to \R$ satisfy $\sup_{(x,\xi)}\sum g_{i}(x,\xi)^{2}<\infty$  . 
\end{proposition}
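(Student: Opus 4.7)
The proof naturally splits into three tasks: verifying the domain statements, establishing the two commutator formulas, and bounding $\sum g_i^{2}$. For the domain, I would invoke \eqref{vDv} to write $v\Delta v^{*}=\Delta_{0}\otimes 1+M_{H}$, where $M_{H}$ is multiplication by the function $H$ of Lemma~\ref{Hlem}. Since $\Delta_{0}$ sends $\Dom\Delta_{0}$ boundedly into $L^{2}(\dH,\nu_{0})$ and $H\in C^{1}(\h,L^{\infty}(\dH,\nu_{0}))$, both components respect the $C^{1}$-regularity in $x$ built into $W_{\h}$. The operator $vpv^{*}$ is an integral operator on $L^{2}(\dH,\nu_{0})\otimes C_{0}(\h)$ with smooth kernel $K(x,\xi,\eta)=P^{n/2}(x,\xi)P^{n/2}(x,\eta)$, from which the analogous mapping property is immediate. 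Thus both operators send $W_{\h}$ into $C_{c}^{1}(\h,L^{2}(\dH,\nu_{0}))\otimes_{C_{c}^{1}(\h)}^{\alg} C_{c}^{1}(\h,\mathcal{S})$.

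Both $\Delta_{0}$ and $p_{0}$ act only in the boundary variable $\xi$ and therefore commute with the horizontal derivations $(1-\|x\|^{2})\partial_{i}^{x}$ appearing in $\Tsla_{\h}$, so each commutator reduces to differentiating the $x$-dependent coefficient of the relevant kernel. For $vpv^{*}$, applying Lemma~\ref{Poissonderivative} to each Poisson factor of $K$ gives $(1-\|x\|^{2})\partial_{i}^{x} K(x,\xi,\eta)=nK(x,\xi,\eta)(u_{i}(x,\xi)+u_{i}(x,\eta))$; the term differentiated in the $\xi$-factor reassembles as $u_{i}\cdot vpv^{*}$ and the term differentiated in the $\eta$-factor as $vpu_{i}v^{*}$, yielding the identity for $[\Tsla_{\h},vpv^{*}]$. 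For $v\Delta v^{*}$, the commutator equals $[M_{H},\Tsla_{\h}]$, which up to sign is $(1-\|x\|^{2})(\partial_{i}^{x}H)\otimes c(\e_{i})$; differentiating through the integrand defining $H$ via Lemma~\ref{Poissonderivative} on each occurrence of $P^{n/2}$ produces a factor $u_{i}(x,\xi)-u_{i}(x,\eta)$, and converting from $(d_{0},\nu_{0})$ to $(d_{x},\nu_{x})$ using $d_{x}=P^{1/2}(x,\xi)P^{1/2}(x,\eta)d_{0}$ and $d\nu_{x}=P^{n}d\nu_{0}$ gives the stated form of $g_{i}$.

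For the boundedness of $\sum_{i}g_{i}^{2}$, the plan is to exploit $G$-equivariance. Since $(u_{i}(x,\cdot))$ transforms as a unit vector under the isotropy subgroup $G_{x}$ and $\nu_{x}$ is $G_{x}$-invariant, the vector $(g_{i}(x,\xi))$ is $G_{x}$-equivariant in $\xi$; combined with the transitivity of $G$ on $\h$, this forces $\sum_{i} g_{i}^{2}$ to be constant on $T_{1}\h$. Evaluating at $x=0$, where $u_{i}(0,\xi)=\xi_{i}$, yields $g_{i}(0,\xi)=\int (\xi_{i}-\eta_{i})\|\xi-\eta\|^{-n}\,d\nu_{0}(\eta)$, whose $SO(n+1)$-equivariance in $\xi$ forces $(g_{i}(0,\xi))_{i}=c\,\xi$ for a constant $c$, so that $\sum_{i} g_{i}^{2}=c^{2}<\infty$. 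The main technical obstacle will be the careful bookkeeping of Poisson-kernel factors in the computation of $\partial_{i}^{x}H$ and in passing between the trivialized picture on $L^{2}(\dH,\nu_{0})\otimes C_{0}(\h)$ and the intrinsic picture on $L^{2}(T_{1}\h,\nu_{x})$; differentiation under the integral sign is justified throughout by the continuity and pointwise estimates of Lemma~\ref{Hlem}.
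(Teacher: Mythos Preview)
Your treatment of the domain statement and of the two commutator formulas is essentially the paper's argument: decompose $v\Delta v^{*}=\Delta_{0}\otimes 1+H$ via \eqref{vDv}, use Lemma~\ref{Hlem} for the regularity of $H$, and obtain both commutators by applying Lemma~\ref{Poissonderivative} and the Leibniz rule to the Poisson-kernel factors in the relevant integral kernels.

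Where you diverge is in the bound on $\sum_i g_i^{2}$. The paper does not use equivariance at all here: it proves the algebraic identity
\[
\sum_{i=0}^{n}\bigl(u_i(x,\xi)-u_i(x,\eta)\bigr)^{2}=d_x(\xi,\eta)^{2},
\]
and then combines Cauchy--Schwarz (splitting the denominator $d_x^{-n}$ as $d_x^{-(n+s)/2}d_x^{-(n-s)/2}$) with the finiteness of the Riesz potentials $I_s,I_{2-s}$ from Lemma~\ref{unicorn}, obtaining the uniform bound $I_sI_{2-s}$. Your symmetry route is attractive and, if completed, actually yields the sharper conclusion that $\sum_i g_i^{2}$ is \emph{constant} on $T_1\h$ rather than merely bounded.

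However, as written your equivariance step has a gap. From ``$(u_i(x,\cdot))$ transforms as a unit vector under $G_x$'' you only get that $\sum_i g_i(x,\xi)^{2}$ is independent of $\xi$; to pass to independence in $x$ via ``transitivity of $G$ on $\h$'' you need that $(g_i)$, not just $\sum_i g_i^{2}$ fibrewise, transforms under the full group $G$ by an orthogonal matrix depending only on $(x,g)$. That in turn requires the corresponding statement for $(u_i)$, which you assert but do not prove. It is true --- the vector $\sum_i u_i(x,\xi)\e_i(x)$ is the unit tangent at $x$ to the geodesic ray toward $\xi$, and since $g$ carries this geodesic to the geodesic from $xg$ to $\xi g$ one gets $u_j(xg,\xi g)=\sum_i O_{ji}(x,g)u_i(x,\xi)$ with $O(x,g)$ the orthogonal matrix of $dg_x$ in the frame $\e_i$ --- but this geometric identification (or an equivalent direct verification) needs to be supplied. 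Once that is in place, $G$-invariance of $\sum_i g_i^{2}$ and transitivity of $G$ on $T_1\h$ finish the argument exactly as you intend.
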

\begin{proof} The operator $vpv^{*}$ can be written as
\[vpv^{*}\Psi(x,\xi)=P(x,\xi)^{n/2}\int \Psi(x,\eta)P^{n/2}(x,\eta)d\nu_0\eta,\]
from which the domain mapping property follows readily. The formula for commutator is a direct application of Equation \ref{Poissoncomm}. We turn to the operator $v\Delta v^{*}$. Recall from Equation \eqref{vDv} that the operator $v\Delta v^{*}$ can be written as
$v\Delta v^{*}=\Delta_{0}\otimes 1 + H,$
with $H$ as in Equation \eqref{Hdef}. By Lemma \ref{Hlem}, $H\in C^{1}(\h, L^{2}(\dH,\nu_0))$ and thus $H$ multiplies $C_{c}^{1}(\h, \Dom \Delta_{0})$ into $C_{c}^{1}(\h, L^{2}(\dH,\nu_0))$. Clearly $\Delta_{0}\otimes 1$ maps  $C_{c}^{1}(\h, \Dom \Delta_{0})$ into $C_{c}^{1}(\h, L^{2}(\dH,\nu_0))$ as well. Therefore 
$$(v\Delta v^{*})\otimes 1:W_{\h}\to C^{1}_{c}(\h, L^{2}(\dH,\nu_{0}))\otimes_{C_{c}^{1}(\h)}^{\alg}C_{c}^{1}(\h,\mathcal{S}),$$
and the commutator $[\Tsla_{\h}, (v\Delta v^{*})\otimes 1]$ is well defined and equals $[\Tsla_{\h}, H\otimes 1]$. By Lemma \ref{Poissonderivative} and the Leibniz rule $$[\Tsla_{\h},P(x,\eta)^{n/2}P(x,\xi)^{-n/2}]=nP(x,\eta)^{n/2}P(x,\xi)^{-n/2}\sum_{i}c(\e_{i})(u_{i}(x,\xi)-u_{i}(x,\eta)).$$
It now follows that on $W_{\h}$
\begin{align*}[\Tsla_{\h}, H\otimes 1]&=n\sum_{i}c(\e_i)\int\frac{P(x,\eta)^{n/2}P(x,\xi)^{-n/2}(u_{i}(x,\xi)-u_{i}(x,\eta))}{d_{0}(\xi,\eta)}d\nu_{0}\eta\\
&=n\sum_{i}c(\e_i)\int\frac{(u_{i}(x,\xi)-u_{i}(x,\eta))}{P(x,\xi)^{n/2}P(x,\xi)^{n/2}d_{0}(\xi,\eta)}P(x,\eta)^{n}d\nu_{0}\eta \\ &= n\sum_{i}c(\e_i)\int\frac{(u_{i}(x,\xi)-u_{i}(x,\eta))}{d_{x}(\xi,\eta)}d\nu_{x}\eta,\end{align*} 
as claimed. To prove that $\sup_{(x,\xi)}\sum g_{i}(x,\xi)^{2}<\infty,$ it suffices to show that
\begin{equation}\label{normestimate}\sup_{(x,\xi)}\sum_{i}\left(\int \frac{(u_{i}(x,\xi)-u_{i}(x,\eta))}{d_{x}(\xi,\eta)^{n}}d\mu_{x}(\eta)\right)^{2} \leq \sup_{(x,\xi)}\sum_{i}\left(\int \frac{|u_{i}(x,\xi)-u_{i}(x,\eta)|}{d_{x}(\xi,\eta)^{n}}d\mu_{x}(\eta)\right)^{2},
\end{equation}is finite. By H\"{o}lder's inequality we have, for $0<s<1$ that 
\begin{equation}\label{Rieszestimate}\left(\int \frac{|u_{i}(x,\xi)-u_{i}(x,\eta)|}{d_{x}(\xi,\eta)^{n}}d\nu_{x}(\eta)\right)^{2}\leq \int \frac{(u_{i}(x,\xi)-u_{i}(x,\eta))^{2}}{d_{x}(\xi,\eta)^{n+s}}d\nu_{x}(\eta)\int \frac{1}{d_{x}(\xi,\eta)^{n-s}}d\nu_{x}(\eta).\end{equation}
A lengthy but elementary calculation shows that 
\begin{equation}\label{expr}\sum_{i=0}^{n}(u_{i}(x,\xi)-u_{i}(x,\eta))^{2}=\|\xi-\eta\|^{2}P(x,\xi)P(x,\eta)=d_{x}(\xi,\eta)^{2}.
\end{equation}
Combining \eqref{normestimate}, \eqref{Rieszestimate} and \eqref{expr}, we find
\begin{align*}\sum_{i}&\left(\int \frac{(u_{i}(x,\xi)-u_{i}(x,\eta))}{d_{x}(\xi,\eta)^{n}}d\nu_{x}(\eta)\right)^{2} \leq I_{s} \sum_{i} \int \frac{(u_{i}(x,\xi)-u_{i}(x,\eta))^{2}}{d_{x}(\xi,\eta)^{n+s}}d\nu_{x}(\eta)\\
&= I_{s} \int \frac{d_{x}(\xi,\eta)^{2}}{d_{x}(\xi,\eta)^{n+s}}d\nu_{x}(\eta)
= I_{s} \int \frac{1}{d_{x}(\xi,\eta)^{n+s-2}}d\nu_{x}(\eta)
= I_{s}I_{2-s},\end{align*}
which proves boundedness by Lemma \ref{unicorn}.\end{proof}
 \subsection{Kasparov products with embedded surfaces} 
 By Theorem \ref{geometriciso}, any element  of the group $K^{0}(C_{0}(M))$ of a Bianchi manifold $M$ can be represented by the self-adjoint Dirac operator on a closed embedded hypersurface $\intN\to M$. Throughout we will use that the spinor bundle on a closed embedded hypersurface is the restriction of the spinor bundle of the ambient manifold (cf. \cite{Baer, Diracembedded}).
 
We will consider the embedded hypersurface $\Sigma:=\pi^{-1}(\intN)\subset \h$ inside the universal cover $\h$ of $M$ and denote by $\nv$ the unit normal vector field of $\Sigma\subset \h$. Let $\mathcal{S}\to\h$ be the spinor bundle of $\h$, which is the pullback of the spinor bundle of $M$ under the covering map $\pi:\h\to M$. The Clifford module structure on $\mathcal{S}|_{\Sigma}$ is given by $$c_{\Sigma}(\x)\psi:=c(\x)c(\nv)\psi,$$ with $\x$ a vector field on $\Sigma$. We denote by $\Dsla_{\Sigma}:C_{c}^{1}(\Sigma,\mathcal{S}|_{\Sigma})\to C_{c}(\Sigma,\mathcal{S}|_{\Sigma})$ the Dirac operator on $\mathcal{S}|_{\Sigma}$ associated to this Clifford module structure. The map 
\begin{equation}\label{sigmagrading} \sigma:\mathcal{S}|_{\Sigma}\to \mathcal{S}|_{\Sigma},\quad  \psi\mapsto ic(\nv)\psi\end{equation} 
is self-adjoint for the Riemannian inner product on $\mathcal{S}$ and squares to $1$. As such $\sigma$  induces a grading on $\mathcal{S}|_{\Sigma}$, giving a decomposition $\mathcal{S}|_{\Sigma}^{+}\oplus \mathcal{S}|_{\Sigma}^{-}$. Moreover, since $\mathcal{S}|_{\h}$ is $G$-equivariant, $\mathcal{S}|_{\Sigma}$ is $\Gamma$-equivariant. Similar relations hold for the spinor bundles of $M$ and $\mathring{N}$. 

Let $\langle \psi,\phi\rangle_{\mathcal{S}}$ denote the inner product on the spinor bundle and $C_{0}(\Sigma,\mathcal{S})$ the associated $C_{0}(\Sigma)$ module of sections. Moreover, we write $L^{2}_{\pi}(\Sigma,\mathcal{S}|_{\Sigma})$ for the $C_{0}(\intN)$-module obtained as the completion of $C_{c}(\Sigma,\mathcal{S})$ in the inner product norm given by
\[\langle\psi,\phi\rangle(n):=\sum_{x\in\pi^{-1}(n)}\langle \psi,\phi\rangle_{\mathcal{S}}(x)\in C_{0}(\intN).\]

\begin{proposition}\label{tensorisos} The right $C_{c}(\mathring{N})$ module map 
\begin{align*}v:C_{c}(T_{1}(\h))\otimes_{C_{c}(M)}^{\alg}C_{c}(\mathring{N},\mathcal{S}_{\mathring{N}})&\to C_{c}(\Sigma, L^{2}(\dH,\nu_0))\otimes_{C_{c}(\Sigma)}^{\alg}C_{c}(\Sigma,\mathcal{S}|_{\Sigma}),\\
\Psi\otimes\psi &\mapsto (P^{n/2}\cdot\Psi)|_{\Sigma}\otimes \pi^{*}\psi \end{align*}
is well-defined and extends to a unitary isomorphism of $C_{0}(\mathring{N})$ modules
 $$v: L^{2}_{\pi}(T_{1}\h,\nu_x)\otimes_{C_{0}(M)} C_0(\intN,\mathcal{S}_{\intN})\to C_{0}(\Sigma, L^{2}(\dH,\nu_0))\otimes_{C_{0}(\Sigma)}L^{2}_{\pi}(\Sigma, \mathcal{S}|_{\Sigma})_{C_{0}(\mathring{N})},$$ 
 and to a unitary isomorphism of Hilbert spaces
 $$v: L^{2}_{\pi}(T_{1}\h,\nu_x)\otimes_{C_{0}(M)} L^{2}(\intN,\mathcal{S}_{\intN})\to C_0(\Sigma, L^{2}(\dH,\nu_0))\otimes_{C_{0}(\Sigma)}L^2(\Sigma, \mathcal{S}|_{\Sigma}).$$ 
Here the latter Hilbert space is the completion of $C_{c}(\Sigma, L^{2}(\dH,\nu_0))\otimes_{C_{c}(\Sigma)}^{\alg}C_{c}(\Sigma,\mathcal{S}|_{\Sigma})$ in the inner product
\begin{equation}\label{sigmainn}
\langle \Psi\otimes \psi,\Phi\otimes\phi\rangle_{\mu}=\int_{\Sigma}\langle  \Psi\otimes\psi, \Phi\otimes\phi\rangle_{C_0(\Sigma)}(x)d\mu(x),
\end{equation}
where $\langle  \Psi\otimes\psi, \Phi\otimes\phi\rangle_{C_{0}(\Sigma)}$ denotes the inner product on $C_{0}(\Sigma,L^{2}(\dH,\nu_0))\otimes_{C_0(\Sigma)}C_{0}(\Sigma,\mathcal{S}).$
The algebra $C(\dH)$ acts by pointwise multiplication and the left  $\Gamma$-representation is given by \begin{equation}\label{gammarep}v( u_{\gamma}\otimes 1) v^{*}(\Psi\otimes\psi)=|\gamma'|^{-\frac{n}{2}}(\gamma\circ\Psi)\otimes (\gamma\circ\psi).\end{equation} 
\end{proposition}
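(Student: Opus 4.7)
The strategy is to verify each clause by computation on the dense subspace of compactly supported elementary tensors, then extend by continuity. First I would check that $v$ respects the $C_c(M)$-balancing used to form the left-hand tensor product: for $f\in C_c(M)$, both $v(\pi^{*}f\cdot\Psi\otimes\psi)$ and $v(\Psi\otimes f|_{\mathring{N}}\cdot\psi)$ evaluate to $P^{n/2}\,f(\pi(x))\,\Psi(x,\xi)|_{\Sigma}\otimes\pi^{*}\psi$, since $\pi^{*}f|_{\Sigma}(x)=f(\pi(x))$; hence $v$ descends to a well-defined map of algebraic tensor products and is patently a right $C_c(\mathring{N})$-module map.

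The isometry property rests on two measure-theoretic identities. The first is the change of variable $d\nu_{x}=P(x,\cdot)^{n}\,d\nu_{0}$ on $\dH$ from \eqref{Poissonprops}, which absorbs the $|P^{n/2}|^{2}$ factors introduced by $v$. The second is that the restriction $\pi|_{\Sigma}\colon\Sigma\to\mathring{N}$ inherits from $\pi\colon\h\to M$ the structure of a Riemannian covering, so that the fibrewise sum $\sum_{x\in\pi^{-1}(n)\cap\Sigma}$ followed by integration over $\mathring{N}$ reconstitutes the Riemannian volume integral over $\Sigma$ appearing in \eqref{sigmainn}. Combining these and using $\langle\pi^{*}\psi,\pi^{*}\phi\rangle_{\mathcal{S}}(x)=\langle\psi,\phi\rangle_{\mathcal{S}_{\mathring{N}}}(\pi(x))$ yields equality of the $C_{0}(\mathring{N})$-valued inner products of source and target; the Hilbert-space statement is then obtained by integrating against the volume measure on $\mathring{N}$.

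For density of the image of $v$, I would choose a trivializing open cover $\{U_{\alpha}\}$ of $\Sigma$ for the covering $\pi|_{\Sigma}$ together with a subordinate partition of unity. Any smooth compactly supported element of $C_c(\Sigma, L^{2}(\dH,\nu_{0}))\otimes^{\alg}_{C_c(\Sigma)} C_c(\Sigma,\mathcal{S}|_{\Sigma})$ whose support lies in a single $U_{\alpha}$ is realised as $v(\Psi\otimes\psi)$ by taking $\Psi\in C_c(T_{1}\h)$ equal to $P^{-n/2}$ times the given element on a thickening of $U_{\alpha}$ (extended by zero using a cutoff), and $\psi\in C_c(\mathring{N},\mathcal{S}_{\mathring{N}})$ the push-down along the sheet $\pi|_{U_{\alpha}}$ of the spinor factor; the partition of unity then assembles the general case, so the image of $v$ is dense.

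Finally, the conjugation formula \eqref{gammarep} is a direct computation. Under the descent representation from Section \ref{subsec: extensionclass}, $u_{\gamma}$ acts on $L^{2}_{\pi}(T_{1}\h,\nu_{x})$ by the diagonal pullback $(u_{\gamma}\Psi)(x,\xi)=\Psi(x\gamma^{-1},\xi\gamma^{-1})$, paired with the spin-lift action on $\mathcal{S}_{\mathring{N}}$. Conjugating by the multiplication operator $P^{n/2}$ produces a Jacobian factor of the form $P(x,\xi)^{n/2}/P(x\gamma^{-1},\xi\gamma^{-1})^{n/2}$, which by the transformation rule \eqref{Pg} equals $|\gamma'|^{-n/2}$; the simultaneous change of the $\Sigma$-coordinate on both factors recombines into the diagonal $\Gamma$-action $\gamma\circ(\,\cdot\,)$ on $\Sigma$. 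The main technical care lies in aligning the direction conventions for the $\Gamma$-actions on $\h$, $\dH$ and $\Sigma$ so that the sign in the exponent of the Jacobian matches \eqref{gammarep}; once these are pinned down the identity holds pointwise on the core.
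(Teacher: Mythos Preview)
Your proposal is correct and follows essentially the same route as the paper: verify the $C_c(M)$-balancing, establish isometry via the Poisson change of measure $d\nu_x = P^n\,d\nu_0$ together with the Riemannian-covering structure of $\pi|_\Sigma$, prove density of the image by a partition of unity subordinate to a trivializing cover of $\Sigma$, and handle the $\Gamma$-representation by direct computation from \eqref{Pg}. The one point the paper makes explicit that you gloss over is that $\pi^*\psi$ is not compactly supported on $\Sigma$, so to see that the image actually lands in the algebraic tensor product over $C_c(\Sigma)$ one inserts a cutoff $\chi\in C_c(\Sigma)$ with $\chi=1$ on $\supp\Psi|_\Sigma$ and writes $(P^{n/2}\Psi)|_\Sigma\otimes\pi^*\psi=(P^{n/2}\Psi)|_\Sigma\otimes\chi\,\pi^*\psi$.
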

\begin{proof} First note that $v$ is well defined for if $\chi\in C_{c}(\Sigma)$ is a function such that $\chi=1$ on $\supp\Psi|_{\Sigma}$, then $$(P^{n/2}\cdot\Psi)|_{\Sigma}\otimes \pi^{*}\psi=(P^{n/2}\cdot\Psi)|_{\Sigma}\otimes \chi \pi^{*}\psi\in C_{c}(\Sigma, L^{2}(\dH,\nu_0))\otimes_{C_{c}(\Sigma)}C_{c}(\Sigma,\mathcal{S}|_{\Sigma}).$$ The balancing relation is respected by $v$ for if $g\in C_{0}(M)$ and $\chi$ is as above then
\begin{align*}v(\Psi\cdot g\otimes \psi)&=v(\Psi\chi\pi^{*}(g)\otimes \psi)=P^{n/2}\cdot \Psi\chi \pi^{*}(g)\otimes \pi^{*}(\psi)\\ &=P^{n/2}\cdot\Psi\otimes \chi \pi^{*}(g)|_{\Sigma}\pi^{*}\psi = P^{n/2}\cdot\Psi\otimes \chi  \pi^{*}(g|_{N}\psi)=v(\Psi\otimes g\cdot \psi).\end{align*}
Compatibility of the inner products follows by
\begin{align*}\langle v(\Psi\otimes \psi),& v(\Phi\otimes \phi) \rangle(n)=\langle \pi^{*}\psi, \langle P^{n/2}\Psi,P^{n/2}\Phi\rangle_{C_{0}(\Sigma,L^{2})} \pi^{*}\phi\rangle(n)\\&
=\sum_{x\in\pi^{-1}(n)}\langle \pi^{*}\psi, \langle P^{n/2}\Psi,P^{n/2}\Phi\rangle_{C_{0}(\Sigma,L^{2})} \pi^{*}\phi\rangle_{\mathcal{S}_{\Sigma}}(x)\\ &=\sum_{x\in\pi^{-1}(n)}\langle \psi(n), \langle \Psi,\Phi\rangle_{L^{2}(T_1\h,\nu_{x})} (x) \phi(n)\rangle_{\mathcal{S}_{\intN}}\\
&=\langle \psi(n), \langle \Psi,\Phi\rangle_{L^{2}_{\pi}(T_1\h,\nu_{x})_{C_{0}(M)}} (n) \phi(n)\rangle_{\mathcal{S}_{\intN}}=\langle \Psi\otimes \psi, \Phi\otimes \phi \rangle(n),
\end{align*}
so it remains to show that $v$ has dense range. Choose a pre-compact open cover $\{U_{i}\}$ of $\Sigma$ with the property that $U_{i}\gamma \cap U_{i}=\emptyset$ whenever $\gamma\neq e$ and let $\chi_{i}^{2}$ be a partition of unity subordinate to $\{U_{i}\}$. Then for each $\chi_{i}$, $\psi\in C_{c}(\Sigma,\mathcal{S}|_{\Sigma})$ and $\Psi\in C_{c}(\Sigma, L^{2}(\dH,\nu_0))$ there is a section $\psi_{i}\in C_{c}(\mathring{N},\mathcal{S})$ such that $\chi_{i}\psi=\pi^{*}\psi_{i}|_{U_{i}}$, and a function $\Psi_{i}\in C_{c}(T_1\h)$ such that $\Psi\chi_{i}=P^{n/2}\Psi_{i}$. Now choose functions $f_{i}\in C_{c}(\Sigma)$ with $f_{i}=1$ on $U_{i}$, so that
\[\Psi\otimes \psi=\sum \Psi\chi_{i}f_{i}\otimes \chi_{i}\psi=\sum \Psi\chi_{i}\otimes f_{i}\pi^{*}\psi_{i}=\sum \Psi\chi_{i}\otimes \pi^{*}\psi_{i}=\sum v(\Psi_{i}\otimes\phi_{i}),\]
which shows that $v$ is surjective and thus extends to unitary isomorphism of the $C^{*}$-completions. To see that $v$ extends to the Hilbert space completions, we need only observe that $\pi:\Sigma\to \mathring{N}$ is a local isometry, so $L^{2}(\Sigma,\mathcal{S}|_{\Sigma},\mu)\simeq C_{0}(\Sigma,\mathcal{S}|_{\Sigma})\otimes_{C_{0}(\mathring{N})}L^{2}(\mathring{N},\mu)$. The statements about the algebra representations follow by straightforward calculation.\end{proof}
Similar to \eqref{wh} we consider the subspace $W_{\Sigma}:=C_{c}^{1}(\Sigma,\Dom \Delta_0)\otimes_{C^{1}_{c}(\Sigma)}^{\alg}C_{c}^{1}(\Sigma,\mathcal{S}|_{\Sigma})$. The restriction maps
$$C_{c}^{1}(\h)\to C_{c}^{1}(\Sigma),\quad C_{c}^{1}(\h,\mathcal{S})\to C_{c}^{1}(\Sigma,\mathcal{S}|_{\Sigma}),\quad C_{c}^{1}(\h,\Dom \Delta_{0})\to C_{c}^{1}(\Sigma,\Dom \Delta_{0}), $$
are all surjective and we conclude that restriction gives a surjection $W_{\h}\to W_{\Sigma}$. 

The closed embedded surface $\Sigma\subset \h$ admits unit normal vector field $\nv$, which we can extend locally to a vector field on $\h$. For $x\in\Sigma$ let $\{{\bf x}_{i}\}_{i=0}^{n}$ be a local orthonormal frame at $x$ for the tangent bundle of $\h$, with $\x_{n}=\nv$. 
We define $\Tsla_{\Sigma}:W_{\Sigma}\to C_{0}(\Sigma, L^{2})\otimes_{C_{0}(\Sigma)} C_{0}(\Sigma,\mathcal{S}|_{\Sigma})$ by
$$ \Tsla_{\Sigma}(\Psi\otimes \psi) (x):=(1-\|x\|^{2})\left(\sum_{k=0}^{n-1}\partial_{\x_{k}}\Psi(x)\otimes c_{\Sigma}(\x_{k})\psi(x)\right)+\Psi(x)\otimes (\Dsla_{\Sigma}\psi)(x).$$
\begin{lemma}\label{Tprops}The operator $ \Tsla_{\Sigma}$ is essentially self-adjoint on $W_{\Sigma}\subset C_{0}(\Sigma, L^2(\partial\h,\nu_{0}))\otimes_{C_{0}(\Sigma)} L^{2}(\Sigma,\mathcal{S})$. Moreover $T_{\Sigma}$ commutes with functions $f\in C(\dH)$ and the $\Gamma$-representation \eqref{gammarep}. \end{lemma}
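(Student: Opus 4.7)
The proof splits into three parts: essential self-adjointness, commutation with $C(\partial\h)$, and commutation with the $\Gamma$-representation.

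For essential self-adjointness, the key observation is that $\Sigma=\pi^{-1}(\mathring{N})\subset\h$ is a Riemannian cover of the complete manifold $\mathring{N}$ (which is complete by the discussion preceding Theorem \ref{geometriciso}), hence is itself complete as a Riemannian manifold, carrying the restriction of the spin structure on $\h$. One then recognizes $\Tsla_{\Sigma}$ as the Dirac operator on $\Sigma$ twisted by the trivial Hilbert bundle $\Sigma\times L^{2}(\partial\h,\nu_{0})$ equipped with the flat connection $d$, i.e.\ as $\sum_{k=0}^{n-1}c_{\Sigma}(\x_{k})(\nabla^{\mathcal{S}}_{\x_{k}}\otimes 1+1\otimes\partial_{\x_{k}})$ in local form. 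Such a Dirac-type operator on a complete Riemannian manifold, with symbol given by Clifford multiplication and hence unit propagation speed, is essentially self-adjoint on $C^{1}_{c}$-sections by Chernoff's theorem (or its infinite-rank extension, standard in the Higson--Roe framework). Since $\Dom\Delta_{0}\subset L^{2}(\partial\h,\nu_{0})$ is dense and $\Tsla_{\Sigma}$ does not involve $\Delta_{0}$, the subspace $W_{\Sigma}$ is a core.

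Commutation with $f\in C(\partial\h)$ is a direct computation. The action is pointwise multiplication on the $L^{2}(\partial\h,\nu_{0})$ fiber, $(f\Psi)(x)(\xi)=f(\xi)\Psi(x)(\xi)$, which is independent of $x\in\Sigma$; consequently $(1-\|x\|^{2})\partial_{\x_{k}}(f\Psi)=f\cdot(1-\|x\|^{2})\partial_{\x_{k}}\Psi$. Meanwhile $f$ acts only on the first tensor factor, so commutes with the Clifford multiplications $c_{\Sigma}(\x_{k})$ and with $\Dsla_{\Sigma}$ acting on the spinor factor. Hence $[\Tsla_{\Sigma},f]=0$.

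For $\Gamma$-equivariance the plan is to transport the problem via the unitary $v$ of Proposition \ref{tensorisos}, so that it suffices to show $v^{*}\Tsla_{\Sigma}v$ commutes with $u_{\gamma}\otimes 1$ on $L^{2}_{\pi}(T_{1}\h,\nu_{x})\otimes_{C_{0}(M)}L^{2}(\mathring{N},\mathcal{S}_{\mathring{N}})$. Since $\Sigma$ is $\Gamma$-invariant and $\gamma$ acts on $\Sigma$ by isometries preserving the spin structure, the operator $\Dsla_{\Sigma}$ is $\Gamma$-equivariant on the cover, and the derivation part $(1-\|x\|^{2})\partial_{\x_{k}}$ intertwines the natural $\Gamma$-actions once the Poisson normalisation $P^{n/2}$ in $v$ is accounted for via the transformation rules \eqref{Pg} and \eqref{Poissonprops}. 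Concretely, applying the Leibniz rule to $\Tsla_{\Sigma}(|\gamma'|^{-n/2}(\gamma\circ\Psi)\otimes(\gamma\circ\psi))$ produces three types of terms: an interior term $|\gamma'|^{-n/2}(\gamma\circ\Tsla_{\Sigma}(\Psi\otimes\psi))$, a term involving $\nabla^{\mathcal{S}}\gamma$ that vanishes because $\gamma$ preserves the Levi--Civita connection on $\mathcal{S}$, and a term $(1-\|x\|^{2})\partial_{\x_{k}}(|\gamma'|^{-n/2})$ which one verifies to vanish (or to cancel an analogous term appearing via the spinorial transformation law) using the conformal identity \eqref{fundamental}.

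The main obstacle is Part 1: invoking Chernoff-type essential self-adjointness for an operator with values in an infinite-rank coefficient bundle, and checking that $W_{\Sigma}$ (which uses $\Dom\Delta_{0}$ rather than all of $L^{2}(\partial\h,\nu_{0})$ in the fibers) is genuinely a core for the closure. Part 3 is conceptually clear but requires a careful bookkeeping of the Poisson factors under the intertwiner $v$.
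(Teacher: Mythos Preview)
Your outline is sound in spirit, but you are working harder than necessary and the paper's approach is both simpler and sidesteps exactly the obstacle you flag.

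The paper's key device is the unitary
\[
\alpha:C_{0}(\Sigma,L^{2}(\partial\h,\nu_{0}))\otimes_{C_{0}(\Sigma)}L^{2}(\Sigma,\mathcal{S})\to L^{2}(\partial\h,\nu_{0})\otimes L^{2}(\Sigma,\mathcal{S}),
\]
induced from $f\otimes g\otimes\psi\mapsto f\otimes g\psi$ on $L^{2}(\partial\h,\nu_{0})\otimes^{\alg}C_{c}(\Sigma)\otimes_{C_{c}(\Sigma)}C_{c}(\Sigma,\mathcal{S})$. Under $\alpha$ one has $\alpha\Tsla_{\Sigma}\alpha^{-1}=1\otimes\Dsla_{\Sigma}$, so essential self-adjointness reduces to the standard fact that $\Dsla_{\Sigma}$ is essentially self-adjoint on $C^{1}_{c}(\Sigma,\mathcal{S})$ (completeness of $\Sigma$), tensored with the identity on a fixed Hilbert space. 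This entirely avoids the infinite-rank Chernoff argument you were worried about, and the core question for $W_{\Sigma}$ is automatic since elements $f\otimes g\otimes\psi$ with $g\psi$ spanning $C^{1}_{c}(\Sigma,\mathcal{S})$ already lie in $W_{\Sigma}$.

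Your treatment of commutation with $C(\partial\h)$ is fine and matches the paper.

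For $\Gamma$-equivariance, your argument is muddled. You worry about $(1-\|x\|^{2})\partial_{\x_{k}}(|\gamma'|^{-n/2})$ and invoke the conformal identity \eqref{fundamental} to handle it, but there is nothing to handle: $|\gamma'(\xi)|$ is the Jacobian of the boundary action and depends only on $\xi\in\partial\h$, not on $x\in\Sigma$, so this derivative is identically zero. The paper makes this transparent by again passing through $\alpha$: under $\alpha$ the representation $vu_{\gamma}v^{*}$ becomes $|\gamma'|^{-n/2}u_{\gamma}$ with $|\gamma'|^{-n/2}$ acting on the $L^{2}(\partial\h,\nu_{0})$ factor only, so $[1\otimes\Dsla_{\Sigma},|\gamma'|^{-n/2}u_{\gamma}]=0$ by $\Gamma$-equivariance of $\mathcal{S}$ and the fact that $\Dsla_{\Sigma}$ does not touch the boundary factor.
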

\begin{proof} The operator $\Dsla_{\Sigma}:C_{c}^{1}(\Sigma,\mathcal{S})\to C_{c}(\Sigma,\mathcal{S})$ is essentially self-adjoint because $\Sigma$ is a complete manifold.
The subspace $L^{2}(\dH,\nu_{0})\otimes^{\alg}C_{c}(\Sigma)\subset C_{0}(\Sigma, L^2(\partial\h,\nu_{0}))$ is dense and
$$L^{2}(\dH,\nu_{0})\otimes^{\alg} C_{c}(\Sigma)\otimes_{C_{c}(\Sigma)} C_{c}(\Sigma,\mathcal{S})\to L^{2}(\dH,\nu_{0})\otimes C_{c}(\Sigma,\mathcal{S}), \quad f\otimes g\otimes\psi\mapsto f\otimes g\psi,$$
extends to a unitary isomorphism 
\begin{equation}\label{alpha} \alpha :C_{0}(\Sigma, L^2(\partial\h,\nu_{0}))\otimes_{C_{0}(\Sigma)} L^{2}(\Sigma,\mathcal{S})\to L^{2}(\dH,\nu_{0})\otimes L^{2}(\Sigma,\mathcal{S}).\end{equation}
For $f\in L^{2}(\dH,\nu_{0})$, $g\in C^{1}_{c}(\Sigma)$ and $\psi\in C^{1}_{c}(\Sigma,\mathcal{S})$ we have $f\otimes g\otimes\psi \in W_{\Sigma}$ and
$$(\alpha\circ \Tsla_{\Sigma})(f\otimes g\otimes\psi)=f\otimes \Dsla_{\Sigma}(g\psi),\quad \alpha T_{\Sigma}\alpha^{-1}=1\otimes\Dsla_{\Sigma}. $$
While the elements $g\psi$ span the core $C_{c}^{1}(\Sigma,\mathcal{S})$ on which $\Dsla_{\Sigma}$ is essentially selfadjoint it follows that $\alpha\circ (\Tsla_{\Sigma}\pm i)$ has dense range, and hence so does $\Tsla_{\Sigma}\pm i$. Lastly, it is clear that $T_{\Sigma}$ commutes with functions $f\in C(\dH)$. For the $\Gamma$-representation \eqref{gammarep}, it is enough to observe that under the map $\alpha$ above we have
\[\alpha[ T_{\Sigma}, vu_{\gamma}v^{*}]\alpha^{-1} f\otimes\psi =[1\otimes \Dsla_{\Sigma},|\gamma'|^{-\frac{n}{2}}u_{\gamma}] f\otimes \psi =0,\]
because $\mathcal{S}$ is $\Gamma$-equivariant and $\gamma'$ does not depend on $x$.\end{proof}
Let $\psi\in C_{c}^{1}(\h,\mathcal{S})$ and $x\in\Sigma$. By \cite[Proposition 2.2]{Baer} the hyperbolic Dirac operator $\Dsla_{\h}$ and the surface Dirac operator $\Dsla_{\Sigma}$ are related by the formulae
\[c(\nv)\Dsla_{\h}\psi(x)=\Dsla_{\Sigma}\psi(x)-K\psi(x)+\nabla_{\nv}^{\h}\psi(x),\quad (\Dsla_{\h}c(\nv)-c(\nv)\Dsla_{\h})\psi(x)=2\Dsla_{\Sigma}\psi(x).\]
Here $\psi$ is a section of the spinor bundle on $\h$ defined in a neighbourhood of $\Sigma$,  $\nabla^{\h}$ is the spin connection in the spinor bundle of $\h$ and $K$ is the \emph{mean curvature} of the surface $\Sigma$. For $\Psi\otimes \psi\in W_{\h}$ and $x\in\Sigma$ we find the simple formula  
\begin{equation}\label{surfacefactor} (T_{\h}c(\nv)-c(\nv)T_{\h})(\Psi\otimes\psi)(x)=2\Tsla_{\Sigma}(\Psi\otimes\psi)(x).\end{equation}
Choose $\lambda_{k}^{i}\in C_c(\h)$ such that $\e_{i}=\sum_{k}\lambda_{k}^{i}\x_{k}$ in a neighbourhood of $x$. The following Lemma allows us to exploit the commutator computations of the previous section.
\begin{lemma}\label{framelemma} Let $R:W_{\h} \to C_{c}^{1}(\h,L^{2}(\dH,\nu_0))\otimes_{C^{1}_{c}(\h)}^{\alg}C_{c}^{1}(\h,\mathcal{S})$ be an operator such that $[T_{\h},R]=\sum_{i=0}^{n} c(\e_i)R_{i}$, where 
$$R_i: W_{\h} \to C_{c}^{1}(\h,L^{2}(\dH,\nu_0))\otimes_{C^{1}_{c}(\h)}^{\alg}C_{c}^{1}(\h,\mathcal{S})$$
are operators that commute with the Clifford action. Then for $\Psi\otimes\psi\in W_{\h}$
\begin{equation}\label{TSigmacomm}\left([T_{\Sigma},R]\right)(\Psi\otimes\psi)(x)=\sum_{k=0}^{n-1}\sum_{i=0}^{n}\lambda_{k}^{i}R_{i}c_{\Sigma}(\x_{k})(\Psi\otimes \psi)(x) ,\end{equation}
locally at $x$. If the $R_{i}$ define bounded operators on $C_0(\Sigma, L^{2}(\dH,\nu_0))\otimes_{C_{0}(\Sigma)}L^2(\Sigma, \mathcal{S}|_{\Sigma})$ then $[T_{\Sigma},R]$ extends to a bounded operator on $C_0(\Sigma, L^{2}(\dH,\nu_0))\otimes_{C_{0}(\Sigma)}L^2(\Sigma, \mathcal{S}|_{\Sigma})$.
\end{lemma}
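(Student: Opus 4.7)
The plan is to reduce the commutator $[T_\Sigma, R]$ to the known commutator $[T_\h, R]$ via the identity \eqref{surfacefactor}, namely $2T_\Sigma = T_\h c(\nv) - c(\nv) T_\h$ valid at points of $\Sigma$. Since $R$ acts only on the $L^2(\dH,\nu_0)$ tensor factor and commutes with the Clifford action, it in particular commutes with $c(\nv)$. Therefore, expanding and regrouping,
\[
2[T_\Sigma, R] = T_\h c(\nv) R - R T_\h c(\nv) - c(\nv) T_\h R + R c(\nv) T_\h = [T_\h, R]\, c(\nv) - c(\nv)\, [T_\h, R].
\]
Substituting the hypothesis $[T_\h, R] = \sum_i c(\e_i) R_i$ and again using that each $R_i$ commutes with $c(\nv)$, this becomes $\sum_i [c(\e_i), c(\nv)]\, R_i$.

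The next step is a pointwise Clifford computation at $x\in\Sigma$. Writing $\e_i = \sum_{k=0}^{n} \lambda_k^i \x_k$ with $\x_n=\nv$, the Clifford relations give $c(\x_k) c(\nv) = -c(\nv) c(\x_k)$ for $k<n$ (since $\x_k \perp \nv$) while $[c(\x_n), c(\nv)] = 0$. Hence
\[
[c(\e_i), c(\nv)] = 2\sum_{k=0}^{n-1} \lambda_k^i\, c(\x_k) c(\nv) = 2\sum_{k=0}^{n-1} \lambda_k^i\, c_\Sigma(\x_k),
\]
where we used the definition $c_\Sigma(\x_k) = c(\x_k) c(\nv)$. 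Inserting this into the previous display and using once more that $R_i$ commutes with Clifford multiplication to move $c_\Sigma(\x_k)$ past $R_i$ yields the claimed formula
\[
[T_\Sigma, R](\Psi\otimes\psi)(x) = \sum_{k=0}^{n-1}\sum_{i=0}^{n} \lambda_k^i\, R_i\, c_\Sigma(\x_k)(\Psi\otimes\psi)(x),
\]
valid locally at $x$. The identity is first established on the core $W_\h$, whose image under restriction is dense in $W_\Sigma$.

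For the final boundedness claim, the $\lambda_k^i$ are the matrix coefficients of the change-of-frame between two local orthonormal frames, hence $|\lambda_k^i|\leq 1$ everywhere; moreover Clifford multiplication $c_\Sigma(\x_k)$ is an isometry on the spinor bundle. Thus the right-hand side is a finite $\mathbb{C}$-linear combination of compositions of bounded operators and extends continuously to $C_0(\Sigma,L^2(\dH,\nu_0))\otimes_{C_0(\Sigma)} L^2(\Sigma,\mathcal{S}|_{\Sigma})$. The main subtlety to verify will be the transition from the local formula (on a neighbourhood where the orthonormal frame and the decomposition $\e_i = \sum_k \lambda_k^i \x_k$ are defined) to a globally bounded operator on the full Hilbert space; this is handled by a standard partition of unity argument on $\Sigma$, using that the bound on each local piece depends only on the $L^\infty$-norms of $\lambda_k^i$ and the operator norms of the $R_i$, all of which are controlled globally.
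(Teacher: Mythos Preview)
Your derivation of the commutator formula is correct and is exactly the ``linear algebra using Equation~\eqref{surfacefactor} and the Clifford relations'' that the paper invokes. One minor point: you assume $R$ commutes with $c(\nv)$, which is not part of the stated hypotheses (only the $R_i$ are assumed to commute with the Clifford action). In every application in the paper $R$ acts on the $L^2(\dH,\nu_0)$ factor alone, so this is harmless, but it is worth noting that without it the reduction $2[T_\Sigma,R]=[T_\h,R]c(\nv)-c(\nv)[T_\h,R]$ does not go through.

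For the boundedness assertion your argument diverges from the paper's and has a gap. You propose a partition of unity on $\Sigma$ to pass from the local formula to a global bound, but the operators $R_i$ need not be local in $x$, so localising $w$ by a cutoff $\chi_\alpha$ does not localise $R_i c_\Sigma(\x_k)w$; the usual patching then fails. The paper avoids this entirely by working with the $C_0(\Sigma)$-valued inner product and estimating pointwise in $x$: one observes that
\[
\sum_{k=0}^{n-1}\lambda_k^i(x)\,c_\Sigma(\x_k)=c_\Sigma\bigl(P_\Sigma\e_i(x)\bigr),
\]
where $P_\Sigma\e_i$ is the orthogonal projection of the global frame vector $\e_i$ onto $T_x\Sigma$. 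This is frame-independent and globally defined on $\Sigma$, with $\|c_\Sigma(P_\Sigma\e_i(x))\|=|P_\Sigma\e_i(x)|\leq 1$ (equivalently $\sum_{k<n}\lambda_k^i(x)^2\leq 1$). Hence $[T_\Sigma,R]=\sum_i c_\Sigma(P_\Sigma\e_i)R_i$ globally, and the pointwise inequality $\langle[T_\Sigma,R]w,[T_\Sigma,R]w\rangle_{C_0(\Sigma)}(x)\lesssim\sum_i\langle R_iw,R_iw\rangle_{C_0(\Sigma)}(x)$ integrates directly to the desired bound. This is cleaner than a partition of unity and sidesteps the locality issue altogether.
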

\begin{proof} Equation \eqref{TSigmacomm} is obtained by linear algebra using Equation \eqref{surfacefactor} and the local relations $c(\x_{k})^{2}=1$ for $0\leq k \leq n$ and $c(\x_{i})c(\x_{j})=-c(\x_{j})c(\x_{i})$ for $i\neq j$. To see that $[T_{\Sigma}, R]$ defines a bounded operator whenever the $R_{i}$ are bounded we first compute the $C_{0}(\Sigma)$-valued inner product $\langle\cdot,\cdot\rangle_{C_{0}(\Sigma)}$ from Equation \ref{sigmainn}, and do a pointwise estimate:
\begin{align*}\langle [\Tsla_{C_0(\Sigma)}, R]w, [\Tsla_{\Sigma}, R]w\rangle_{C_0(\Sigma)}(x)
&=\sum_{i=0}^{n}\left(\sum_{k=1}^{n-1}\lambda_{k}^{i}(x)^{2}\right)\langle R_{i}w,R_{i}w\rangle_{C_0(\Sigma)}(x) \\&\leq \sum_{i=0}^{n}\langle R_{i}w,R_{i}w\rangle_{C_0(\Sigma)}(x). 
\end{align*}
Therefore, integration against the measure $\mu$, for the inner product \eqref{sigmainn} we find
\begin{align*}\langle [\Tsla_{\Sigma}, R]w, [\Tsla_{\Sigma}, R]w\rangle_{\mu}\leq \sum_{i=0}^{n} \langle R_{i}w,R_{i}w\rangle_{\mu}\leq \left(\sum_{i=0}^{n} \|R_{i}\|^{2}\right)\langle w,w\rangle_{\mu}, \end{align*}
which proves boundedness of $[T_{\Sigma},R]$ on $C_0(\Sigma, L^{2}(\dH,\nu_0))\otimes_{C_{0}(\Sigma)}L^2(\Sigma, \mathcal{S}|_{\Sigma})$.
\end{proof}
 We write $\sigma:=ic(\nv)$, which satisfies $\sigma^{*}=\sigma$ and $\sigma^{2}=1$ and commutes with the operator  $S$ constructed in Theorem \ref{unbddrep}.
\begin{proposition}\label{reducetofull} 
The operator $v\sigma S v^{*}+ T_{\Sigma}:W_{\Sigma}\to C_0(\Sigma, L^{2}(\dH,\nu_0))\otimes_{C_{0}(\Sigma)}L^2(\Sigma, \mathcal{S}|_{\Sigma})$ is essentially self-adjoint.

\end{proposition}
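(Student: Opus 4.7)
The plan is to prove essential self-adjointness of $v\sigma Sv^{*}+T_{\Sigma}$ on $W_{\Sigma}$ via an anticommutation/graph-norm argument. This is the standard strategy for an unbounded Kasparov product of two self-adjoint operators built from transversal Clifford pieces: verify that both summands are essentially self-adjoint on a common core, show that the would-be principal part of their anticommutator vanishes by Clifford algebra, and then control what remains.

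First, I would verify that both summands are essentially self-adjoint on $W_{\Sigma}$. Lemma \ref{Tprops} handles $T_{\Sigma}$. For $v\sigma Sv^{*}$, I would use that $S$ is self-adjoint regular on $L^{2}(T_{1}\h,\nu_{x})_{C_{0}(\h)}$ with core $C_{c}^{1}(\h,\Dom\Delta_{0})$ by Theorem \ref{unbddrep}; the operator $\sigma$ acts as a self-adjoint involution on the spinor factor and commutes with $S$, so after forming the internal tensor product with $L^{2}(\Sigma,\mathcal{S}|_{\Sigma})$ and applying the unitary $v$ of Proposition \ref{tensorisos}, $W_{\Sigma}$ becomes a core for $v\sigma Sv^{*}$.

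Second, I would record the algebraic observation that $\sigma=ic(\nv)$ anticommutes with $c_{\Sigma}(\x_{k})=c(\x_{k})c(\nv)$ for every tangential frame vector $\x_{k}$, $k=0,\dots,n-1$. Hence the principal (Clifford) parts of $v\sigma Sv^{*}$ and of $T_{\Sigma}$ anticommute, and on $W_{\Sigma}$ one obtains an identity of the form
\begin{equation*}
(v\sigma Sv^{*}+T_{\Sigma})^{2}=(v\sigma Sv^{*})^{2}+T_{\Sigma}^{2}+B,
\end{equation*}
where the error $B$ collects: (i) the tangential derivatives of $\sigma$, which are bounded multiplication operators on $W_{\Sigma}$; (ii) the commutators $[T_{\Sigma},v\Delta v^{*}]$ and $[T_{\Sigma},vpv^{*}]$, shown to be bounded in Proposition \ref{comm} together with Lemma \ref{framelemma}; and (iii) the commutator $[T_{\Sigma},\rho]$, whose norm is bounded by the fact that $\rho$ is $1$-Lipschitz in the ambient hyperbolic metric and $d_{\Sigma}\geq d_{\h}|_{\Sigma}$. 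This leads to the graph-norm estimate
\begin{equation*}
\|(v\sigma Sv^{*}+T_{\Sigma})\psi\|^{2}=\|v\sigma Sv^{*}\psi\|^{2}+\|T_{\Sigma}\psi\|^{2}+\langle\psi,B\psi\rangle,\quad\psi\in W_{\Sigma},
\end{equation*}
so that the graph norm of the sum dominates the graph norm of $T_{\Sigma}$ (and of $v\sigma Sv^{*}$) up to a bounded term. Since $W_{\Sigma}$ is already a core for $T_{\Sigma}$, a standard completeness argument then forces it to be a core for $v\sigma Sv^{*}+T_{\Sigma}$ as well, yielding essential self-adjointness.

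The main obstacle is the precise control of the error $B$ coming from the $\rho F_{p}$ piece of $S$: the a priori worry is that $\{T_{\Sigma},\sigma\}\cdot\rho F_{p}$ could be unbounded because $\rho$ grows along the non-compact $\Sigma$. The argument hinges on checking that the Clifford anticommutation $\{c_{\Sigma}(\x_{k}),\sigma\}=0$ cancels all would-be principal $\rho$-weighted contributions in the expansion of $\{v\sigma Sv^{*},T_{\Sigma}\}$, leaving only zeroth- and first-order remainders that are absorbed by the bounded commutators of Proposition \ref{comm}, the bounded multiplier $[T_{\Sigma},\rho]$, and the tangential derivatives of $\sigma$. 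Making this cancellation explicit in the frame $\{\x_{k}\}$ of Lemma \ref{framelemma}, and ensuring no $\rho$- or $\Delta$-weighted cross-terms survive, is the technical heart of the proof.
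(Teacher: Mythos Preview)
Your overall strategy—reduce the anticommutator $\{v\sigma Sv^{*},T_{\Sigma}\}$ via the Clifford relation $\{c_{\Sigma}(\x_{k}),\sigma\}=0$ and then control the remainder—is the same as the paper's. The gap is in what you claim survives that reduction. The anticommutation $\sigma T_{\Sigma}=-T_{\Sigma}\sigma$ (which is exact; there are no tangential-derivative corrections of $\sigma$ here) turns the graded anticommutator into an ungraded commutator:
\[
\{v\sigma Sv^{*},T_{\Sigma}\}=\sigma\,[T_{\Sigma},vSv^{*}]=\sigma\bigl(-[T_{\Sigma},v\Delta v^{*}]+[T_{\Sigma},\rho]\,vF_{p}v^{*}+\rho\,[T_{\Sigma},vF_{p}v^{*}]\bigr).
\]
The first two pieces are bounded by Proposition~\ref{comm} and Lemma~\ref{framelemma}, but the third piece $\rho\,[T_{\Sigma},vF_{p}v^{*}]=2\rho\,[T_{\Sigma},vpv^{*}]$ is \emph{not}: the commutator $[T_{\Sigma},vpv^{*}]$ is merely bounded, and the multiplier $\rho$ is unbounded along the noncompact surface $\Sigma$. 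No further Clifford cancellation is available at this stage—the $\rho$-weighted term genuinely survives. Consequently your error operator $B$ is unbounded, and the graph-norm identity you write down does not give a usable estimate; the conclusion that $W_{\Sigma}$ is automatically a core for the sum does not follow.

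The paper circumvents this by proving a \emph{relative} bound rather than an absolute one: from $S^{2}=\Delta^{2}+2\Delta\rho+\rho^{2}\geq\rho^{2}$ one gets that $\rho(vSv^{*}\pm i)^{-1}$ is bounded, so $(st+ts)(s\pm i)^{-1}$ is bounded (with $s=v\sigma Sv^{*}$, $t=T_{\Sigma}$). This is exactly the hypothesis of the weak anticommutation criterion \cite[Definition~A.1, Theorem~A.4]{MR} (cf.\ also \cite{KaLe}), which then yields self-adjointness of $s+t$ on $\Dom s\cap\Dom t$. To repair your argument you must replace the claim ``$B$ is bounded'' by this relative bound and invoke that criterion; the naive Kato--Rellich/graph-norm route does not close.
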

\begin{proof} Write $t:=\Dsla_{\Sigma}$ and $s:=v\sigma S v^{*}$. On $W_{\Sigma}\subset \Dom s\cap \Dom t$ we can write
\begin{align*}st+ts=T_{\Sigma}v\sigma Sv^{*}+v\sigma Sv^{*}T_{\Sigma}&=\sigma(T_{\Sigma}vSv^*-vSv^*T_{\Sigma})\\ &= \sigma[T_{\Sigma},v\Delta v^{*}]+\sigma\rho[T_{\Sigma},vpv^{*}] + \sigma[T_{\Sigma},\rho]vpv^{*}.\end{align*}
It is straightforward to check that formula \eqref{vDv} holds for the isometry $v$ from Proposition \ref{tensorisos}. By Proposition \ref{comm} and Lemma \ref{framelemma}, the operators $[T_{\Sigma},v\Delta v^{*}], [T_{\Sigma},vpv^{*}]$ and $[T_{\Sigma},\rho]$ extend to bounded operators.
The unbounded multiplication operator $\rho$ commutes with the other operators involved. Now since
\[S^{2}=(-\Delta-\rho +2p\rho)^{2}=\Delta^2+2\Delta\rho+\rho^{2}\geq \rho^{2},\]
it follows that $\rho^{2}(1+S^{2})^{-1}\leq \rho^{2}(1+\rho^{2})^{-1}\leq 1$ and hence $\rho(vSv^{*}\pm i)^{-1}$ extends to a bounded operator. The operators $(s\pm i)^{-1}$ preserve the core $W_{\Sigma}$ and we have shown that $(st+ts)(s\pm i)^{-1}$ extends to a bounded operator. Thus $s$ and $t$ satisfy \cite[Definition A.1]{MR} (see also \cite{KaLe}) and by \cite[Theorem A.4]{MR} the sum operator $s+t$ is self-adjoint on $\Dom s\cap\Dom t$.
\end{proof}
For $\chi\in C_{c}(\h)$ and $f\in L^{2}(\dH,\nu_0)$ we denote by $f\cdot \chi $ the function $f\cdot \chi(x,\xi):=\chi(x)f(\xi)$. Using that $\chi$ has compact support, it is straightforward to check that $f\cdot\chi\in L^{2}(T_{1}\h,\nu_x)_{C_{0}(M)}$.
\begin{lemma}\label{connection} For any function $\chi\in C_{c}^{1}(\h)$ for which $\pi:\supp \chi\to M$ is injective and $f\in L^{2}(\dH,\nu_{0})$, the operator
$$\psi\mapsto v^{*}\Tsla_{\Sigma} v (f\cdot \chi \otimes \psi)-f\cdot \chi\otimes \Dsla_{\intN}\psi,$$
extends to a bounded operator $L^{2}(\intN,\mathcal{S})\to L^{2}_{\pi}(T_{1}\h,\nu_{x})\otimes _{C_{0}(M)} L^{2}(\intN,\mathcal{S})$.
\end{lemma}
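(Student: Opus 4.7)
The plan is to compute $v^{*}\Tsla_{\Sigma} v(f\cdot\chi\otimes\psi)$ explicitly on a dense subclass of $\psi$ and $f$, then identify the error term as a bounded multiplication operator. By density, we may reduce to the case $f\in \Dom \Delta_{0}$ (and $\psi\in C_{c}^{1}(\intN,\mathcal{S}_{\intN})$), for which $(P^{n/2}\chi f)|_{\Sigma}$ lies in $C_{c}^{1}(\Sigma,\Dom \Delta_{0})$ and the calculations below are justified. Using the formula $v(f\chi\otimes\psi)=(P^{n/2}\chi f)|_{\Sigma}\otimes \pi^{*}\psi$ from Proposition \ref{tensorisos} and the definition of $\Tsla_{\Sigma}$, together with the fact that $\pi:\Sigma\to\intN$ is a local isometry of spin$^{c}$ manifolds so that $\Dsla_{\Sigma}\pi^{*}=\pi^{*}\Dsla_{\intN}$, I would write
\[
\Tsla_{\Sigma} v(f\chi\otimes\psi)=v(f\chi\otimes\Dsla_{\intN}\psi)+E_{a}(\psi)+E_{b}(\psi),
\]
where the two error terms come from Leibniz differentiation of the first factor $P^{n/2}\chi f$.

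The first error term is the contribution of differentiating $P^{n/2}$:
\[
E_{a}(\psi)(x)=(1-\|x\|^{2})\sum_{k=0}^{n-1}\partial_{\x_{k}}(P^{n/2})(x,\xi)\chi(x)f(\xi)\otimes c_{\Sigma}(\x_{k})\pi^{*}\psi(x).
\]
Expanding $\x_{k}=\sum_{i}\mu_{k}^{i}\e_{i}$ in the standard global frame and invoking the identity $(1-\|x\|^{2})\partial_{i}(P^{n/2})=nP^{n/2}u_{i}$ from the proof of Lemma \ref{Poissonderivative}, with $u_{i}\in C_{b}(T_{1}\h)$, this term equals $v$ applied to the bounded multiplication operator $n\sum_{i,k}\mu_{k}^{i}u_{i}\chi f\otimes c_{\Sigma}(\x_{k})(\cdot)$ acting on $\psi$. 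The second error term is the contribution of differentiating $\chi$:
\[
E_{b}(\psi)(x)=(1-\|x\|^{2})\sum_{k=0}^{n-1}P^{n/2}(x,\xi)\partial_{\x_{k}}\chi(x)f(\xi)\otimes c_{\Sigma}(\x_{k})\pi^{*}\psi(x),
\]
which is $v$ applied to multiplication by $\sum_{k}(1-\|x\|^{2})\partial_{\x_{k}}\chi\cdot f\otimes c_{\Sigma}(\x_{k})(\cdot)$; this multiplier is a bounded compactly supported function on $T_{1}\h$ times $f\in L^{2}(\dH,\nu_{0})$.

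Applying the unitary $v^{*}$ of Proposition \ref{tensorisos} then yields
\[
v^{*}\Tsla_{\Sigma}v(f\chi\otimes\psi)-f\chi\otimes\Dsla_{\intN}\psi=v^{*}(E_{a}(\psi)+E_{b}(\psi)),
\]
and both summands on the right are of the form $v^{*}v(h\otimes \psi')=h\otimes\psi'$ where $h\in L^{2}_{\pi}(T_{1}\h,\nu_{x})$ has norm bounded by $C\|f\|_{L^{2}(\dH,\nu_{0})}\|\chi\|_{C^{1}}$ uniformly in $\psi$, and $\psi'$ is obtained from $\psi$ by bounded Clifford multiplications. The injectivity of $\pi$ on $\supp\chi$ is precisely what ensures that the inner product on $L^{2}_{\pi}(T_{1}\h,\nu_{x})$ restricted to functions supported in $\supp\chi$ agrees with the ordinary $L^{2}$-inner product, so that the bounds carry over to the $C_{0}(M)$-module norm. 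Summing over $k$ gives the desired bound on the difference operator by a constant times $\|\psi\|_{L^{2}(\intN,\mathcal{S}_{\intN})}$, and extension by continuity finishes the proof.

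The main technical obstacle will be the careful bookkeeping of the Clifford action and the tangent frame $\{\x_{k}\}$ on $\Sigma$ versus the Euclidean frame $\{\e_{i}\}$ on $\h$: one must check that the boundedness of $u_{i}$ on $T_{1}\h$ survives restriction to $\Sigma$ and pairing with $c_{\Sigma}(\x_{k})$. Since the $u_{i}$ are \emph{globally} bounded by Lemma \ref{Poissonderivative} and the change-of-frame coefficients $\mu_{k}^{i}$ are bounded on the relatively compact $\supp\chi$, this reduces to uniform norm estimates of Clifford elements, which is routine.
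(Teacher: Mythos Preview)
Your proposal is correct and follows essentially the same approach as the paper's proof: both apply the Leibniz rule to the factor $P^{n/2}\chi f$, producing one error term from the derivative of $P^{n/2}$ (controlled by the bounded functions $u_i$ of Lemma~\ref{Poissonderivative}) and a second from the derivative of $\chi$ (compactly supported, hence bounded), and both use injectivity of $\pi$ on $\supp\chi$ to identify the $C_0(M)$-module norm with an ordinary $L^2$-norm. The only cosmetic differences are that the paper writes $\chi=(\pi^{*}\zeta)|_{U}$ for some $\zeta\in C_{c}^{1}(M)$ (which packages your injectivity argument) and appeals to Lemma~\ref{framelemma} for the frame change $\e_i\leftrightarrow\x_k$, whereas you expand directly via coefficients $\mu_k^i$; these are equivalent computations.
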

\begin{proof} As in the proof of Proposition \ref{geoequiv}, there exists $\zeta\in C_{c}^{1}(M)$  such that $\chi=(\pi^{*}\zeta)|_{U}$. Then
\begin{align*} \Tsla_{\Sigma} v &(f\cdot \chi \otimes \psi)-v(f\cdot \chi\otimes \Dsla_{\intN}\psi)  = \Tsla_{\Sigma} (P^{n/2} f\cdot \chi \otimes \pi^{*}\psi )  -P^{n/2} f\cdot \chi \otimes \pi^{*}\Dsla_{\intN}\psi \\
&=[\Tsla_{\Sigma},P^{n/2}] f\cdot \chi\otimes \pi^{*} \psi + P^{n/2}f\cdot (\Tsla_{\Sigma}\chi \otimes \pi^{*}\psi -\chi \otimes \pi^{*}\Dsla_{\intN}\psi) 
\end{align*}

and we consider both summands separately. Writing $\lx_{k}$ for the vector fields  on $\intN$, satisfying $c(\x_{k})(x)=\pi^{*}c(\lx_{k})(x)$, Lemmas \ref{Poissoncomm} and \ref{framelemma} give the local expression
\begin{align*}[\Tsla_{\Sigma},P^{n/2}] f\cdot \chi\otimes\pi^{*}\psi (x)&= nP^{n/2}\sum_{k=0}^{n-1}\sum_{i}\lambda_{k}^{i}u_{i}f\cdot \chi\otimes c_{\Sigma}(\x_{k})\pi^{*}\psi(x)\\
&=nv\left(\sum_{k=0}^{n-1}\sum_{i}\lambda_{k}^{i}u_{i}f\cdot \chi\otimes c_{N}(\lx_{k})\psi\right)(x).
\end{align*}
This is shown to be a bounded operator as in the proof of Lemma \ref{framelemma}. For the second summand, 
\begin{align*}P^{n/2}f\cdot (\Tsla_{\Sigma}\pi^{*}\zeta \otimes \pi^{*}\psi -\pi^{*}\zeta \otimes \pi^{*}\Dsla_{N}\psi)|_{U}
&=\sum_{k=0}^{n-1}P^{n/2}f\cdot \pi^{*}(\partial_{\lx_{k}}\zeta)|_{U} \otimes (\pi^{*}c_{N}(\lx_{k})\psi )|_{U} \\
 &= v\left( \sum_{k=0}^{n-1} f\cdot \pi^{*}(\partial_{\lx_{k}}\zeta)|_{U} \otimes c_{N}(\lx_{k})\psi \right),\end{align*}
which defines a bounded operator as in Proposition \ref{geoequiv}. \end{proof}
\begin{theorem}\label{product} The triple $(C(\partial\h)\rtimes \Gamma, L^{2}_{\pi}(T_{1}\h,\nu_{x})\otimes_{C_{0}(M)}L^{2}(\intN,\mathcal{S}_{\intN}), \sigma S+v^{*}T_{\Sigma}v)$ is an unbounded Fredholm module representing the class $\partial[\Dsla_{\mathring{N}}]\in K^{1}(C(\partial \h)\rtimes\Gamma)$. 
\end{theorem}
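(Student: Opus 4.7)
The goal is to identify the given triple with the unbounded Kasparov product of the extension class $[\partial]\in KK_1(C(\partial\h)\rtimes\Gamma, C_0(M))$ (represented at the equivariant level by the unbounded module of Theorem \ref{unbddrep} and reduced to the quotient via Corollary \ref{delext}) with the Dirac class $[\Dsla_{\intN}]\in K^0(C_0(M))$ of Theorem \ref{geometriciso}. The strategy is to apply the unbounded Kasparov product criterion of Mesland–Rennie \cite{MR} (compare Kucerovsky), which reduces the verification to three ingredients: (a) essential self‑adjointness and compact resolvent of the sum on a suitable core; (b) bounded commutators with the algebra $C(\partial\h)\rtimes\Gamma$; (c) the connection and semi‑boundedness conditions.

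First I would transfer the problem via the unitary $v$ of Proposition \ref{tensorisos} to the Hilbert space $C_0(\Sigma,L^2(\dH,\nu_0))\otimes_{C_0(\Sigma)}L^2(\Sigma,\mathcal{S}|_\Sigma)$, where the analysis of Section \ref{section: boundary} applies. Essential self‑adjointness on the core $W_\Sigma$ (equivalently on $v^*W_\Sigma$) is precisely Proposition \ref{reducetofull}, and compact $C(\partial\h)\rtimes\Gamma$‑local resolvents follow from the facts that $\sigma S$ has $C_0(\h)$-compact resolvent in $L^2(T_1\h,\nu_x)$ (Proposition \ref{resolvent}) while $\Dsla_\Sigma$ has $C_0(\intN)$-compact resolvent in $L^2_\pi(\Sigma,\mathcal{S}|_\Sigma)$ by completeness of $\intN$; the tensor product operator then has compact resolvent after multiplication by a compactly supported function, and one inflates to $C(\partial\h)\rtimes\Gamma$ using the covariant representation. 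Bounded commutators with $C(\partial\h)$ come from Theorem \ref{unbddrep} for the $\sigma S$ part and Lemma \ref{Tprops} for the $T_\Sigma$ part; $\Gamma$‑equivariance of $S$ and of the Clifford data together with Lemma \ref{Tprops} give boundedness of the commutators with the unitaries $u_\gamma$, so commutators with $C_c(\Gamma,C(\partial\h))$ are bounded.

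Next I would verify the connection condition: the operator $v^*T_\Sigma v$ must be a connection for $\Dsla_{\intN}$ in the sense that for every $\eta$ in a dense subspace of $L^2_\pi(T_1\h,\nu_x)$, the map
\[ \psi\mapsto (v^*T_\Sigma v)(\eta\otimes\psi) - \eta\otimes\Dsla_{\intN}\psi \]
extends to a bounded operator $L^2(\intN,\mathcal{S}_{\intN})\to L^2_\pi(T_1\h,\nu_x)\otimes_{C_0(M)}L^2(\intN,\mathcal{S}_{\intN})$. This is exactly the content of Lemma \ref{connection} applied to elementary tensors $f\cdot\chi$ with $\chi\in C_c^1(\h)$ injectively projecting to $M$ and $f\in L^2(\dH,\nu_0)$; such elements are total in $L^2_\pi(T_1\h,\nu_x)$ by the partition of unity argument from the proof of Proposition \ref{tensorisos}.

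Finally, the semi‑boundedness (positivity modulo bounded perturbations) condition on the graded commutator $[\sigma S,\ v^*T_\Sigma v]_{\pm}$ reduces, after the unitary $v$ and using that $\sigma=ic(\nv)$ anti‑commutes with $c_\Sigma(\x_k)$ for $0\le k\le n-1$, to bounded‑operator statements of the form already established: the commutators $[T_\Sigma, v\Delta v^*]$, $[T_\Sigma, vpv^*]$ and $[T_\Sigma,\rho]$ are bounded (Proposition \ref{comm} and Lemma \ref{framelemma}), and $\rho\,(vSv^*\pm i)^{-1}$ is bounded, which is the computation already performed in Proposition \ref{reducetofull}. Thus Kucerovsky–Mesland–Rennie applies and the triple represents $[\partial]\otimes_{C_0(M)}[\Dsla_{\intN}] = \partial[\Dsla_{\intN}]$. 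The main obstacle will be the semi‑boundedness condition: although the commutator computations of Proposition \ref{comm} and Lemma \ref{framelemma} give bounded commutators, one must track the signs coming from the Clifford relations $c(\nv)c(\x_k)=-c(\x_k)c(\nv)$ ($k<n$) to see that the off‑diagonal terms in the sum $\sigma S+v^*T_\Sigma v$ really arrange into a positive semi‑bounded form modulo a bounded perturbation; this is where the explicit structure of $S=-\Delta+(2p-1)\rho$ and the decomposition of $T_\Sigma$ in the local frame $\{\x_k\}$ must be combined carefully.
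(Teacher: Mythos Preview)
Your proposal is correct and follows essentially the same route as the paper's proof: self-adjointness from Proposition \ref{reducetofull}, bounded commutators with $C(\partial\h)$ and $\Gamma$ from Theorem \ref{unbddrep} and Lemma \ref{Tprops}, the connection condition from Lemma \ref{connection}, and then an appeal to the unbounded product criterion (the paper cites \cite[Lemma 4.3, Theorem 4.4]{MR} and \cite[Theorem 6.7]{KaLe2}). Your treatment is in fact more explicit than the paper's terse version---you spell out compact resolvents and the semi-boundedness check, whereas the paper absorbs the latter into the $st+ts$ computation already done inside Proposition \ref{reducetofull} and simply invokes the cited results.
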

\begin{proof} The operator $\sigma S+v^{*}T_{\Sigma}v$ is self-adjoint by Proposition \ref{reducetofull}. By Theorem \ref{unbddrep} and Proposition \ref{Tprops} the operator $\sigma S+ v^{*}T_{\Sigma}v$ has bounded commutators with functions $f\in C(\dH)$ and with group elements $u_{\gamma}$. By Lemma \ref{connection} and combining arguments in \cite[Lemma 4.3]{MR}, \cite[Theorem 6.7]{KaLe2} and \cite[Theorem 4.4]{MR}, it follows that the triple $(C(\partial\h)\rtimes \Gamma, L^{2}_{\pi}(T_{1}\h,\nu_{x})\otimes_{C_{0}(M)}L^{2}(\intN,\mathcal{S}_{\intN}), \sigma S+v^{*}T_{\Sigma}v)$ is a $K$-cycle representing the product $[\partial]\otimes[\Dsla_{\intN}]=\partial[\Dsla_{\mathring{N}}]$.
\end{proof}
It should be noted that under the isomorphism \eqref{alpha}, 
$$\alpha :C_{0}(\Sigma, L^2(\partial\h,\nu_{0}))\otimes_{C_{0}(\Sigma)} L^{2}(\Sigma,\mathcal{S})\to L^{2}(\dH,\nu_{0})\otimes L^{2}(\Sigma,\mathcal{S}),$$ the unbounded Fredholm module in Theorem \ref{product} admits a simple description. It can be represented on the module $L^{2}(\dH,\nu_0)\otimes L^{2}(\Sigma,\mathcal{S})$ using the $\Gamma$-representation
\[u_{\gamma} (f\otimes \psi )(\xi,x)=|\gamma'(\xi)|f(\xi\gamma)\otimes\psi(x\gamma),\]
and the operator $\sigma(\Delta_0\otimes 1+H + \rho F_{p})+1\otimes \Dsla_{\Sigma}$. However, proving that this operator is self-adjoint with compact resolvent requires the analysis presented above.

%%%%%%%%%%%%%%%%%%%%%%%%%%%%%

\end{document}